\documentclass[12pt]{amsart}
\usepackage{amscd,amssymb,graphics,a4wide,hyperref,verbatim}
\usepackage{graphicx}
\usepackage{psfrag} 
\usepackage[usenames,dvipsnames]{color}


\usepackage{mathrsfs}
\input xy
\xyoption{all}

\footskip 1cm
\textheight 22.5cm

\newcommand{\op}[1]{\operatorname{#1}}
\newcommand{\cone}{\operatorname{{Cone}}}
\newcommand{\Cone}{\operatorname{{Cone}}}

\DeclareFontFamily{U}{rsf}{}
\DeclareFontShape{U}{rsf}{m}{n}{
  <5> <6> rsfs5 <7> <8> <9> rsfs7 <10->  rsfs10}{}
\DeclareMathAlphabet{\mathscr}{U}{rsf}{m}{n}


\newtheorem{theorem}{Theorem}[section]
\newtheorem{lemma}[theorem]{Lemma}
\newtheorem{proposition}[theorem]{Proposition}
\newtheorem{corollary}[theorem]{Corollary}
\newtheorem{conjecture}[theorem]{Conjecture}
\newtheorem{assumption}[theorem]{Assumption}

\theoremstyle{definition}
\newtheorem{convention}[theorem]{Convention}
\newtheorem{definition}[theorem]{Definition}

\newtheorem{example}[theorem]{Example}

\theoremstyle{remark}
\newtheorem{remark}[theorem]{Remark}

\numberwithin{equation}{section}

\newcommand{\ZZ} {\mathbb{Z}}
\newcommand{\QQ} {\mathbb{Q}}
\newcommand{\RR} {\mathbb{R}}
\newcommand{\CC} {\mathbb{C}}
\newcommand{\DD} {\mathbb{D}}
\newcommand{\FF} {\mathbb{F}}
\newcommand{\HH} {\mathbb{H}}
\newcommand{\PP} {\mathbb{P}}
\renewcommand{\AA} {\mathbb{A}}

\newcommand {\shF} {\mathcal{F}}

\newcommand {\shH} {\mathcal{H}}

\newcommand {\shK} {\mathcal{K}}

\newcommand {\shO} {\mathcal{O}}

\newcommand {\shT} {\mathcal{T}}


\newcommand {\Bl} {\operatorname{Bl}}

\newcommand {\C} {\mathbb{C}}

\newcommand {\Conv} {\operatorname{Conv}}

\newcommand {\codim} {\operatorname{codim}}

\newcommand {\coker} {\operatorname{coker}}
\newcommand {\crit} {\operatorname{crit}}

\newcommand {\Diag} {\operatorname{Diag}}

\newcommand {\dlog} {\operatorname{dlog}}

\newcommand {\dual} {{\vee}}

\newcommand {\Gr} {\operatorname{Gr}}

\newcommand {\Hom} {\operatorname{Hom}}

\newcommand {\hra} {\hookrightarrow}

\newcommand {\id} {\operatorname{id}}
\newcommand {\im} {\operatorname{im}}

\newcommand {\Int} {\operatorname{Int}}

\renewcommand {\k} {{\Bbbk}}
\renewcommand {\ker } {\operatorname{ker}}

\newcommand {\lla} {\longleftarrow}

\newcommand {\lra} {\longrightarrow}

\newcommand {\Newton} {\operatorname{Newton}}
\newcommand {\ntor} {{\operatorname{ntor}}}

\renewcommand{\O} {\mathcal{O}}

\renewcommand{\P} {\mathscr{P}}

\newcommand {\Proj} {\operatorname{Proj}}

\newcommand {\ra} {\to}


\newcommand {\redact}[1]{{}}

\newcommand {\sing} {\mathrm{sing}}
\newcommand {\Sing} {\operatorname{Sing}}

\newcommand {\Spec} {\operatorname{Spec}}
\newcommand {\spe} {\operatorname{sp}}

\newcommand {\supp} {\operatorname{supp}}

\newcommand {\tor} {{\operatorname{tor}}}

\newcommand {\Tot} {\operatorname{Tot}}


\def\mydate{\ifcase\month \or January\or February\or March\or
April\or May\or June\or July\or August\or September\or October\or 
November\or December\fi \space\number\day,\space\number\year}


\newlength{\picwidth} \setlength{\picwidth}{.75\textwidth}
\newlength{\miniwidth} \setlength{\miniwidth}{.5\textwidth}
\newlength{\nanowidth} \setlength{\nanowidth}{.33\textwidth}
\newlength{\melowidth} \setlength{\melowidth}{.88\textwidth}
\newlength{\leftminiwidth} \setlength{\leftminiwidth}{.45\textwidth}
\newlength{\rightminiwidth} \setlength{\rightminiwidth}{.45\textwidth}
\newlength{\minipagewidth} \setlength{\minipagewidth}{.45\textwidth}


\begin{document}
\def\mapright#1{\smash{
 \mathop{\longrightarrow}\limits^{#1}}}
\def\mapleft#1{\smash{
 \mathop{\longleftarrow}\limits^{#1}}}
\def\exact#1#2#3{0\to#1\to#2\to#3\to0}
\def\mapup#1{\Big\uparrow
  \rlap{$\vcenter{\hbox{$\scriptstyle#1$}}$}}
\def\mapdown#1{\Big\downarrow
  \rlap{$\vcenter{\hbox{$\scriptstyle#1$}}$}}
\def\dual#1{{#1}^{\scriptscriptstyle \vee}}
\def\invlim{\mathop{\rm lim}\limits_{\longleftarrow}}
\def\rto{\raise.5ex\hbox{$\scriptscriptstyle ---\!\!\!>$}}

\input epsf.tex
\title
[Towards Mirror Symmetry for Varieties of general type]
{\mbox{Towards Mirror Symmetry for}
\mbox{\qquad\qquad Varieties of general type \qquad\qquad}}
\author{Mark Gross, Ludmil Katzarkov, Helge Ruddat}

\address{UCSD Mathematics, 9500 Gilman Drive, La Jolla, CA 92093-0112, USA}
\email{mgross@math.ucsd.edu}
\thanks{This work was partially supported by NSF grant
0854987.}

\address{Universit\"at Wien, Fakult\"at f\"ur Mathematik,
Nordbergstrasse 15, 1090 Wien, Austria }
\email{lkatzark@math.uci.edu}
\thanks{This work was partially supported by NSF grants 0600800, 0652633,
FWG grant P20778 and an ERC grant GEMIS}

\address{JGU Mainz, Institut f\"ur Mathematik, Staudingerweg 9, 55099 Mainz, Germany}
\thanks{This work was partially supported by DFG research grant RU 1629/1-1 and SFB-TR-45}
\email{ruddat@uni-mainz.de}

\maketitle
\setcounter{tocdepth}{1}
\tableofcontents
\bigskip


\section*{Introduction}
Mirror symmetry is a phenomenon first discovered by theoretical physicists: see \cite{CK99} for a historical account. 
In its original formulation, it was
a duality between two different Calabi-Yau manifolds of the same dimension
$$X \leftrightarrow \check X$$
such that the complex structure moduli space of one side matches 
with the K\"ahler moduli space on the other side. 
Furthermore the Hodge diamonds satisfy the relation $$h^{p,q}(X)=h^{n-p,q}(\check X)$$ where $n=\dim X$.
What gave rise to most of the attention, however, was enumerative mirror 
symmetry. One generating function on the complex structure moduli space
arising from period intgrals matches another generating function on the 
K\"ahler moduli space arising from Gromov-Witten theory. 
As a consequence, the variation of Hodge structure of $X$ encodes 
the Gromov-Witten theory of $\check X$, see \cite{COGP91}.
Batyrev found a construction for mirror dual Calabi-Yau manifolds that are embedded as hypersurfaces in projective toric varieties where the duality comes from a polar duality of the polytopes describing the toric varieties \cite{Ba94}.
He and Borisov then extended this construction to complete intersections in toric varieties for which they prove the duality of Hodge numbers \cite{BB96}. 
The most general approach suggested in this direction was a duality of 
Gorenstein cones \cite{BB97}. We shall suggest, among other things, 
that one should be able to consider arbitrary cones and still 
have a mirror duality.
Givental extended the duality construction from Calabi-Yau varieties to Fano varieties \cite{Gi96} where it becomes asymmetric in the types of geometry: a Fano $\check X$ is mirror dual to a Landau-Ginzburg model, i.e., a quasi-projective
variety $U$ with regular function $w: U\ra\CC$,
$$\check X\leftrightarrow ( U, w).$$
Givental then proves enumerative mirror symmetry for these examples. 
In general, Givental's mirror construction applies to a not necessarily Fano
complete intersection of
codimension $d$ on a toric variety of dimension $n$. In \cite{Gi96}, Givental
described a Landau-Ginzburg mirror to such a complete intersection, but
in complete generality it is a Landau-Ginzburg model whose 
underlying variety is
an algebraic torus of dimension $n+d$. Only in the Fano case can this model
be reduced to a model of dimension $n-d$, matching with the dimension of
the complete intersection. 
For example, the mirror 
of a degree $d$ hypersurface $\check S_d$ in $\PP^{n+1}$ should be given by 
$$ U=(\CC^*)^{n+2},\quad w=x_1+\cdots+x_{n+2}+\frac{x_{n+2}^d}{x_1\cdot\cdots\cdot x_{n+1}},$$ 
see also \cite{HV}.
While such a mirror leads to oscillatory integrals which correctly describe
the genus zero Gromov-Witten theory of $\check S_d$ (see \cite{CG07} and \cite{Ir08}),
we wish to focus on a more geometric form of mirror symmetry. In particular,
from the point of view of the Strominger-Yau-Zaslow conjecture \cite{SYZ96}
it is important to have a mirror of the same dimension. Furthermore, from the
point of view of Homological Mirror Symmetry \cite{Ko94}, it is also
important to have a greater geometric understanding of the mirror.

What we achieve in this paper is threefold: 
\begin{enumerate} 
\item We show that the suggested dual Landau-Ginzburg model needs to be partially compactified in order to be able to find features of 
geometric mirror symmetry. 
We do this by giving a more general mirror duality construction where \emph{both} sides are a Landau-Ginzburg model
$$(X,w)\leftrightarrow (\check X,\check w).$$
This duality derives from a duality of polyhedral cones and is thus motivated by the dualities of Batyrev-Borisov and Givental.
For the above example there are then codimension one embeddings $\check S_d\subset \PP^n\subset \check X$, $\check S_d$ being the critical locus of $\check w$ and $X$ being a partial compactification of $U$.

Our construction provides a mirror dual of a complete intersection $\check S$ in a toric variety with no restriction on $K_{\check S}$. 
It works best however if $K_{\check S}$ is nef. The relation between $(\check X,\check w)$ and $\check S$ is then $\check S=\Sing \check w^{-1}(0)$.
\item We provide a mirror dual of the correct dimension: we suggest that the dual to $\check S$ is the critical locus $ S$ of the potential $ w$ near the fibre over $0$, i.e., $ S=\Sing w^{-1}(0)$. This needs to be furnished with the sheaf of vanishing cycles $\shF_{ S}=\phi_{ w,0}\CC[1]$: 
this is a perverse sheaf on $ X$ naturally supported on $ S$.
This perverse sheaf can be viewed as a replacement for the constant sheaf
$\CC$ used in mirror symmetry for Calabi-Yau manifolds.
Note that in the above example $ S_d$ lies in the complement of $ U$ in $ X$, so there is no way of seeing this structure without
the partial compactification of $( U, w)$ underpinning the relevance of our approach as compared to \cite{Gi96},\cite{HV}.
\item We verify that a duality of Hodge numbers holds for our suggested mirror pair:
$$h^{p,q}(\check S)=h^{d-p,q}( S,\shF_{ S})$$
where 
$$h^{p,q}( S,\shF_{ S}):=\dim \Gr_F^p H^{p+q}( S,\shF_{ S})$$
and $F$ is the Hodge filtration on the sheaf of vanishing cycles (shifted by one).
\end{enumerate}

The work described here was initiated in 2003 and has since 
influenced a series of other works.
Kontsevich's homological symmetry conjecture \cite{Ko94} was generalized in 
\cite{KKOY09} to mirror pairs of Landau-Ginzburg models as in our construction.
Seidel \cite{Sei08} and Efimov \cite{Ef09} prove one direction of this conjecture for curves of higher genus using our construction. 
Clarke \cite{Cl08} gave a mirror construction of Landau-Ginzburg models closely related to ours. 
Our work also inspired an SYZ version \cite{AAK12} relating to the construction given here by a blow-up at infinity.
The mirror dual of a curve of genus larger than one is a perverse curve.
Mirror dual pairs of perverse curves could also be found inside Calabi-Yau threefolds of Batyrev's construction \cite{Ru13}.
Mirror symmetry for non-compact curves has already also been studied by physicists \cite{AAMV05}, 
the perverse curve is then the locus of one-dimensional strata in a toric Calabi-Yau threefold. 
Each perverse node is called a \emph{topological vertex}.
In a sequel paper \cite{GKR16} we will discuss a Fano case in detail and how quantum corrections affect our mirror construction.

Homological mirror symmetry provided our original motivation for the study of the
sheaf of vanishing cycles. In particular,
the cohomological information associated to a Landau-Ginzburg model is a non-commutative Hodge structure \cite{KKP08}, see also \cite{Sh11}.
If $w:X\ra\CC$ denotes a Landau-Ginzburg model, i.e., $X$ is a quasi-projective variety and $w$ holomorphic, one considers the complex
\[
(\Omega^{\bullet}_{\bar{X}}(\log D)[u],ud+d\bar{w}\wedge)
\]
where $\bar{w}:\bar{X}\ra\CC$ is a relative compactification of $w:X\ra\CC$, $D=\bar{X}\setminus X$ a normal crossings divisor and $u\in\CC$ a parameter.
The natural cohomology theory arising from the relevant category for the purpose of homological mirror symmetry is the hypercohomology of this complex, see \cite{KKP08}\,\S3.2. 
By a theorem of Barannikov and Kontsevich (unpublished), the hypercohomology is a free $\CC[u]$-module.
New proofs were given by Sabbah \cite{Sab99} and Ogus and Vologodsky \cite{OV07}. 
Under the Riemann-Hilbert correspondence, Fourier-Laplace transform and localization, this module is gauge-equivalent to the hypercohomology of the sheaf of vanishing cycles, see \cite{Sab10},\,(1.2). In particular,
\begin{equation} 
\label{sabbahmaintheorem}
\dim \HH^i(\Omega^{\bullet}_{\bar{X}}(\log D),d\bar{w}\wedge) = \sum_{\lambda\in\CC} \dim H^{i-1}(\bar w^{-1}(\lambda),\phi_{\bar w,\lambda}\CC)
\end{equation}
where $\phi_{\bar w,\lambda}\CC$ is the perverse sheaf of vanishing cycles at the fibre $\lambda$.
This motivates the study of the mixed Hodge structure on the sheaf of vanishing cycles.

We now turn to the general setup. In particular, we 
consider the pair of Landau-Ginzburg models $(X,w), (\check X,\check w)$ 
dual under the following construction.
Set 
\[
M\cong \ZZ^{d+1},\quad M_{\RR}=M\otimes_{\ZZ}\RR,\quad N=\Hom_{\ZZ}(M,\ZZ),\quad
N_{\RR}=N\otimes_{\ZZ}\RR.
\]
Consider a strictly convex rational polyhedral cone $\sigma\subseteq M_{\RR}$
with $\dim\sigma=\dim M_{\RR}$, and let $\check\sigma\subseteq N_{\RR}$ be
the dual cone, 
\[
\check\sigma:=\{n \in N_{\RR}\,|\,\hbox{$\langle n,m\rangle \ge 0$ for all
$m\in\sigma$}\}. 
\]
The corresponding toric varieties
\begin{align*}
X_{\sigma}{} & :=\Spec \CC[\check\sigma\cap N]\\
X_{\check\sigma}{} & :=\Spec \CC[\sigma\cap M]
\end{align*}
are usually singular. Choose desingularizations by choosing
fans $\Sigma$ and $\check\Sigma$ which are refinements
of $\sigma$ and $\check\sigma$
respectively, with $\Sigma$ and $\check\Sigma$ consisting only of standard
cones, i.e., cones generated by part of a basis for $M$ or $N$.

We now obtain smooth toric varieties $X_{\Sigma}$ and $X_{\check\Sigma}$, and in
addition, we obtain Landau-Ginzburg potentials as follows, partially
compactifying Givental's mirror of a toric variety. For each
ray $\rho\in\Sigma$, let $m_{\rho}\in M$ be the primitive generator of $\rho$,
so that $z^{m_{\rho}}$ is a monomial regular function on $X_{\check\Sigma}$.
Similarly, for each ray $\check\rho\in\check\Sigma$, with primitive
generator $n_{\check\rho}\in N$, $z^{n_{\check\rho}}$ is a monomial function
on $X_{\Sigma}$. We then define Landau-Ginzburg potentials $w:X_{\Sigma}
\rightarrow \CC$ and $\check w:X_{\check\Sigma}\rightarrow\CC$ as
\begin{align} \label{generalmirror1}
w {} & :=\sum_{\check\rho} c_{\check\rho} z^{n_{\check\rho}}\\ \label{generalmirror2}
\check w {} & :=\sum_{\rho} c_{\rho} z^{m_{\rho}}
\end{align}
where $c_{\check\rho},c_{\rho}\in\CC$ are general coefficients. Note $w$ (resp.\ $\check w$) factors through the resolution $X_\Sigma\ra X_\sigma$ 
(resp.\ $X_{\check\Sigma}\ra X_{\check\sigma}$).

\begin{definition} 
Given a complex manifold $X$ with non-constant quasi-projective regular function $w:X\ra\CC$ that has a compact critical locus, we define the Hodge numbers 
$$h^{p,q}(X,w)=\sum_{\lambda\in\CC} \dim \Gr_F^p H^{p+q-1}(w^{-1}(\lambda),\phi_{w,\lambda}\CC)$$
where $\phi_{w,\lambda}\CC$ is the perverse sheaf of vanishing cycles at the fibre $\lambda$ and $\Gr_F^p$ is the $p$th graded piece of a mixed Hodge structure.\footnote{The subtraction of $1$ in the cohomology degree on the right is motivated by \eqref{sabbahmaintheorem} and also to have \eqref{hodge-S-to-X}, see also Lemma~\ref{shiftHpq}. A priori, there is no mixed Hodge structure on the cohomology of the twisted de Rham complex.}
\end{definition}
We conjecture an identification of the rotated Hodge diamonds as known from Calabi-Yau mirror symmetry. 
\begin{conjecture}
\label{mainconjecture}
Assuming that the critical loci of $w,\check w$ are compact, there exist disks $\DD$, $\check \DD$ centered at $0$ in $\CC$ such that for $X=w^{-1}(\DD), \check X= \check w^{-1}(\check \DD)$ we have
$$h^{p,q}(X,w)=h^{n-p,q}(\check X,\check w)$$
where $n=\dim X=\dim \check X=d+1$.
\end{conjecture}
\begin{remark} 
\label{remark-on-generalizing}
\begin{enumerate}
\item We expect a similar more general conjecture to hold using orbifold Hodge numbers for the case where $X_\Sigma,X_{\check\Sigma}$ are orbifolds, i.e., the cones in the fans $\Sigma,\check\Sigma$ are simplicial but not necessarily unimodular. 
If each cone in the fan is furthermore Gorenstein then we expect a generalization using stringy Hodge numbers such that this conjecture extends the theorem about the duality of stringy Hodge numbers for Calabi-Yau complete intersections proved by Batyrev-Borisov \cite{BB96} which places the hypothesis that $\sigma$ and $\check\sigma$ are Gorenstein cones (which is stronger than asking for all cones in $\Sigma,\check\Sigma$ to be Gorenstein). Note that $\DD=\check\DD=\CC$ in the Batyrev-Borisov setting.
\item If $\sigma$ is Gorenstein and $\check\Sigma$ refines the blowup of the deepest stratum then the associated Landau-Ginzburg model $\check w:X_{\check \Sigma}\ra\CC$ is associated to an irreducible variety, the critical locus of $\check w$ lying over $0$. 
It is only the Calabi-Yau case where this happens on both sides of the duality.
We also reproduce mirror symmetry for Fano manifolds, however with excess dimensions, e.g., the mirror pair of LG models for the projective line in this setup would be 
$$(x+y+z:\Bl_0\AA^3\ra\CC)\leftrightarrow (x+y+z+xyz:\AA^3\ra\CC).$$ 
This turns into the usual pair $\PP^1\leftrightarrow x+1/x:\CC^*\ra\CC$ by the transition to the critical locus on the left and via Kn\"orrer periodicity \cite{Or05} on the right. 
This works by writing the potential as $x+y+z(1+xy)=w+fg$ and taking for the new model $w|_{f=g=0}:V(f,g)\ra\CC$ which is indeed $x+1/x:\CC^*\ra\CC$. This was communicated to us by Denis Auroux.
\item Our set-up should fit into a more general non-toric framework for Landau-Ginzburg mirror symmetry using toric degenerations and discrete Legendre transforms as proposed by the first author and Bernd Siebert, see also \cite{CPS11}, \cite{Ru12}. We expect more general proofs to be more approachable in this framework.
\end{enumerate}
\end{remark}
The most relevant case for us is now the situation where one cone is Gorenstein and the associated critical locus is a manifold of positive Kodaira dimension. 
The main result of this paper is that the Conjecture~\ref{mainconjecture} holds true in this case when in addition 
we assume the existence of a crepant resolution, see Thm.~\ref{mainthm}, Cor.~\ref{cor_mainthm}.
The mirror dual is then not just a Landau-Ginzburg model as in the Fano case but as in the Calabi-Yau case the mirror geometry can be localized at its critical locus over $0$ even though this is singular now.
Thus, Landau-Ginzburg models play the role of a vehicle for the construction of mirror duals for varieties of general type that we now describe.

Fix once and for all a lattice polytope $\Delta\subseteq M_{\RR}$ with $\dim\Delta=\dim M_\RR>0$ whose associated projective toric variety $\PP_{\Delta}$ is smooth. 
Define the cone 
$\Cone(\Delta)\subseteq M_{\RR}\oplus\RR$ by 
\[
\Cone(\Delta):=\{(rm,r)\,|\,m\in\Delta, r\ge 0\}.
\]
We take $\sigma=\Cone(\Delta)$ in the above construction (replacing $M_\RR$ by $M_\RR\oplus\RR$, etc). Let $\check S$ be the zero section in $\PP_\Delta$ of a general section of $\shO_{\PP_\Delta}(1)$. In particular $\check S$ is smooth and the Newton polytope of an equation for $\check S$ on the open torus is $\Delta$. This section gives a regular function $\check w$ on the total space of $\shO_{\PP_\Delta}(-1)$. This total space is a toric variety $X_{\check\Sigma}$ given by a fan $\check\Sigma$. The support of the fan $\check\Sigma$ is a cone $\check\sigma\subset N_\RR\oplus\RR$ that is the dual of $\sigma$, so this fits well with the general construction given before.
We obtain a Landau-Ginzburg model $\check w:X_{\check\Sigma}\ra\CC$ whose critical locus is $\check S$ and one checks that
\begin{equation}
\label{hodge-S-to-X}
h^{p,q}(\check S)=h^{p+1,q+1}(X_{\check \Sigma}, \check w).
\end{equation}

As $\sigma$ is a Gorenstein cone, the dualizing sheaf of $X_\sigma$ is a line bundle. Since $X_\sigma$ is affine, this line bundle is trivial and thus $X_\sigma$ is Calabi-Yau. 
\begin{lemma}
\label{lemma-assumption-deltaprime}
The following are equivalent:
\begin{enumerate} 
\item The blow-up $\Bl_{\{0\}}X_\sigma$ of the origin in $X_\sigma$ is crepant;
\item $\check S$ has non-negative Kodaira dimension;
\item $\Delta$ has a lattice point in its interior.
\end{enumerate}
\end{lemma}
\begin{proof} (1)$\iff$(3) is \S\ref{section_geom_prop} combined with the fact that crepant birational morphisms to $X_\sigma$ can be identified with integral subdivisions of $\Delta$ and (2)$\iff$(3) is Prop.~\ref{kodairadim}].
\end{proof}
We assume the condition of the lemma hold and that there is a crepant desingularization of 
$\Bl_{\{0\}} X_\sigma$ by a toric variety $X_\Sigma$ given by a fan $\Sigma$ that subdivides $\sigma$. 
This property means the existence of a unimodular subdivision of $\Delta$ refining the subdivision given by the blowup of the origin. In general such do exist as orbifold resolutions, cf. Rem.~\ref{remark-on-generalizing},\,(1).
We obtain another Landau-Ginzburg model 
$$w:X_\Sigma\ra\CC$$
making a choice of general coefficients for the above description of the potential and set $S=\Sing w^{-1}(0)$ and $\shF_{S}=\phi_{ w}\CC[1]$ as before.
We claim $( S,\shF_{ S})$ is the mirror dual to $\check S$.
We define
\begin{equation}
\label{hpqFdef}
h^{p,q}( S,\shF_{ S})
= \dim\Gr_F^p\HH^{p+q}( S,\shF_{ S})
\end{equation}
Setting $d=\dim S=\dim \check S$, our main result is then
\begin{theorem} \label{mainthmintro}
$ h^{p,q}(\check S)= h^{d-p,q}( S,\shF_{ S})$.
\end{theorem}
We have by definition
\begin{align}
h^{p,q}(S,\shF_{S})&=h^{p+1,q+1}(X_{\Sigma}, w),\nonumber \\
h^{p,q}(\check S,\shF_{\check S})&=h^{p+1,q+1}(X_{\check \Sigma}, \check w)\nonumber
\end{align}
furthermore, since $\check S$ is smooth, we have an identification $\shF_{\check S}[1] = \CC_{\check S}$ that also works at the level of mixed Hodge complexes and thus $h^{p,q}(\check S,\shF_{\check S})=h^{p,q}(\check S)$.

\begin{corollary}
Conjecture~\ref{mainconjecture} holds for the refined setup.
\end{corollary}

One can of course ask for a deeper relationship between $S$ and $\check S$,
such as an equivalence between relevant categories. This question goes
far beyond the scope of this paper though we address the connection in \S\ref{section1.5}. 
As mentioned before, there is strong evidence given in \cite{Sei08} and \cite{Ef09} as well as \cite{KKOY09}, \cite{AAK12} that the construction of this paper gives a valid interpretation for mirrors of a wide range of possible varieties.
Note also our discussion on Hochschild cohomology in \S\ref{sectionHH}.

Another kind of relationship between $S$ and $\check S$ is enumerative mirror symmetry. 
In terms of the Givental Landau-Ginzburg mirrors described at the beginning
of this paper, it has been known that oscillatory integrals defined using
these Landau-Ginzburg potentials give rise to a description of quantum
cohomology even for general type hypersurfaces. Even the work \cite{Gi94}
considered hypersurfaces in projective space of arbitrary degree, and 
\cite{CG07} in theory provides an enumerative mirror statement in this
degree of generality. The necessary Birkhoff factorization to define
a coordinate transform was carried out
in practice by Iritani in \cite{Ir08} in the case of a degree nine hypersurface
in $\PP^7$, confirming some calculations of Jinzenji \cite{Ji00}. Further
work on general complete intersections in toric varieties from an enumerative
point of view was also carried out in \cite{CCIT09,CCIT14}.
It would be very interesting to see whether these calculations can be made
sense of in terms of the mirrors we propose, and we intend to return to this
question in the future.

The structure of the paper is as follows. 
A discussion of how our main result relates to homological mirror symmetry is given in \S\ref{section1.5} and this is independent of the remainder of the paper.
In \S\ref{section1}, we
introduce the combinatorial setup and describe in detail the construction
of the proposed Landau-Ginzburg mirrors and their structure.
\S\ref{section2} reviews
basic formulae for Hodge numbers of hypersurfaces in toric varieties.
\S\ref{section3} fills in some of the necessary background in mixed
Hodge theory.
\S\ref{section4} then gives the details of the calculation of the
Hodge numbers of the mirror: this is the heart of the paper. 
While our main theorem is related to Hochschild homology, we also phrase a conjecture about Hochschild cohomology in \S\ref{sectionHH} and include a proof for curves.
Finally, \S\ref{completeint} gives the generalization of our setup to complete intersections and \S\ref{orbifoldsection} generalizes our main conjecture to toric Landau-Ginzburg models that have terminal singularities by using orbifold cohomology.

The proof strategy for our main Theorem~\ref{mainthmintro} is divided in three major steps. 
The first and main step is to prove
$e^p(\check S) = (-1)^d e^{d-p}(S,\shF_S)$
where 
$e^p(X)=\sum_{q} \sum_{i}(-1)^{i} h^{p,q}\, H^i_c(X)$ is the Deligne-Euler number.
This step is largely combinatorial, using stratifications and expressing ranks of cohomology groups in terms of dimensions of strata. 
For $\check S$ this is an easy consequence of Danilov-Khovanskii's work, see \S\ref{section2}, and for $(S,\shF_S)$ this is hard work using the weight spectral sequence of the mixed Hodge structure on the vanishing cycles, see \S\ref{duality-for-euler}. 
Once this is done in Theorem~\ref{epdual},\,(3), we make use of the observation that only two Hodge numbers contribute to $e^p(\check S)$ namely, 
$h^{p,p}$ and $h^{p,d-p}$ and we prove the same statement for the Hodge numbers of the mirror dual $(S,\shF_S)$. 
This is the vanishing result Prop.~\ref{vanishing}\,(3) that is worked towards in \S\ref{subsection-vanishing-result}. 
The proof uses Poincar\'e duality for $(S,\shF_S)$ and an identification of some part of its Hodge numbers with the nearby fibre Hodge numbers Prop.~\ref{vanishing},\,(1).
In the final step, Theorem~\ref{mainthm}, we compute $h^{p,p}(S,\shF_S)$ by showing it is given by the difference of the special fibre and the nearby fibre cohomology.
We show it coincides with $h^{p,p}(\check S)$ which then proves Theorem~\ref{mainthmintro} when adding the previous steps.

We would like to thank Denis Auroux, Patrick Clarke, David Favero, Hiroshi Iritani, Maxim Kontsevich, Conan Leung, Kevin Lin, Arthur Ogus, Tony Pantev, Chris Peters, Bernd Siebert, Manfred Herbst, Daniel Pomerleano, Dmytro Shklyarov and Duco van Straten for useful conversations.

\section{Homological mirror symmetry and (co-)homology}
\label{section1.5}
This section is independent of the remainder of the paper and serves as an extended introduction to clarify the connection to homological mirror symmetry.
A discussion of the categories related to our construction has already appeared in \cite{KKOY09} and \cite{Ka10}. We just quickly review the main ideas and 
apply these to the discussion of cohomology. 
Following 
\cite{Or11}, to a Landau-Ginzburg model $(X,w)$, we associate the triangulated category $\op{D}^b(X,w)$ which is defined as
$$\op{D}^b(X,w) = \prod_{t \in\AA^1} \op{D}^b_\sing(w^{-1}(t))$$
where $\op{D}^b_\sing(w^{-1}(t))$ is the 
Verdier quotient of $\op{D}^b(w^{-1}(t))$, the bounded derived category of coherent sheaves on $w^{-1}(t)$, by $\op{Perf}(w^{-1}(t))$, the full subcategory of perfect complexes (i.e., complexes of locally free sheaves). 
For a non-critical value $t$ of $w$, we have $\op{D}^b_\sing(w^{-1}(t))=0$.
 
The \emph{generalized homological mirror symmetry conjecture} suggests that for
mirror dual models $(X_\Sigma,w)$ and $(X_{\check\Sigma},\check w)$ given by
our construction, there are equivalences of categories
\begin{equation} \label{HMS1}
\op{D}^b(X_\Sigma,w) \cong \op{DFS}(X_{\check\Sigma},\check w)
\end{equation}
\begin{equation} \label{HMS2}
\op{D}^b(X_{\check\Sigma},\check w) \cong \op{DFS}(X_{\Sigma}, w)
\end{equation}
where $\op{DFS}(X,w)$ is the derived Fukaya-Seidel category of a symplectic fibration $w:X\ra\CC$. In general, the Fukaya-Seidel category $\op{FS}(X,w)$ is a conjectural $A_\infty$-category at least part of whose objects are Lagrangians which are vanishing cycles over some subsets of the critical locus.
It has been rigorously defined for the case where $w$ is a Lefschetz fibration in \cite{Sei01} as follows: Fix a non-critical
value $\lambda_0$ of $w$, and choose paths $\gamma_1,\ldots,\gamma_n$ 
in $\CC$ which connect the critical values $\lambda_1,\ldots,\lambda_n$ of $w$ to $\lambda_0$. Parallel transport of cycles vanishing at $\lambda_i$ along $\gamma_i$  gives Lagrangian submanifolds of $w^{-1}(\lambda_0)$. These are
the objects of the Fukaya-Seidel category. The morphisms are Floer complexes.
Taking twisted complexes and idempotent completion finally yields $\op{DFS}(X,w)$. 

\subsection{Equivalences for a smooth critical locus: Renormalization flow and Kn\"orrer periodicity}
\label{Section_Knoerrer}
It was pointed out to us by Denis Auroux that, 
if $S=\crit(\check w)$ is a smooth compact symplectic $d$-manifold, by standard symplectic arguments, the category $\op{FS}(X_{\check \Sigma}, \check w)$ can be defined. Moreover there is a natural full and faithful functor
$\phi: \op{Fuk}(S)\ra \op{FS}(X_{\check \Sigma},\check w)$ given by mapping a Lagrangian $L$ in $S$ to the set of points in the suitably chosen
fixed non-singular fibre
which are taken into $L$ under the gradient flow of $\op{Re}(\check w)$ 
for some fixed metric. This functor is expected to be essentially surjective when one restricts $(X_{\check \Sigma},\check w)$ to a neighbourhood of $S$. In the following discussion, we assume this is the case.

Note that $\op{DFuk}(S)$ is a $\ZZ_2$-graded Calabi-Yau category\footnote{This notion was introduced by Kontsevich and means that this triangulated category supports a right Serre functor which is isomorphic to $[d]$ for some $d$, where
$[\cdot]$ is the shift endo-functor.} and thus its Hochschild homology is $\ZZ_2$-graded and isomorphic to the Hochschild cohomology, see also the next section. By homological mirror symmetry, i.e., by \eqref{HMS1}, $D^b(X_\Sigma,w)$ should also be a Calabi-Yau category. Indeed, the anti-canonical divisor of $X_\Sigma$ is trivial, so by \cite{LP11},\,Thm.\,4.1, the subcategory of compact objects is a Calabi-Yau category and it is expected that this generates the entire category.

There is a suggestion of how to refine to a $\ZZ$-grading in \cite{Sei08},\,\S8 for a genus two curve. It is currently unknown whether there is a general way to refine the grading in the cases relevant to us.

For the complex geometry, let $S=\crit(\check w)$ be given as a complete intersection in a toric variety as in \S\ref{completeint}.
By \cite{HW09},\,Thm.\,2
we have an equivalence\footnote{The 
Calabi-Yau assumption in loc.cit. can be dropped for this result.}
\begin{equation} \label{HerbstWalcher}
\op{D}^b(S)\cong \op{D}^b(X,w,\ZZ^k)
\end{equation}
where $\ZZ^k$ indicates a $\ZZ^k$-grading given by the $(\CC^*)^k$-action on $\check w^{-1}(0)$ induced from the split vector bundle. 
The hypersurface case is also treated in \cite{Is10},\cite{Sh11}. 

Let us disuss this equivalence in the case where we drop the assumption for $\PP_\Delta$ to be smooth.
A weaker assumption is that there is a maximal projective crepant partial resolution $\tilde\PP_\Delta$ of a singular $\PP_\Delta$ as in the Batyrev-Borisov construction \cite{BB94} such that $S$, the strict transform of the ample hypersurface, is smooth after such a resolution. It was shown in 
\cite{HW09},\,Thm.\,3 that different choices of a resolution give non-canonically equivalent categories $\op{D}^b(S)$. 
Morally, at least on a dense open subset $U\subset\PP_\Delta$ meeting $S$ where $\shO_{\PP_\Delta}(1)$ trivializes, the equivalence \eqref{HerbstWalcher} could then be replaced by a hypothetical version the following result (which is \cite{Or05},\,Cor.\,3.2) in the case where $f=0$.
\begin{proposition}[Orlov]\label{Knoerrer} 
Let $U$ be smooth and quasi-projective, $f,g\in\Gamma(U,\shO_U)$, $x$ a coordinate on $\AA^1$, $V(g)\subseteq U$ smooth and $f|_{V(g)}$ non-constant then there is a natural equivalence
$$D^b(V(g),f|_{V(g)})\cong D^b(U\times \AA^1,f+gx).$$
\end{proposition}

\subsection{Hochschild (co-)homology of a smooth critical locus}
On the symplectic side, there are morphisms 
$$\op{HH}_{i-d}(\op{Fuk(S)}) \stackrel{\alpha}{\lra} \op{QH}^i(S) {\ra} \op{HH}^i(\op{Fuk(S)})$$
where the left and right are the Hochschild homology and cohomology of the $A_\infty$-category $\op{Fuk(S)}$ and the middle one is the quantum cohomology of $S$. These are conjectured to be isomorphisms under certain conditions, see \cite{Ko94}, \cite{AFOO} and for references with $\op{SH}$ in place of $\op{QH}$ see \cite{Sei07}, \cite{Ab10}, \cite{Ga13}.
For the following considerations, let us assume that $\alpha$ is an isomorphism.
On the complex side, we have by the Kontsevich-Hochschild-Kostant-Rosenberg theorem for the Hochschild homology and cohomology rings of $\op{D}^b(S)$ respectively
\begin{align}
\label{KHKR1} \op{HH}^i(S)= {} & \bigoplus_{p+q=i} H^q(S,{\bigwedge}^p\shT_S),
\\
\label{KHKR2} \op{HH}_i(S)= {} & \bigoplus_{p-q=i} H^q(S,\Omega^p_S).
\end{align}
In the classical limit $\op{QH}^i(S)$ becomes $H^i(S)$.
Note that when $S,\check S$ are smooth Calabi-Yau manifolds\footnote{Calabi-Yau means for us in particular $h^{0,k}(S)=h^k(S^d)$ for $d=\dim S$.}, this gives a way of deducing the duality of Hodge numbers $h^{p,q}(S)=h^{d-p,q}(\check S)$ from the (generalized) homological mirror symmetry conjecture if $d=\dim S\le 5$. Given all the assumptions, we have
\begin{equation} \label{CYHodgededuce}
\begin{array}{rcl}
  \bigoplus_{p+q=i}{H^{p,q}(S)}&\cong& H^i(S)\\
  &\cong& \op{QH}^i(S)\\
  &\cong& \op{HH}_{i-d}(\op{Fuk}(S))\\
  &\cong& \op{HH}_{i-d}(\op{D}^b(\check S))\\
  &\cong& \bigoplus_{p-q=i-d}H^{p,q}(\check S)\\
  &\cong& \bigoplus_{p+q=2d-i}H^{d-p,q}(\check S).
\end{array}
\end{equation}
In higher dimensions one needs to add the information of a monodromy action.

\subsection{Hochschild (co-)homology of a singular critical locus}
We discuss here the case where $\check S$ is compact but very singular, e.g.,
where $\check S$ looks like the mirror of a hypersurface
$S$ of positive Kodaira dimension. Given a Landau-Ginzburg model 
$w:X\ra \CC$, by \cite{Or11},\,Thm.\,3.5, there is an equivalence of triangulated categories
$$\op{D}^b(X,w)\cong \prod_{t \in\CC} \op{MF}(X,w-t)$$
where $\op{MF}(W,w)$ is the triangulated category of matrix factorisations defined in
loc.cit. It comes with a natural differential $\ZZ/2\ZZ$-graded enhancement
$\op{MF}^{\op{dg}}(W,w)$ (see \cite{Or11},\,Rem\,2.6) which is needed in order to define its Hochschild homology and cohomology. By \cite{LP11},\,3.1, for $i=0,1$, we then have\footnote{As mentioned in the introduction of loc.cit., the requirement of a single critical value $0$ as assumed in loc.cit.\ can easily be removed in order to get the result stated here.}
$$
\underset{k\equiv i \op{mod} 2}{\bigoplus}
\op{HH}^k(\op{D}^d(X,w))
\cong \underset{k\equiv i \op{mod} 2}{\bigoplus}
\HH^k(X,(\textstyle\bigwedge^\bullet \shT_X,\iota_{dw}))
$$
where $\iota_{dw}$ denotes contraction by $dw$. According to \cite{LP11},\,3.2 one also expects
\begin{equation} \label{HHmod2}
\underset{k\equiv i \op{mod} 2}{\bigoplus}
\op{HH}_k(\op{D}^{b}(X,w))
\cong \underset{k\equiv i \op{mod} 2}{\bigoplus}
\HH^k(X,(\Omega_X^\bullet,dw\wedge)).
\end{equation}
In fact, one desires a
$\ZZ$-graded enhancement of $\op{MF}(W,w)$ instead of a $\ZZ/2\ZZ$-graded one 
in order to be able to ``remove'' $\bigoplus_{k\equiv i \op{mod} 2}$ from the above equalities. However, note ``removing'' cannot hold literally.
For example, for the setup of (\ref{HerbstWalcher}), (\ref{HHmod2}) becomes (\ref{KHKR2}).
The right hand side of \eqref{KHKR2} involves individual Hodge groups.
On the other hand, 
in the cohomology of the sheaf of vanishing cycles, appearing on the right hand
side of \eqref{HHmod2}, we can't identify this splitting.
Assuming the generalized homological mirror symmetry conjecture holds for a mirror pair $(X_{\check\Sigma},\check w), (X_{\Sigma},w)$ of our construction in \S\ref{section1} (i.e., with $S=\crit(\check w)$ smooth) we deduce for $i=0,1$,
\begin{equation} \label{oneside}
\underset{k\equiv i \op{mod} 2}{\bigoplus}
H^k(S,\CC) 
\cong 
\underset{k\equiv i \op{mod} 2}{\bigoplus}
\HH^{k-d}(\check S,\shF_{\check S})
\end{equation}
by using (\ref{CYHodgededuce}) on one side of the mirror pair (unlike in the Calabi-Yau case where it applies on both sides) and combining it with the functor $\phi$ from the beginning of \S\ref{Section_Knoerrer}, with (\ref{HHmod2}) and \eqref{sabbahmaintheorem}.
In fact, we prove a much stronger result in Thm. \ref{mainthmintro}.
This suggests there might be a more refined version of homological
mirror symmetry in this situation.


\section{The setup: The mirror pair of Landau-Ginzburg models}
\label{section1}

\begin{figure}
\input{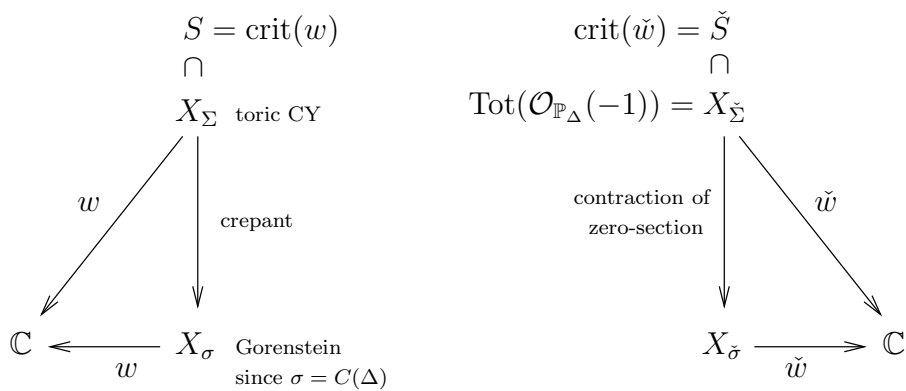}
\caption{The refined setup to study the mirror dual of a smooth hypersurface $\check S$ in $\PP_\Delta$}
\label{fig-notations}
\end{figure}

\subsection{Resolutions}
\label{section11}
Recall $M\cong\ZZ^{d+1}, N=\Hom(M,\ZZ)$, we will use the notation 
\[
\bar{M}\cong M\oplus \ZZ,\quad \bar{M}_{\RR}=\bar{M}
\otimes_{\ZZ}\RR,\quad \bar{N}=\Hom_{\ZZ}(\bar{M},\ZZ),\quad
\bar{N}_{\RR}=\bar{N}\otimes_{\ZZ}\RR.
\]

We recall briefly the standard correspondence between convex polytopes
and fans. 
A \emph{lattice polytope} $\Delta$ is a convex hull in $M_\RR$ of points in $M$. It defines a toric variety $\PP_{\Delta}$ by
$$\PP_{\Delta} = \Proj \CC[\Cone(\Delta)\cap \bar M]$$
where $\CC[P]$ denotes the monoid algebra of a monoid $P$ which is here graded by the second summand of $\bar M$.
For $\tau\subseteq \Delta$ a face, the normal cone to $\Delta$ along
$\tau$ is
\[
N_{\Delta}(\tau)=\{n\in N\,|\, \hbox{$n|_{\tau}={\rm constant}$,
$\langle n,m\rangle\ge \langle n,m'\rangle$ for all $m\in\Delta$, $m'
\in\tau$}\}.
\]
The normal fan of $\Delta$ is
\[
\check\Sigma_{\Delta}:=\{N_{\Delta}(\tau)\,|\,\hbox{
$\tau$ is a face of $\Delta$}\}.
\]
The normal fan $\check\Sigma_{\Delta}$ carries a strictly convex
piecewise linear function $\varphi_{\Delta}$ defined by
\[
\varphi_{\Delta}(n)=-\inf\{\langle n,m\rangle \,|\, m\in\Delta\}.
\]
Conversely, given a fan $\Sigma$ in $N_{\RR}$ 
whose support $|\Sigma|$ is convex,
and given a strictly convex piecewise linear function with integral slopes
$\varphi:|\Sigma|\rightarrow \RR$, the \emph{Newton polyhedron} of
$\varphi$ is
\[
\Delta_{\varphi}:=\{m\in M_{\RR}\,|\,\hbox{$\varphi(n)+\langle n,m\rangle\ge 0$
for all $n\in |\Sigma|$}\}.
\]
By standard toric geometry
this coincides up to translation with the convex hull of all points of $M$
indexing monomial sections
of the line bundle associated to the divisor $\sum_\rho \varphi(n_\rho) D_\rho$.
Here the sum is taken over the rays $\rho$ of $\Sigma$, $D_\rho$ being the corresponding toric prime divisor, and $n_\rho$ the primitive generator of $\rho$.
So we may also associate a Newton polytope to a Laurent polynomial or a line 
bundle.\footnote{If no global section exists, the polytope will be empty.}

If $\Sigma$ is a fan, we denote by $X_{\Sigma}$ the toric variety defined
by $\Sigma$. If $\sigma$ is a strictly convex rational polyhedral cone, then
we write $X_{\sigma}$ for the affine toric variety defined by the cone $\sigma$.
Given $\tau\in\Sigma$, $V(\tau)$ will denote the closure of the torus orbit in $X_\Sigma$ corresponding to $\tau$, e.g., $V(\{0\})=X_\Sigma$. For $\rho\in\Sigma$ a ray, $V(\rho)$ is a toric divisor which we will also call $D_\rho$.

Note that $\PP_{\Delta}$ comes with an ample line bundle $\O_{\PP_{\Delta}}(1)$ with Newton polytope
$\Delta$. The fan defining this toric variety is the normal fan
$\check\Sigma_{\Delta}$ of $\Delta$, and the line bundle
$\O_{\PP_{\Delta}}(1)$ is induced by the piecewise linear function
$\varphi_{\Delta}:N_{\RR}\rightarrow\RR$ on the fan $\check\Sigma_{\Delta}$.

The regularity of $\PP_\Delta$ is equivalent to each cone in the normal
fan to $\Delta$ being a standard cone (alias unimodular), i.e., being generated by
$e_1,\ldots,e_i$, where $e_1,\ldots,e_{d+1}$ is a basis of $N$.
We shall also assume that $\Delta$ has at least one interior integral
point which we will find in \S\ref{section_kodairadim} to be equivalent to the assumption $\kappa(\check S)\ge 0$ already made after \eqref{hodge-S-to-X} in the introduction.

With $\sigma=\Cone(\Delta)\subseteq \bar{M}_{\RR}$ and $\check\sigma = \{n\in\bar{N}_\RR| \langle m,n\rangle\ge 0\hbox{ for all }m\in\sigma\}$, we have
\[
\check\sigma=\{(n,r)\,|\, r\ge \varphi_{\Delta}(n)\}.
\]

Our first task is to specify precisely the subdivisions of the
cones $\sigma$ and $\check\sigma$ we will use. 
There is a canonical choice of resolution for $\check\sigma$:

\begin{proposition} 
\label{Sigmafanprop}
Let $\rho:=(0,1)\in \bar{N}_{\RR}$. Then $\rho\in \Int(\check\sigma)$.
Furthermore, let $\check\Sigma$ be the fan given by
$$\check\Sigma:= \{\check\tau\,|\, \hbox{$\check\tau$ a proper face
of $\check\sigma$}\} \cup \{\check\tau+\RR_{\ge 0}\rho \,|\, \hbox{$\check\tau$ a proper face of $\check\sigma$}\}.$$
This is the star subdivision of the cone $\check\sigma$ along the ray
$\RR_{\ge 0}\rho$. Then $X_{\check\Sigma}$ is a non-singular variety.
\end{proposition}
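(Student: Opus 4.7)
The plan is to verify both claims by unwinding the toric definitions.

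For $\rho \in \Int(\check\sigma)$: since $\Delta$ is compact and full-dimensional, $\sigma = \Cone(\Delta)$ is a strictly convex cone, so $\rho = (0,1)$ lies in the interior of $\check\sigma = \sigma^{\vee}$ if and only if $\langle m', \rho\rangle > 0$ for every nonzero $m' \in \sigma$. Writing $m' = (rm, r)$ with $m \in \Delta$ and $r \ge 0$, we have $\langle m', \rho\rangle = r$, which vanishes only at $m' = 0$. Hence $\rho \in \Int(\check\sigma)$.

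For the smoothness claim, I would first note that since $\rho$ is interior to $\check\sigma$, no proper face of $\check\sigma$ contains $\rho$, so the maximal cones of the star subdivision $\check\Sigma$ are exactly the cones $\check\tau + \RR_{\ge 0}\rho$ with $\check\tau$ a facet of $\check\sigma$. It therefore suffices to verify that each such maximal cone is standard. Next, by cone/face duality, facets of $\check\sigma$ correspond to rays of $\sigma$, which in turn are precisely the rays $\RR_{\ge 0}(v,1)$ for $v$ a vertex of $\Delta$. For such a $v$, the corresponding facet $\check\tau_v$ is cut out from $\check\sigma$ by the equation $\langle (v,1),(n,r')\rangle = 0$, i.e.\ $r' = -\langle v, n\rangle$, and projects isomorphically via $\bar N_\RR \to N_\RR$ onto the normal cone $N_\Delta(v)$. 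By the smoothness of $\PP_\Delta$, $N_\Delta(v)$ is a standard cone generated by a basis $n_1^v,\ldots,n_{d+1}^v$ of $N$, so $\check\tau_v$ is generated by the lifted vectors $(n_i^v, -\langle v, n_i^v\rangle)$ for $i=1,\ldots,d+1$.

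Finally, the maximal cone $\check\tau_v + \RR_{\ge 0}\rho$ is generated by these $d+1$ lifted vectors together with $(0,1)$. Expanding the $(d+2)\times(d+2)$ determinant of these $d+2$ generators along the last coordinate (where only $(0,1)$ contributes) yields $\pm\det(n_1^v,\ldots,n_{d+1}^v) = \pm 1$, so they form a $\ZZ$-basis of $\bar N$ and the cone is standard. Therefore $X_{\check\Sigma}$ is non-singular. The whole argument is essentially a bookkeeping exercise with the toric dictionary; the only point likely to require any care is the explicit identification of the generators of $\check\tau_v$, after which the smoothness check collapses to a one-line determinant computation, so I do not expect a substantive obstacle.
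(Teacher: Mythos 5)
Your proof is correct and follows essentially the same route as the paper: both verify $\rho\in\Int(\check\sigma)$ by evaluating on $(rm,r)$, and both use the smoothness of $\PP_\Delta$ to identify the faces of $\check\sigma$ as lifts $(e_i,\varphi_\Delta(e_i))$ of standard normal cones, which together with $(0,1)$ give (part of) a basis of $\bar N$. The only cosmetic difference is that you check just the maximal cones via a determinant expansion, while the paper phrases the same computation for an arbitrary proper face as "extends to a basis of $\bar N$".
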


\proof The first statement is obvious, since $\rho$ is strictly positive
on every element of $\Cone(\Delta)\setminus \{0\}$. 
For the fact that $X_{\check\Sigma}$ is non-singular, let $\check\tau$ be a proper
face of $\check\sigma$. Then, since $\varphi_\Delta$ is strictly convex, $\check\tau$ takes the form
\[
\check\tau=\{(n,\varphi_{\Delta}(n))\,|\, n\in\check\tau'\}
\]
for some $\check\tau'\in \check\Sigma_{\Delta}$. In particular, since
$\PP_{\Delta}$ is assumed to be non-singular, $\check\tau'$ is a standard cone,
say generated by $e_1,\ldots,e_i$, part of a basis $e_1,...,e_{d+1}$ of $N$. Then 
$\check\tau+\RR_{\ge 0}\rho$ is generated by $(e_1,\varphi_{\Delta}(e_1)),
\ldots,(e_i,\varphi_{\Delta}(e_i)),(0,1)$ which extends to a basis of
$\bar{N}$ by $(e_{i+1},0),...,(e_{d+1},0)$.
\qed

\begin{remark}
Note that the projection $\bar{N}\rightarrow N$ induces a map
on fans from $\check\Sigma$ to $\check\Sigma_{\Delta}$, so we have a morphism
$X_{\check\Sigma}\rightarrow \PP_{\Delta}$. This is clearly an $\AA^1$-bundle,
and the source is the total space of $\O_{\PP_{\Delta}}(-1)$.
\end{remark}
\bigskip

Next, we will describe allowable refinements of $\sigma$. As we see
shortly, we
will only consider those corresponding to crepant resolutions, i.e., refinements which arise from polyhedral decompositions
$\P$ of $\Delta$ into lattice polytopes.
We first give a canonically determined
polyhedral decomposition of $\Delta$.

Let $h_*:\Delta\cap M\rightarrow\ZZ$ be the function defined by
\[
h_*(m)=\begin{cases} 0&\hbox{if $m\in\partial\Delta$}\\
-1&\hbox{if $m\in \Int(\Delta)$}\end{cases}
\]
and
\begin{equation}
\label{Deltatildedef}
\Delta_*:=\Conv\{(m,h_*(m))| m\in \Delta\cap M\}\subseteq M_{\RR}
\oplus\RR.
\end{equation}

Here $\Conv A$ denotes the convex hull of a set $A$.
Then $\Delta_*$ has one face (the upper face) equal to $\Delta\times\{0\}$,
and the remaining proper 
faces define, via projection to $M_{\RR}$, a subdivision 
of $\Delta$. Let $\P_*$ denote the set of faces of this subdivision. 

\begin{definition}
\label{starlikedefinition}
A polyhedral decomposition $\P$ of $\Delta$ is said to
be \emph{star-like} if it is a regular\footnote{Recall a polyhedral
decomposition $\P$ of $\Delta$ is \emph{regular}
if there is a strictly convex piecewise linear function on $\Delta$ whose
maximal domains of linearity are the cells of $\P$.}
refinement of $\P_*$.
\end{definition}

We will assume from now on the existence of the following:
\begin{assumption} \label{overallhypo}
Let $\P$ be a star-like triangulation of $\Delta$
into standard simplices, i.e., simplices $\tau$ such that $\Cone(\tau)$ is
a standard cone. 
\end{assumption}
Such a triangulation need not exist; it does, however,
always exist if $\dim \Delta=2$. The existence of $\P$ is equivalent to
the existence of a toric crepant resolution of the blow-up of $X_\sigma$ at the origin. To get rid of the Assumption~\ref{overallhypo}, one may work with toric stacks. There always exists a crepant resolution as a toric Deligne-Mumford stack whose coarse moduli space has at worst terminal quotient singularities. Such is given by a triangulation of $\P_*$ by elementary simplices, i.e., simplices whose only lattice points are its vertices. In this paper we stick
to Assumption \ref{overallhypo} to avoid having to develop the relevant theory
on stacks.
More generally, one should conjecturally use an orbifold twisted de Rham 
complex, orbifold cohomology and vanishing cycles on orbifolds to obtain more general results, see \S\ref{orbifoldsection}.
Note that there are typically several choices for $\P$. These are related by ``phase transitions in the K\"ahler moduli space.'' More precisely, each choice is given by a maximal cone in the secondary fan of $\sigma$. As we will see, the
Hodge numbers don't depend on this choice.

Having fixed $\P$, we obtain a refinement
$\Sigma$ of $\sigma$ by
\[
\Sigma=\{\Cone(\tau)\,|\, \tau\in\P\}\cup \{\{0\}\}
\]
and similarly $\Sigma_*$ replacing $\P$ by $\P_*$. Geometrically, we have a composition
$$X_{\Sigma}\ra X_{\Sigma_*}\ra X_\sigma$$
where the second map is the blow-up of the origin in $X_\sigma$; this
will be explained in \S\ref{section_geom_prop}.

\begin{example}
\label{basicexamples1}
Let $\Delta$ be a reflexive polytope, i.e., 
\begin{enumerate}
\item[a)] $\Delta$ has a unique
interior lattice point $v$ and 
\item[b)] the polar 
dual
$\Delta^*:=\{n\in N_{\RR}| \langle n,m-v\rangle \ge -1\quad\forall m\in \Delta\}$
is a lattice polytope. 
\end{enumerate}
Under Assumption~\ref{overallhypo}, a) implies b).
It is not hard to see that $\check\sigma=\Cone(\Delta)^{\vee}=\Cone(\Delta^*)$. 
In this case, $\P_*$ is the star subdivision of $\Delta$ at $v$. This is the subdivision whose maximal cells
are the convex hulls of $\tau\cup \{v\}$ with $\tau$ a maximal proper face
of $\Delta$.
\end{example}

\begin{figure}
\centerline{\epsfbox{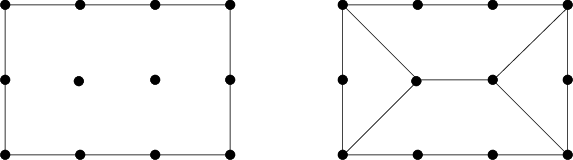}}
\caption{A polytope $\Delta$ and its subdivision $\P_*$}
\label{genus2}
\end{figure}

\begin{example}
\label{basicexamples2}
This will be a running example throughout the paper. We consider
the two-dimensional polytope drawn on the left in Figure \ref{genus2}. The
picture on the right gives $\P_*$. We then have several possible choices
for $\P$; for example, we may take the one given in Figure \ref{genus2new}.
\begin{figure}
\centerline{\epsfbox{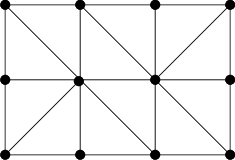}}
\caption{A star-like subdivision giving a crepant resolution}
\label{genus2new}
\end{figure}
\end{example}

We can now choose Landau-Ginzburg potentials 
\begin{align*}
w:X_{\Sigma}&\rightarrow\CC,\\
\check w:X_{\check\Sigma}&\rightarrow\CC.
\end{align*}
We write these as follows. First, for $w$, the primitive generators of
one-dimensional cones of $\check\Sigma$ are $\rho=(0,1)\in N\oplus\ZZ$
and $(n_{\tau},\varphi_{\Delta}(n_{\tau}))$, where $\tau$ runs over
codimension one faces of $\Delta$ and $n_{\tau}$ is the primitive 
(inward-pointing) normal vector to $\tau$. Thus we write
\begin{equation}
\label{Wpotential}
w=c_{\rho}z^{\rho}+\sum_{\tau\subset\Delta} c_{\tau}z^{(n_{\tau},
\varphi_{\Delta}(n_{\tau}))},
\end{equation}
where again the sum is over all codimension one faces of $\Delta$.
Second, the primitive generators of the one-dimensional cones of $\Sigma$ are of the
form $(m,1)$ for $m\in\Delta\cap M$, so we write
\begin{equation}
\label{Wcheckpotential}
\check w=\sum_{m\in \Delta\cap M} c_mz^{(m,1)}.
\end{equation}
Here all coefficients are chosen in $\CC$ generally.
In Prop.~\ref{checkWproper}, we note that giving $\check{w}$ is equivalent to giving a global section of $\O_{\PP_\Delta}(1)$ and show that its zero locus $\check S$ coincides with the critical locus of $\check w$. 

\begin{example} 
\label{genusgmirror}
Continuing and extending Ex.~\ref{basicexamples2}, we may take for $\Delta$ a rectangle of edge lengths $2$ and $g+1$ such that $\Delta$ has $g$ interior points and $\check S$ is a genus $g$ curve. Before the resolution, its mirror Landau-Ginzburg model $(X_\sigma, w)$ is then given via 
(\ref{Wpotential}) as
$$(X_\sigma=\Spec\C[x,y,z,u,v]/(xy-z^2,uv-z^{g+1}), c_xx+c_yy+c_zz+c_uu+c_vv)$$
where $z=z^{\rho}$, $u,v$ are the monomials given by the normals of the length two edges of $\Delta$ and $x,y$ those for the length $g+1$ edges.
The singular locus of $X_\sigma$ is non-compact with four irreducible 
components, two of which are generically curves of $A_1$ singularities, the 
other two generically curves of $A_g$ singularities.
\end{example}

\subsection{Properifications}
Now $w$ and $\check w$ are not proper, so we need to choose properifications
of these maps. The particular choice will turn out not to be important,
as it won't affect the answer: the sheaves of vanishing cycles whose
cohomology we will eventually have to compute will have proper support
even before compactifying. We still need to make some choice
to show that we are not losing any cohomology, however. The two functions
$w$ and $\check w$ are dealt with separately.

Since $X_{\check\Sigma}$ is an $\AA^1$-bundle over $\PP_{\Delta}$, the
obvious thing to do is to compactify $X_{\check\Sigma}$ to a $\PP^1$-bundle
over $\PP_{\Delta}$.

\begin{proposition}[Properification of $\check w$] 
\label{checkWproper}
Consider $\bar{\check\Sigma}$ given by
\begin{align*}
\bar{\check\Sigma}:= {} & \quad \{\check\tau
\,|\, \hbox{$\check\tau$ a proper face
of $\check\sigma$}\}\\
& \cup \{\check\tau+\RR_{\ge 0}\rho \,|\, \hbox{$\check\tau$ a proper face
of $\check\sigma$}\}\\
& \cup \{\check\tau-\RR_{\ge 0}\rho \,|\, \hbox{$\check\tau$ a proper face
of $\check\sigma$}\}.
\end{align*}
Then 
\begin{enumerate}
\item $\bar{\check\Sigma}$ is a complete, non-singular
fan containing the fan $\check\Sigma$,
hence giving a projective compactification $X_{\check\Sigma}\subseteq X_{\bar
{\check\Sigma}}$.
The projection $\bar{N}\rightarrow N$ defines a map of fans from 
$\bar{\check\Sigma}$ to $\check\Sigma_{\Delta}$, giving a morphism
$X_{\bar{\check\Sigma}}\rightarrow \PP_{\Delta}$ which is a $\PP^1$-bundle.
Let $D_0$ be the divisor corresponding to the ray $\RR_{\ge 0}\rho$ and
$D_{\infty}$ be the divisor corresponding to the ray $-\RR_{\ge 0}\rho$.
These are sections of the projection to $\PP_{\Delta}$, hence isomorphic
to $\PP_{\Delta}$.
\item $\check w$ extends to a rational map $\check w:X_{\bar
{\check \Sigma}}\,\rto
\PP^1$ which fails to be defined on a non-singular subvariety of codimension
two. Blow up this subvariety to obtain $\tilde 
X_{\bar{\check\Sigma}}$.
Then $\tilde X_{\bar{\check \Sigma}}\setminus X_{\check\Sigma}$ 
is normal crossings.
Furthermore, $\check w$ extends to give a projective morphism
$\bar{\check w}:\tilde X_{\bar{\check\Sigma}}\rightarrow\PP^1$.
\item There is a non-singular divisor $\check W_0$ on $\tilde
X_{\bar{\check\Sigma}}$ such that 
$\bar{\check w}^{-1}(0)=D_0\cup \check W_0$ is a normal crossings divisors, 
with
$D_0\cap \check W_0$ isomorphic to the hypersurface $\check S$ in $\PP_{\Delta}$ given
by the equation $\bar{\check w}=0$. 
Note this makes sense as the summands of $\bar{\check w}$
are in one-to-one correspondence with points of $\Delta\cap M$,
and these points form a basis for $\O_{\PP_{\Delta}}(1)$.
\end{enumerate}
\end{proposition}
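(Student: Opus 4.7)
Here is my plan. The proof naturally splits along parts (1)--(3).

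For part (1), I would verify the combinatorics of $\bar{\check\Sigma}$ directly. The maximal cones of $\bar{\check\Sigma}$ are $\check\tau\pm\RR_{\ge 0}\rho$ where $\check\tau$ runs over the facets of $\check\sigma$, and such facets correspond bijectively to maximal cones $\check\tau'$ of $\check\Sigma_\Delta$ via $\check\tau'\ni n\mapsto(n,\varphi_\Delta(n))$. Since $\check\Sigma_\Delta$ is complete, the cones $\check\tau\pm\RR_{\ge 0}\rho$ cover $\check\tau'\times\RR$, hence $\bar N_\RR$. Non-singularity is the same determinant calculation as in Prop.~\ref{Sigmafanprop}: the generators $(e_i,\varphi_\Delta(e_i))$ of $\check\tau$ together with $(0,\pm1)$ form a basis of $\bar N$, by cofactor expansion along the last column. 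The projection $\bar N\to N$ is obviously a morphism of fans, and locally over the affine chart $U\subset\PP_\Delta$ corresponding to $\check\tau'$, the preimage is glued from two copies of $\AA^{d+2}$ along $\AA^{d+1}\times\Gm$, exhibiting it as $U\times\PP^1$. This identifies $X_{\bar{\check\Sigma}}\to\PP_\Delta$ as a $\PP^1$-bundle. The orbit closures $D_0=V(\RR_{\ge 0}\rho)$ and $D_\infty=V(-\RR_{\ge 0}\rho)$ are disjoint sections whose quotient fans both recover $\check\Sigma_\Delta$, so $D_0,D_\infty\cong\PP_\Delta$.

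For part (2), I would work in the local toric coordinates dual to $(e_1,\varphi_\Delta(e_1)),\ldots,(e_{d+1},\varphi_\Delta(e_{d+1})),(0,1)$. In these coordinates $z_1,\ldots,z_{d+2}$, the identity $z^{(m,1)}=z_{d+2}\prod_i z_i^{\langle m,e_i\rangle+\varphi_\Delta(e_i)}$ (with each exponent nonnegative for $m\in\Delta$) yields the factorization
\[
\check w=z_{d+2}\cdot\bar s(z_1,\ldots,z_{d+1}),
\]
where $\bar s$ is the local expression of a general section of $\O_{\PP_\Delta}(1)$ whose zero locus $S\subset\PP_\Delta$ is smooth by Bertini. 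On the opposite affine chart one obtains $\check w=\bar s/w_{d+2}$ where $w_{d+2}=0$ cuts out $D_\infty$. The rational map $[\check w:1]\colon X_{\bar{\check\Sigma}}\dashrightarrow\PP^1$ is therefore indeterminate precisely where $\bar s=0$ and $w_{d+2}=0$, namely along $S\cap D_\infty$, a smooth codimension two subvariety. Blowing up this center resolves $\bar s/w_{d+2}$ to a regular function in the standard way, producing $\bar{\check w}\colon\tilde X_{\bar{\check\Sigma}}\to\PP^1$. Since $X_{\bar{\check\Sigma}}\setminus X_{\check\Sigma}=D_\infty$ is a single smooth divisor and we blow up a smooth subvariety sitting inside it, the complement $\tilde X_{\bar{\check\Sigma}}\setminus X_{\check\Sigma}=\tilde D_\infty\cup E$ is normal crossings.

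For part (3), compute first on $X_{\bar{\check\Sigma}}$ that $\div(\check w)=D_0+\check W_0^{\mathrm{pre}}-D_\infty$, where $\check W_0^{\mathrm{pre}}:=\pi^{-1}(S)$ is a smooth $\PP^1$-bundle over $S$; indeed $\div(z^{(0,1)})=D_0-D_\infty$ from the ray data, and $\bar s$ contributes $\check W_0^{\mathrm{pre}}$. Transversality of $\check W_0^{\mathrm{pre}}$ and $D_\infty$ along $S\cap D_\infty$ is immediate since $D_\infty\to\PP_\Delta$ is an isomorphism and $\check W_0^{\mathrm{pre}}=\pi^{-1}(S)$ pulls back a transverse divisor. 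Consequently, after blowup the strict transforms $\check W_0$ and $\tilde D_\infty$ are disjoint, and pulling back the divisor identity gives $\div(\bar{\check w})=D_0+\check W_0-\tilde D_\infty$, so $\bar{\check w}^{-1}(0)=D_0\cup\check W_0$. This is normal crossings by the analysis of part (2) combined with transversality of $\check W_0^{\mathrm{pre}}$ to $D_0$ along $S\cap D_0$. For smoothness of $\check W_0$: since $S\cap D_\infty$ is a codimension one smooth subvariety of the smooth divisor $\check W_0^{\mathrm{pre}}$, the strict transform equals the blowup of $\check W_0^{\mathrm{pre}}$ along a Cartier divisor, which is $\check W_0^{\mathrm{pre}}$ itself. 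Finally $D_0\cap\check W_0=D_0\cap\check W_0^{\mathrm{pre}}=D_0\cap\pi^{-1}(S)\cong S$ via $D_0\cong\PP_\Delta$.

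The principal obstacle is part (2): correctly identifying the indeterminacy of $[\check w:1]$ as the smooth codimension two locus $S\cap D_\infty$ and verifying that a single blowup of this center simultaneously resolves the map and preserves the normal crossings structure at infinity. Once this is in place, parts (1) and (3) reduce to direct cone combinatorics and standard strict-transform computations.
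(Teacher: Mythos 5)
Your argument is correct and follows essentially the same route as the paper: part (1) by the standard cone combinatorics, and parts (2)--(3) by the same local factorization $\check w = x_{d+2}^{\pm 1}\bar s$ in the charts $\check\tau\pm\RR_{\ge 0}\rho$, identifying the indeterminacy locus as the smooth codimension-two intersection of the zero component with $D_\infty$ (isomorphic to $S$) and resolving it by a single blowup; your global packaging of that zero component as $\check W_0^{\mathrm{pre}}=\pi^{-1}(S)$ is just a cleaner way of organizing what the paper does chart by chart. One small slip in part (3): $\div(z^{(0,1)})$ is not $D_0-D_\infty$, since $z^{\rho}$ also has order $\varphi_\Delta(n_\tau)$ along the vertical divisors corresponding to the rays $(n_\tau,\varphi_\Delta(n_\tau))$; correspondingly your $\bar s$ is not a global rational function but a section of $\pi^*\O_{\PP_\Delta}(1)$ with zero divisor $\pi^{-1}(S)$, and these two corrections cancel, so the asserted identity $\div(\check w)=D_0+\check W_0^{\mathrm{pre}}-D_\infty$ is nevertheless correct (and in any case follows directly from your chartwise factorization in part (2)), leaving the rest of the argument unaffected.
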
 

\proof (1) is standard; we leave the details to the reader.

For (2) and (3), let us begin by considering a cone of the form
$\check\tau\pm\RR_{\ge 0}\rho$ in $\bar{\check\Sigma}$, where 
$\check\tau$ is a maximal
proper face of $\check\sigma$. We know that $\check\tau$ is dual 
to $\Cone(v)\subseteq
\sigma$ for some vertex $v$ of $\Delta$. Furthermore, $\check\tau\pm\RR_{\ge 0}\rho$
is generated by vectors $(e_1,\varphi_{\Delta}(e_1)),\ldots,
(e_{d+1},\varphi_{\Delta}(e_{d+1})),\pm\rho$ where
$e_i\in N$ is constant on a maximal proper face of $\Delta$ containing 
$v$ and $\varphi_{\Delta}(e_i)=-\langle e_i,v\rangle$. 
Thus $\CC[(\check\tau\pm \RR_{\ge 0}\rho)^{\vee}\cap \bar{M}]
\cong \CC[x_1,\ldots,x_{d+2}]$, where $x_1,\ldots,x_{d+2}$ are the
monomials associated to the dual basis to $(e_1,\varphi_{\Delta}(e_1)),
\ldots,(e_{d+1},\varphi_{\Delta}(e_{d+1})),
\pm\rho$. 

Now if $m\in\Delta\cap M$, a monomial $z^{(m,1)}$
can then be written in terms of $x_1,\ldots,x_{d+2}$ as
\begin{align*}
z^{(m,1)} {} = & x_{d+2}^{\pm 1}\prod_{i=1}^{d+1} x_i^{\langle (e_i,
\varphi_{\Delta}(e_i)),(m,1)\rangle}\\
{} = & x_{d+2}^{\pm 1}
\prod_{i=1}^{d+1} x_i^{\langle e_i, m\rangle-\langle e_i,v\rangle}.
\end{align*}
Note also that if $e_1^*,\ldots,e_{d+1}^*$ is the dual basis to 
$e_1,\ldots,e_{d+1}$,
then $e_1^*,\ldots,e_{d+1}^*$ generate the tangent cone to $\Delta$ at $v$, so
in particular $v+e_i^*\in\Delta\cap M$.
Thus up to coefficients the monomials $z^{(v,1)}$ and $z^{(v+e_i^*,1)}$ appear in $\check w$ and
are of the form $x_{d+2}^{\pm 1}$ and $x_{d+2}^{\pm 1}x_i$ respectively.
Therefore, in this affine coordinate patch, we can write
\[
\check w=x_{d+2}^{\pm 1}(c_v+\sum_{i=1}^{d+1} c_{v+e_i^*} x_i+
\hbox{higher order terms}).
\]

Thus, for general choice of coefficients,
in the affine open subset of $X_{{\check\Sigma}}$ corresponding
to $\check\tau+\RR_{\ge 0}\rho$, ${\check w}^{-1}(0)$ 
is reducible, consisting of
the two irreducible components given by
$x_{d+2}=0$ (which is the divisor corresponding to the ray
$\RR_{\ge 0}\rho$, i.e., $D_0$) and the hypersurface given by 
\begin{equation}
\label{hypersurfeq}
0=c_v+\sum_{i=1}^{d+1} c_{v+e_i^*} x_i+\hbox{higher order terms}.
\end{equation}
Again, for general choice of coefficients, this will be non-singular. 

Similarly, in the affine open subset of $X_{\bar{\check\Sigma}}$ 
corresponding
to $\check\tau-\RR_{\ge 0}\rho$, we see that $\check w$ 
has a simple pole along the divisor
$x_{d+1}=0$ (the divisor $D_{\infty}$) 
and is zero along a hypersurface defined by the same equation 
\eqref{hypersurfeq}.

Let $\check W_0$
be the closure in $X_{\bar{\check\Sigma}}$
of the hypersurface given by \eqref{hypersurfeq} in any of
the affine subsets considered. Then $\check w$ is zero along $D_0\cup 
\check W_0$ and
has a simple pole along $D_{\infty}$, and $\check w$ is 
undefined along $\check W_0\cap D_{\infty}$.

Furthermore, the equation \eqref{hypersurfeq} restricted to either
$D_0$ or $D_{\infty}$ yields 
(an affine piece of) the hypersurface in $\PP_{\Delta}$
defined by $\check w=0$. Thus in particular, $\check W_0\cap 
D_{\infty}$ is a non-singular
variety of codimension two, which we may blow up to get a non-singular variety
$\tilde X_{\bar{\check\Sigma}}$, with exceptional hypersurface $E$, and
$\check w$ extends to a well-defined function on $\tilde X_{\bar
{\check\Sigma}}$.
Note the proper transforms of $D_0, D_{\infty}$ and $\check W_0$ in 
$\tilde X_{\bar
{\check\Sigma}}$ are isomorphic to $D_0,D_{\infty}$ and $\check W_0$, 
so we continue to use the same notation.

The center of the blow-up is contained in $D_{\infty}
=X_{\bar{\check\Sigma}}
\setminus X_{\check\Sigma}$, so $X_{\check\Sigma}$ is an open
subset of $\tilde X_{\bar{\check\Sigma}}$, with 
$\tilde X_{\bar{\check\Sigma}}
\setminus X_{\Sigma}=D_{\infty}\cup E$.  We have now shown (2). Then (3) follows
also from the above discussion.
\qed \bigskip

Let $\Delta'\subseteq\Delta$ be given by
\[
\Delta':=\Conv\{v\in\Int(\Delta)\cap M\}.
\]
Our construction for a setup with $\Delta'=\emptyset$ has been used in \cite[\S5]{AAEKO13}.
However, from now on, we make the assumption that $\dim\Delta'\ge0$ (as already announced after Lemma~\ref{lemma-assumption-deltaprime}).

We next consider the properification of $w:X_{\Sigma}\rightarrow\CC$. To
do this, we first consider the obvious choice of a projective toric
variety on which $w$ can be viewed as the section of a line bundle.
Let 
\[
\check\Delta=\Conv\{0,\rho\}\cup \{ (n_{\omega},\varphi_{\Delta}(n_{\omega}))
\,|\,\hbox{$\omega\subseteq\Delta$ a codimension one face of $\Delta$}\}.
\]
Our notation is slightly misleading as the morally dual object to $\check \Delta$ is $\Conv ((\Delta\times\{1\})\cup\{0\})\subseteq \bar M_\RR$ rather than $\Delta$ itself because the former two are supporting the pencils given by $\check w$ and $w$ respectively.
Because $\varphi_{\Delta}$ is convex, one sees that $0$ is a vertex
of $\check\Delta$ and the tangent cone to $\check\Delta$ at $0$ is precisely
the cone $\check\sigma$. Thus the normal fan $\check\Sigma_{\check\Delta}$
to $\check\Delta$ is a complete fan in $\bar{M}_{\RR}$
containing the cone $\sigma$, so
$\PP_{\check\Delta}$ is a compactification of $X_{\sigma}$. The function
$w_{\sigma}$ on $X_{\sigma}$ defined by the same equation as the function
$w$ on $X_{\Sigma}$ then extends to a rational function 
$w_{\check\Delta}$ on $\PP_{\check\Delta}$ given by
\[
w_{\check\Delta}={
c_{\rho}z^{\rho}+\sum_{\tau\subset\Delta} c_{\tau}z^{(n_{\tau},
\varphi_{\Delta}(n_{\tau}))}
\over
z^0}.
\]

\begin{proposition}[Properification of $w$] 
\label{properWprop}
There is a projective birational morphism
$\pi:\tilde\PP_{\check\Delta}\rightarrow\PP_{\check\Delta}$ such
that
\begin{enumerate} 
\item 
The map $\pi$ factors through a projective toric resolution of singularities 
$X_{\bar{\Sigma}}\ra\PP_{\check\Delta}$ given by a fan $\bar{\Sigma}$ which contains $\Sigma$ as a subfan. 
\item If $\dim\Delta'=0$, there is a surjection
$\pi_{\Cone(\Delta')}:X_{\bar\Sigma} \ra D_{\Cone(\Delta')}$ where $D_{\Cone(\Delta')}$ denotes the toric divisor given by the ray $\Cone(\Delta')$. The inclusion $D_{\Cone(\Delta')}\ra X_{\bar\Sigma}$ is a section of $\pi_{\Cone(\Delta')}$.
\item $\bar w:=w_{\check\Delta}\circ \pi$ is a projective regular map
to $\PP^1$.
\item $\bar w^{-1}(\CC)$ is non-singular, where
$\CC=\PP^1\setminus \{\infty\}$, and 
$X_{\Sigma}\subseteq \bar w^{-1}(\CC)$, with
$D:=\bar w^{-1}(\CC)\setminus X_{\Sigma}$ a normal crossings divisor.
Furthermore, $\bar{w}^{-1}(0)$ is non-singular in a neighbourhood
of $\bar{w}^{-1}(0)\cap D$.
\end{enumerate}
\end{proposition}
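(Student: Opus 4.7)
My strategy has three parts: (i) construct $\bar\Sigma$ by combining $\Sigma$ with a toric resolution of $\check\Sigma_{\check\Delta}$ outside $\sigma$; (ii) when $\dim\Delta'=0$, exploit the ``tent'' structure of $\P_*$ to arrange the retraction in (2); and (iii) resolve the indeterminacy of $w_{\check\Delta}$ by blowing up the (smooth, for generic coefficients) base locus of the pencil $\langle w,z^0\rangle$.

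For (1), I first observe that $\sigma$ is already a cone of $\check\Sigma_{\check\Delta}$: the vertex $0\in\check\Delta$ has tangent cone $\check\sigma$, so its normal cone equals $\sigma$. I would then take $\bar\Sigma$ to be a non-singular projective refinement of $\check\Sigma_{\check\Delta}$ whose restriction to cones contained in $\sigma$ is exactly $\Sigma$; standard toric resolution of singularities produces such a $\bar\Sigma$ by operating only on cones of $\check\Sigma_{\check\Delta}$ not in $\sigma$, since $\Sigma$ itself is non-singular by Assumption~\ref{overallhypo}. For (2), when $\dim\Delta'=0$, the lifted point $(v,-1)$ lies on every proper face of $\Delta_*$ other than the upper face $\Delta\times\{0\}$, so every maximal cell of $\P_*$ contains $v$ as a vertex; hence $\Cone(\Delta')=\RR_{\ge0}(v,1)$ is a ray of $\Sigma$. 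By further arranging $\bar\Sigma$ compatibly with the star structure of $\P_*$ at $v$, the splitting $\bar M = M\oplus\ZZ(v,1)$ and projection $\bar M_\RR\to\bar M_\RR/\RR(v,1)$ induce the desired retraction $\pi_{\Cone(\Delta')}: X_{\bar\Sigma}\to D_{\Cone(\Delta')}$, for which the natural toric embedding is a section.

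For (3), I regard $w$ and $z^0$ as global sections of the line bundle $L$ on $X_{\bar\Sigma}$ with Newton polytope $\check\Delta$, so that $w_{\check\Delta}$ is the rational map to $\PP^1$ defined by the pencil $\langle w,z^0\rangle$. The section $z^0$ vanishes precisely on the boundary divisors $D_\rho$ with $\rho\not\subset\sigma$ (equivalently, $\varphi_{\check\Delta}(n_\rho)>0$), and for generic coefficients the closure $W=\overline{\{w=0\}}$ is a smooth hypersurface transverse to every toric stratum (toric Bertini). The base locus $B=W\cap\bigcup_{\rho\not\subset\sigma}D_\rho$ is therefore a finite union of smooth codimension-two subvarieties. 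A sequence of blow-ups along smooth centers then resolves the indeterminacy; the resulting $\pi:\tilde\PP_{\check\Delta}\to X_{\bar\Sigma}$ turns $w_{\check\Delta}$ into a regular projective morphism $\bar w:\tilde\PP_{\check\Delta}\to\PP^1$.

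For (4), clearly $X_\Sigma\subset\bar w^{-1}(\CC)$ since $w$ is regular on $X_\Sigma$; moreover, every blow-up center lies in some $D_\rho$ with $\rho\not\subset\sigma$, hence outside $X_\Sigma$, so $X_\Sigma$ remains an open subset of $\tilde\PP_{\check\Delta}$. The complement $D=\bar w^{-1}(\CC)\setminus X_\Sigma$ is the union of proper transforms of boundary toric divisors meeting $\bar w^{-1}(\CC)$ together with components of exceptional divisors lying in $\bar w^{-1}(\CC)$; normal crossings is preserved because each blow-up center is a smooth transverse intersection of toric divisors with $W$. Smoothness of $\bar w^{-1}(0)$ near $D$ follows because, away from $B$, the zero fibre coincides with the proper transform of $W$, which meets the toric boundary transversely. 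The main technical obstacle will be arranging the sequence of blow-ups in (3) consistently when $z^0$ vanishes to higher order along some $D_\rho$, possibly requiring iterated blow-ups or a further refinement of $\bar\Sigma$ to keep each blow-up center smooth and the final boundary NCD.
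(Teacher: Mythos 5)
Your treatment of (1) and (2) follows the paper's outline but asserts exactly what has to be constructed: a \emph{projective} smooth refinement of $\check\Sigma_{\check\Delta}$ that leaves the prescribed subfan $\Sigma$ untouched (the paper builds it from the convex function $\varphi_{\check\Delta}+\epsilon\varphi_{\Sigma}$, with $\varphi_\Sigma$ extended by zero, and then pulls additional rays only outside $|\Sigma|$, so that projectivity is kept at every step), and, for (2), compatibility of \emph{every} cone of $\bar\Sigma$ --- not only those inside $\sigma$, which is what ``the star structure of $\P_*$ at $v$'' controls --- with the projection modulo $\RR\Cone(\Delta')$; the paper arranges this by pulling back a piecewise linear function from the quotient, using that the unique ray of $\check\Sigma_{\check\Delta}$ outside $\sigma$ is $-\RR_{\ge 0}\Cone(\Delta')$. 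These points are fixable. The genuine gap is in (3)--(4).

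In (3) you invoke ``toric Bertini'' for $W=\overline{\{w=0\}}$, but the monomials of $w$ span only $\check\Delta_0\subsetneq\check\Delta$, so genericity of the coefficients does not give $\check\Delta$- (hence $\bar\Sigma$-) regularity of this particular member; the paper sidesteps this by introducing the auxiliary section $w'_{\check\Delta}=z^0+w$, which \emph{is} $\check\Delta$-regular, so that $H=\{w'_{\check\Delta}=0\}$ is smooth and meets the boundary transversally, and the base locus of the pencil is $H\cap H_\infty$ with $H_\infty=\{z^0=0\}$ normal crossings but in general \emph{non-reduced}. That non-reducedness is precisely what you defer (``possibly requiring iterated blow-ups or a further refinement''), and it cannot be deferred: away from the base locus every point of $D_\infty=X_{\bar\Sigma}\setminus X_\Sigma$ is sent to $\infty$, so $D=\bar w^{-1}(\CC)\setminus X_\Sigma$ lies entirely over the base locus. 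Consequently your argument for (4) --- ``away from $B$ the zero fibre coincides with the proper transform of $W$'' --- speaks about the wrong region and establishes neither that $D$ is normal crossings nor that $\bar w^{-1}(\CC)$ is smooth and $\bar w^{-1}(0)$ is smooth near $D$; all of these depend on the explicit structure of the modification over $B$ in the presence of multiplicities. The paper's device is simpler than a smooth-center principalization: blow up the base ideal $(w'_{\check\Delta},z^0)$, locally $(x_0,\,x_1^{d_1}\cdots x_n^{d_n})$, in a single step. The resulting $\tilde\PP_{\check\Delta}$ is singular, but its singular locus is contained in $\bar w^{-1}(\infty)$ --- the proposition never requires $\tilde\PP_{\check\Delta}$ itself to be smooth, only $\bar w^{-1}(\CC)$ --- and parts (3) and (4) are read off from this explicit local chart. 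As written, your proof does not establish regularity of $\bar w$ or part (4) whenever some $d_i\ge 2$, which is the general situation (the paper notes explicitly that $H_\infty$ need not be reduced).
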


\proof We begin by refining the normal fan $\check\Sigma_{\check\Delta}$
to a fan $\bar{\Sigma}$ with the properties 
\begin{itemize}
\item[a)]
$\Sigma=\{\tau\in\bar{\Sigma}\,|\,\tau\subseteq\sigma\}$ and
\item[b)] $X_{\bar{\Sigma}}$ is a projective non-singular
toric variety
\end{itemize}
as follows.
Let $\varphi_{\Sigma}$ denote the piecewise linear convex function giving the subdivision $\Sigma$ of $\sigma$. By adding a linear function, we may assume $\varphi_{\Sigma}\ge 0$. 
Note that if one gives a function on the set of integral generators of a cone $\tau$, there is a canonical extension to all of $\tau$ as a convex piecewise linear function. Its graph is given by the lower faces of 
the convex hull of the graph of the function on the set of generators.
We use this construction to extend $\varphi_{\Sigma}$ to all of $\bar{M}_\RR$ by setting the value on a generator $m$ of a ray contained in $\sigma$ to $\varphi_{\Sigma}(m)$ and to zero for all further rays.
One easily checks that the so-constructed functions on the cones glue such that the extension is continuous and piecewise linear.
Moreover, it is convex away from $\partial\sigma$. We denote the extension by $\varphi_{\Sigma}$ also.
By the strict convexity of
$\varphi_{\check\Delta}$ at $\partial\sigma$, for some small $\epsilon$, we find that
$\varphi_{\check\Delta}+\epsilon\varphi_{\Sigma}$ is a piecewise linear convex function giving a refinement of $\Sigma_{\check\Delta}$ with
the property of $\bar{\Sigma}$ in a) above. In general, this may not yet induce a desingularization, however we may refine it to such. This can be done by \emph{pulling additional rays}, i.e., by successively inserting new rays along with star-subdivisions where each ray is generated by an integral point not contained in the support of $\Sigma$. These operations can be realized by piecewise linear functions and thus induce projective partial resolutions eventually giving a total projective resolution. We call the resulting fan $\bar\Sigma$ which will be the fan in (1). 

To see (2), note that
we may modify the previous procedure if $\dim\Delta'=0$ as follows.
The fan of the projective toric divisor $D_{\Cone(\Delta')}$ is 
given as the minimal fan containing the maximal domains of linearity of a piecewise linear function $\bar{\varphi}'$ which we may pull back to a function $\varphi'$ under the projection
$$\bar M_\RR\ra  \bar M_\RR / (\RR\Cone(\Delta')).$$
Note that $\varphi'$ is piecewise linear on $\check\Sigma_{\check\Delta}$ because there is only one ray in $\check\Sigma_{\check\Delta}\backslash \Sigma$ which is in fact $-\RR_{\ge 0}\cdot \Cone(\Delta')$.
We may replace
$\varphi_{\check\Delta}+\epsilon\varphi_{\Sigma}$ in the above procedure by
$\varphi_{\check\Delta}+\epsilon\varphi'$ to obtain a $\bar{\Sigma}$ satisfying (2).

\begin{figure}
\input{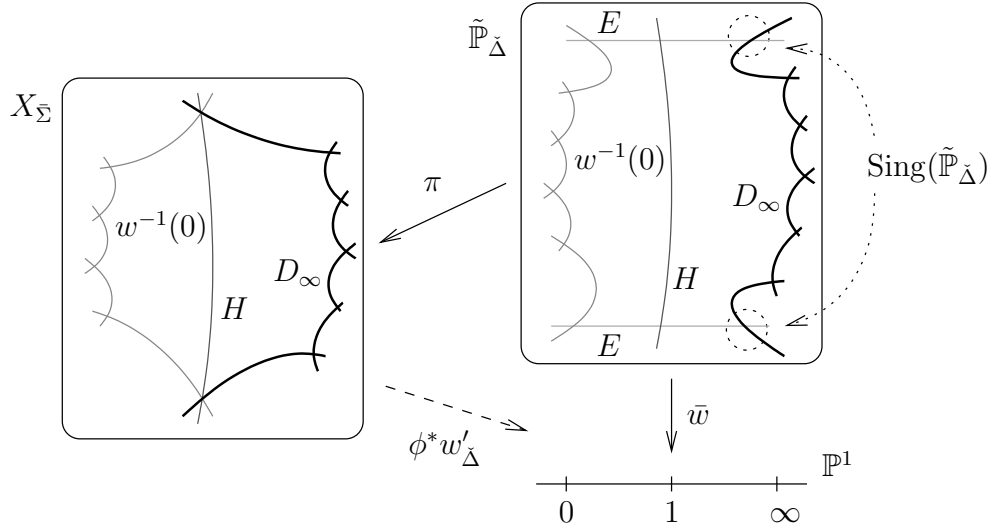}
\caption{Properification of $w$}
\label{Wproper}
\end{figure}
We have a resolution of singularities $\phi:X_{\bar{\Sigma}}\rightarrow
\PP_{\check\Delta}$ with $X_{\Sigma}\subseteq X_{\bar{\Sigma}}$,
and since $X_{\bar\Sigma}$ is non-singular,  $D_{\infty}:=X_{\bar{\Sigma}}
\setminus X_{\Sigma}$ is a divisor with normal crossings.

Next, consider the section 
\[
w'_{\check\Delta}
=z^0+c_{\rho}z^{\rho}+\sum_{\tau\subset\Delta} c_{\tau}z^{(n_{\tau},
\varphi_{\Delta}(n_{\tau}))}
\]
of $\O_{\PP_{\check\Delta}}(1)$. Because the coefficients are general,
this section is $\check\Delta$-regular in the sense of \cite{Ba94},\,Def.\,3.1.1. Thus pulling back this
section to $X_{\bar{\Sigma}}$ we obtain a
section $\phi^*w'_{\check\Delta}$ 
of $\phi^*\O_{\PP_{\check\Delta}}(1)$ which by 
\cite{Ba94},\,Prop.\,3.2.1, is $\bar{\Sigma}$-regular, and hence
its zero locus defines a non-singular hypersurface $H\subseteq 
X_{\bar{\Sigma}}$. Now the rational function $w_{\check\Delta}$
pulls back to $X_{\bar\Sigma}$ and induces a pencil contained in the
linear system $|\phi^*\O_{\PP_{\check\Delta}}(1)|$. This pencil includes
both the non-singular hypersurface $H$ and the hypersurface $H_{\infty}$
given by $z^0=0$. One sees easily that $\supp(H_{\infty})=D_{\infty}$.
Thus $H_{\infty}$ is a normal crossings divisor, but need not be reduced.

Again since $H$ is $\bar{\Sigma}$-regular, it meets $D_{\infty}$
transversally. So locally, at a point of $D_{\infty}\cap H$,
the base-locus of the pencil defined by
$w_{\check\Delta}$ on $X_{\bar\Sigma}$ is given by equations
$x_1^{d_1}\cdots x_n^{d_n}=x_0=0$. Blowing up this base-locus, we obtain
a projective variety $\tilde\PP_{\check\Delta}$, which is singular, but
now $w_{\check\Delta}$ extends to a morphism $\bar w:
\tilde\PP_{\check\Delta}\rightarrow \PP^1$ factoring through the blowup map $\pi$. See Figure~\ref{Wproper} for a picture. This gives (3).
Let $E$ be the exceptional locus of $\pi$.

Next, note from the local description of the base-locus that
the singular locus of $\tilde\PP_{\check\Delta}$ is 
contained entirely in $\bar w^{-1}(\infty)$,
the proper transform of $H_{\infty}$. Note also that $X_{\Sigma}$ was
disjoint from $H_{\infty}$, and hence $X_{\Sigma}\subseteq
\bar w^{-1}(\CC)$, the latter variety being non-singular. 
Furthermore, $\bar w^{-1}(\CC)\setminus X_{\Sigma}=E\cap \bar
w^{-1}(\CC)$, and
from the explicit local description of $H_{\infty}\cap H$,
one sees the remaining part of (4).
\qed


\begin{corollary} The morphisms $w:X_\Sigma\ra\CC$ and $\check w:X_{\check 
\Sigma}\ra\CC$ are quasi-projective.
\end{corollary}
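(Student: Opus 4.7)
The plan is to read off quasi-projectivity of $w$ and $\check w$ directly from the two properifications just constructed. Recall that a morphism $f:X\to Y$ is quasi-projective when it factors as an open immersion $X\hookrightarrow \bar X$ followed by a projective morphism $\bar X\to Y$.

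First, for $\check w:X_{\check\Sigma}\to\CC$, I would invoke Prop.~\ref{checkWproper}: the projective morphism $\bar{\check w}:\tilde X_{\bar{\check\Sigma}}\to\PP^1$ restricts, via base change along the open immersion $\CC=\PP^1\setminus\{\infty\}\hookrightarrow\PP^1$, to a projective morphism $\bar{\check w}^{-1}(\CC)\to\CC$. Part (2) of that proposition shows that $X_{\check\Sigma}$ is an open subset of $\tilde X_{\bar{\check\Sigma}}$ (its complement is $D_\infty\cup E$), and by construction $X_{\check\Sigma}\subseteq\bar{\check w}^{-1}(\CC)$. Hence $\check w$ factors as the open immersion $X_{\check\Sigma}\hookrightarrow \bar{\check w}^{-1}(\CC)$ followed by the projective morphism $\bar{\check w}^{-1}(\CC)\to\CC$, which gives quasi-projectivity.

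The argument for $w:X_\Sigma\to\CC$ is entirely parallel, using Prop.~\ref{properWprop} instead. The projective morphism $\bar w:\tilde\PP_{\check\Delta}\to\PP^1$ (part (3)) restricts to a projective morphism $\bar w^{-1}(\CC)\to\CC$, and part (4) gives the open immersion $X_\Sigma\hookrightarrow \bar w^{-1}(\CC)$. Composing yields the required factorization.

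There is no real obstacle here; the corollary is a formal consequence of the already-established factorizations. The only thing to be careful about is that quasi-projectivity is defined relative to the base $\CC$ (so we only need open immersion into something projective \emph{over} $\CC$, not over a projective base), which is exactly what base-changing the projective extensions along $\CC\hookrightarrow\PP^1$ provides.
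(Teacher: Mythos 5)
Your argument is correct and is exactly the intended reading of the corollary: the paper states it without proof precisely because quasi-projectivity of $w$ and $\check w$ falls out of Propositions \ref{properWprop} and \ref{checkWproper} by restricting the projective extensions $\bar w$, $\bar{\check w}$ over $\CC\subseteq\PP^1$ and noting that $X_\Sigma$, $X_{\check\Sigma}$ sit as open subsets of the resulting schemes projective over $\CC$. Nothing to add.
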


\begin{example} \label{g2curveproper}
Continuing with Ex.~\ref{basicexamples2},
let's assume that the vertices of $\Delta$ are $(0,0)$, $(3,0)$,
$(0,2)$ and $(3,2)$. The normal fan to $\Delta$,
$\check\Sigma_{\Delta}$, is the fan for $\PP^1\times\PP^1$, with
rays generated by $(\pm 1,0)$ and $(0,\pm 1)$. We have 
\[
\varphi_{\Delta}(1,0)=0,\quad \varphi_{\Delta}(-1,0)=3,\quad
\varphi_{\Delta}(0,1)=0,\quad \varphi_{\Delta}(0,-1)=2
\]
and hence
\[
\check\Delta=\Conv\{(0,0,0),(0,0,1),(1,0,0),(0,1,0),(-1,0,3),(0,-1,2)\}
\]
shown in Figure \ref{Pfan}.
\begin{figure}
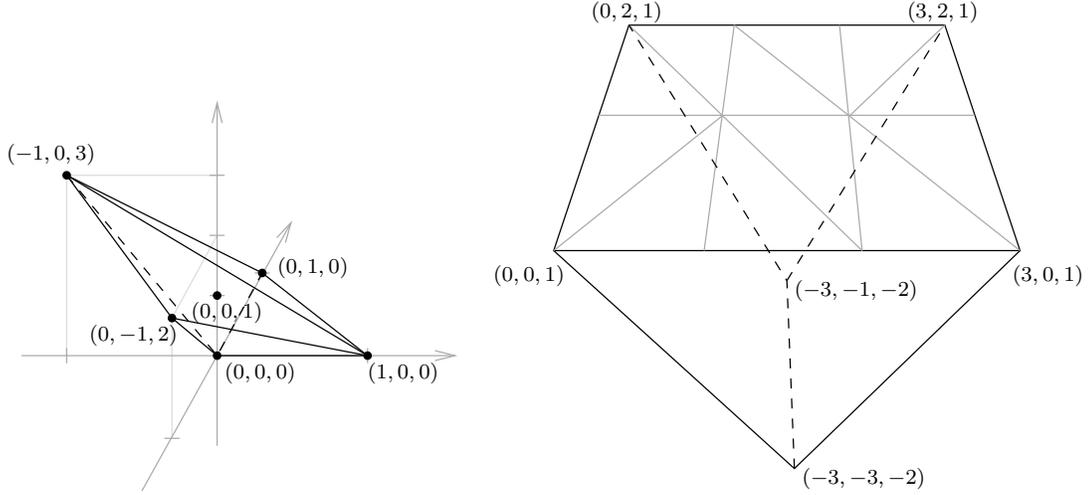

\input{Deltacheck.pstex_t}\quad
\input{Pfan.pstex_t}
\caption{$\check\Delta$ on the left and $\check\Delta^*$ on the right.}
\label{Pfan}
\end{figure}
One can check that the only integral points of $\check\Delta$ are the
points listed, with $\rho=(0,0,1)$ the unique interior integral
point of $\check\Delta$. So 
$\PP_{\check\Delta}$ can be embedded in $\PP^5$ using
the six given points to determine sections of $\O_{\PP_{\check\Delta}}(1)$.
Using coordinates $z_0,\ldots,z_5$ corresponding to the six points
given above in the given order, one sees that the image of $\PP_{\check
\Delta}$ in $\PP^5$ is given by the equations
$z_0z_2z_4-z_1^3=0$ and $z_3z_5-z_1^2=0$, 
which are homogeneous versions of those in
Ex.~\ref{genusgmirror}. In addition,
\[
w_{\check\Delta}={z_1+z_2+z_3+z_4+z_5\over z_0}.
\]
Note that $\check\Delta$ is a reflexive polytope. This is no longer
true if $g>2$ as in Ex.\ \ref{genusgmirror}.

For a general choice of $c\in \CC$, the surface with equation
$w_{\check\Delta}=c$ is a singular K3 surface whose inverse image $\tilde W_c$ under the blowup map $X_{\bar{\Sigma}}\ra\PP_{\check\Delta}$ is smooth and of Picard rank $18$.
The right side of Figure \ref{Pfan} indicates the part of the fan $\bar{\Sigma}$ induced from the subdivision $\P$ of $\Delta$. One can view the entire fan $\bar{\Sigma}$ by also triangulating the further faces of $\check\Delta^*$,
but using vertices which are not necessarily integral points. 
\end{example}

\subsection{$\Delta'$ and the Kodaira dimension of $\check S$}
\label{section_kodairadim}
The significance of $\Delta'$ throughout the paper is in part explained by the following results.

\begin{proposition}
\label{DeltaDeltaprimerelation}
Let $\varphi_K$ denote the piecewise linear function on $\check\Sigma_{\Delta}$
which represents $K_{\PP_{\Delta}}$, taking the value $-1$ on the primitive
generator of each ray of $\check\Sigma_{\Delta}$. Then
\[
\varphi_{\Delta'}=\varphi_{\Delta}+\varphi_K.
\]
\end{proposition}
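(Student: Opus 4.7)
The plan is to check equality of the two piecewise linear functions cone-by-cone on the fan $\check\Sigma_\Delta$. Both sides are continuous and piecewise linear on $N_\RR$, and $\varphi_\Delta$ is strictly convex on $\check\Sigma_\Delta$ (being the support function of the ample divisor with Newton polytope $\Delta$), so it suffices to verify the identity on each maximal cone of $\check\Sigma_\Delta$.

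Fix a vertex $v$ of $\Delta$ and let $\check\sigma_v\in\check\Sigma_\Delta$ be the corresponding maximal cone. Smoothness of $\PP_\Delta$ guarantees that the primitive ray generators $n_1,\ldots,n_{d+1}$ of $\check\sigma_v$ (the inward facet normals at $v$) form a $\ZZ$-basis of $N$; let $e_1^*,\ldots,e_{d+1}^*\in M$ be the dual basis, which coincides with the set of primitive edge vectors of $\Delta$ emanating from $v$. On $\check\sigma_v$, $\varphi_\Delta$ is linear with slope $-v$, and $\varphi_K$ is linear with slope $-w_v$, where $w_v:=e_1^*+\cdots+e_{d+1}^*\in M$; this latter formula is forced by the conditions $\varphi_K(n_i)=-1$. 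Consequently $(\varphi_\Delta+\varphi_K)(n)=-\langle n,v+w_v\rangle$ on all of $\check\sigma_v$.

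Next I want to show $\varphi_{\Delta'}(n)=-\langle n,v+w_v\rangle$ on $\check\sigma_v$, which amounts to verifying that $v+w_v\in\Delta'$ and that it minimizes $\langle n,\cdot\rangle$ over $\Delta'$ for every $n\in\check\sigma_v$. The minimization is easy: any $m\in\Int(\Delta)\cap M$ satisfies $m-v=\sum a_ie_i^*$ with $a_i\in\ZZ_{\geq 0}$ (since $m-v$ lies in the tangent cone to $\Delta$ at $v$), and moreover $a_i\geq 1$ because $m$ fails to lie on any facet of $\Delta$ at $v$, forcing $\langle n_i,m\rangle>-\varphi_\Delta(n_i)$ and thus $a_i\geq 1$ by integrality. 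Hence $m-(v+w_v)=\sum(a_i-1)e_i^*\in\check\sigma_v^\vee$, and passing to convex combinations yields $\Delta'-(v+w_v)\subseteq\check\sigma_v^\vee$, so $\langle n,m\rangle\geq\langle n,v+w_v\rangle$ for all $m\in\Delta'$ and $n\in\check\sigma_v$.

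The main obstacle is the membership $v+w_v\in\Delta'$, which reduces (given integrality of $v+w_v$) to showing $v+w_v\in\Int(\Delta)\cap M$. For facets $\omega_i$ containing $v$ this is immediate: $\langle n_i,v+w_v\rangle=\langle n_i,v\rangle+\langle n_i,w_v\rangle=-\varphi_\Delta(n_i)+1$, which is strictly larger than $-\varphi_\Delta(n_i)$. The delicate step is to handle facets $\omega$ not containing $v$, where one must establish $\langle n_\omega,v+w_v\rangle\geq-\varphi_\Delta(n_\omega)+1$. I would do this by invoking the standing hypothesis $\Int(\Delta)\cap M\neq\emptyset$: pick $m_0\in\Int(\Delta)\cap M$, expand $m_0-v=\sum a_ie_i^*$ with $a_i\geq 1$, and exploit the $\ZZ$-basis structure provided by smoothness, together with convexity of $\Delta$, to propagate the strict inequalities $\langle n_\omega,m_0\rangle>-\varphi_\Delta(n_\omega)$ from $m_0$ to $v+w_v$. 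Equivalently, one verifies that the Newton polytope $\Delta''=\{m\in M_\RR:\langle n_\rho,m\rangle\geq-\varphi_\Delta(n_\rho)+1\}$ of $\varphi_\Delta+\varphi_K$ has every vertex of the form $v+w_v$ for some vertex $v$ of $\Delta$, hence is the convex hull of its lattice points; since $\Delta''\cap M=\Int(\Delta)\cap M$ by construction, this gives $\Delta''=\Delta'$ and therefore $\varphi_{\Delta'}=\varphi_\Delta+\varphi_K$.
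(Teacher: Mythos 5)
Your reduction is well set up, and the parts you actually prove are correct: on $\check\sigma_v$ one has $(\varphi_\Delta+\varphi_K)(n)=-\langle n,v+w_v\rangle$, and your verification that every $m\in\Int(\Delta)\cap M$ satisfies $m-(v+w_v)\in\check\sigma_v^\vee$, hence $\langle n,m\rangle\ge\langle n,v+w_v\rangle$ for all $n\in\check\sigma_v$ and $m\in\Delta'$, is complete. So the whole proposition is equivalent to the single statement that $v+w_v\in\Int(\Delta)$ for \emph{every} vertex $v$. But that statement is exactly the nontrivial content of the proposition --- it is equivalent to convexity of $\varphi_\Delta+\varphi_K$, i.e.\ to nefness of $K_{\PP_\Delta}+\shO_{\PP_\Delta}(1)$, and it is where the hypotheses (smoothness at \emph{all} vertices and $\Int(\Delta)\cap M\neq\emptyset$) must do global work --- and you do not prove it: for facets $\omega$ not containing $v$ you only sketch a strategy (``propagate the strict inequalities from $m_0$ to $v+w_v$ using convexity and the $\ZZ$-basis structure''). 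As described, this cannot work in dimension $\ge 3$: any inequality you propagate by convexity from the data you use (the vertex $v$, lattice points on its edges, one interior point $m_0$) holds on their convex hull, and $v+w_v$ need not lie in that hull. In coordinates at $v$, the point $(1,\ldots,1)$ is not in the convex hull of $0$, $\epsilon_1,\epsilon_2,\epsilon_3$ and $m_0=(10,1,1)$ when the edge lengths are $1$, so the local data at $v$ plus one interior point is genuinely insufficient; some global input (smoothness at the other vertices, an adjunction/Mori-type argument on $\PP_\Delta$, or a careful walk through the polytope) is required, and your sketch does not indicate how it enters.

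The fallback in your last sentences has the same gap plus an additional slip. Even granting that $\Delta''$ is a lattice polytope with $\Delta''\cap M=\Int(\Delta)\cap M$, so that $\Delta''=\Delta'$, equality of Newton polytopes does not imply equality of the piecewise linear functions: one still needs $\varphi_\Delta+\varphi_K$ to coincide with the support function of $\Delta''$, i.e.\ to be convex, and that is again precisely the assertion $v+w_v\in\Delta''$ for every vertex $v$ (note that ``every vertex of $\Delta''$ has the form $v+w_v$ for some $v$'' is weaker than what you need, which is that this membership holds for every $v$). For comparison, the paper argues via the Newton polytope: it reduces to $\Delta''=\Delta'$ and invokes smoothness for the assertion that $\Delta''$ is a lattice polytope contained in the relative interior of $\Delta$ --- the same key point, asserted rather than spelled out. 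So your cone-by-cone framework is a legitimate alternative presentation, but in either formulation the step you left open is where all the work lies, and as it stands your argument does not close it.
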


\proof 
Note that $\varphi_K$ exists by smoothness of $\PP_{\Delta}$.
Let $\Delta''=\Delta_{\varphi''}$ denote the possibly empty Newton polytope 
of the piecewise linear function $\varphi''=\varphi_\Delta+\varphi_K$ on 
$\check\Sigma_\Delta$. We need to show that $\Delta''=\Delta'$. 
Indeed, since $\Delta''$ is a lattice polytope contained in the relative interior of $\Delta$, we have $\Delta''\subseteq\Delta'$. On the other hand $\Delta''\supseteq\Delta'$ because, using the fact that the tangent cones to $\Delta$
at vertices of $\Delta$ are standard,
each lattice point in the relative interior of $\Delta$ has integral distance $\ge 1$ to each facet.
\qed

\begin{corollary}
\label{DeltaK}
If $\PP_\Delta$ has nef anti-canonical class, then the Newton polytope 
of $-K_{\PP_\Delta}$, 
which we denote by $\Delta_K$, is reflexive. We then have the Minkowski sum decomposition
$$\Delta=\Delta_K+\Delta'.$$
\end{corollary}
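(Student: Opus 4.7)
The plan is to first establish the reflexivity of $\Delta_K$ and then deduce the Minkowski sum decomposition as a formal consequence of Prop.~\ref{DeltaDeltaprimerelation}.

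\emph{Reflexivity of $\Delta_K$.} Since $-K_{\PP_\Delta}$ is nef, the piecewise linear function $-\varphi_K$ on the complete fan $\check\Sigma_\Delta$ is convex, so the Newton polytope $\Delta_K=\Delta_{-\varphi_K}$ is a well-defined lattice polytope. I would first verify that $0\in\Int(\Delta_K)$: by smoothness of $\PP_\Delta$, each maximal cone $\sigma'$ of $\check\Sigma_\Delta$ is standard, generated by part of a basis $e_1,\dots,e_k$ of $N$, so any nonzero $n=\sum a_i e_i\in\sigma'$ with $a_i\ge 0$ satisfies $\varphi_K(n)=-\sum a_i<0$. Hence $0>\varphi_K(n)$ for every nonzero $n\in N_\RR$, showing $0$ lies in the relative interior of $\Delta_K$. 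Next, since a lattice polytope containing $0$ in its interior is reflexive iff every facet has lattice distance exactly $1$ from $0$, it suffices to check, for each primitive inward facet normal $n_F\in N$ of $\Delta_K$, that $\varphi_K(n_F)=-1$. The key point is that convexity of $-\varphi_K$ on $\check\Sigma_\Delta$ forces the normal fan of $\Delta_K$ to be a coarsening of $\check\Sigma_\Delta$, so every ray of the normal fan of $\Delta_K$ is already a ray of $\check\Sigma_\Delta$. In particular $n_F$ is a primitive generator of some $\rho\in\check\Sigma_\Delta$, and then $\varphi_K(n_F)=-1$ by the defining property of $\varphi_K$.

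\emph{Minkowski decomposition.} By Prop.~\ref{DeltaDeltaprimerelation}, $\varphi_{\Delta'}=\varphi_\Delta+\varphi_K$, which rearranges to
\[
\varphi_\Delta \;=\; \varphi_{\Delta'}+(-\varphi_K) \;=\; \varphi_{\Delta'}+\varphi_{\Delta_K},
\]
using the identity $\varphi_{\Delta_K}=-\varphi_K$, which follows from the definition of $\Delta_K$ as the Newton polytope of $-\varphi_K$ together with convexity of $-\varphi_K$. Since support functions of lattice polytopes are additive under Minkowski sum (two polytopes with the same support function agree), this identity translates directly into the desired equality $\Delta=\Delta_K+\Delta'$.

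\emph{Main obstacle.} The only non-formal ingredient is the step in reflexivity asserting that the primitive facet normals of $\Delta_K$ are among the primitive ray generators of $\check\Sigma_\Delta$; this is precisely where the nef hypothesis (equivalently the convexity of $-\varphi_K$) enters in an essential way. Once this is in place, everything else is manipulation of piecewise linear functions and their Newton polytopes.
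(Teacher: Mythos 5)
Your proof is correct. The paper states this corollary without any written proof, treating it as an immediate consequence of Prop.~\ref{DeltaDeltaprimerelation}, and your argument — reflexivity of $\Delta_K$ via the fact that nefness makes $-\varphi_K$ convex, hence the support function of $\Delta_K$, so that facet normals of $\Delta_K$ are among the primitive ray generators of $\check\Sigma_{\Delta}$ where $\varphi_K=-1$, followed by additivity of support functions applied to $\varphi_{\Delta'}=\varphi_{\Delta}+\varphi_K$ — is exactly the intended derivation.
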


Figure~\ref{minkowski} shows this decomposition for Ex.~\ref{basicexamples2}.
In the case that $-K_{\PP_{\Delta}}$ is nef, on the dual side, we have 
the convex hull of the graph of $-\varphi_K$ is the cone over the 
dual reflexive polytope of $\Delta_K$ which we denote $\check\Delta_K$. This implies that
$$\check\Delta_K = \pi(\check\Delta)$$
where $\pi$ denotes the natural projection $\bar N_\RR\ra N_\RR$.

Now we can relate the dimension of $\Delta'$ to the Kodaira dimension
of $\check S$:

\begin{proposition} \label{kodairadim}
Let $\check S$ be the zero locus of a general section of $\Gamma(\PP_{\Delta},\O_{\PP_{\Delta}}(1))$ and hence a non-singular variety
of dimension $d$. Then the Kodaira dimension of $\check S$ is
\[
\kappa(\check S)=\min \{\dim\Delta',d\}
\]
where we use $\dim\emptyset=-\infty$.
\end{proposition}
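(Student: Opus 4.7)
The plan is to use adjunction to interpret $K_S$ as the restriction of an explicit toric line bundle on $\PP_\Delta$, and then to analyze how its Iitaka dimension behaves under restriction to a generic ample hyperplane section.

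\textbf{Setup.} Since $S\in|\O_{\PP_{\Delta}}(1)|$ is a smooth member of a very ample linear system, adjunction yields
\[
K_S \simeq (K_{\PP_{\Delta}}\otimes \O_{\PP_{\Delta}}(1))\big|_S.
\]
By Prop.~\ref{DeltaDeltaprimerelation}, the torus-invariant divisor $D:=K_{\PP_{\Delta}}+H$ (with $H=\O_{\PP_{\Delta}}(1)$) is given by the piecewise linear function $\varphi_{\Delta'}$, so its Newton polytope is $\Delta'$. Because $m\varphi_{\Delta'}$ has Newton polytope $m\Delta'$, standard toric geometry gives $h^0(\PP_{\Delta}, mD) = \#(m\Delta'\cap M)$, which grows polynomially of degree $\dim\Delta'$ (and is identically zero if $\Delta'=\emptyset$). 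Hence $\kappa(\PP_{\Delta},D)=\dim\Delta'$.

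\textbf{Lower bound.} Assuming $\Delta'\ne\emptyset$, for $m\gg0$ the linear system $|mD|$ defines a rational map $\phi_m:\PP_{\Delta}\dashrightarrow\PP^N$ whose image is (a projective toric variety for) $m\Delta'$, of dimension $\dim\Delta'$. Since $H$ is base-point-free and ample, a general $S\in|H|$ meets every positive-dimensional fiber of $\phi_m$. Therefore, if $\dim\Delta'\le d$, the image $\phi_m(S)=\phi_m(\PP_{\Delta})$ has dimension $\dim\Delta'$; if $\dim\Delta'=d+1$, the map $\phi_m$ is generically finite and $\phi_m|_S$ has image of dimension $d$. Restriction sends $H^0(\PP_{\Delta},mD)$ into $H^0(S,mK_S)$, injectively for a generic $S$ (since no nonzero global section of $mD$ vanishes on all of $S$). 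This gives $\kappa(S)\ge \min(\dim\Delta',d)$.

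\textbf{Upper bound via the Iitaka fibration.} On a suitable birational model $\pi:Y\to\PP_{\Delta}$, the divisor $\pi^*D$ admits an Iitaka fibration $f:Y\to Z$ with $\dim Z=\dim\Delta'$ and $\pi^*D|_F\sim_{\QQ}0$ on a general fiber $F$. Choose $S$ general enough so that $\tilde S:=\pi^{-1}(S)$ is a smooth divisor birational to $S$ meeting $F$ transversally. Restrict $f$ to $\tilde S\to Z$: for $\dim\Delta'\le d$ this map is dominant onto $Z$ (since $\tilde S$ is ample on $Y$ and hits every positive-dimensional fiber of $f$), and its general fiber $\tilde S\cap F$ is a hyperplane section of $F$ on which $\pi^*D$ remains $\QQ$-trivial, hence has Iitaka dimension $\le 0$. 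Iitaka's easy addition formula then gives
\[
\kappa(S) = \kappa(\tilde S, \pi^*D|_{\tilde S}) \;\le\; \kappa(\tilde S\cap F,\pi^*D|_{\tilde S\cap F}) + \dim f(\tilde S) \;\le\; 0 + \min(\dim\Delta',d).
\]
For $\dim\Delta'=d+1$, $D$ is big on $\PP_{\Delta}$, hence $D|_S$ is big on $S$, so $\kappa(S)=d$. Combined with the lower bound, $\kappa(S)=\min(\dim\Delta',d)$.

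\textbf{Main obstacle and edge case.} The subtle point is the upper bound when $\Delta'=\emptyset$, for which we must show $h^0(S,mK_S)=0$ for all $m\ge1$: this is not an immediate consequence of $h^0(\PP_{\Delta},mD)=0$, since sections on $S$ need not extend to $\PP_{\Delta}$. I would handle this via the exact sequence $0\to\O_{\PP_{\Delta}}(mD-H)\to\O_{\PP_{\Delta}}(mD)\to\O_S(mK_S)\to0$, bounding $h^1(\PP_{\Delta},mD-H)$ by Serre duality together with the Demazure-style combinatorial description of toric cohomology (noting $mD-H=mK_{\PP_{\Delta}}+(m-1)H$ and using that $\PP_{\Delta}$ is smooth projective toric, hence rational with $H^i(\O)=0$ for $i>0$). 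The other delicate step is verifying the Iitaka-fibration/easy-addition argument in the boundary cases $\dim\Delta'\in\{0,1\}$, where $Z$ is low-dimensional and one must ensure $\pi^*D|_{\tilde S\cap F}$ is genuinely of non-positive Iitaka dimension.
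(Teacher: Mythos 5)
Your upper bound rests on a claim that the general theory of Iitaka fibrations does not provide. For an arbitrary divisor with $\kappa(\PP_\Delta,D)=\dim\Delta'$, passing to a birational model only yields a fibration $f:Y\to Z$ with $\dim Z=\kappa$ and $\kappa(F,\pi^*D|_F)=0$ for a \emph{very general} fibre $F$; it does not give $\pi^*D|_F\sim_\QQ 0$. And Iitaka dimension zero on $F$ is not enough for your next step: a divisor with $\kappa=0$ on $F$ can restrict to a divisor of positive Iitaka dimension on the hyperplane section $\tilde S\cap F$ (think of an effective exceptional curve on a blown-up surface restricted to a general hyperplane section, which has positive degree), so the bound $\kappa(\tilde S\cap F,\pi^*D|_{\tilde S\cap F})\le 0$ does not follow and easy addition gives nothing. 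Since this is the entire content of the upper bound for $0<\dim\Delta'\le d$ (and the case $\dim\Delta'=0$ likewise needs $D$ itself to be $\QQ$-trivial, again not a formal consequence of $\kappa=0$), the argument as written has a genuine gap.

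The gap is repairable, but only by using the toric positivity you never invoke: by Prop.~\ref{DeltaDeltaprimerelation} the support function of $D=K_{\PP_\Delta}+H$ is $\varphi_{\Delta'}=\varphi_\Delta+\varphi_K$, which is convex on the complete fan $\check\Sigma_\Delta$ (this is recorded in the proof of Lemma~\ref{pdeltadeltaprime}), so $D$ is nef, hence basepoint-free on the complete toric variety $\PP_\Delta$. Thus no birational model is needed: $|mD|$ defines a toric morphism onto (the toric variety of) $m\Delta'$, of dimension $\dim\Delta'$, with $mD$ pulled back from an ample divisor, so $D$ really is trivial on the fibres and your restriction/easy-addition step becomes correct; semiampleness also handles $\dim\Delta'=0$, and together with Kodaira's lemma (plus the remark that a general $S$ is not contained in the support of the effective part, or simply $D^d\cdot S>0$ for $D$ nef and big) it justifies the bigness of $D|_S$ when $\dim\Delta'=d+1$. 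Your lower bound is essentially the paper's: the injectivity you assert is exactly the Newton-polytope count (a section of $mD$ vanishing on $S$ is $f$ times a section of $mD-H$, impossible since $\mathrm{Newt}+\Delta\subseteq m\Delta'$ forces dimension $d+1\le\dim\Delta'$), and the $\Delta'=\emptyset$ obstacle you flag is moot under the paper's standing assumption that $\Delta$ has an interior lattice point. By contrast, the paper's own proof avoids Iitaka machinery entirely: adjunction identifies $\Gamma(S,\O_S(nK_S))$ with restrictions of sections of $n(K_{\PP_\Delta}+S)$, whose number is the lattice-point count $l(n\Delta')$, and the growth comparison is done by the same torus/Newton-polytope argument, which is why it needs none of the positivity bookkeeping your route requires.
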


\begin{remark} The proposition also holds true for $\Delta'=\emptyset$ which we have excluded from our general considerations. It was pointed out to us by Victor Batyrev that smoothness of $\PP_{\Delta}$ is necessary for the proposition to hold true because there exist hypersurfaces of general type in toric varieties with no interior lattice points in their Newton polytope.
\end{remark}

\proof Set
$k:=\min \{\dim\Delta',d\}$. We need to show that $k$ 
is the minimal integer such that
$\dim\Gamma(\check S,\O_{\check S}(nK_{\check S}))$ as a function of $n$ is $O(n^k)$. 
Let $l(n\Delta')$ denote the number of lattice points contained in $n\Delta'$.
We are done if we show that $\dim\Gamma(\check S,\O_{\check S}(nK_{\check S}))=l(n\Delta')$ for $\dim\Delta'\le d$ and that $\dim\Gamma(\check S,\O_{\check S}(nK_{\check S}))$ is bounded below by $l(nF)$ for $\dim\Delta'= d+1$ and some facet $F$ of $\Delta'$ because the Kodaira dimension of $\check S$ is bounded above by $\dim \check S=d$.
By the adjunction formula, we have $$K_{\check S}=(K_{\PP_\Delta}+\check S)|_{\check S}.$$
By Proposition \ref{DeltaDeltaprimerelation} and standard toric geometry,
it follows that 
\[
l(n\Delta')=\dim \Gamma(\PP_{\Delta},\O_{\PP_{\Delta}}(n(K_{\PP_\Delta}+\check S))).
\]

For $\dim\Delta'\le d$ the map $\Gamma(\PP_{\Delta},\O_{\PP_{\Delta}}(n(K_{\PP_\Delta}+\check S)))\ra
\Gamma(\PP_{\Delta},\O_{\PP_{\Delta}}(n(K_{\PP_\Delta}+\check S))\otimes\O_{\check S})$ is injective. This can be checked on the dense torus where $\check S$ is given by a principal ideal a generator of which has Newton polytope $\Delta$. Thus, every non-trivial element in the ideal has a Newton polytope of dimension $d+1$.
For the same reason, for $\dim\Delta'=d+1$, the restriction of the above map to sections given by monomials in a face of $\Delta'$ is injective.
\qed

\begin{figure}
\input{minkowski.pstex_t}
\caption{}
\label{minkowski}
\end{figure}

\subsection{Geometry of the central fibre of the potential $w$}
\label{section_geom_prop}

We now return to describing $w:X_{\Sigma}\rightarrow\CC$ in more
detail. In particular, we wish to describe $w^{-1}(0)$. It follows from Proposition
\ref{properWprop},(4), that
$\Sing(w^{-1}(0))$ is proper over $\CC$.
We define some additional combinatorial objects. First, let
\[
\check\sigma^o:=\Conv\{\bar n\in \bar{N}\,|\, \bar n\in \check \sigma, 
\bar n\not=0\}.
\]
All monomials of $w$ lie in $\check\sigma^o\cap\check\Delta$.
Moreover, 
$$\left\{\sum_{n\in I}{a_n}z^{n}\,\bigg|\, I\subset\check\sigma^o\cap \bar{N},|I|<\infty,a_n\in\CC\right\}$$ 
is the ideal of the origin $0$ in $X_\sigma$.
Its blow-up $\op{Bl}_0 X_\sigma$ coincides with the toric variety given by the normal fan of $\check\sigma^o$, see \cite{Th03} for more details. 
We will see shortly that this normal fan is
$$\Sigma_*=\{\Cone(\tau)\,|\, \tau\in \P_*\}\cup\{\{0\}\}$$
which we may think of as the star subdivision of $\sigma$ along $\Cone(\Delta')$.

We can extend the function $h_*:\Delta\cap M\rightarrow\ZZ$ to a piecewise
linear function $h_*:\Delta\rightarrow\RR$
by $h_*(m)=\inf\{r|(m,r)\in\Delta_*\}$ where $\Delta_*$ is
defined in \eqref{Deltatildedef}. This is 
a strictly convex function.
We now give a more useful description of $\P_*$. We recommend keeping in mind
Figure~\ref{genus2}.

\begin{lemma} \label{Pstar}
\begin{enumerate}
\item If we think of $h_*$ as a piecewise linear function on $\Sigma_*$
given by 
\[
h_*(rm,r)=rh_*(m),
\]
then $\check\sigma^o$ is the Newton polyhedron of $h_*$.
\item We have $\Sigma_*=\check\Sigma_{\check\sigma^o}$ and
$$X_{\Sigma_*}=\Bl_0 X_\sigma = \PP_{\check\sigma^o}.$$
Thus, there is a one-to-one correspondence between proper faces of $\check\sigma^o$ and $\P_*$ which we will refer to as \emph{duality}.
\item Assume $\Delta'\neq\emptyset$. Then we have
$$\P_*=\{\tau|\tau\subseteq\partial\Delta\}
\sqcup \{\tau|\tau\in\P_*,\tau\not\subseteq\Delta',\tau\cap\Delta'\neq\emptyset\}
\sqcup \{\tau|\tau\subseteq\Delta'\}.$$
\end{enumerate}
\end{lemma}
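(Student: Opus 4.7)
For part (1), the plan is to unpack the Newton-polyhedron definition directly. Since $h_*$ is homogeneous of degree one, a point $\bar n=(n,c)\in\bar N_\RR$ lies in $\Delta_{h_*}$ iff $\langle n,m\rangle+c\ge -h_*(m)$ for every $m\in\Delta$. Splitting into the boundary (where $h_*=0$) and interior lattice points (where $-h_*=1$) breaks this into the requirement $\bar n\in\check\sigma$ together with $\langle n,v\rangle+c\ge 1$ for every $v\in\Int(\Delta)\cap M$. The key observation is that for $\bar n\in\check\sigma\cap\bar N\setminus\{0\}$ the second condition is automatic: the integer $\langle n,v\rangle+c$ is nonnegative by the first and cannot equal $0$, for otherwise the linear functional $\langle\bar n,\cdot\rangle$ would attain its minimum on $\Delta\times\{1\}$ at an interior point, forcing $\bar n=0$. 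This identifies the lattice points of $\Delta_{h_*}$ with $\check\sigma\cap\bar N\setminus\{0\}$. Since $\Delta_{h_*}$ is a rational polyhedron with strictly convex recession cone $\check\sigma$, its vertices lie in $\check\sigma\cap\bar N\setminus\{0\}$, giving $\Delta_{h_*}\subseteq\check\sigma^o$; the reverse inclusion is immediate from convexity of $\Delta_{h_*}$ together with $\check\sigma\cap\bar N\setminus\{0\}\subseteq\Delta_{h_*}$.

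For part (2), the equality $\check\Sigma_{\check\sigma^o}=\Sigma_*$ then follows from the standard correspondence: the normal fan of the Newton polyhedron of a piecewise linear function is the fan whose maximal cones are its maximal domains of linearity, which by the very construction of $\P_*$ are the cones over the maximal cells of $\P_*$. For the blow-up statement I would invoke the toric description of blow-ups of monomial ideals: the maximal ideal of $0\in X_\sigma$ is generated by $\{z^{\bar n}:\bar n\in\check\sigma\cap\bar N\setminus\{0\}\}$, whose Newton polytope is $\check\sigma^o$ by (1), so $\Bl_0 X_\sigma=\PP_{\check\sigma^o}=X_{\Sigma_*}$ by \cite{Th03}. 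Duality is then the usual face--cone bijection for a normal fan.

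For part (3), I would do case analysis on proper faces of $\Delta_*$ other than the upper face $\Delta\times\{0\}$, whose projections to $\Delta$ are exactly the cells of $\P_*$. The vertices of $\Delta_*$ are the vertices of $\Delta$ at height $0$ and the vertices of $\Delta'$ at height $-1$. A proper face $F\ne\Delta\times\{0\}$ thus has either (a) all vertices at height $0$, so $F$ is a proper face of $\Delta\times\{0\}$ and $\tau\subseteq\partial\Delta$; (b) all vertices at height $-1$, so $F$ is a face of $\Delta'\times\{-1\}$ and $\tau\subseteq\Delta'$; or (c) vertices at both heights, so $\tau$ contains a vertex of $\Delta'$ and a vertex of $\Delta$, giving $\tau\cap\Delta'\ne\emptyset$ and $\tau\not\subseteq\Delta'$. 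Disjointness of the three sets reduces to $\Delta'\subseteq\Int(\Delta)$, which holds because $\Int(\Delta)$ is convex and $\Delta'$ is the convex hull of lattice points in $\Int(\Delta)$.

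The hardest step will be the identification in (1), specifically the interior-minimum argument and the clean polyhedral matching of $\Delta_{h_*}$ and $\check\sigma^o$ via their common lattice points; once (1) is in hand, (2) and (3) reduce to standard toric geometry and elementary face combinatorics.
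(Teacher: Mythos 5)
Your reduction of $\Delta_{h_*}:=\{\bar n\in\bar N_\RR \mid \langle \bar n,(m,1)\rangle\ge -h_*(m)\ \forall m\in\Delta\}$ to the two conditions ``$\bar n\in\check\sigma$ and $\langle\bar n,(v,1)\rangle\ge 1$ for all $v\in\Int(\Delta)\cap M$'', and your observation that the second condition is automatic for nonzero lattice points of $\check\sigma$, are both correct and parallel the paper's evaluation of the dual function $h_*'$ at boundary and interior lattice points; parts (2) and (3) are also fine and essentially the paper's argument (strict convexity of $h_*$ on $\Sigma_*$ plus \cite{Th03}, and the star-subdivision description of $\P_*$). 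The problem is the sentence ``Since $\Delta_{h_*}$ is a rational polyhedron with strictly convex recession cone $\check\sigma$, its vertices lie in $\check\sigma\cap\bar N\setminus\{0\}$.'' Rationality only gives rational vertices, and knowing the lattice points of $\Delta_{h_*}$ does not force the vertices to be among them; a rational polyhedron cut out by integral inequalities is in general not a lattice polyhedron. So the inclusion $\Delta_{h_*}\subseteq\check\sigma^o$ --- the hard direction of (1) --- is not established by your argument.

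This is exactly the point where Assumption~\ref{overallhypo} must enter, and your proof never uses it. The vertices of $\Delta_{h_*}$ are the points $-\bar n_\omega$, where $\bar n_\omega\in\bar N_\RR$ represents $h_*$ on a maximal cell $\omega\in\P_*$ (viewing $\omega\subseteq M_\RR\times\{1\}$), and what one needs is $\bar n_\omega\in\bar N$: then $-\bar n_\omega$ is a nonzero lattice point of $\check\sigma$, hence lies in $\check\sigma^o$, and $\Delta_{h_*}=\Conv\{-\bar n_\omega\}+\check\sigma\subseteq\check\sigma^o$ follows. The paper deduces this integrality from the hypothesis that $\P$ is a triangulation into standard simplices: each maximal $\omega\in\P_*$ then contains a standard simplex, so the integral points of $\Cone(\omega)\cap(M\times\{1\})$ span $\bar M$ over $\ZZ$, and since $h_*$ takes integer values on $\Delta\cap M$ the linear functional $\bar n_\omega$ is forced to be integral. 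Without such an argument the step genuinely fails: the affine function interpolating prescribed integer values at the vertices of a non-unimodular lattice simplex typically has a non-integral gradient, so ``integer values at lattice points'' does not by itself make $\Delta_{h_*}$ a lattice polyhedron. To repair your proof, insert this integrality-of-slopes argument (or an equivalent appeal to Assumption~\ref{overallhypo}) before concluding $\Delta_{h_*}\subseteq\check\sigma^o$.
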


\begin{remark} 
In the language of Gross-Siebert, we have refined the discrete Legendre transform $\sigma\leftrightarrow \check \sigma$ to
$(\Sigma_*,h_*)\leftrightarrow (\check\sigma^o)$. This corresponds to a blow-up $X_\sigma$ and a degeneration of $X_{\check\sigma}$. 
See \cite{Ru12}, \cite{GKR16} for an extension of this point of view.
\end{remark}

\begin{proof}
Define 
$h_*':\Delta\rightarrow\RR$
by
\[
h'_*(m)=-\inf_{\bar n\in\check\sigma^o} \langle \bar n,(m,1)\rangle;
\]
this is also a convex piecewise linear function.

To prove (1), we need to show that in fact $h_*=h_*'$.
To see this, first note that for $m\in \partial\Delta\cap M$,
there exists an $\bar n\in \check\sigma^o\cap\bar{N}$ such that $\langle
\bar n,(m,1)\rangle=0$. Since $\langle \bar n',(m,1)\rangle\ge 0$
for all $\bar n'\in\check\sigma$, we have $h'_*(m)=0$. If $m\in\Int(\Delta)
\cap M$, then $\langle \rho,(m,1)\rangle =1$, while
$\langle \bar n,(m,1)\rangle\ge 1$ for all $\bar n\in\check\sigma^o\cap
\bar{N}$,
so $\langle\bar n,(m,1)\rangle \ge 1$ for all $\bar n\in\check\sigma^o$.
Thus $h_*'(m)=-1$. Now by construction, $h_*$ is clearly the largest
convex function with these values on integral points, so $h_*'(m)\le h_*(m)$
for all $m\in\Delta$. 

On the other hand, suppose $\omega\in\P_*$ is
a maximal cell; then $h_*|_{\omega}$ 
is represented by some
$\bar n_{\omega}\in N_{\RR}\oplus\RR$ on $\omega$, identifying
$\omega$ with $\omega\times\{1\}\subseteq \bar{M}_{\RR}$. By Assumption~\ref{overallhypo}, $\omega$ contains a standard simplex, and hence
the integral points of $\Cone(\omega)\cap (M\times \{1\})$ span $\bar{M}$.
Since $h_*$ only takes integral values on points of $\Delta\cap M$,
we conclude that in fact $\bar n_{\omega}$ is integral, i.e., $\bar n_{\omega}
\in \bar{N}$. Now we observe that
$-\bar n_{\omega}\in \check\sigma^o$, as $\bar n_{\omega}\not=0$ and
$0\ge h_*(m)\ge \langle
\bar n_{\omega},(m,1)\rangle$ for all $m\in\Delta$. So for $m\in\omega$,
$h_*'(m)\ge -\langle -\bar n_{\omega},(m,1)\rangle=h_*(m)$. Thus $h_*=h'_*$.

Because $h_*$ is strictly convex on $\Sigma_*$, we have $\Sigma_*=\check\Sigma_{\check\sigma^o}$ and the remainder of (2) follows from what we discussed before the lemma.

Part (3) follows from the construction of $h_*$ which makes $\P_*$ be the star subdivision of $\Delta$ centered at $\Delta'$.
\end{proof}

We now refine part (3) of the previous lemma and also prove some combinatorial facts that we need later.

\begin{lemma} \label{pdeltadeltaprime}
\begin{enumerate}
\item 
If $\check\Sigma_{\Delta'}$ denotes the normal fan of $\Delta'$ in $N_\RR/\Delta'^\perp$, the projection
\[
N_{\RR}\rightarrow N_{\RR}/\Delta'^{\perp}
\]
induces a map of fans
\[
\check p_{\Delta\Delta'}:\check\Sigma_{\Delta}\ra \check\Sigma_{\Delta'}.
\]
\item There are natural maps
\[
\{\tau|\tau\subseteq\partial\Delta\} 
\stackrel{p_{\Delta\Delta'}^1}{\lra} 
\{\tau|\tau\in\P_*,\tau\not\subseteq\Delta',\tau\cap\Delta'\neq\emptyset\}
\stackrel{p_{\Delta\Delta'}^2}{\lra} 
\{\tau|\tau\subseteq\Delta',\, \dim\tau<\dim\Delta\}.
\]
Here $p^1_{\Delta\Delta'}$ is bijective and takes $\tau\subseteq\partial
\Delta$ to the unique cell $\tau'$ of $\P_*$ with $\tau'\not\subseteq
\Delta'$, $\tau'\cap\Delta'\not=\emptyset$, and $\tau'\cap\partial\Delta=\tau$.
The map $p^2_{\Delta\Delta'}$ is surjective and 
takes $\tau'$ to $\tau'\cap\Delta'$.
We define 
\[
p_{\Delta\Delta'}:\{\tau|\tau\subseteq\Delta\}\ra\{\tau|\tau\subseteq\Delta'\}
\]
to be the composition $p^2_{\Delta\Delta'}\circ p^1_{\Delta\Delta'}$ on
proper faces of $\Delta$, and $p_{\Delta\Delta'}(\Delta)=\Delta'$.
Explicitly, for $\tau\subseteq\Delta$,
$$ \begin{array}{rcl}
p_{\Delta\Delta'}(\tau) &=& \Conv\{p_{\Delta\Delta'}(v)|v\hbox{ is a vertex of }\tau\}\\
&=& \Delta'\cap\bigcap_{i=1}^k\left\{ m\in M_\RR|\langle m, n_{\omega_i}\rangle=-\varphi_\Delta(n_{\omega_i})+1\right\}
\end{array}$$
where $\omega_i$ are the maximal proper faces of $\Delta$ containing $\tau$.
We have $\dim\tau\ge\dim p_{\Delta\Delta'}(\tau)$.
Moreover, $\check p_{\Delta\Delta'}$ is the composition of 
$p_{\Delta\Delta'}$ with the bijections which identify the set of faces of $\Delta$, respectively $\Delta'$, with the corresponding normal fan.
\item The intersection of $\check\sigma^o$ with $\check\Sigma$ induces a subdivision $\P_{\partial\check\sigma^o}$ of
$\partial\check\sigma^o$ where each bounded face is a standard simplex. 
Moreover, under the duality of Lemma~\ref{Pstar},(2),
at most faces dual to $\tau'\subseteq\Delta'$ receive a refinement. For $\tau'\subseteq\Delta'$ and $\check\tau'\subseteq \check\sigma^o$ the corresponding dual face, there is a natural inclusion reversing bijection
$$\{\check\tau\in\P_{\partial\check\sigma^o} | \Int(\check\tau)\subseteq
\Int(\check\tau')\neq\emptyset\}\leftrightarrow p^{-1}_{\Delta\Delta'}(\tau')$$
where the simplex corresponding to $\tau\in p^{-1}_{\Delta\Delta'}(\tau')$ has dimension
$d+1-\dim\tau$.
\end{enumerate}
\end{lemma}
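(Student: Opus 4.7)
\proof
\textbf{My plan.} The strategy is to prove (1), (2), (3) in order, with the combinatorial content concentrated in (2); parts (1) and (3) then follow essentially by dualizing what (2) establishes. The main care point will be verifying in (3) that the induced refinement of $\partial\check\sigma^o$ consists of standard simplices with the claimed dimensions.

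First, for (1), I would invoke Prop.~\ref{DeltaDeltaprimerelation} to identify $\Delta'$ as the Newton polyhedron of $\varphi_\Delta+\varphi_K$, meaning $\Delta'$ is $\Delta$ with every facet translated inward by one integral unit. The normal fan $\check\Sigma_{\Delta'}$ is then automatically a coarsening of $\check\Sigma_\Delta$ after modding out by $\Delta'^\perp$: two cones of $\check\Sigma_\Delta$ merge under projection to $N_\RR/\Delta'^\perp$ precisely when their associated faces of $\Delta$ shift inward to the same face of $\Delta'$. This yields $\check p_{\Delta\Delta'}$, and the compatibility with the combinatorial $p_{\Delta\Delta'}$ of (2) will be recorded at the end of (2).

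Next, for (2), I would start from Lemma~\ref{Pstar}(3), which presents $\P_*$ as a star subdivision of $\Delta$ centered on $\Delta'$. The middle cells are thus exactly the convex hulls $\Conv(\tau\cup\tau'')$ with $\tau\subseteq\partial\Delta$ a proper face and $\tau''\subseteq\Delta'$ its inward-shifted image. Declaring $p^1_{\Delta\Delta'}(\tau):=\Conv(\tau\cup\tau'')$ and $p^2_{\Delta\Delta'}(\tau'):=\tau'\cap\Delta'$ makes $p^1_{\Delta\Delta'}$ a bijection and $p^2_{\Delta\Delta'}$ a surjection immediately. The explicit formula for $p_{\Delta\Delta'}(\tau)$ emerges by writing
\[
\Delta'=\{m\in\Delta\mid\langle m,n_\omega\rangle\ge-\varphi_\Delta(n_\omega)+1 \text{ for every facet } \omega\subseteq\Delta\}
\]
(a direct consequence of Prop.~\ref{DeltaDeltaprimerelation}) and observing that $p_{\Delta\Delta'}(\tau)$ is cut out inside $\Delta'$ by imposing equality in exactly those hyperplanes whose $\omega_i$ are the facets of $\Delta$ containing $\tau$. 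The dimension inequality $\dim\tau\ge\dim p_{\Delta\Delta'}(\tau)$ is automatic (adding equalities cannot raise dimension), and the identification of $\check p_{\Delta\Delta'}$ with $p_{\Delta\Delta'}$ under the normal-fan bijection follows since $N_\Delta(\tau)$ projects onto $N_{\Delta'}(p_{\Delta\Delta'}(\tau))$ under $N_\RR\to N_\RR/\Delta'^\perp$.

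Finally, for (3), I would use Lemma~\ref{Pstar}(2) to pass between faces of $\check\sigma^o$ and cells of $\P_*$, a cell $\tau\in\P_*$ being dual to a face of $\check\sigma^o$ of dimension $(d+1)-\dim\tau$ (from $\dim\Cone(\tau)+\dim(\text{dual face})=d+2$). Since $\check\Sigma$ is the star subdivision of $\check\sigma$ along the interior ray $\rho$, its cones coincide with faces of $\check\sigma$ away from the rays meeting $\rho$; so intersecting with $\check\sigma^o$ refines only the bounded top faces of $\check\sigma^o$ --- these are exactly the faces dual to $\tau'\subseteq\Delta'$ by Lemma~\ref{Pstar}(3). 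Under the refinement of $\P_*$ to $\P$ (Assumption~\ref{overallhypo}), such a top face $\check\tau'$ receives dually a simplicial refinement indexed by $p^{-1}_{\Delta\Delta'}(\tau')$, with inclusion reversed and each piece corresponding to $\tau\in p^{-1}_{\Delta\Delta'}(\tau')$ of dimension $d+1-\dim\tau$ by the same duality. That these pieces are actually standard simplices follows from a local check in the standard coordinates supplied by Assumption~\ref{overallhypo}, which is the point requiring the most attention.
\qed
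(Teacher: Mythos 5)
Your part (1) is fine, but in part (2) you assume precisely what has to be proved. Lemma~\ref{Pstar},(3) only records the trichotomy of cells of $\P_*$ (cells in $\partial\Delta$, ``middle'' cells, cells in $\Delta'$); it does not say that the middle cells are the joins $\Conv(\tau\cup\tau'')$, nor which face $\tau''$ of $\Delta'$ is joined to a given $\tau\subseteq\partial\Delta$, nor that each boundary face $\tau$ underlies exactly one middle cell. That is exactly the content of the bijectivity of $p^1_{\Delta\Delta'}$ and of the identification of $\tau'\cap\Delta'$ with the face cut out by the shifted hyperplanes, and it needs an argument with $h_*$: the paper takes $n\in\Int(\check\tau)$ scaled so that $\varphi_K(n)=-1$ and checks that the affine function $-\langle n,\cdot\rangle-\varphi_\Delta(n)$ agrees with $h_*$ exactly on $\Conv(\tau\cup\tau'')$ (value $0$ on $\tau$, $-1$ on $\tau''$, strictly smaller elsewhere), and conversely that every middle cell with $\tau'\cap\partial\Delta=\tau$ arises from such an $n$. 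Without some such verification, your explicit formula for $p_{\Delta\Delta'}(\tau)$ and the compatibility with $\check p_{\Delta\Delta'}$ are unsupported assertions rather than consequences of Lemma~\ref{Pstar}.

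In part (3) the approach is actually off target. The subdivision $\P_{\partial\check\sigma^o}$ is induced by intersecting $\check\sigma^o$ with the cones of $\check\Sigma$, the canonical star subdivision of $\check\sigma$ along $\rho$ on the $N$-side; it has nothing to do with the refinement of $\P_*$ to $\P$ from Assumption~\ref{overallhypo}, which lives on the $M$-side and plays no role in this lemma. Note also that the refinement pieces are indexed by faces of $\Delta$ in $p^{-1}_{\Delta\Delta'}(\tau')$, not by cells of $\P$, so your indexing does not even match your proposed source of the subdivision. Consequently ``a local check in the standard coordinates supplied by Assumption~\ref{overallhypo}'' checks the wrong thing: standardness of the bounded cells comes from smoothness of $\PP_\Delta$ (each cone $\check\tau+\RR_{\ge 0}\rho$ of $\check\Sigma$ is standard, Prop.~\ref{Sigmafanprop}) combined with the key identity $\check\sigma^o\cap\omega=\Conv((\omega\cap\bar N)\setminus\{0\})$ for $\omega\in\check\Sigma$, which the paper proves via the supporting hyperplane $\langle(p_{\Delta\Delta'}(v),1),\cdot\rangle=1$, i.e., using part (2). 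From that identity one reads off that the cells of $\P_{\partial\check\sigma^o}$ containing $\rho$ are $\Conv(\{\rho\}\cup\check\omega_\tau)$ with $\check\omega_\tau$ the bounded facet dual to $p^1_{\Delta\Delta'}(\tau)$, that the minimal face of $\check\sigma^o$ containing $\rho$ and $\check\omega_\tau$ is dual to $p_{\Delta\Delta'}(\tau)$, and hence the inclusion-reversing bijection and the dimension count $d+1-\dim\tau$. None of these steps appears in your sketch, so (3) as proposed would not go through.
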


\begin{proof}
For (1),
first note that by Proposition \ref{DeltaDeltaprimerelation}, 
$\varphi_{\Delta'}$
is piecewise linear and convex, but not necessarily strictly convex, on
the fan $\check\Sigma_{\Delta}$. The maximal domains of linearity of
$\varphi_{\Delta'}$ define a fan $\check\Sigma'$ of not necessarily
strictly convex cones in $N_{\RR}$, and the fan $\check\Sigma_{\Delta'}$
is then obtained by dividing out each cone in $\check\Sigma'$ by
$\Delta'^{\perp}$. This gives the map of fans $\check p_{\Delta\Delta'}$
of (1).

In particular, if $\tau\subseteq \partial\Delta$ and
$\check\tau$ is the corresponding cone of $\check\Sigma_{\Delta}$,
$n\in\check\tau$, $n\not=0$, we have that $\langle n,\cdot\rangle = 
-\varphi_\Delta(n)$ is a supporting hyperplane of the face $\tau$. The
face of $\Delta'$ corresponding to $\check p_{\Delta\Delta'}(\check\tau)$
is then supported by the hyperplane $\langle n,\cdot\rangle = 
-\varphi_{\Delta'}(n)$. In particular, if $n\in\Int(\check\tau)$,
the image of $n$ in $N_{\RR}/\Delta'^{\perp}$ lies in the interior of
$\check p_{\Delta\Delta'}(\check\tau)$. In this case, $n$ defines a supporting
hyperplane of $\Delta$ which intersects $\Delta$ only in $\tau$, and 
defines a supporting hyperplane of $\Delta'$ which intersects $\Delta'$
only in the face dual to $\check p_{\Delta\Delta'}(\check\tau)$.
This gives a surjective map from the set of faces of $\Delta$ to 
the set of faces of $\Delta'$.
Thus to prove (2), we just need to show that this map is
the map $p_{\Delta\Delta'}$ described in (2).

To show this, we first need to show that for any $\tau\subseteq\partial\Delta$,
there is a unique $\tau'\in\P_*$ such that $\tau'\not\subseteq\partial
\Delta$ and $\tau'\cap\partial\Delta=\tau$. This will show
bijectivity of $p^1_{\Delta\Delta'}$. Furthermore, we need to show
that $\tau'\cap\Delta'$ is the face $\tau''$ of $\Delta'$ corresponding
to $\check p_{\Delta\Delta'}(\check\tau)$.

To show both these items, let $n\in\Int(\check\tau)$ be chosen so
that $\varphi_K(n)=-1$. Then the affine linear function
$-\langle n,\cdot\rangle-\varphi_{\Delta}(n)$ takes the value $0$ on
$\tau$ and is strictly negative on $\Delta\setminus\tau$, while
$-\langle n,\cdot\rangle -\varphi_{\Delta'}(n)$ takes the value
$0$ on $\tau''$ and is strictly negative on $\Delta'\setminus\tau''$.
Since $\varphi_{\Delta'}=\varphi_{\Delta}+\varphi_K$ by Proposition
\ref{DeltaDeltaprimerelation}, $-\langle n,\cdot\rangle -\varphi_{\Delta}(n)$
takes the value $0$ on $\tau$ and the value $-1$ on $\tau''$. So
\begin{align*}
&\hbox{$-\langle n,m\rangle-\varphi_{\Delta}(n)=h_*(m)$ for $m\in 
\Conv(\tau,\tau'')=:\tau'$,}\\
&\hbox{$-\langle n,m\rangle-\varphi_{\Delta}(n)<h_*(m)$ for $m\in 
\Delta\setminus\tau'$}
\end{align*}
by the definition of $h_*$. Thus $\tau'\in\P_*$ and $\tau'\cap\partial
\Delta=\tau$, $\tau'\cap\Delta'=\tau''$, so $\tau'$ is as desired.

Finally, given a cell $\tau'\in\P_*$
with $\tau'\cap\partial\Delta=\tau$, $\tau'\not\subseteq\partial\Delta$,
we need to show that $\tau'$ is as constructed above. Indeed,
there is an affine linear function $-\langle n,\cdot\rangle+c$ which
coincides with $h_*$ on $\tau'$ and is smaller than $h_*$ on $\Delta
\setminus\tau'$. Then necessarily the hyperplane $\langle n,\cdot\rangle -c=0$
intersects $\Delta$ precisely in the face $\tau$, so this hyperplane
is a support hyperplane for the face $\tau$. Thus $n\in\Int(\check\tau)$ and
$c=-\varphi_{\Delta}(n)$. Furthermore, for $-\langle n,\cdot\rangle +c$
to take the value $-1$ on $\tau'\cap\Delta'$, we must have $\varphi_K(n)=-1$.
Thus $\tau'$ is as constructed in the previous paragraph.

The remaining statements of (2) follow easily from the above discussion.

For (3), by Prop.~\ref{Sigmafanprop}, we have
$$\check\Sigma = \{\{0\},\RR_{\ge 0}\rho\} \cup \{\check\tau|\tau\subseteq\partial\Delta\}
\cup\{\RR_{\ge 0}\rho+\check\tau|\tau\subseteq\partial\Delta\}$$
where $\check\tau=\Cone(\tau)^\perp\cap\check\sigma$.
We claim that, for $\omega\in\check\Sigma$,
$$\check\sigma^o\cap\omega = \Conv((\omega\cap\bar{N})\backslash\{0\}).$$
Indeed, $\omega$ is a standard cone which, w.l.o.g, we may assume maximal.
Say $v_0,v_1,\ldots,v_{d+1}$ are its primitive integral generators with $v_0=\rho$.
Let $v\in\Delta$
be the integral generator of the ray dual to the cone generated by $v_1,\ldots,v_{d+1}$. Then
$\langle (p_{\Delta\Delta'}(v),1),\cdot\rangle = 1$ contains $v_0,\ldots,v_{d+1}$ and is a supporting hyperplane of $\check\sigma^o$. Thus this hyperplane
supports
$\Conv\{v_0,\ldots,v_{d+1}\}$ as a face of $\check\sigma^o\cap\omega$.
Since $\Conv((\omega\cap \bar N)\setminus\{0\})= 
\{\sum_i \lambda_i v_i|\sum_i\lambda_i\ge 1\}$, the claim follows.


Now $\check\Sigma$ induces a subdivision of $\check\sigma^o$ (resp.\ $\partial\check\sigma^o$) which we denote by $\P_{\check\sigma^o}$ (resp.\ $\P_{\partial\check\sigma^o}$), see Fig.~\ref{sigmaosub}.
\begin{figure}
\input{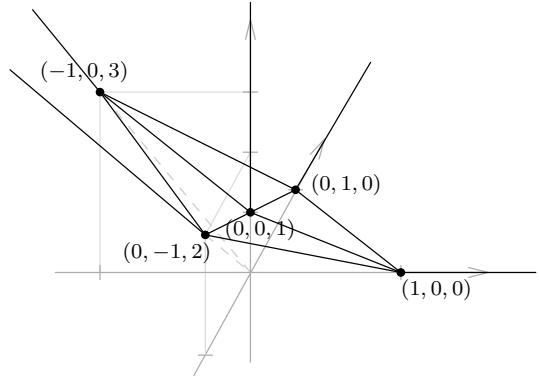}
\caption{$\P_{\check\sigma^o}$ for Example~\ref{basicexamples2}}
\label{sigmaosub}
\end{figure}
At most faces of $\check\sigma^o$ which contain $\rho$ are effected by the 
subdivision. By the claim,
the cells in $\P_{\partial\check\sigma^o}$ properly containing $\rho$ are 
\[
\{\Conv\{\rho,v^{\check\tau}_1,..,v^{\check\tau}_{r_{\check\tau}}\}|\tau\subseteq\partial\Delta\}
\]
where $v^{\check\tau}_1,\ldots,v^{\check\tau}_{r_{\check\tau}}$ denote the primitive integral generators of $\check\tau$, the cone of $\check\Sigma$
corresponding to $\tau$. 
Thus, these cells are in natural bijection with faces of $\Delta$ ($\rho$ itself corresponding to $\Delta$).
Note that under the duality of Lemma~\ref{Pstar},(2), the face of
$\check\sigma^o$ dual to $\tau\subsetneq\Delta$ is an unbounded
face. Again by the claim, each such face has one bounded facet, which is
\[
\check\omega_{\tau}:=\Conv\{v^{\check\tau}_1,\ldots,
v^{\check\tau}_{r_{\check\tau}}\}.
\] 
By
the argument for (2) above, $\check\omega_{\tau}$ is dual to 
$p^1_{\Delta\Delta'}(\tau)$.
In turn, the minimal face of $\check\sigma^o$ containing both
$\rho$ and $\check\omega_{\tau}$ is dual to $p_{\Delta\Delta'}(\tau)$.
This demonstrates the inclusion reversing bijection.
The dimension formula follows from duality and what we said already.
\end{proof}

The Newton polytope of $w$ is given by
\[
\check\Delta_0=\check\Delta\cap\check\sigma^o=\Conv\big(\{\rho\}\cup \{ (n_{\tau},\varphi_{\Delta}(n_{\tau}))
\,|\,\hbox{$\tau\subseteq\Delta$ a codimension one face of $\Delta$}\}\big)
\]
which is also the convex hull of the bounded faces of $\check\sigma^o$.
Lemma~\ref{Pstar} then implies

\begin{lemma} \label{dimDelta0}
We have $\dim\check\Delta_0=d+2$ for $\dim\Delta'>0$ and $\dim\check\Delta_0=
d+1$ for $\dim\Delta'=0$.
\end{lemma}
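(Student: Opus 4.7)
The plan is to show that $\check\Delta_0$ always has affine span of dimension at least $d+1$, and that it has dimension exactly $d+1$ (rather than $d+2$) if and only if $\dim\Delta'=0$.

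For the first step, I observe that $\check\Delta_0$ lies in $\bar N_\RR$, which has dimension $d+2$. The primitive generators of the rays of $\check\sigma$ are exactly the points $(n_\tau,\varphi_\Delta(n_\tau))$ for $\tau$ a facet of $\Delta$, and these linearly span $\bar N_\RR$ because $\check\sigma$ is full-dimensional. Together with $\rho=(0,1)$ they linearly span $\bar N_\RR$, so the affine span of the vertices of $\check\Delta_0$ has dimension either $d+1$ or $d+2$.

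Next, I want to characterize when the affine span has dimension $d+1$, i.e., when all vertices lie on a common affine hyperplane $\{a+\langle n,m\rangle+br=0\}$ in $\bar N_\RR$. Evaluating at $\rho$ gives $a+b=0$. If $a=0$ then also $b=0$ and $\langle n_\tau,m\rangle=0$ for all $\tau$, which forces $m=0$ because the $n_\tau$ span $N_\RR$; this contradicts the assumption of a hyperplane. So $a\neq 0$, and after rescaling the hyperplane equation becomes $r+\langle n,m\rangle=1$. The condition that the remaining vertices lie on this hyperplane reads
\[
\varphi_\Delta(n_\tau)=1-\langle n_\tau,m\rangle \qquad \text{for every facet } \tau\subseteq\Delta.
\]

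The key step, which I expect to be the crux, is to show this condition is equivalent to $\dim\Delta'=0$. For this I would invoke Prop.~\ref{DeltaDeltaprimerelation}: since $\varphi_K(n_\tau)=-1$ on every ray generator, the displayed equation is equivalent to $\varphi_{\Delta'}(n_\tau)=-\langle n_\tau,m\rangle$ on each ray of $\check\Sigma_\Delta$. The function $\varphi_{\Delta'}$ is piecewise linear on $\check\Sigma_\Delta$, hence linear on each maximal cone, and each maximal cone of $\check\Sigma_\Delta$ is standard (so its ray generators form a basis). Agreement of the linear function $\varphi_{\Delta'}|_{\check\tau_v}$ with the linear function $-\langle\cdot,m\rangle$ on this basis forces agreement on all of $\check\tau_v$; as this holds for every maximal cone, $\varphi_{\Delta'}=-\langle\cdot,m\rangle$ globally, which means $\Delta'=\{m\}$. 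Conversely, if $\Delta'=\{m\}$ then $\varphi_{\Delta'}=-\langle\cdot,m\rangle$ everywhere, and adding $-\varphi_K$ recovers the displayed equation. This completes the equivalence and hence the lemma, since the dichotomy $\dim\check\Delta_0\in\{d+1,d+2\}$ now matches the dichotomy $\dim\Delta'\in\{0,>0\}$.
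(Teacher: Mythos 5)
Your proof is correct, but it takes a genuinely different route from the paper's. The paper gets the lemma as an immediate consequence of the duality in Lemma~\ref{Pstar}: $\check\Delta_0$ is the convex hull of the bounded faces of $\check\sigma^o$, and under the correspondence between faces of $\check\sigma^o$ and cells of $\P_*$, the bounded faces are exactly those dual to cells meeting $\Delta'$; when $\dim\Delta'=0$ every such cell contains the unique interior lattice point $v$, so all bounded faces lie in the single $(d+1)$-dimensional facet dual to $\{v\}$ and $\check\Delta_0$ equals that facet, while when $\dim\Delta'>0$ two distinct vertices of $\Delta'$ give two distinct bounded facets of $\check\sigma^o$, whose convex hull is already $(d+2)$-dimensional. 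You instead argue directly from the defining points $\rho$ and $(n_\tau,\varphi_\Delta(n_\tau))$: the bound $\dim\check\Delta_0\ge d+1$ comes from the extreme rays of the pointed, full-dimensional cone $\check\sigma$ spanning $\bar N_\RR$, and coplanarity of the points is translated, via $\varphi_{\Delta'}=\varphi_\Delta+\varphi_K$ (Prop.~\ref{DeltaDeltaprimerelation}) and linearity of $\varphi_{\Delta'}$ on each maximal cone of $\check\Sigma_\Delta$, into global linearity of $\varphi_{\Delta'}$, i.e.\ into $\Delta'$ being a single point; note that you implicitly use the standing assumption $\Delta'\neq\emptyset$ to pass from $\Delta'\subseteq\{m\}$ to $\dim\Delta'=0$, and that the propagation step needs only that the ray generators of a maximal cone span $N_\RR$, so smoothness enters your argument only through Prop.~\ref{DeltaDeltaprimerelation} itself. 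What each approach buys: yours is self-contained and elementary, independent of the $\check\sigma^o$--$\P_*$ duality; the paper's reuses machinery it has already set up and makes it transparent that in the case $\dim\Delta'=0$ the polytope $\check\Delta_0$ is an actual facet of $\check\sigma^o$, a fact invoked later in the proof of Prop.~\ref{W0descprop} (your supporting hyperplane $r+\langle n,v\rangle=1$ recovers this with one extra remark).
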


We set
\begin{equation}
\label{stricttrafo}
 W_t = \overline{w^{-1}(t)\cap(\CC^*)^{d+2}}
\end{equation}
where the overline denotes the closure in $X_\Sigma$. Now, $W_t$ is the strict transform of the hypersurface of $X_\sigma$ given by the same equation because the maps $X_\Sigma\ra X_{\Sigma_*}\ra X_\sigma$ restricted to $(\CC^*)^{d+2}$ give isomorphisms. So we may also take the closure (\ref{stricttrafo}) in $X_{\Sigma_*}$ which we then denote by $W^*_t$.
To complete the notation, let $\bar{W_t}$ denote the closure of ${W_t}$ in $X_{\bar{\Sigma}}$ and $\tilde{W_t}$ the closure in $\tilde\PP_{\check\Delta}$ such that we have a diagram
\begin{equation}
\label{setupWs}
\begin{split}
\xymatrix@C=30pt
{
W^{\sigma}_t\ar@{_{(}->}[d] 
& W^*_t\ar[l]\ar@{_{(}->}[d]
& W_t\ar[l]\ar@{_{(}->}[d]\ar@{^{(}->}[r]
& \bar W_t\ar@{_{(}->}[d]
& \tilde{W}_t\ar@{_{(}->}[d]\ar_{\sim}[l]\\
X_{\sigma}
& X_{\Sigma_*}\ar[l]
& X_{\Sigma}\ar@{^{(}->}[r]\ar[l]
& X_{\bar\Sigma}
& \tilde\PP_{\check\Delta}.\ar[l]
}
\end{split}
\end{equation}
Given $\tau\in\P$, let $\P_*(\tau)$ denote the smallest cell of $\P_*$ containing $\tau$. For $\tau\in\P_*$, we set
$$\check\Delta_{\tau}=\check\Delta_0\cap \check\tau,$$
where $\check\tau$ denotes the face of $\check\sigma^o$ dual to $\tau$.

\begin{proposition}
\label{W0descprop}
\begin{enumerate}
\item For $\tau\in\P_*$, the Newton polytope of the hypersurface\footnote{We use the notation $V(\tau)$ as shorthand for $V(\Cone(\tau))$.} $V(\tau)\cap W^*_0$ in $V(\tau)$ is $\check\Delta_\tau$. For $v\in\Delta'$ a vertex, the divisor 
$W_0^*\cap D_v$ is ample in $D_v$ where $D_v\subseteq X_{\Sigma}$ is the toric divisor corresponding to
the ray $\Cone(v)\in \Sigma_*$.
\item The intersection of $\bar W_t$ with every closed toric stratum
 in $X_{\bar\Sigma}$ is either empty or smooth for $t=0$ and $t\in\CC$ general. 
For $\tau\in\P$, the Newton polytope of the hypersurface $V(\tau)\cap W_0$ in $V(\tau)$ is
$\check\Delta_{\P_*(\tau)}$. 
\item For $t\neq 0$, we have $w^{-1}(t)=W_t$. For $t=0$, we have
$$
w^{-1}(0)=W_0\cup \bigcup_{v\in\Int(\Delta)\cap M} D_v.
$$
Furthermore, $w^{-1}(0)$ is normal crossings.
\end{enumerate}
\end{proposition}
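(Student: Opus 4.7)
The plan is to prove the three parts in order, using the standard toric dictionary for (1), Batyrev's nondegeneracy theorem for (2), and a direct order-of-vanishing computation for (3).

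For (1), I exploit Lemma \ref{Pstar}, which identifies $X_{\Sigma_*}=\PP_{\check\sigma^o}$ with $\Sigma_*$ the normal fan of $\check\sigma^o$. For $\tau\in\P_*$, the torus orbit corresponding to $\Cone(\tau)$ has character lattice $\Cone(\tau)^\perp\cap\bar N$, so the restriction of $w=\sum c_m z^m$ keeps only those terms with $m\in\Cone(\tau)^\perp$, giving Newton polytope $\check\Delta_0\cap\Cone(\tau)^\perp=\check\Delta_\tau$. For the ampleness assertion with $v$ a vertex of $\P_*$ in $\Delta'\cap M$, the face of $\check\sigma^o$ dual to $\Cone(v)$ is a bounded $(d{+}1)$-dimensional face (by Lemma \ref{Pstar}(3) together with Lemma \ref{pdeltadeltaprime}) and equals $\check\Delta_v$; since by toric duality this face is the polytope defining $D_v$ as a polarized toric variety, the hypersurface $W_0^*\cap D_v$ with Newton polytope $\check\Delta_v$ is ample.

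For (2), I apply $\check\Delta$-regularity in the sense of \cite{batyrev2},\,Def.\,3.1.1, paralleling the argument in Prop.~\ref{properWprop}. For general coefficients $c_\rho,c_\tau$ and either $t=0$ or $t\in\CC$ general, $w-t$ is nondegenerate with respect to its Newton polytope, so by \cite{batyrev2},\,Prop.\,3.2.1 the strict transform $\bar W_t$ in $X_{\bar\Sigma}$ meets every closed toric stratum either in the empty set or in a smooth hypersurface. The Newton polytope of $V(\tau)\cap W_0$ for $\tau\in\P$ is then computed exactly as in (1) after observing that $\Cone(\P_*(\tau))$ is the smallest cone of $\Sigma_*$ containing $\Cone(\tau)$, hence $\Cone(\tau)^\perp\cap\check\sigma^o=\Cone(\P_*(\tau))^\perp\cap\check\sigma^o=\check\Delta_{\P_*(\tau)}$.

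For (3), I determine on which toric divisors $D_v\subseteq X_\Sigma$ the function $w$ vanishes identically. Since the support of $w$ consists of the vertices of $\check\Delta_0$, $w|_{D_v}\equiv 0$ iff $\langle (v,1),m\rangle>0$ for every such vertex $m$. A direct computation gives $\langle (v,1),\rho\rangle=1$ and $\langle (v,1),(n_\tau,\varphi_\Delta(n_\tau))\rangle=\langle v,n_\tau\rangle+\varphi_\Delta(n_\tau)\ge 0$, with equality iff $v\in\tau$; hence $w$ vanishes on $D_v$ precisely when $v$ meets no facet of $\Delta$, i.e., $v\in\Int(\Delta)\cap M$. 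For $t\neq 0$ this cannot occur for $w-t$, so $w^{-1}(t)=W_t$, while for $t=0$ we obtain the stated decomposition $w^{-1}(0)=W_0\cup\bigcup_{v\in\Int(\Delta)\cap M}D_v$. The normal crossings assertion then follows from (2): $X_\Sigma$ is smooth toric so its toric boundary is SNC, $W_0$ is smooth and meets each toric stratum transversally, and consequently the union of $W_0$ with the smooth toric divisors $D_v$ is normal crossings. The most delicate step is the application of nondegeneracy at $t=0$, where the Newton polytope degenerates from $\check\Delta$ to the strictly smaller $\check\Delta_0$; one needs to verify that genericity of $(c_\rho,c_\tau)$ still forces nondegeneracy, and that \cite{batyrev2},\,Prop.\,3.2.1 descends from $\Sigma_*$ to the refinement $\bar\Sigma$.
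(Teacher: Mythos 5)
Your part (3) is essentially the paper's argument (vanishing orders $\langle (n,r),(v,1)\rangle$ along $D_v$, multiplicity one from $z^\rho$, no toric component for $t\neq 0$), and that portion is fine. The problem is in (1), and it propagates into the Newton-polytope claim of (2). You compute $V(\tau)\cap W^*_0$ by restricting $w$ to the orbit closure, keeping the terms with exponent in $\Cone(\tau)^\perp$, and assert the Newton polytope is $\check\Delta_0\cap\Cone(\tau)^\perp=\check\Delta_\tau$. But $W^*_0$ is the \emph{strict transform} (the closure of $w^{-1}(0)\cap(\CC^*)^{d+2}$), not $w^{-1}(0)$, and restriction to the orbit only computes the latter. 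The two differ exactly when $w$ vanishes identically on $V(\tau)$, which by your own computation in (3) happens for every $\tau$ meeting $\Int(\Delta)$: for $v\in\Delta'\cap M$ one has $\langle (v,1),n\rangle>0$ for all $n\in\check\sigma\setminus\{0\}$, so $\Cone(v)^\perp\cap\check\Delta_0=\emptyset$, and your formula would make $W^*_0\cap D_v$ empty — contradicting the ampleness you then argue via the bounded dual facet $\check\Delta_v$, which sits at level $\langle (v,1),\cdot\rangle=1$, not in $\Cone(v)^\perp$. The correct face is the one on which the functionals of $\Cone(\tau)$ attain their minimum $-h_*$ on $\check\Delta_0$, i.e.\ the face $\check\tau\cap\check\Delta_0$ dual to $\tau$ under Lemma~\ref{Pstar},(2); your identification $\check\Delta_\tau=\check\Delta_0\cap\Cone(\tau)^\perp$ is only valid for $\tau\subseteq\partial\Delta$, which are not the strata the proposition is really about. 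The same issue invalidates the step $\Cone(\tau)^\perp\cap\check\sigma^o=\Cone(\P_*(\tau))^\perp\cap\check\sigma^o=\check\Delta_{\P_*(\tau)}$ in your (2).

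The paper's device that repairs this (and which you would need, or an equivalent) is the morphism $f:X_{\Sigma_*}\to\PP_{\check\Delta_0}$ induced by the inclusion $\check\Delta_0\subseteq\check\sigma^o$, using that every bounded face of $\check\sigma^o$ is a face of $\check\Delta_0$ (with the case $\dim\Delta'=0$ treated separately, since there $\check\Delta_0$ is only a facet of $\check\sigma^o$). Then $W^*_0=f^{-1}(W_{\check\Delta_0})$ for an \emph{ample}, $\check\Delta_0$-regular hypersurface $W_{\check\Delta_0}$, and on each stratum $V(\tau)=\PP_{\check\tau}$ the restriction of $f$ is $\PP_{\check\tau}\to\PP_{\check\tau\cap\check\Delta_0}$, which gives the Newton polytope $\check\Delta_\tau$, the ampleness of $W^*_0\cap D_v$, and — via Batyrev's Prop.\,3.2.1 applied to the pullback along $X_{\bar\Sigma}\to X_{\Sigma_*}\to\PP_{\check\Delta_0}$ — exactly the $t=0$ nondegeneracy and descent to $\bar\Sigma$ that you flag at the end as "the most delicate step" but do not resolve. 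As written, your proposal leaves that step open and bases (1) on an incorrect orbit-restriction computation, so the argument has a genuine gap.
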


\begin{figure}
\input{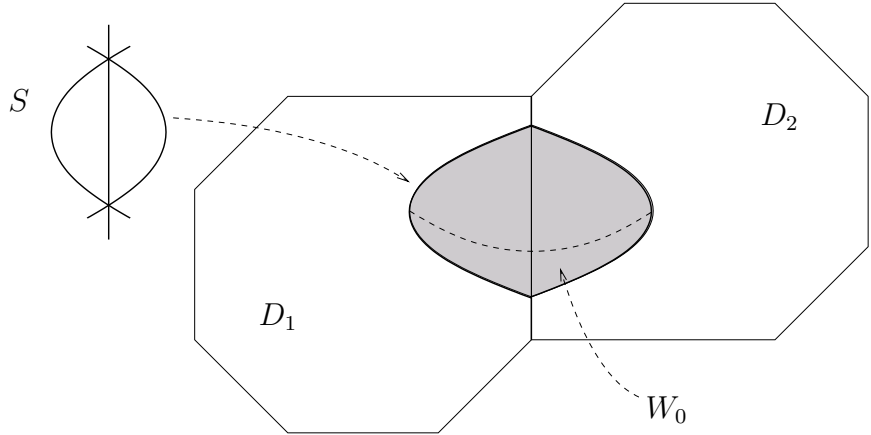}
\caption{The central fibre $w^{-1}(0)$ of the Landau-Ginzburg that hosts the mirror dual of a genus two curve, cf. Fig.~\ref{genus2new}}
\label{three-surfaces}
\end{figure}

\begin{proof}
Consider the embedding of polyhedra $\check\Delta_0\hra\check\sigma^o$.
First assume that $\dim\Delta'>0$, so that $\dim\check\Delta_0=\dim
\check\sigma^o$ by Lemma~\ref{dimDelta0}.
In view of Lemma~\ref{Pstar},(2), for $\tau\in\P_*$, $\Cone(\tau)$
is the normal cone to a face of $\check\sigma^o$. From this embedding
and the fact that every bounded face of $\check\sigma^o$ is also a bounded
face of $\check\Delta_0$, we see that $\Cone(\tau)$ is contained in
a normal cone of $\check\Delta_0$, equal to a normal
cone of $\check\Delta_0$ provided that $\tau\subseteq\Delta'$.
Thus the embedding of polytopes induces a morphism of toric varieties 
$f:X_{\Sigma_*}\ra \PP_{\check\Delta_0}$. 
On the other hand,
if $\dim\Delta'=0$, then by Lemma~\ref{dimDelta0} one sees that
$\check\Delta_0$ is a face of $\check\sigma^o$, and the projection
$\bar M\rightarrow \bar M/\ZZ m$  for $m$ the normal vector to the face
$\check\Delta_0$ induces again a morphism of toric
varieties $f:X_{\Sigma_*}\rightarrow\PP_{\check\Delta_0}$. 
In either case,
$W_0^*=f^{-1}(W_{\check\Delta_0})$ for an ample hypersurface $W_{\check\Delta_0}\subset\PP_{\check\Delta_0}$ given by the same equation as $W_0^*$. 
Given $\tau\in\P_*$, its dual $\check\tau$ is a face of $\check\sigma^o$ and we have $V(\tau)=\PP_{\check\tau}$. The restriction of $f$ to 
$\PP_{\check\tau}$ yields the natural map $\PP_{\check\tau}\ra \PP_{\check\tau\cap\check\Delta_0}$. This is an isomorphism if $\tau\not\subseteq\partial\Delta$.
In any case, the Newton polytope of 
$W_0^*\cap \PP_{\check\tau}$ is isomorphic to that of
$W_{\check\Delta_0}\cap \PP_{\check\tau\cap\check\Delta_0}$ which is $\check\tau\cap\check\Delta_0$ by ampleness of $W_{\check\Delta_0}$. In particular, $W_0^*\cap \PP_{\check\tau}$
is ample in $\PP_{\check\tau}$ if $\tau\not\subseteq\partial\Delta$.

We have shown (1) and will now deduce (2). The assertion that the Newton polytope of $V(\tau)\cap W_0$
is $\check\Delta_{\P_*(\tau)}$ follows from the fact that $W_0$ is the pullback of $W_0^*$ under the map $X_\Sigma\ra X_{\Sigma_*}$ which takes a stratum 
$V(\tau)$ to $V(\P_*(\tau))$. 
Since the coefficients of $W_{\check\Delta_0}$ are assumed general, 
$W_{\check\Delta_0}$ is $\check\Delta_0$-regular. The remainder of (2) follows from the fact that regularity is preserved under pullback, see 
\cite{Ba94},\,Prop.\,3.2.1, and the smoothness of $X_{\bar\Sigma}$ in a neighbourhood of the closure of $W_t$.

Finally, for (3), note that, for $t\neq 0$, $W_t$ is the proper transform
of the hypersurface $W^\sigma_t$ in $X_\sigma$ because $W^\sigma_t$ is $\sigma$-regular, which is not true for $W^\sigma_0$ because the latter contains the origin of $X_\sigma$. Since $w^{-1}(0)$ is the total transform of $W^\sigma_0$, isomorphic over the dense torus, the irreducible components of $w^{-1}(0)$ different from $W_0$ need to be toric divisors of $X_\Sigma$, the set of which is indexed by $\Delta\cap M$. The multiplicities may be computed locally as follows:
A standard fact of toric geometry says that the
monomial $z^{(n,r)}$ vanishes to order
$\langle (n,r),(v,1)\rangle =\langle n,v\rangle+r$ along $D_v$.
In particular, if $v\not\in\partial\Delta$, then for any $(n,r)\in
\check\sigma^o$, $\langle (n,r),(v,1)\rangle >0$, so all the
monomials $z^{(n,r)}$ appearing in $w$ vanish on $D_v$. Furthermore,
the monomial $z^{\rho}$ vanishes to order $1$ on $D_v$, so 
$D_v\subseteq w^{-1}(0)$ and $D_v$ appears with multiplicity one.
On the other hand, if $v\in\partial\Delta$, there is at least one
monomial $z^{(n,r)}$ appearing in $w$ not vanishing on $D_v$. Moreover, all  such non-vanishing monomials are linearly independent after restriction to $D_v$.
\end{proof}

\begin{corollary}  \label{handlebodies}
For $\tau\in\P$, $\tau\subset\partial\Delta$, denoting by $T_\tau$ the torus orbit of $X_{\Sigma}$ corresponding to $\Cone(\tau)$, we have
$$\begin{array}{rcl}
w^{-1}(t)\cap T_\tau 
&\cong& H^{\codim \P_*(\tau)-1}\times (\CC^*)^{\dim\P_*(\tau)-\dim\tau} 
\qquad \hbox{ for }t\neq 0,\\
w^{-1}(0)\cap T_\tau 
&\cong& H^{\codim \P_*(\tau)-2}\times (\CC^*)^{\dim\P_*(\tau)-\dim\tau+1}
\end{array}
$$
where $\codim \P_*(\tau)=d+1-\dim\P_*(\tau)$ and $H^k$ denotes a $k$-dimensional handlebody, i.e., the intersection $H\cap (\CC^*)^{k+1}$ for a general hyperplane $H$ in $\PP^{k+1}$.
\end{corollary}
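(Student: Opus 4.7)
The plan is to reduce to a Newton-polytope computation of the restriction of $w-t$ (respectively $w$) to the torus orbit $T_\tau$, and then to apply the standard identification of a toric hypersurface with standard-simplex Newton polytope and generic coefficients as a handlebody times a torus. Write $\tau_\Delta:=\P_*(\tau)$; by Lemma~\ref{Pstar}(3) this is the unique face of $\Delta$ whose relative interior contains $\Relint(\tau)$. The key preliminary observation is that since $\Relint(\tau)\subseteq\Relint(\tau_\Delta)$, for any codimension one face $\omega\subset\Delta$ one has $\tau\subseteq\omega$ if and only if $\tau_\Delta\subseteq\omega$. Moreover, since $\tau\subset\partial\Delta$, no vertex of $\tau$ lies in $\Int(\Delta)\cap M$, so no $\Cone(v)$ with $v\in\Int(\Delta)\cap M$ is a ray of $\Cone(\tau)$; thus $D_v\cap T_\tau=\emptyset$ for all such $v$, and $w^{-1}(0)\cap T_\tau=W_0\cap T_\tau$.

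Next I would determine which monomials of $w-t$ restrict non-trivially to $V(\tau)$, i.e., which lie in $\Cone(\tau)^\perp$. The monomial $z^\rho$ does not restrict since $\langle(0,1),(v,1)\rangle=1$, while for a codimension one face $\omega\subset\Delta$ we have $\langle(n_\omega,\varphi_\Delta(n_\omega)),(v,1)\rangle=\langle n_\omega,v\rangle+\varphi_\Delta(n_\omega)\ge 0$ with equality iff $v\in\omega$, so $z^{(n_\omega,\varphi_\Delta(n_\omega))}$ restricts iff $\tau_\Delta\subseteq\omega$. Smoothness of $\PP_\Delta$ implies that the cone of $\check\Sigma_\Delta$ dual to $\tau_\Delta$ is standard, so the primitive generators $\{n_\omega\}_{\tau_\Delta\subseteq\omega}$ are part of a $\ZZ$-basis of $N$, and hence their lifts $\{(n_\omega,\varphi_\Delta(n_\omega))\}$ form a $\ZZ$-basis of the saturated sublattice $L:=\tau_\Delta^\perp\cap\bar N\subseteq\tau^\perp\cap\bar N$, of rank $\codim\P_*(\tau)$. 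Since $L$ is saturated, the character-lattice sequence splits and $T_\tau$ decomposes (non-canonically) as $T'\times T''$ with $\dim T'=\codim\P_*(\tau)$ and $\dim T''=\dim\P_*(\tau)-\dim\tau$; by construction $(w-t)|_{T_\tau}$ is pulled back from $T'$.

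For $t\ne 0$ the resulting Laurent polynomial on $T'$ has Newton polytope $\Conv\{0\}\cup\{(n_\omega,\varphi_\Delta(n_\omega))\}_{\tau_\Delta\subseteq\omega}$, a standard simplex spanning $L$; genericity of the coefficients $c_\omega$ in \eqref{Wpotential} makes the hypersurface coefficients generic, so its zero locus in $T'$ is the intersection of a generic hyperplane of $\PP^{\codim\P_*(\tau)}$ with $(\CC^*)^{\codim\P_*(\tau)}$, i.e., the handlebody $H^{\codim\P_*(\tau)-1}$; multiplying by $T''$ yields the first formula. For $t=0$ I would factor out $z^{(n_{\omega_0},\varphi_\Delta(n_{\omega_0}))}$ for some fixed $\omega_0\supseteq\tau_\Delta$; the remaining Laurent polynomial has Newton polytope a standard simplex of dimension $\codim\P_*(\tau)-1$ lying in a saturated corank-one sublattice $L'\subsetneq L$ (the translates $e_\omega-e_{\omega_0}$ are part of a basis of $L$), and so defines the handlebody $H^{\codim\P_*(\tau)-2}$ on the corresponding codimension-one subtorus of $T'$. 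The residual $\CC^*$ factor in $T'$ merges with $T''$ to give the $(\CC^*)^{\dim\P_*(\tau)-\dim\tau+1}$ factor.

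The main technical point will be establishing that the lifts $(n_\omega,\varphi_\Delta(n_\omega))_{\tau_\Delta\subseteq\omega}$ generate the sublattice $L$ over $\ZZ$ and hence produce a standard simplex; this reduces to the standardness of the dual cone of $\tau_\Delta$ in $\check\Sigma_\Delta$, which is precisely the smoothness of $\PP_\Delta$. Everything else is routine toric/Newton-polytope combinatorics together with the generic-coefficient identification of a standard-simplex Laurent polynomial with a hyperplane-section handlebody.
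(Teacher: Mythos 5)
Your proposal is correct and is essentially the paper's own argument: both reduce to showing the relevant Newton polytope is a standard simplex (via smoothness of $\PP_\Delta$, i.e.\ standardness of the dual cone/normal cone at $\P_*(\tau)$) and then peel off the torus factor coming from $T_\tau\to T_{\P_*(\tau)}$, your $T'$ being exactly $T_{\P_*(\tau)}$. The only difference is presentational: the paper cites Prop.~\ref{W0descprop} and Lemma~\ref{pdeltadeltaprime},(3) together with the projection $X_\Sigma\to X_{\Sigma_*}$, whereas you re-derive the monomial restriction and the lattice-basis claim directly on $T_\tau$.
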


\begin{proof} Given $\tau$ as in the assertion then $\P_*(\tau)$ is a proper face of $\Delta$. By Prop.~\ref{W0descprop} and
Lemma~\ref{pdeltadeltaprime},(3), $\check\Delta_{\P_*(\tau)}$, the Newton polytope of $W^*_0\cap T_{\P_*(\tau)}$, is a standard simplex. It is the convex hull
of the primitive generators of the face of $\check\sigma$ dual
to the face $\Cone(\P_*(\tau))$ of $\sigma$. Thus 
the Newton polytope of $W^*_t\cap T_{\P_*(\tau)}$ for $t\neq 0$ is 
$\Conv(\{0\}\cup \check\Delta_{\P_*(\tau)})$. Checking dimensions implies
$W^*_0\cap T_{\P_*(\tau)}=H^{d-\dim \P_*(\tau)-1} \times\CC^*$ and
$W^*_t\cap T_{\P_*(\tau)}=H^{d-\dim \P_*(\tau)}$ for $t\neq 0$. The assertion follows from the fact that the restriction of the map $X_{\check\Sigma}\ra X_{\Sigma_*}$ to $T_\tau$ is a projection
$T_\tau\cong T_{\P_*(\tau)}\times (\CC^*)^{\dim\P_*(\tau)-\dim\tau}\ra T_{\P_*(\tau)}$ and $w^{-1}(t)\cap T_\tau$ is the pullback of 
$W^*_t\cap T_{\P_*(\tau)}$ under this map.
\end{proof}

\subsection{The intersection complex of $w^{-1}(0)$}
Recall the following standard definition:

\begin{definition} Let $X=\bigcup_{i\in I} X_i$ be a strictly normal
crossings variety. The \emph{dual intersection complex} $\Gamma_X$ of  
$X$ is the simplicial complex with vertices the index set $I$ and
there is one simplex $\langle i_0,\ldots,i_p\rangle$ for every
connected component of $X_{i_0}\cap \cdots
\cap X_{i_p}$.
\end{definition}

\begin{proposition} \label{propdualintcomplex}
The set of vertices of the dual intersection complex
$\Gamma_{w^{-1}(0)}$ of $w^{-1}(0)$ is 
\[
(\Delta'\cap M) \cup \{u\}
\]
where $u$ represents $W_0$.
The precise structure of $\Gamma_{w^{-1}(0)}$ depends on $\dim\Delta'$:
\begin{enumerate}
\item If $\dim\Delta'\le d-1$ then $\Gamma_{w^{-1}(0)}$ is the cone
over $\Delta'$. Precisely, the simplices are
\[
\{\langle u\rangle\}
\cup
\{\langle v_0,\ldots,v_p\rangle \,|\,\Conv\{v_0,\ldots,v_p\}\in \P\}
\cup
\{\langle v_0,\ldots,v_p,u\rangle \,|\,\Conv\{v_0,\ldots,v_p\}\in \P\}.
\]
In particular, $\Gamma_{w^{-1}(0)}$ is topologically a ball of dimension
$\dim\Delta'+1$.
\item If $\dim\Delta'=d$ then we have one simplex $\langle u\rangle$, one
simplex $\langle
v_0,\ldots,v_p\rangle$ whenever $\Conv\{v_0,\ldots,v_p\}\in\P$, one simplex
$\langle v_0,\ldots,v_p,u\rangle$ whenever 
\[
\hbox{$\Conv\{v_0,\ldots,v_p\}\in
\P$ and $\Conv\{v_0,\ldots,v_p\}\subseteq\partial\Delta'$,}
\]
and two simplices
$\langle v_0,\ldots,v_p,u\rangle$ whenever 
\[
\hbox{$\Conv\{v_0,\ldots,v_p\}\in\P$
and $\Conv\{v_0,\ldots,v_p\}\not\subseteq\partial\Delta'$.}
\]
So topologically,
$\Gamma_{w^{-1}(0)}$ is obtained by taking two cones over $\Delta'$ and gluing
them together along the boundary. In particular $\Gamma_{w^{-1}(0)}$
is a $d+1$-dimensional sphere.
\item If $\dim\Delta'=d+1$ then the simplices of $\Gamma_{w^{-1}(0)}$ are
\begin{align*}
&\{\langle u\rangle\}\cup\{\langle v_0,\ldots,v_p\rangle \,|\,\Conv\{v_0,\ldots,v_p\}\in \P\}\\
\cup&
\{\langle v_0,\ldots,v_p,u\rangle \,|\,\hbox{$\Conv\{v_0,\ldots,v_p\}\in \P$
and $\Conv\{v_0,\ldots,v_p\}\subseteq\partial\Delta'$}\}.
\end{align*}
Thus $\Gamma_{w^{-1}(0)}$ is again a $d+1$-dimensional sphere.
\end{enumerate}
\end{proposition}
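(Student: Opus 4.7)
The plan is to extract the irreducible components of $w^{-1}(0)$ from Proposition~\ref{W0descprop}(3): they are $W_0$ (giving the vertex $u$) and the toric divisors $D_v$ for $v\in\Int(\Delta)\cap M$. Since $\Int(\Delta)$ is open and convex and contains the vertices of $\Delta'$, one has $\Delta'\subseteq\Int(\Delta)$, hence $\Delta'\cap M=\Int(\Delta)\cap M$, pinning down the stated vertex set. Simplices not involving $u$ are handled by standard toric geometry: $D_{v_0}\cap\cdots\cap D_{v_p}=V(\Cone\{v_0,\ldots,v_p\})$ is an irreducible toric stratum, non-empty iff $\Conv\{v_0,\ldots,v_p\}\in\P$, yielding the simplices $\langle v_0,\ldots,v_p\rangle$ in all three cases.

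The main content is counting connected components of $V(\tau)\cap W_0$ for $\tau=\Conv\{v_0,\ldots,v_p\}\in\P$, necessarily a cell of $\P$ inside $\Delta'$. By Lemma~\ref{Pstar}(3), cells of $\P_*$ inside $\Delta'$ are the faces of $\Delta'$, so $\P_*(\tau)$ is the smallest face of $\Delta'$ containing $\tau$, equal to $\Delta'$ iff $\tau\not\subseteq\partial\Delta'$. Proposition~\ref{W0descprop}(2) identifies $V(\tau)\cap W_0$ as a smooth subvariety with Newton polytope $\check\Delta_{\P_*(\tau)}$, and a direct check gives $\dim\check\Delta_{\P_*(\tau)}=d+1-\dim\P_*(\tau)$: the point $\rho\in\check\Delta_0$ lies in the face $\check{\P_*(\tau)}$ of $\check\sigma^o$, and the edges of $\check\Delta_0$ issuing from $\rho$ span all tangent directions of $\check{\P_*(\tau)}$ at $\rho$. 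When this dimension is $\ge 2$, the defining equation (after factoring through a connected-fibered torus quotient) cuts out a smooth connected hypersurface in a toric variety of dimension $\ge 2$, whose closure in $V(\tau)$ stays connected by the smoothness provided above, giving one simplex. When the dimension equals $1$, $V(\tau)\cap W_0$ has a number of connected components equal to the lattice length of the interval $\check\Delta_{\P_*(\tau)}$. When the dimension is $0$, the polytope equals $\{\rho\}$, the restricted equation reduces to the nonzero constant $c_\rho$, and $V(\tau)\cap W_0=\emptyset$.

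It remains to identify the relevant lattice lengths. Dimension $0$ forces $\P_*(\tau)=\Delta'$ with $\dim\Delta'=d+1$ (Case~3, $\tau\not\subseteq\partial\Delta'$), giving no simplex. Dimension $1$ forces $\dim\P_*(\tau)=d$ and splits into two subcases: (a) $\P_*(\tau)=\Delta'$ with $\dim\Delta'=d$ (Case~2, $\tau\not\subseteq\partial\Delta'$), where $\rho$ is interior to the segment $\check{\Delta'}$; since every interior lattice point of $\Delta$ lies on the hyperplane containing $\Delta'$, the perpendicular extent of $\Delta$ across $\Delta'$ is exactly $2$ (any larger extent would produce an interior lattice point off that hyperplane), and a computation with the endpoints $(\pm n,\varphi_\Delta(\pm n))$ gives lattice length $2$, hence two simplices; (b) $\P_*(\tau)$ a facet of $\Delta'$ with $\dim\Delta'=d+1$ (Case~3, $\tau\subseteq\partial\Delta'$), where $\rho$ is an endpoint of the segment and Proposition~\ref{DeltaDeltaprimerelation} places the other endpoint at lattice distance $1$ from $\rho$, giving one simplex. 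All other situations have $\dim\check\Delta_{\P_*(\tau)}\ge 2$ and yield one simplex, matching the three cases of the proposition. The topological claims follow by inspection of the resulting $\Delta$-complex: Case~1 is the join $\{u\}*(\P|_{\Delta'})$, a ball of dimension $\dim\Delta'+1$; Case~2 is two combinatorial copies of the cone $\{u\}*\Delta'$ glued along their common boundary $\Delta'\cup(\{u\}*\partial\Delta')\cong S^d$, a $(d+1)$-sphere; Case~3 is the triangulated ball $\Delta'$ capped off along $\partial\Delta'\cong S^d$ by the cone $\{u\}*\partial\Delta'$, again a $(d+1)$-sphere. The principal technical obstacles are the lattice-length computations and the preservation of connectedness under taking closures in $V(\tau)$.
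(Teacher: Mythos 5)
Your route is genuinely different from the paper's. The paper never touches the Newton polytopes $\check\Delta_{\P_*(\tau)}$ at this point: it uses the toric degeneration $w_t$ of Remark~\ref{mirrorfamily} to get the linear equivalence $W_0\sim\bigcup_{v\in\partial\Delta}D_v$, restricts it to $D_\omega$, and counts connected components of $W_0\cap D_\omega$ as the number of connected components of $M_\RR\setminus T_\omega\Delta'$, read off from the fan $\Sigma(\omega)$; the three cases then fall out of elementary convex geometry of the tangent wedge. You instead count components via Proposition~\ref{W0descprop}(2): the pullback structure through $V(\tau)\to V(\P_*(\tau))\to\PP_{\check\Delta_{\P_*(\tau)}}$ reduces everything to the dimension, and in the $1$--dimensional case the lattice length, of the dual face $\check\Delta_{\P_*(\tau)}$ of $\check\sigma^o$. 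This is a perfectly workable alternative (and arguably closer in spirit to the rest of \S1.4), but it pushes the real content into two lattice-geometric facts that you state rather than prove. The dimension formula $\dim\check\Delta_{\P_*(\tau)}=d+1-\dim\P_*(\tau)$ is fine, but ``the edges of $\check\Delta_0$ issuing from $\rho$ span all tangent directions'' is an assertion of what is to be shown; the clean justification is the one already in the proof of Proposition~\ref{W0descprop}(1), namely that $\Cone(\tau)$ is \emph{equal to} a normal cone of $\check\Delta_0$ whenever $\tau\subseteq\Delta'$, so the dual face of $\check\Delta_0$ has the expected dimension.

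The genuine gap is your case (2) lattice-length computation. The parenthetical ``any larger extent would produce an interior lattice point off that hyperplane'' is not an argument: the existence of interior \emph{points} of $\Delta$ at height $c+1$ does not by itself produce an interior \emph{lattice} point there, and as a statement about general lattice polytopes this is exactly the nontrivial issue (compare the triangle $\Conv\{(0,0),(3,0),(0,3)\}$, which extends two levels above the hyperplane through its interior point). The correct justification uses Proposition~\ref{DeltaDeltaprimerelation}, i.e.\ smoothness of $\PP_\Delta$, just as you already do in case (3)(b): since $\Delta'=\bigcap_F\{\langle n_F,\cdot\rangle\ge-\varphi_\Delta(n_F)+1\}$ and $\dim\Delta'=d$, the constraints active on $\relint\Delta'$ must have normals $\pm n_0$, so both $n_0$ and $-n_0$ are facet normals of $\Delta$ with $c=-\varphi_\Delta(n_0)+1$ and $-c=-\varphi_\Delta(-n_0)+1$. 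This gives the width-$2$ statement, identifies the endpoints of $\check\Delta_{\Delta'}$ as the lattice points $(n_0,\varphi_\Delta(n_0))$ and $(-n_0,\varphi_\Delta(-n_0))$ with midpoint $\rho$, and hence the lattice length $2$ you need. (Similarly, in case (3)(b) you should note why $\rho$ is actually an endpoint: if $\Delta'$ is full-dimensional with facet normal $n_1$ at level $c_1$, then $\max_\Delta\langle n_1,\cdot\rangle\ge c_1+2$, so the segment does not continue past $\rho$; this is a one-line check but it is needed, since a longer segment would give two components there.) With these repairs your proof goes through and matches the proposition, including the topological identifications at the end.
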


\proof By Proposition \ref{W0descprop}, the description of the vertices
of $\Gamma_{w^{-1}(0)}$ is clear. Let
\[
\P_{\Delta'}:=\{\omega\in\P\,|\,\omega\subseteq\Delta'\}.
\]
Clearly, for any cell $\omega\in\P_{\Delta'}$ with vertices $v_0,\ldots,v_p$,
the toric stratum of $X_{\Sigma}$ determined by $\Cone(\omega)$ is
just $D_{\omega}:=
D_{v_0}\cap\cdots\cap D_{v_p}$, hence $\langle v_0,\ldots,v_p\rangle$
is a simplex in $\Gamma_{w^{-1}(0)}$. To understand the remaining
simplices, we just need to understand $D_{\omega}\cap  W_0$.

Consider the family of potentials
\[
w_t=tw+z^{\rho}.
\]
Then $w_t^{-1}(0)$ defines a toric degeneration of $w^{-1}(0)$ as
$t\rightarrow 0$. In particular,
recall that $\rho$ evaluates to
$1$ on each primitive generator of a ray in $\Sigma$ and thus
the zero locus of $z^\rho$ is the reduced union of all toric divisors in $X_\Sigma$. Since by Prop.~\ref{W0descprop},(3), $w^{-1}(0)$ already contains those 
divisors corresponding to rays generated by $v\in\Delta'$, we find that $w_t$ degenerates the component $\tilde W_0$ to the union of all toric divisors $D_v$ with $v\in\partial\Delta$.
In particular, this degneration
induces a linear equivalence $W_0\sim \bigcup_{v\in\partial\Delta} D_v$. 
Restricting this to $D_\omega$ yields 
$$D_{\omega}\cap  W_0 \sim \bigcup_{v\in\partial\Delta} D_v\cap D_\omega.$$
We are interested in the number of connected components of this divisor class.
Using the combinatorial description on the right hand side, this number can be read off from the fan of $D_\omega$. This fan is given by
\[
\Sigma(\omega)=\{(\Cone(\tau)+\RR \Cone(\omega))/\RR \Cone(\omega)\,|\,
\tau\in\P, \omega\subseteq\tau\}
\]
in $\bar M_\RR/(\RR \Cone(\omega))$.
For two polyhedra $\tau\subseteq\tau'\subseteq \bar M_{\RR}$ (resp.\ in $M_\RR$), we choose
any point $x\in\Int(\tau)$ and write 
\[
T_{\tau}\tau'=\{c(v-x)\,|\,c\in \RR_{\ge 0}, v\in\tau'\};
\]
this is the tangent wedge to $\tau'$ along $\tau$. Using this notation, 
we observe that the rays of $\Sigma(\omega)$ which don't correspond to a divisor $D_v\cap D_\omega$ with $v\in\partial\Delta$ span 
$$(T_{\Cone(\omega)}\Cone(\Delta')+\RR \Cone(\omega))/\RR \Cone(\omega).$$
By standard toric geometry, the number of connected components
of $W_0\cap D_{\omega}$ is the same as the number
of connected components of 
\[
(\bar{M}_{\RR}/\RR \Cone(\omega))\setminus 
\big((T_{\Cone(\omega)}\Cone(\Delta')+\RR \Cone(\omega))/\RR \Cone(\omega)\big),
\]
or equivalently, the number of connected components of $M_{\RR}\setminus 
T_{\omega}\Delta'$.

This now gives the case-by-case description of $\Gamma_{w^{-1}(0)}$.
If $\dim\Delta'\le d-1$, i.e., $\codim(\Delta'\subseteq M_\RR)\ge 2$, then $M_{\RR}\setminus T_{\omega}\Delta'$
is connected and non-empty for all $\omega\in\P_{\Delta'}$, so 
$\Gamma_{w^{-1}(0)}$ is just a cone over $\Delta'$ as described in
item (1) of the statement of the Proposition.

If $\dim \Delta'=d$ then, for $\omega\subseteq\partial\Delta'$, $M_{\RR}\setminus T_{\omega}\Delta'$
is connected, and there is a unique simplex of 
$\Gamma_{w^{-1}(0)}$ with vertices $u$ and the vertices of $\omega$. 
If $\omega\in\P_{\Delta'}, \omega\not\subseteq\partial\Delta'$ 
then $M_{\RR}\setminus T_{\omega}\Delta'$ has two connected components.
In this case, there are two simplices with vertices $u$ and the vertices
of $\omega$. This gives the description in item (2).

Finally, if $\dim\Delta'=d+1$ then if $\omega\subseteq \partial\Delta'$,
there is again a unique simplex of $\Gamma_{w^{-1}(0)}$ with vertices $u$ and
the vertices of $\omega$. On the other hand,
if $\omega\not\subseteq \partial\Delta'$
then in fact $D_v\cap D_\omega=\emptyset$ for all $v\in\partial\Delta$, so $D_{\omega}$ is disjoint from $ W_0$.
(Equivalently, $M_{\RR}\setminus T_{\omega}\Delta'$ has zero connected
components.)
\qed


\section{Hodge numbers of hypersurfaces in projective toric varieties}
\label{section2}
In this section, we recall the results of Danilov and Khovanskii about the Hodge numbers of a regular hypersurface in a non-singular toric variety. We will later compare this with the Hodge numbers of the mirror of such a hypersurface.

We recall:
\begin{definition}
\label{HDdef}
For a variety $X$, one defines the $(p,q)$-th and $p$-th 
\emph{Hodge-Deligne numbers}
$$e^{p,q}(X)=\sum_{i}(-1)^{i} h^{p,q}\, H^{i}_c(X,\CC),$$
$$e^p(X)=\sum_{q} e^{p,q}(X)\stackrel{{q=q'+k}\atop{i=p+q'}}{=}(-1)^p\sum_{q',k}(-1)^{q'} h^{p,q'+k}\, H^{p+q'}_c(X,\CC).$$
\end{definition}

We fix a polytope $\Delta\subseteq M_{\RR}$ as usual with $\dim\Delta=\dim M_\RR=d+1$ and assume that it comes
with a polyhedral decomposition $\P$ into standard simplices.
We also assume that $\PP_{\Delta}$ is a non-singular toric variety.
Note that $\PP_{\Delta}$ comes with the ample line bundle $\O_{\PP_{\Delta}}(1)$. We pick a general section of this line bundle, defining a non-singular hypersurface $\check S$ in $\PP_{\Delta}$.

\begin{proposition} \label{HodgenumbersS}
\begin{enumerate} 
\item $h^{p,q}(\check S)=0$ unless $p=q$ or $p+q=d$.
\item For $\tau\in\P$, let $\Delta(\tau)$ be the minimal face
of $\Delta$ containing $\tau$. Then
\begin{align*}
(-1)^pe^p(\check S)=\sum_q (-1)^{q}h^{p,q}(\check S)= {} &
-\sum_{\tau\subseteq\Delta} (-1)^{\dim\tau}
\begin{pmatrix} \dim\tau\\ p+1\end{pmatrix} \\
&+\sum_{\tau\in\P}(-1)^{\dim\tau} 
\begin{pmatrix}\dim \Delta(\tau)-\dim\tau\\ p+1\end{pmatrix}
\end{align*}
\item
For $2p>d$, 
\[
h^{p,p}(\check S)=h^{p+1,p+1}(\PP_{\Delta})=(-1)^{p+1}\sum_{\tau\subseteq
\Delta} (-1)^{\dim\tau} \begin{pmatrix} \dim\tau\\ p+1\end{pmatrix}
\]
and
\[
h^{p,d-p}(\check S)=\sum_{\tau\in\P}(-1)^{d-p+\dim\tau}
\begin{pmatrix}
\dim\Delta(\tau)-\dim\tau\\ p+1
\end{pmatrix}.
\]
\end{enumerate}
\end{proposition}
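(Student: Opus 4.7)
The plan treats the three parts in order, with most of the substance in~(2).

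Part~(1) is a quick consequence of the Lefschetz hyperplane theorem applied to the ample smooth hypersurface $S\hookrightarrow\PP_\Delta$: the restriction $H^i(\PP_\Delta,\CC)\to H^i(S,\CC)$ is an isomorphism for $i<d$. Since $\PP_\Delta$ is a smooth complete toric variety, its rational cohomology is pure of Hodge type $(p,p)$, and the isomorphism transports this property to $H^i(S,\CC)$ for $i<d$. Poincar\'e duality on $S$ then promotes the conclusion to $i>d$, so off-diagonal Hodge classes on $S$ survive only in the middle degree $i=d$.

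For part~(2) I stratify $\PP_\Delta$ by its torus orbits $T_{\tau'}$, indexed by faces $\tau'\subseteq\Delta$, and restrict to the locally closed decomposition $S=\bigsqcup_{\tau'\subseteq\Delta}(S\cap T_{\tau'})$. Additivity of Hodge--Deligne numbers under such stratifications gives $e^p(S)=\sum_{\tau'}e^p(S\cap T_{\tau'})$, and each piece $S_{\tau'}:=S\cap T_{\tau'}$ is a $\tau'$-regular smooth hypersurface in the algebraic torus $T_{\tau'}\cong(\CC^*)^{\dim\tau'}$. Danilov--Khovanskii's theorem expresses $e^p(S_{\tau'})$ explicitly through the $f$-vector of $\tau'$ together with interior lattice-point counts on its faces. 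Assembling the ``universal'' contributions across all $\tau'$ produces the first sum in~(2). The lattice-point corrections are rewritten using the hypothesis that $\P$ triangulates $\Delta$ into standard simplices: since each such simplex is lattice-free apart from its vertices, the lattice points of $\Delta$ are exactly the vertices of $\P$, and more generally the cells $\tau\in\P$ with $\Delta(\tau)=\tau'$ form a unimodular triangulation of $\Relint(\tau')$. A direct $f$-vector identity then converts the $l^{*}$-type corrections of Danilov--Khovanskii into the alternating sum over $\tau\in\P$ in~(2).

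Part~(3) is then a corollary of~(1) and~(2). For $2p>d$, weak Lefschetz combined with Poincar\'e duality yields $h^{p,p}(S)=h^{p+1,p+1}(\PP_\Delta)$, and the displayed closed form is the standard $f$-vector formula for the Hodge numbers of a smooth complete toric variety; one checks it also agrees with the first sum of~(2) in this range, since the second sum of~(2) contributes nothing to $h^{p,p}(S)$ when $2p>d$ because $\binom{\dim\tau'-\dim\tau}{p+1}$ already encodes the middle-cohomology correction. For $h^{p,d-p}(S)$ with $2p>d$, part~(1) forces $h^{p,q}(S)=0$ outside $q=p$ and $q=d-p$, so $(-1)^p e^p(S)=(-1)^p h^{p,p}(S)+(-1)^{d-p}h^{p,d-p}(S)$; solving using~(2) and the formula for $h^{p,p}(S)$ yields the second identity.

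The main obstacle is the combinatorial bookkeeping inside~(2): matching the Danilov--Khovanskii corrections, traditionally expressed through the counts $l^{*}(\omega)$ for faces $\omega\subseteq\tau'$, with the alternating sum over $\P$ given in the statement. The decisive input is that for each $\tau'\subseteq\Delta$ the cells $\tau\in\P$ with $\Delta(\tau)=\tau'$ triangulate $\Relint(\tau')$ by standard simplices, and a direct $f$-vector manipulation then identifies $\sum_{\Delta(\tau)=\tau'}(-1)^{\dim\tau}\binom{\dim\tau'-\dim\tau}{p+1}$ with the correction attached to the face $\tau'$ on the Danilov--Khovanskii side.
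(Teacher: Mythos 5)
Your proposal is correct and follows essentially the same route as the paper: part (1) via the Lefschetz theorem for the ample hypersurface, part (2) via the Danilov--Khovanskii formulas with the interior lattice-point counts $l^*(j\omega)$ converted through the standard-simplex triangulation $\P$ and a binomial identity, and part (3) by combining the toric formula for $h^{p+1,p+1}(\PP_\Delta)$ with the relation $(-1)^pe^p(S)=(-1)^ph^{p,p}(S)+(-1)^{d-p}h^{p,d-p}(S)$. The only cosmetic difference is that you re-derive the global Danilov--Khovanskii expression by summing their torus-hypersurface formula over the orbit stratification of $\PP_\Delta$, whereas the paper quotes it directly as \cite{DK86},\,5.5.
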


\begin{figure}
\input{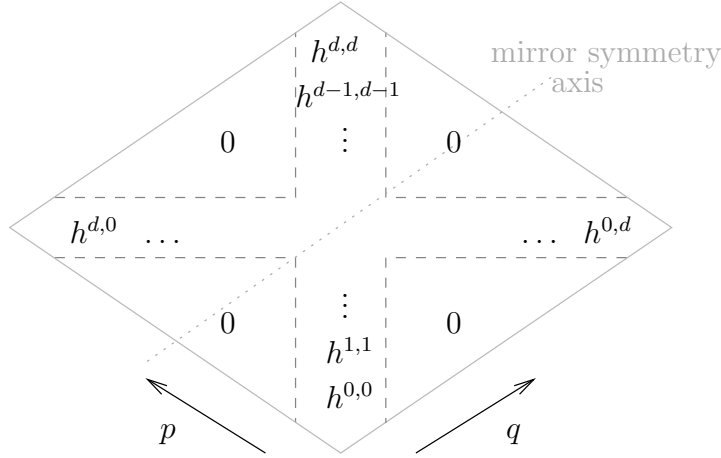}
\caption{The Hodge diamond of $\check S$}
\label{hodgeS}
\end{figure}

\proof
This is just rewriting formulas of \cite{DK86},\,5.5. We begin with
\[
\sum_q (-1)^{p+q}h^{p,q}(\check S)=
(-1)^{p+1}\sum_{\tau\subseteq\Delta} (-1)^{\dim\tau}\begin{pmatrix}
\dim\tau\\ p+1\end{pmatrix}
-\sum_{\omega\subseteq\Delta} (-1)^{\dim\omega}
\varphi_{\dim\omega-p}(\omega).
\]
Here the sum is over all faces $\tau$ (resp.\ $\omega$) of $\Delta$, and 
\[
\varphi_i(\omega)=(-1)^i\sum_{j\ge 1} (-1)^j\begin{pmatrix}\dim\omega+1\\
i-j\end{pmatrix} l^*(j\omega)
\]
with $l^*(j\omega)$ the number of interior integral points in $j\omega$.
Using $\P$, we can compute this as follows.

If $\tau$ is a standard $i$-dimensional simplex, then 
$l^*(j\tau)=\begin{pmatrix} j-1\\ i\end{pmatrix}$. Thus,
if $\omega$ is a face of $\Delta$, we have
\begin{align*}
l^*(j\omega)
\quad=
\sum_{\tau\in\P\atop \tau\subseteq\omega,\tau
\not\subseteq\partial\omega} l^*(j\tau)
\quad=
\sum_{\tau\in\P\atop \tau\subseteq\omega,\tau
\not\subseteq\partial\omega} \begin{pmatrix} j-1\\ \dim\tau\end{pmatrix}.
\end{align*}

We insert this in the above expression for $\varphi_i(\omega)$ and apply
Prop.~\ref{binomident},(1) to get

\[
\varphi_i(\omega)=
\sum_{\tau\in\P\atop \tau\subseteq\omega,\tau
\not\subseteq\partial\omega} 
(-1)^{i+\dim\tau+1}
\begin{pmatrix}\dim\omega-\dim\tau \\\dim\omega+1-i \end{pmatrix},
\]
and we conclude (2). (1) follows from the Lefschetz theorem proved in 3.7
of \cite{DK86}, and the formula for $h^{p,p}$ in (3)
follows from that Lefschetz theorem and \cite{DK86},\,2.5. The formula
for $h^{p,d-p}(\check S)$ then comes from (2) and
the fact that $(-1)^pe^p(\check S)=(-1)^ph^{p,p}(\check S)+(-1)^{d-p} h^{p,d-p}(\check S)$.
\qed

\medskip

The statements of \cite{DK86},\,1.6 and 1.8 give:

\begin{theorem} \label{eulerdecompose}
For $X=\sqcup_i X_i$ a disjoint union and $X,Y,X_i$ varieties, we have
\begin{enumerate} 
\item $e^{p,q}(X)=\sum_i e^{p,q}(X_i)$,
in particular $e^{p}(X)=\sum_i e^{p}(X_i)$,
\item $e^{p,q}(X\times Y)=\sum_{{p_1+p_2=p}\atop{q_1+q_2=q}} e^{p_1,q_1}(X)e^{p_2,q_2}(Y)$,
in particular \\$e^p(X\times Y)=\sum_{k} e^{p-k}(X)e^k(Y)$.
\end{enumerate}
\end{theorem}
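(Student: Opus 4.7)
The plan is to deduce both statements from the general formalism of Deligne's mixed Hodge structures on compactly supported cohomology, since by definition $e^{p,q}(X)=\sum_i(-1)^i h^{p,q}H^i_c(X,\CC)$ is an Euler-characteristic-like invariant of these mixed Hodge structures.

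For part (1), I would argue by induction on the number of strata. It suffices to handle the case $X=U\sqcup Z$ where $U\subseteq X$ is open and $Z=X\setminus U$ is closed. Deligne's long exact sequence
\[
\cdots\to H^i_c(U,\CC)\to H^i_c(X,\CC)\to H^i_c(Z,\CC)\to H^{i+1}_c(U,\CC)\to\cdots
\]
is an exact sequence of mixed Hodge structures. Taking $\gr^p_F\gr^W_{p+q}$ is an exact functor, so alternating-summing dimensions of the $(p,q)$-graded pieces over $i$ gives $e^{p,q}(X)=e^{p,q}(U)+e^{p,q}(Z)$. Iterating this for a general disjoint decomposition $X=\sqcup_i X_i$ (valid also for constructible decompositions, which is what the arbitrary disjoint union case really is) yields the additivity. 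The statement about $e^p$ follows by summing over $q$.

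For part (2), the Künneth formula for compactly supported cohomology gives a natural isomorphism
\[
H^k_c(X\times Y,\CC)\iso\bigoplus_{i+j=k} H^i_c(X,\CC)\otimes H^j_c(Y,\CC),
\]
and by Deligne this is an isomorphism of mixed Hodge structures, where the right-hand side carries the tensor product Hodge and weight filtrations. Taking $(p,q)$-graded pieces turns the tensor product into a convolution
\[
h^{p,q}\bigl(H^i_c(X)\otimes H^j_c(Y)\bigr)=\sum_{p_1+p_2=p,\;q_1+q_2=q} h^{p_1,q_1}H^i_c(X)\cdot h^{p_2,q_2}H^j_c(Y),
\]
and the alternating sum over $k=i+j$ factors as a product of alternating sums over $i$ and $j$, giving the claimed formula for $e^{p,q}(X\times Y)$. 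Summing over $q$ produces the corresponding convolution identity for $e^p$.

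No step here is a real obstacle; the content is really the existence and functoriality of Deligne's mixed Hodge structures on $H^*_c$ together with the compatibility of Künneth with these structures. The only mild subtlety is ensuring the disjoint-union argument of (1) extends beyond finite stratifications, but since we only apply the result to the finite decompositions produced by $\P$ and toric strata in the sequel, it suffices to treat finitely many pieces by induction.
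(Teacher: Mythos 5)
Your argument is correct. The paper does not prove this statement at all: it simply quotes Danilov--Khovanskii \cite{DK86}, 1.6 and 1.8, and what you have written is precisely the standard proof underlying that citation — additivity via Deligne's long exact sequence of mixed Hodge structures on $H^\bullet_c$ for an open/closed decomposition (using that $\gr_F\gr^W$ is exact on mixed Hodge structures by strictness), and multiplicativity via the K\"unneth isomorphism of mixed Hodge structures together with the convolution behaviour of Hodge numbers on tensor products of pure graded pieces. Your reading of "disjoint union" as a finite decomposition into locally closed subvarieties is also the right one for how the theorem is actually used later (torus-orbit and handlebody decompositions), so the induction over finitely many pieces suffices.
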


We give a proof of a lemma that we will need later:
\begin{lemma} \label{handlebodytimestorus}
Recall a \emph{handlebody} $H^k$ is the intersection of a general hyperplane in $\PP^{k+1}$ with $(\CC^*)^{k+1}$. We
have $e^{p,q}(H^k\times(\CC^*)^l)
=0$ for $p\neq q$ and
$$e^{p,p}(H^k\times(\CC^*)^l)
=(-1)^{p+k+l}\left(
\begin{pmatrix}k+l+1\\p+1\end{pmatrix}
-\begin{pmatrix}l\\p+1\end{pmatrix}
\right).$$
\end{lemma}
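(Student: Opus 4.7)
The plan is to reduce to the two factors using Thm.~\ref{eulerdecompose},(2) and then compute each separately. For the torus, the K\"unneth formula together with the standard mixed Hodge structure on $H^*_c(\CC^*)$ (giving $e^{0,0}(\CC^*)=-1$ and $e^{1,1}(\CC^*)=1$, since $H^1_c(\CC^*)\cong\CC(0)$ and $H^2_c(\CC^*)\cong\CC(-1)$) immediately yields
\[
e^{p,q}((\CC^*)^l)=\begin{cases}(-1)^{l-p}\binom{l}{p}&\text{if }p=q,\\0&\text{otherwise.}\end{cases}
\]

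For the handlebody itself, the key observation is that after rescaling coordinates $(y_1,\ldots,y_{k+1})$ on $(\CC^*)^{k+1}$, a general hyperplane in $\PP^{k+1}$ may be written as $1+y_1+\cdots+y_{k+1}=0$, and projection onto the first $k$ coordinates gives a regular isomorphism
\[
H^k\;\cong\;(\CC^*)^k\setminus\{1+y_1+\cdots+y_k=0\}\;=\;(\CC^*)^k\setminus H^{k-1}.
\]
The long exact sequence in compactly supported cohomology for this open-closed pair is a sequence of mixed Hodge structures, so additivity of $e^{p,q}$ yields the recursion $e^{p,q}(H^k)=e^{p,q}((\CC^*)^k)-e^{p,q}(H^{k-1})$. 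Starting from the base case $H^0=\mathrm{pt}$ and inducting on $k$ (Pascal's identity handles the step) one obtains
\[
e^{p,q}(H^k)=0\ \text{ for }p\neq q,\qquad e^{p,p}(H^k)=(-1)^{p+k}\binom{k+1}{p+1}.
\]
The vanishing already propagates through the product via Thm.~\ref{eulerdecompose},(2), proving the first half of the lemma.

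Convolving the two explicit formulas in the diagonal case $p=q$ reduces the remaining assertion to the combinatorial identity
\[
\sum_{p_1+p_2=p}\binom{k+1}{p_1+1}\binom{l}{p_2}=\binom{k+l+1}{p+1}-\binom{l}{p+1},
\]
which follows from Vandermonde's convolution after reindexing $j=p_1+1$ and subtracting the absent $j=0$ term; the parity works out to the predicted sign $(-1)^{p+k+l}$. There is no serious obstacle in the argument: the two points requiring genuine (if routine) justification are that the projection $H^k\to(\CC^*)^k\setminus H^{k-1}$ is an isomorphism of algebraic varieties (clear, as $y_{k+1}$ is a regular function of the remaining coordinates on the image) and that the resulting long exact sequence respects the mixed Hodge structures (standard from Deligne); everything else is combinatorial bookkeeping.
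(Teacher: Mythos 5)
Your proof is correct, and it reaches the paper's formulas by a slightly different route in the only nontrivial step, the computation of $e^{p,q}(H^k)$. The paper decomposes a hyperplane $H\cong\PP^k\subset\PP^{k+1}$ into the pieces cut out by the torus strata of $\PP^{k+1}$, obtaining the motivic identity $H=\bigsqcup_{i=0}^{k}\binom{k+2}{i+2}H^i$, and then inverts this relation by induction on $k$ using the binomial identity of Prop.~\ref{binomident},(1) together with the known Hodge numbers of $\PP^k$. You instead project away one coordinate to identify $H^k$ with $(\CC^*)^k\setminus H^{k-1}$, so that additivity (Thm.~\ref{eulerdecompose},(1), which is exactly the statement that $e^{p,q}$ is additive over locally closed decompositions, so your appeal to the compactly supported long exact sequence is subsumed by it) gives the recursion $e^{p,q}(H^k)=e^{p,q}((\CC^*)^k)-e^{p,q}(H^{k-1})$, solved with Pascal's rule from the base case $H^0=\mathrm{pt}$. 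Your recursion is more self-contained: it needs neither Prop.~\ref{binomident},(1) nor $e^{p,q}(\PP^k)$, whereas the paper's stratification argument reuses machinery it needs anyway (the same strata decompositions and identity (1) appear in Prop.~\ref{HodgenumbersS}). The remaining ingredients — the torus Hodge--Deligne numbers, the product formula Thm.~\ref{eulerdecompose},(2), and the final Vandermonde convolution with the missing $j=0$ term accounting for the subtracted $\binom{l}{p+1}$ — coincide with the paper's. The two small points you flag (that the specific hyperplane $1+y_1+\cdots+y_{k+1}=0$ represents the general one after rescaling by the torus action, and that the sequence is one of mixed Hodge structures) are indeed the only things to justify, and both are fine.
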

\begin{proof} By \cite{DK86},\,1.10,
$e^{p,q}((\CC^*)^l)$ is zero for $p\neq q$ and 
$e^{p,p}((\CC^*)^l)=(-1)^{p+l}\begin{pmatrix}l\\p\end{pmatrix}$. Note that if $H$ denotes a hyperplane in $\PP^{k+1}$ then we have the motivic sum
$H=\bigsqcup_{i=0}^k \begin{pmatrix}k+2\\i+2\end{pmatrix} H^i$.
Since $H\cong\PP^k$, by induction over $k$ using Prop.~\ref{binomident},(1), we get $e^{p,q}(H^k)=0$ for $p\neq q$ and
$e^{p,p}(H^k)=(-1)^{p+k}\begin{pmatrix}k+1\\p+1\end{pmatrix}$. 
The product formula Thm~\ref{eulerdecompose},(2) yields
$$e^p(H^k\times(\CC^*)^l)=\sum_{p_1\ge 0} (-1)^{p_1+k}(-1)^{p-p_1+l}
\begin{pmatrix}k+1\\p_1+1\end{pmatrix}
\begin{pmatrix}l  \\p-p_1\end{pmatrix}$$
and the assertion follows from Prop.~\ref{binomident},(2).
\end{proof}


\section{The mixed Hodge structure on the cohomology of the vanishing cycles}
\label{section3}
We review the notion of the sheaf of vanishing cycles from \cite{Del73} and the Hodge structure on its cohomology as given in \cite{St75}, \cite{PS08}.

\subsection{Vanishing cycles of a semistable degeneration}
\label{section3_1}
We fix a proper map $f:\bar{X}\rightarrow \DD$, where $\DD$ is the unit disk and $f$ is smooth away from $f^{-1}(0)$.
Consider the following diagram:
\[
\xymatrix@C=30pt
{Y\ar[d]\ar[r]^i & \bar{X}\ar[d]_f& \tilde{{\bar{X^*}}}\ar[l]_{k}\ar[r]\ar[d]&
\tilde \DD^*\ar[d]\\
\{0\}\ar[r]&\DD &\bar{X}^*\ar[l]\ar[lu]_{j^Y}\ar[r]&\DD^*
}
\]
Here $Y$ is the fibre over $0\in \DD$, $i$ the inclusion,
$\bar{X}^*=\bar{X}\setminus Y$, $\DD^*=\DD\setminus \{0\}$, $\tilde \DD^*$
the universal cover of $\DD^*$ and $\tilde{\bar{X^*}}= \bar{X}^*\times_{\DD^*}
\tilde \DD^*$ the pullback of the family $\bar{X}^*\rightarrow \DD^*$
to $\tilde \DD^*$. The map $j^Y$ is the inclusion and the map $k$
the projection $\tilde{\bar{X^*}}\rightarrow \bar{X}^*$ followed by $j^Y$.

\begin{definition}
The functor $\psi_f:D^+(\bar{X},\ZZ)\rightarrow D^+(Y,\ZZ)$
from the derived category of sheaves of abelian groups on $\bar{X}$
to the derived category of sheaves of abelian groups on $Y$ is defined
by, for $\shF\in D^+(\bar{X},\ZZ)$, 
\[
\psi_f(\shF)=i^{-1} {\bf R}k_*(k^{-1}(\shF)).
\]
This is the \emph{sheaf of nearby cycles} of $\shF$.
There is a natural map
\[
\spe : i^{-1}\shF\rightarrow \psi_f(\shF).
\]
The cone of this map in $D^+(Y,\ZZ)$ is $\phi_f(\shF)$,
the \emph{sheaf of vanishing cycles} of $\shF$.

For a complex of sheaves $\shF$, we denote by $\shH^k(\shF)$ the
$k$-th cohomology sheaf of the complex, and put
\begin{align*}
R^k\psi_f(\shF):= {} & \shH^k(\psi_f(\shF)),\\
R^k\phi_f(\shF):= {} & \shH^k(\phi_f(\shF)).
\end{align*}
If $g:\bar{X}\ra C$ is a proper map to a Riemann surface $C$ and $p\in C$, we denote by $\psi_{g,p}$ and $\phi_{g,p}$ the above functors on the category of complexes of sheaves on $g^{-1}(\DD)$ for a disk $\DD$ centered at $p$, small enough so that $p$ is the only critical value of $g$ in $\DD$.
Clearly, the image of the functor is independent of the size of the disk.
\end{definition}

\begin{theorem} \label{cycles_computed}
Let $f:\bar{X}\rightarrow \DD$ be a proper
morphism over a disk $\DD$, and suppose $X\subseteq\bar{X}$
is an open subset such that, with $D:=\bar{X}\setminus X$ flat over $\DD$, $Y=f^{-1}(0)$,
$D\cup Y$ is a reduced normal crossings divisor. 
Let $j^D:X\rightarrow \bar{X}$ be the inclusion and 
$$Y=\bigcup_{i=1}^{N_Y}Y_i\qquad\hbox{ and }\qquad D=\bigcup_{i=1}^{N_D}D_i$$ 
be the decomposition into irreducible components.
We define the sheaf on $\bar{X}$
\[
\CC_{Y^1}:=
\bigoplus_{i=1}^{N_Y} {\CC}_{Y_i}
\]
where ${\CC}_{Y_i}$ denotes the (push-forward of) the
constant sheaf on $Y_i$ with coefficients in $\CC$. 
We define $\CC_{D_i}$ and $\CC_{D^1}$ similarly.  
We set
\[
\CC_{Y^1 \cup D^1}':=\coker\left({\CC}_{Y}\stackrel{(\Diag,0)}{\lra}\CC_{Y^1}\oplus \CC_{D^1} \right)
\]
where $\Diag$ is the linear map sending 1 to $\rho$ 
with $\rho_i=1$ for $1\le i\le N_Y$.
Then
\begin{enumerate}
\item 
$R^q\psi_f({\bf R}j^D_*\CC_X)={\bigwedge}^q\CC_{Y^1 \cup D^1}'.$ \vspace{0.16cm}
\end{enumerate}
Under the additional assumption of
\begin{equation}\label{overallassumption}
\Sing(Y)\cap D=\emptyset,
\end{equation}
we have
\begin{enumerate}
\setcounter{enumi}{1}
\item $R^q\phi_f({\bf R}j^D_*\CC_X)$ is supported on $\Sing(Y)$ for $q\ge 0$,
\item
$
R^q\phi_f({\bf R}j^D_*\CC_X)=
\begin{cases}
0&\hbox{if $q=0$};\\
{\bigwedge}^q\CC_{Y^1\cup D^1}'|_{Y\setminus D}&\hbox{if $q>0$},
\end{cases}
$ \vspace{0.16cm}
\item 
$
R^q\phi_f({\bf R}j^D_*\CC_X)=R^q\phi_f(\CC_{\bar X})\qquad\hbox{for }q\ge0.
$
\end{enumerate}
\end{theorem}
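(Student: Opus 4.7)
All four statements are local on $\bar X$, so I plan first to reduce to a standard local model around any point $y\in Y$. Since $D\cup Y$ is a reduced normal crossings divisor and $Y$ is reduced (the multiplicities in $f^{-1}(0)$ are all one), I can choose coordinates $z_1,\ldots,z_n$ on a small polydisc $U$ around $y$ such that $f|_U=z_1\cdots z_k$, the components of $Y$ through $y$ are the $k$ hyperplanes $\{z_i=0\}$ for $1\le i\le k$, and the components of $D$ through $y$ are the $l$ hyperplanes $\{z_j=0\}$ for $k<j\le k+l$. Write $\shF:=\mathbf{R}j^D_*\CC_X$.

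For (1), I would run the standard Milnor fibration calculation. The nearby fibre $F_y:=\{z_1\cdots z_k=\epsilon\}\cap U'$ of $f$ at $y$ is homotopy equivalent to the real torus $T^{k-1}$, and removing $D$ adds a factor $T^l$ from the meridians around $\{z_j=0\}$, $k<j\le k+l$. Hence $(R^q\psi_f\shF)_y = H^q(F_y\cap X;\CC) = {\bigwedge}^q\CC^{k+l-1}$. The generators of $H^1$ are naturally indexed by the components of $Y$ through $y$, subject to the single relation that their sum is the class of the fibre $F_y$ itself (which bounds), together with the meridian classes of the components of $D$; this is exactly the stalk $(\CC'_{Y^1\cup D^1})_y$. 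Globalising requires checking the identification is functorial under passage to smaller strata, which is the content of the standard compatibility between $\psi_f$ and the combinatorics of the stratification.

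For (2) and (3) I would use the hypothesis~\eqref{overallassumption}. At a smooth point $y$ of $Y$ we have $k=1$, so $f=z_1$ is a smooth morphism: splitting $U=U_1\times U_2$ with $U_1$ the $z_1$-disc, $f$ restricts to a $C^\infty$ fibre bundle over $U_1\setminus\{0\}$ and $\shF|_U$ is pulled back from $U_2$. Hence $\spe\colon i^{-1}\shF\to\psi_f\shF$ is an isomorphism on a neighbourhood of $y$ and $\phi_f\shF=0$ there, whether or not $y\in D$; this gives (2). By~\eqref{overallassumption} we have $\Sing(Y)\subseteq Y\setminus D$, and on $Y\setminus D$ the sheaf $\shF$ agrees with $\CC_{\bar X}$, so $i^{-1}\shF$ restricts to the constant sheaf $\CC_{Y\setminus D}$ concentrated in degree $0$. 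The long exact sequence of the triangle $i^{-1}\shF\to\psi_f\shF\to\phi_f\shF\to$ then yields $R^0\phi_f\shF=0$ (the specialisation $\CC\to R^0\psi_f\shF=\CC$ sends $1$ to the class of a point in the connected Milnor fibre, so is the identity) and $R^q\phi_f\shF\cong R^q\psi_f\shF$ for $q\ge1$; combined with~(1) this is exactly~(3).

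For (4) I would exploit that the tautological morphism $\CC_{\bar X}\to\shF$ is an isomorphism on $\bar X\setminus D$. The same argument as in (2) (applied to $\CC_{\bar X}$, which locally coincides with $\shF$ when $D$ is absent) shows $\phi_f\CC_{\bar X}$ is supported on $\Sing(Y)$, which lies in $\bar X\setminus D$ by~\eqref{overallassumption}; therefore the induced morphism $\phi_f\CC_{\bar X}\to\phi_f\shF$ is a quasi-isomorphism, giving~(4).

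I expect the main obstacle to be the sheaf-theoretic globalisation in~(1): the stalkwise identification with $\bigwedge^q\CC^{k+l-1}$ is classical, but matching it naturally with the combinatorially defined $\CC'_{Y^1\cup D^1}$ across all strata requires carefully unwinding the construction of $\psi_f$ and of $\spe$, and in particular identifying the image of $1\in\CC_Y$ with the fundamental class of the Milnor fibre --- this is precisely what enforces the diagonal relation in the definition of $\CC'_{Y^1\cup D^1}$ as a cokernel.
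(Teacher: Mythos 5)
Your proposal is correct and follows essentially the same route as the paper: a local computation of the stalks of $\psi_f({\bf R}j^D_*\CC_X)$ in the normal crossings model (the paper packages the Milnor-fibre/infinite-cyclic-cover computation via the Eilenberg--Moore spectral sequence degenerating to a K\"unneth formula over $H^\bullet(S^1)$, with the diagonal relation $\rho$ appearing exactly as your ``sum of the $Y$-meridians vanishes on the fibre''), followed by the same localization arguments for (2)--(4) using $\Sing(Y)\cap D=\emptyset$ and the fact that $\CC_{\bar X}\to{\bf R}j^D_*\CC_X$ is a quasi-isomorphism away from $D$. The globalisation concern you flag for (1) is handled in the paper at the same level of detail, namely by identifying sections over arbitrarily small neighbourhoods $U$ with $\Gamma(U,\bigwedge^q\CC'_{Y^1\cup D^1})$ naturally in $U$.
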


\proof 
For $U\subseteq \bar X$ a small neighbourhood of a point $p$ in $Y$, since $Y\cup D$ is normal crossings, $U\backslash (Y\cup D)$ has the homotopy type of 
$(S^1)^{n_Y}\times (S^1)^{n_D}$ where $n_Y,n_D$ are the numbers of irreducible components of $Y$, resp. $D$, passing through $p$.
We use the Eilenberg-Moore spectral sequence to translate the Cartesian square
\[
\xymatrix@C=30pt
{U\ar_f[d] & k^{-1}(U)\ar_k[l]\ar[d]\\
\DD & \ar[l] \tilde \DD^*
}
\]
into cohomology. The spectral sequence degenerates to the generalized K\"unneth formula
$$ H^\bullet(k^{-1}(U),\ZZ) = 
H^\bullet((S^1)^{n_Y}\!\times\!(S^1)^{n_D},\ZZ)
\otimes_{H^\bullet(S^1,\ZZ)} H^\bullet(\RR,\ZZ) $$
where the map $H^\bullet(S^1,\ZZ)=\ZZ[x]/x^2\ra H^\bullet(\RR,\ZZ)=\ZZ$ is given by sending $x\mapsto 0$ and 
$H^\bullet(S^1,\ZZ)\ra H^\bullet((S^1)^{n_Y}\!\times\!(S^1)^{n_D},\ZZ)=(\ZZ[y]/y^2)^{\otimes n_Y}\otimes (\ZZ[d]/d^2)^{\otimes n_D}$
is given by $x\mapsto \rho=\sum_{i=1}^{n_Y} 1^{\otimes(i-1)}\otimes y\otimes 1^{\otimes n_Y+n_D-i}$. Rewriting yields
$$H^q(k^{-1}(U),\CC)= H^q(U,\CC)/(\rho)=\Gamma(U,{\bigwedge}^q\CC_{Y^1 \cup D^1}')$$
and proves a local version of (1). The global one follows by observing that each summand of $\CC_{Y^1}$ and likewise $\CC_{D^1}$ is the first cohomology sheaf of an oriented $S^1$-bundle which is thus the constant sheaf. Then also the $(S^1)^k$-bundle along a codimension $k$ stratum splits as a product of oriented $S^1$-bundles.
Part (2) follows from the fact that the adjunction
$i^{-1}{\bf R}j^D_*\CC_X\ra \psi_f({\bf R}j^D_*\CC_X)$ is a quasi-isomorphism outside of $\Sing(Y)$ which can be seen from (1).
Part (3)-(4) follows from the fact that
$\CC_{\bar X}\ra {\bf R}j^D_*\CC_X$ is a quasi-isomorphism away from $D$.
\qed

\begin{example} \label{cohoS}
Applying this to the case of $\bar{\check w}:\bar{\check w}^{-1}(\DD)\rightarrow
\DD$, 
we take $D=\tilde X_{\bar{\check\Sigma}}\setminus X_{\check\Sigma}$.
We note that 
$\CC_{Y^1\cup D^1}'|_{\bar{\check w}^{-1}(0)\cap X_{\check\Sigma}}$ 
is $\CC_{\check S}$, where $\check S=D_0\cap\check W_0$, in the notation of 
Proposition \ref{checkWproper}, is a hypersurface in $\PP_{\Delta}$.
Thus
\[
R^q\phi_{\bar{\check w}}({\bf R}j^D_*\CC_X)=
\begin{cases}
0&\hbox{if $q\not=1$};\\
\CC_{\check S}&\hbox{if $q=1$}.
\end{cases}
\]
 From this we conclude that
\begin{equation}
\label{checkWcohom}
\HH^q(\bar{\check w}^{-1}(0),\phi_{\bar{\check w}}({\bf R}j^D_*\CC_X))
=H^{q-1}(\check S,\CC).
\end{equation}
\end{example}

Most useful for the next sections is Thm.~\ref{cycles_computed},(4) because it enables us to work with the vanishing cycles of a compact degeneration which involves slightly less technology.


\subsection{Mixed Hodge structure}
Our goal in this section is to recall the definition of a mixed Hodge structure on
the hypercohomology groups of $\phi_f\CC_{\bar X}$. To do so, we shall
identify a cohomological mixed Hodge complex whose $\CC$-part is 
quasi-isomorphic to $\phi_f\CC_{\bar X}$.
The notion of a cohomological mixed Hodge complex is due to Deligne \cite{DelTH},\,III. We will always ignore the $\ZZ$-module structure of these
complexes, and will only be concerned with $\QQ$-module structures. 
Moreover, we restrict ourselves to normalized ones in the sense of \cite{PS08},\,Rem.\,3.15, i.e., with an explicit comparison pseudo-morphism $\beta$ given as
\begin{equation} \label{qisochain}
(\shK^\bullet_\QQ,W)
\stackrel{\beta_1}{\lra} ('\shK^\bullet_\CC,W) 
\stackrel{\beta_2}{\lla}
(\shK^\bullet_\CC,W,F)
\end{equation}
where 
$\beta_2$ is a filtered quasi-isomorphism and 
$\beta_1$ become such after tensoring with $\CC$. 
A map of cohomological mixed Hodge complexes is a map on all three terms compatible with the $\beta_i$.
\begin{convention}[Indexing of $W$]
Note that a mixed Hodge complex $K^\bullet$ has the property that $\Gr^W_i H^j(K^\bullet)$ is pure of weight $i+j$. In particular non-trivial contributions in negative $W$-weight may occur and the index $i$ doesn't give the (absolute) weight, just the weight relative to $j$. Some authors therefore introduce a shift to the induced $W$-filtration on $H^j(K^\bullet)$ which we don't do as we find this even more confusing. We would like to ask the reader to keep this in mind.
\end{convention}

Recall that to a filtered complex of sheaves $K^\bullet$ on a topological space with increasing filtration $W$ one associates a spectral sequence $E_\bullet(K^\bullet,W)$ with
\begin{equation} \label{specseqfiltcomplex}
E_1^{p,q}(K^\bullet,W) = \HH^{p+q}(\Gr^W_{-p}K^\bullet) \Rightarrow \HH^{p+q}(K^\bullet).
\end{equation}
To apply this to a complex with decreasing filtration $F^\bullet$, one sets $F_n=F^{-n}$.

We assume the setup and notation of \S\ref{section3_1} as given in Thm~\ref{cycles_computed}.
In addition, we denote by $Y^k$ the normalization of 
$$\coprod_{i_1<\cdots<i_k} Y_{i_1}\cap\cdots\cap Y_{i_k}$$
and by $a^Y_i$ the projection $Y^i\ra \bar{X}$. 
We are going to recall the construction of the mixed Hodge structure on the hypercohomology of 
$\phi_{f}(\CC_{\bar{X}})$
following \cite{St75}, \cite{PS08}. 
This is done by giving a map of cohomological mixed Hodge complexes resolving 
$i^{-1}\CC_{\bar{X}}\ra\psi_f\CC_{\bar{X}}$. 
Taking the mixed cone, we will then obtain a cohomological mixed Hodge complex resolving
$$\phi_f \CC_{\bar{X}} = \cone(\CC_{\bar{X}}|_Y\ra\psi_{f}\CC_{\bar{X}}).$$
We have the increasing filtrations $W^Y$
defined on $\Omega^{\bullet}_{\bar{X}}(\log Y)$ by
\begin{align*}
W^Y_k\Omega^{p}_{\bar{X}}(\log Y) = {} &
\Omega^{k}_{\bar{X}}(\log Y)\wedge \Omega^{p-k}_{\bar{X}}.
\end{align*}
Moreover, there is the Hodge filtration
$$F^k\Omega^{\bullet}_{\bar{X}}(\log Y)=\Omega^{\bullet\ge k}_{\bar{X}}(\log Y).$$
Consider the double complex\footnote{Note that we adapt to the original notation by Steenbrink \cite{St75}. The two indices $p,q$ are swapped in \cite{PS08}.}
$$A^{p,q}= \Omega^{p+q+1}_{\bar{X}}(\log Y)/W^Y_q \Omega^{p+q+1}_{\bar{X}}(\log Y).$$
The first differential is the exterior derivation and the second differential is given by wedging with $\dlog f = f^*\dlog t$, i.e., we fix a coordinate $t$ of $\DD$.
For a double complex $C^{\bullet,\bullet}$, we denote the total complex by $C^\bullet$.
We have three filtrations on $A^{\bullet}$ given by the rules
\begin{equation}
\label{weightA1}
W_k A^r = \bigoplus_{p+q=r}W^Y_{2q+k+1} \Omega^{p+q+1}_{\bar{X}}(\log Y)/W^Y_q\Omega^{p+q+1}_{\bar{X}}(\log Y)
\end{equation}
and respectively in terms of the filtrations on $A^r$ and
$\Omega^{p+q+1}_{\bar{X}}(\log Y)/W^Y_q\Omega^{p+q+1}_{\bar{X}}(\log Y)$
\begin{equation}
\label{weightA2}
W^Y_k = \bigoplus_{p+q=r} W^Y_{k+q+1} 
\qquad
F^k = \bigoplus_{p+q=r} F^{k+q+1}.  
\end{equation}
The monodromy weight filtration $W$ is the relevant filtration for the limiting mixed Hodge structure. 
It is closely related to but doesn't quite coincide with the pole order filtration $W^Y$.
We have $F^k A^{\bullet,\bullet}= A^{\bullet\ge k,\bullet}$.
The injection
$\dlog f\wedge: \Omega^p_{\bar{X}/\DD}(\log Y)\otimes \O_Y \ra A^{p,0}$
turns $A^{\bullet,\bullet}$ into a resolution of $\Omega^\bullet_{\bar X/\DD}(\log Y)\otimes \O_Y$.
Note that by the residue isomorphism we have
\begin{equation}
\label{A-weighted-pieces}
\Gr^W_k A^r = \bigoplus_{{p+q=r}\atop{2q+k+1>q}} W^Y_{2q+k+1}\Omega^{p+q+1}_{\bar{X}}(\log Y)/W^Y_{2q+k}\Omega^{p+q+1}_{\bar{X}}(\log Y)=\bigoplus_{{p+q=r}\atop{2q+k+1>q}}  \Omega^{p-q-k}_{Y^{2q+k+1}}.
\end{equation}

By \cite[Thm.\,4.19]{St75}, $A^\bullet$ is the $\CC$-part of a cohomological mixed Hodge complex.
There is an endomorphism of this double complex $\nu:A^{p,q}\ra A^{p-1,q+1}$ simply given by the natural projection modulo $W^Y_{q+1}$. 
We have $\op{log} T=2\pi i \nu$ where $T$ is the monodromy transform on cohomology, see \cite{PS08}, Thm\,11.21 and Cor.\,11.17. One finds
$\ker(\nu)^{\bullet}= W^Y_0 A^{\bullet}$ with the filtrations $W$ and $F$ induced from $A^{\bullet}$. The injection
$$\spe:\ker(\nu)^{\bullet}\ra A^\bullet$$ 
is bifiltered.
By \cite{PS08},\,\S11.3.1, $\ker(\nu)^{\bullet}$ is a cohomological mixed Hodge complex computing $H^\bullet(Y,\CC)$.
A useful description for the rational structure for $A^\bullet$ was given in \cite{PS08},\,11.2.6 using Illusie's Koszul complex giving a (normalized) cohomological mixed Hodge complex $(C^\bullet,A^\bullet,\beta)$. The inclusion of $W_0^Y$ gives a bifiltered injection of cohomological mixed Hodge complexes
$$\spe:(W_0^YC^\bullet,W_0^YA^\bullet,W_0^Y\beta)\hra (C^\bullet,A^\bullet,\beta)$$ 
whose cokernel we denote by $(\bar C^\bullet,\bar A^\bullet,\bar\beta)$.

\begin{theorem} \label{exseqCHMCL0}
\begin{enumerate}
\item 
We have an exact sequence of cohomological mixed Hodge complexes
\begin{equation*}
0\ra(W^Y_0C^\bullet,W^Y_0A^\bullet,W^Y_0\beta) 
\stackrel{\spe}{\lra} (C^\bullet,A^\bullet,\beta)
\ra (\bar C^\bullet,\bar A^\bullet,\bar\beta)
\ra 0.
\end{equation*}
\item
The inclusion $W^Y_0A^\bullet\ra A^\bullet$ is isomorphic to $\CC_Y\ra\psi_f\CC_{\bar{X}}$ in $D^+(Y,\ZZ)$ and thus $\bar A^{\bullet}$ is isomorphic
to $\phi_f\CC_{\bar{X}}$. This gives a mixed Hodge structure on
$\HH^i(Y,\phi_f\CC_{\bar X})$, and
the sequence in (1) turns the long exact sequence
\begin{equation*}
\cdots\ra H^i(Y,\CC)
\ra \HH^i(Y,\psi_f\CC_{\bar X})
\ra \HH^i(Y,\phi_f\CC_{\bar X})
\ra H^{i+1}(Y,\CC)
\ra \cdots
\end{equation*}
into an exact sequence of mixed Hodge structures.
\item We have
$\Gr_k^W\HH^i(Y,\psi_f\CC_{\bar X})=\Gr_k^W\HH^i(Y,\phi_f\CC_{\bar X})$ for $k\ge2$.
\end{enumerate}
\end{theorem}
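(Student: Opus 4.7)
The plan is to verify parts (1)--(3) in turn using Lemma~\ref{cok_CHMC} for the cokernel construction, the standard quasi-isomorphisms from \cite{St75} and \cite{PS08} to identify $\bar A^\bullet$ with $\phi_f\CC_{\bar X}$, and a careful analysis of the inherited weight filtration on $W_0^YA^\bullet$ to handle the vanishing claim.

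For (1), the map $\spe$ is a bifiltered inclusion with respect to $(W,F)$ and compatible with the rational structure by construction of $\ker(\nu)$. I will apply Lemma~\ref{cok_CHMC}, whose hypotheses reduce to injectivity on $\Gr_F\Gr^W$ and on $E_1(\cdot,W)$. A direct computation from (\ref{weightA1}) and (\ref{weightA2}), using the Poincar\'e residue isomorphism $\Gr^{W^Y}_j\Omega^{p+q+1}_{\bar X}(\log Y)\cong (a^Y_j)_*\Omega^{p+q+1-j}_{Y^j}$, identifies $\Gr^W_kA^r$ as a direct sum over $p+q=r$ of residue-type pieces (one for each admissible $q$), while $\Gr^W_kW_0^YA^r$ is the single summand at $q=-k$ (and vanishes for $k>0$). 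The inclusion $\spe$ is thus the inclusion of a direct summand on $\Gr^W$, from which the required injectivities on $\Gr_F\Gr^W$ and $E_1(\cdot,W)$ follow at once, endowing $(\bar C^\bullet,\bar A^\bullet,\bar\beta)$ with the CHMC structure.

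For (2), $A^\bullet$ resolves $\psi_f\CC_{\bar X}$ by Steenbrink's original argument (see also \cite[Thm.\,11.21]{PS08}), and $W_0^YA^\bullet=\ker(\nu)$ resolves $\CC_Y$ by \cite[\S11.3.1]{PS08}. Under these identifications $\spe$ corresponds to the natural specialization map. Since $\spe$ is an honest injection of complexes of sheaves, its cone---which is by definition $\phi_f\CC_{\bar X}$---is quasi-isomorphic to the cokernel $\bar A^\bullet$, giving the identification $\bar A^\bullet\simeq\phi_f\CC_{\bar X}$ together with a mixed Hodge structure on $\HH^i(Y,\phi_f\CC_{\bar X})$. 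Taking hypercohomology of the short exact sequence of CHMCs from (1) then produces the desired LES of MHS, matching the one associated to the distinguished triangle $\CC_Y\to\psi_f\CC_{\bar X}\to\phi_f\CC_{\bar X}\to\CC_Y[1]$.

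For (3), by the LES of (2) the claim reduces to showing that for $k\ge 2$ and every $j$ both of the maps $\Gr^W_kH^j(Y,\CC)\to\Gr^W_k\HH^j(\psi_f\CC_{\bar X})$ and $\Gr^W_k\HH^j(\phi_f\CC_{\bar X})\to\Gr^W_kH^{j+1}(Y,\CC)$ vanish. From the computation in (1), $\Gr^W_kW_0^YA^\bullet=0$ for $k>0$ and is a single summand of $\Gr^W_kA^\bullet$ otherwise; combined with the Tate-type offset relating the monodromy filtration on $\HH^\bullet(\psi_f)$ with the Deligne weight filtration on $H^\bullet(Y)$ (which is what the CHMC $W_0^YA^\bullet$ calculates), this forces the image of $H^j(Y,\CC)$ in $\HH^j(\psi_f\CC_{\bar X})$ to sit in sufficiently low weights to give the cut-off. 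The main obstacle will be the indexing bookkeeping: different sources use different conventions for the weight filtrations of $\psi_f$, $\phi_f$ and $\CC_Y$, and one must verify carefully that the computed bound translates into the precise cutoff $k\ge 2$ stated in the theorem.
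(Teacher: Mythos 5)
Your treatment of (1) and (2) is correct and is essentially the paper's argument: (1) is exactly an application of Lemma~\ref{cok_CHMC}, and your residue computation showing that $\Gr^W_k(W_0^YA^\bullet)$ is the single $q=-k$ summand of $\Gr^W_kA^\bullet$ (vanishing for $k>0$), hence a split subcomplex, is a clean verification of its two hypotheses; (2) is the standard identification from \cite{St75}, \cite{PS08} together with ``cone of an injection $=$ cokernel''.

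The gap is in (3), and it sits precisely where you yourself flag it: the indexing bookkeeping is not an incidental nuisance, it is the entire content of the cutoff $k\ge 2$, and your write-up as it stands points at the wrong cutoff. You reduce (3) to the vanishing of the two outer maps in the graded long exact sequence, writing the connecting map as $\Gr^W_k\HH^j(\phi_f\CC_{\bar X})\to\Gr^W_kH^{j+1}(Y,\CC)$ with the \emph{same} index $k$. In the convention used in the theorem (the same one as in Lemma~\ref{lemWspecseq} and Prop.~\ref{van_to_gen_nby}, namely the filtration induced directly from the complex-level $W$ of \eqref{weightA1}, so that $\HH^m(\Gr^W_k)$ carries MHS weight $m+k$), the connecting homomorphism raises the cohomological degree by one while preserving the MHS weight, hence \emph{lowers} the complex-level index by one: the graded sequence reads
\begin{equation*}
\Gr^W_kH^i(Y,\CC)\ra\Gr^W_k\HH^i(Y,\psi_f\CC_{\bar X})\ra\Gr^W_k\HH^i(Y,\phi_f\CC_{\bar X})\ra\Gr^W_{k-1}H^{i+1}(Y,\CC).
\end{equation*}
Your observation that $\Gr^W_k(W_0^YA^\bullet)=0$ for $k>0$ does give exactly the needed input (it says the induced filtration on $H^m(Y,\CC)$ has $\Gr^W_k=0$ for $k\ge 1$, which is the same as Deligne's bound that the weights of $H^m$ of the compact variety $Y$ are $\le m$ --- the paper simply cites \cite{DelTH},\,III,\,8.2.4 here instead). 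But to conclude you must kill \emph{both} outer terms: the left one needs $k\ge1$, the right one needs $k-1\ge1$, i.e.\ $k\ge2$; for $k=1$ the term $\Gr^W_0H^{i+1}(Y,\CC)$ survives and the statement fails in general. With your same-$k$ bookkeeping one would wrongly conclude the cutoff $k\ge1$. So the missing step is: identify the weight of $\HH^m(\Gr^W_k)$ as $m+k$ (the CMHC normalization), deduce the index shift across the connecting map, and then the two vanishings combine to give exactly $k\ge2$. Once that half-page is written out, your argument closes and agrees with the paper's (your derivation of the weight bound from the complex, rather than from Deligne's theorem, is a legitimate and slightly more self-contained variant).
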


\begin{proof} Exactness in (1) is clear. That middle term is a cohomological mixed Hodge complex is shown in \cite[Thm.\,4.19]{St75} or \cite[Thm.\,11.22]{PS08}. For the kernel and cokernel this works very similar. It follows from the observation that after taking $\Gr^W$ they can be written as a direct sum of shifted de Rham complexes on smooth varieties, see \eqref{A-weighted-pieces} for $\Gr_k^W A^\bullet$ and we have
$$\Gr_k^W \left(W^Y_0A^r\right) = \bigoplus_{{p+q=r}\atop{2q+k+1=q+1}}  \Omega^{p-q-k}_{Y^{2q+k+1}}, \qquad \Gr_k^W \bar A^r = \bigoplus_{{p+q=r}\atop{2q+k+1>q+1}}  \Omega^{p-q-k}_{Y^{2q+k+1}}.$$
The first part of (2) is given in the discussion after Theorem 11.28 of \cite{PS08}, the remainder of (2) is standard given (1). Since $Y$ is compact, by \cite{DelTH},\,III,\,8.2.4, we have $h^{p,q}H^i(Y)=0$ for $p+q>i$. This implies (3).
\end{proof}

\begin{remark} Note that (1) can be generalized as follows.
Joseph Steenbrink pointed out to us that for a strict injection of filtered complexes with strict differentials $(K^\bullet,F)\ra (L^\bullet,F)$, if $H^i(K)\ra H^i(L)$ is strict for each $i$, the filtered complex $L/K$ has strict differentials. 
Because maps of Hodge structures are automatically strict, this result can be used to conclude that for an injection 
$K \ra L$ of cohomological mixed Hodge complexes such that $\Gr^W K\ra \Gr^W L$ is a strict injection, we have that $L/K$ is a cohomological mixed Hodge complex.
A similar result holds for the kernel of a surjection $K\ra L$.
\end{remark}

\begin{lemma} \label{lemWspecseq}
In the sense of \eqref{specseqfiltcomplex}, we consider the spectral sequence of $(\bar A^\bullet,W)$, with
\begin{equation}
\label{Wspecseq}
E^{-k,m+k}_1:\HH^{m}(X,\Gr^W_{k}\bar A^\bullet) \Rightarrow \HH^{m}(X,\bar A^\bullet).
\end{equation}
We have
\begin{enumerate}
\item The sequence (\ref{Wspecseq}) is degenerate at $E_2$.
\item The Poincar\'e residue map along $Y$ induces an isomorphism 
$$\Gr^W_{k} \bar A^\bullet 
= \bigoplus_{q>-1,-k}\Gr^{W^Y}_{2q+k+1}\Omega^\bullet_X(\log Y)[1]
\stackrel{\sim}{\ra} \bigoplus_{q>-1,-k} \Omega^\bullet_{{Y}^{2q+k+1}}[-2q-k].$$
\item We thus have
$$\HH^{m}(X,\Gr^W_{k}\bar A^\bullet)=\bigoplus_{q>-1,-k} H^{m-2q-k}( Y^{2q+k+1},\CC)\langle-q-k\rangle$$
where $\langle\cdot\rangle$ denotes the Tate twist.
\item The map $d_1$ in (\ref{Wspecseq}) is given by
$d_1=\delta - \gamma$ where 
$$\delta:H^l(Y^s,\CC)\ra H^l(Y^{s+1},\CC)$$ 
is the restriction map given as
$$(\delta\alpha)|_{Y_{i_1}\cap\cdots\cap Y_{i_s}}=
\sum_j (-1)^{j+1}\alpha|_{Y_{i_1}\cap\cdots\hat{Y}_{i_j}\cdots\cap Y_{i_s}},$$ 
$$\gamma:H^l(Y^{s},\CC)\ra H^{l+2}( Y^{s-1},\CC)$$ 
is the Gysin map, i.e., the Poincar\'e dual of $\delta$.
\item We have Poincar\'e duality for
(\ref{Wspecseq}), i.e., if we set $n=\dim X$, $m'=2n-m-2$, $k'=2-k$, we have an isomorphism
$$E_1^{-k,m+k}=(E_1^{-k',m'+k'}\langle n\rangle)^*$$
which is compatible with the respective differentials $d_1$ and $d_1^*$. 
In particular, it also holds when we replace $E_1$ by $E_\infty$. We obtain
$$h^{p,q}\HH^i(Y,\phi_f\CC_{\bar X}) = h^{n-p,n-q}\HH^{2n-2-i}(Y,\phi_f\CC_{\bar X}).$$
\item We have Poincar\'e duality also for $E_1(A^\bullet,W)$ which yields
$$h^{p,q}\HH^i(Y,\psi_f\CC_{\bar X}) = h^{\dim Y-p,\dim Y-q}\HH^{2\dim Y-i}(Y,\psi_f\CC_{\bar X}).$$
\end{enumerate}
\end{lemma}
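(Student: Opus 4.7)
The plan is to directly unpack the filtered complex $\bar A^\bullet$ and apply standard techniques from the theory of cohomological mixed Hodge complexes. Part (1) is immediate: since $(\bar C^\bullet,\bar A^\bullet,\bar\beta)$ is a cohomological mixed Hodge complex by Theorem~\ref{exseqCHMCL0}, its weight spectral sequence degenerates at $E_2$ by Deligne's theorem.

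For (2), I would unwind the definitions: from $\bar A^{p,q}=\Omega^{p+q+1}_{\bar X}(\log Y)/W^Y_{q+1}$ and $W_k\bar A^{p,q}=W^Y_{2q+k+1}/W^Y_{q+1}$, one computes $\Gr^W_k\bar A^{p,q}=\Gr^{W^Y}_{2q+k+1}\Omega^{p+q+1}_{\bar X}(\log Y)$, which is nonzero precisely when $q>\max(-1,-k)$. Summing over $p+q=m$ and applying Deligne's Poincar\'e residue isomorphism $\Gr^{W^Y}_r\Omega^\bullet(\log Y)\cong (a^Y_r)_*\Omega^{\bullet-r}_{Y^r}$ yields (2); the shift $[-2q-k]$ reflects that $\Omega^{m-2q-k}_{Y^{2q+k+1}}$ sits in total degree $m$ on the $q$-summand. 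Part (3) follows by taking hypercohomology, and the Tate twist $\langle-q-k\rangle$ is exactly the one required to place each piece in the pure weight $m+k$ demanded by $E_1^{-k,m+k}$; it arises because Poincar\'e residue is a morphism of Hodge structures up to a twist.

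For (4), the total differential of $\bar A^\bullet$ decomposes as $d'+d''$, where $d'$ is the exterior derivative and $d''=\dlog f\wedge\cdot$. Since $d'$ strictly preserves $W^Y$ (hence $W$), it acts on each $\Gr^W_k\bar A^\bullet$ as the usual exterior derivative on $\Omega^\bullet_{Y^{2q+k+1}}$ and contributes nothing to $d_1$. By contrast, $d''$ strictly raises $W^Y$ by one, hence lowers $W$ by one, inducing the $d_1$ map $\Gr^W_k\bar A^m\to\Gr^W_{k-1}\bar A^{m+1}$. Writing $\dlog f=\sum_i\dlog z_i$ locally in terms of defining equations of the components of $Y$ and tracing through the residue, the terms $\dlog z_j$ with $j$ a new index adjoin a new defining equation and produce the restriction $\delta$, while the terms $\dlog z_{i_l}$ with $i_l$ already among the residue indices yield, after re-expansion via the Koszul relation, the Gysin map $\gamma$; careful sign tracking then gives $d_1=\delta-\gamma$.

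Parts (5) and (6) are Poincar\'e duality statements. Since $f$ is proper, both $\psi_f\CC_{\bar X}$ and $\phi_f\CC_{\bar X}$ are (shifted) self-dual constructible complexes on the compact $Y$, and this self-duality refines to a duality of mixed Hodge structures. At the $E_1$-level it becomes classical Poincar\'e duality on each $Y^s$ (of dimension $n-s$): the summand $H^{m-2q-k}(Y^{2q+k+1})\langle-q-k\rangle$ of $E_1^{-k,m+k}$ pairs with the summand indexed by $q'=q+k-1$ of $E_1^{-(2-k),m'+2-k}$ where $m'=2n-m-2$, with all indices and Tate twists matching up to the overall $\langle n\rangle$. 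Compatibility with $d_1$ follows because $\delta$ and $\gamma$ are mutually Poincar\'e dual on the $Y^s$, so by (1) the duality descends to $E_\infty$ and yields the claimed Hodge number symmetry. Part (6) is entirely parallel, replacing $\bar A^\bullet$ by $A^\bullet$ (whose weight-$k$ summands range over $q\ge\max(0,-k)$) and using $\dim Y=n-1$ in place of $n$. The main obstacle will be the sign bookkeeping in (4): the Koszul signs, the sign convention in $d'+d''$, and the Poincar\'e residue all enter, making the correct relative sign between $\delta$ and $\gamma$ delicate; once that is handled, the remainder of the lemma is either formal or standard.
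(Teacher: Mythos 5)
Parts (1)--(3), (5) and (6) of your proposal are correct and essentially follow the paper's own route: (1) is the $E_2$-degeneration for the cohomological mixed Hodge complex of Thm.~\ref{exseqCHMCL0} (the paper cites \cite{PS08},\,Thm.\,3.18), (2)--(3) are the same unwinding of \eqref{weightA1}--\eqref{weightA2} plus the residue isomorphism (the paper reads the Tate twist off the shift of $F$ in \eqref{weightA2}, you read it off purity of the residue --- both are fine), and (5)--(6) are the same summand-wise Poincar\'e duality on the compact $Y^s$ with the reindexing $q'=q+k-1$, $k'=2-k$, $m'=2n-m-2$, combined with $E_2$-degeneration to pass to $E_\infty$.

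The genuine gap is in your argument for (4), which the paper simply quotes from \cite{PS08},\,\S11.3.2. Your claim that $d'$ ``contributes nothing to $d_1$ because it strictly preserves $W$'' is a non sequitur, and your attribution of the Gysin component to the repeated-index terms of $d''=\dlog f\wedge$ cannot work, for two reasons. First, on a representative whose leading $W^Y$-graded term is $\dlog z_{i_1}\wedge\cdots\wedge\dlog z_{i_{2q+k+1}}\wedge\tilde\beta$, wedging with $\dlog z_{i_l}$ for an index already present kills that term identically, and whatever comes from lower-order tails has $W^Y$-weight at most $2q+k+1$, hence lands in $W_{k-2}$ of $A^{p,q+1}$ and dies in $\Gr^W_{k-1}$. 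Second, and decisively, a bidegree count: inside $\Gr^W_{k-1}\bar A^{m+1}$ the Gysin target $H^{m-2q-k+2}(Y^{2q+k})$ comes from the column $A^{p+1,q}$, while the restriction target $H^{m-2q-k}(Y^{2q+k+2})$ comes from the column $A^{p,q+1}$; since $d''$ maps $A^{p,q}\to A^{p,q+1}$, it can only ever produce the restriction component $\delta$. Following your prescription one would therefore obtain $d_1=\delta$ with no Gysin term at all.

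The Gysin component in fact comes from $d'$, through the connecting homomorphism of $0\to\Gr^W_{k-1}\to W_k/W_{k-2}\to\Gr^W_k\to 0$: choose a representative of a class in the $q$-summand whose leading term is $\dlog z_{I}\wedge\tilde\beta$ with $\beta=\tilde\beta|_{Y_I}$ closed; then $d'$ of it has vanishing residue along $Y_I$, so it drops one step in $W^Y$, and its class in $\Gr^W_{k-1}$, read off via residues along the $(2q+k)$-fold intersections $Y_{I\setminus\{i_l\}}$, is (up to sign and twist) the Gysin image of $[\beta]$. This is exactly the mechanism by which the weight spectral sequence of a smooth open variety --- whose only differential preserves $W$ --- has $d_1$ equal to the Gysin map (\cite{PS08},\,Prop.\,4.10), a fact the paper itself uses in Lemma~\ref{Wsequence}; it also shows that ``filtration-preserving'' does not imply ``no contribution to $d_1$.'' So the formula $d_1=\delta-\gamma$ is correct, but your proof of it would fail; either carry out the zig-zag as above or cite \cite{PS08},\,\S11.3.2 as the paper does.
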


\begin{proof} For (1) and (2), see, e.g., 
\cite{PS08},\,Thm.\,3.18 and \S4.2, respectively. 
By (\ref{weightA2}),
$F^i\Gr_k^W\bar A^\bullet$ becomes $F^{i+q+1}$ in the middle term of (2) and then $F^{i-q-k}$ on the right hand side of (2), thus the Tate twist in (3) becomes clear. We deduce (4) from 
\cite{PS08},\,\S11.3.2,\,p.\,280.
For (5), we apply Poincar\'e duality to each summand in (3). For $Z$ a compact manifold, Poincar\'e duality means 
\[
H^i(Z,\CC)=\Hom(H^{2\dim Z-i}(Z,\CC)\langle \dim Z\rangle,\CC).
\] 
Using $\dim Y^i=n-i$, one remodels the resulting sum 
$$E_1^{-k,m+k}=(\bigoplus_{q>-1,-k} 
H^{2n-2-m-2q-k}( Y^{2q+k+1},\CC)\langle n-q-1\rangle)^*$$
by replacing $m,k,q$ by $m'$, $k'$ and $q'=q+k-1$. Part (6) goes along the same lines as (5).
\end{proof}

\begin{figure}
\resizebox{1\textwidth}{!}{
\input{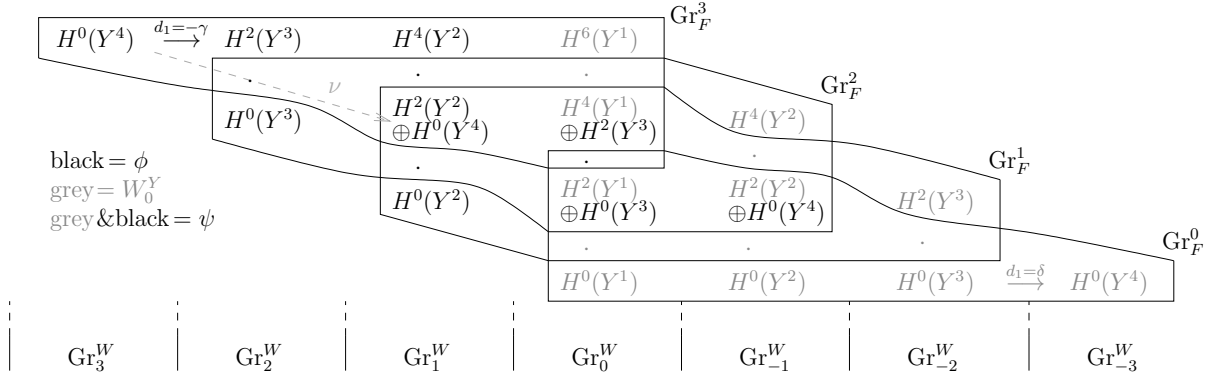}
}
\caption{The $E_1$ term with respect to the weight filtration of the spectral sequence of the cohomology of the special fibre, nearby fibre and vanishing cycles for the case of a degeneration of a compact threefold, with odd cohomologies
indicated by dots.}
\label{E2termfigure}
\end{figure}




\section{The Hodge numbers of the mirror}
\label{section4}

Given $M$, $N$, $M_{\RR}$, $N_{\RR}$, $\Delta
\subseteq M_{\RR}$, a star-like triangulation $\P$ of $\Delta$
consisting only of standard simplices, we obtain data
\begin{align*}
w:X_{\Sigma}&\rightarrow\CC\\
\check w:X_{\check\Sigma}&\rightarrow\CC
\end{align*}
with compactifications
\begin{align*}
\bar{w}:\tilde\PP_{\check\Delta}&\rightarrow\PP^1\\
\bar{\check w}:\tilde X_{\bar{\check\Sigma}}&\rightarrow\PP^1
\end{align*}
given by Propositions \ref{properWprop} and \ref{checkWproper} respectively.
We choose a small disk $\DD\subseteq \CC\subseteq\PP^1$ with center
$0\in\CC$ which does not contain any other critical values of $\bar{w}$
or $\bar{\check w}$, and consider the restrictions
\begin{align*}
{\bar w}:{\bar w}^{-1}(\DD)&\rightarrow \DD\\
\bar{\check w}:\bar{\check w}^{-1}(\DD)&\rightarrow \DD
\end{align*}
In the two cases, we have inclusions of open sets
\[
j^{D}:X_{\Sigma}\cap 
\bar{w}^{-1}(\DD)\subseteq \bar{w}^{-1}(\DD),
\]
\[
\check j^{\check D}:X_{\check\Sigma}\cap \bar{\check w}^{-1}(\DD)\subseteq
\bar{\check w}^{-1}(\DD).
\]
We have already identified $\HH^q(\bar{\check w}^{-1}(0),\phi_{\bar{\check w}}({\bf R}\check j^{\check D}_*\CC_{X_{\check\Sigma}}))$  with $H^{q-1}(\check S,\CC)$ in Ex.~\ref{cohoS}. 
It is not hard to see that the usual Hodge structure from the K\"ahler manifold $\check S$ and that from the vanishing cohomology construction, as given in the last section, coincide on $H^{q-1}(\check S,\CC)$. We can compute its Hodge numbers via the formulae in Prop.~\ref{HodgenumbersS}.

We are now going to compute the Hodge numbers of
$\HH^q(\bar w^{-1}(0),\phi_{\bar w}({\bf R}j^D_*\CC_{X_\Sigma}))$ in order to compare it to the former and to prove our main result.
We apply the construction of the last section and use its notation, i.e.,
$\bar{X}=\bar w^{-1}(\DD)$, $X=X_\Sigma\cap \bar{X}$, $\bar{w}:\bar{X}\ra \DD$, 
$$Y={\bar{w}}^{-1}(0)=\bigcup_{i=1}^{N_Y}Y_i\qquad\hbox{ and }\qquad D=\bar{X}\backslash X=\bigcup_{i=1}^{N_D}D_i.$$
Indeed, by Lemma~\ref{W0descprop},(3) and simpliciality of $\bar{\Sigma}$, 
$Y\cup D$ is a normal crossing divisor.

We now proceed to the main calculation. Motivated by Def.~\ref{HDdef}, we set
\begin{align*}
e^{p}( S,\shF_{ S})= {} &(-1)^p\sum_{q,k}(-1)^q h^{p,q+k}\, \HH^{p+q}( S,\shF_{ S})\\
= {} &(-1)^p\sum_{q,k}(-1)^q h^{p+1,q+k}\, \HH^{p+1+q}(Y,\bar A^\bullet).
\end{align*}

Recall that $W_0$ is the component of  $w^{-1}(0)$ which is not contained in any toric stratum of $X_\Sigma$, i.e., the unique component of $Y$ which meets $D$.
Let ${Y}_{\tor}^{i}\subset Y^{i}$ be the subset of those components which are not contained in ${\tilde{W}_0}$ and ${Y}_{\ntor}^{i}= Y^{i} \setminus {Y}_{\tor}^{i}$. Note that ${Y}_{\ntor}^{1}=\tilde{W}_0$.
For $\tau\in\P$, we denote by $\P_*(\tau)$ the smallest cell of $\P_*$ containing $\tau$ and by $T_\tau\cong (\CC^*)^{d+1-\dim\tau}$ the torus orbit in $X_\Sigma$ corresponding to $\Cone(\tau)$. Analogously, we define $T_\tau$ to be the torus orbit corresponding to $\tau\in\P_*$.
Let $\P_{\Delta'}$, $\P_{\partial\Delta'}$ and $\P_{\partial\Delta}$ denote\footnote{We take $\partial\Delta'$ in the topology of $\Delta$, e.g. $\partial\Delta'=\Delta'$ for $\dim\Delta'<\dim\Delta$.}
 the induced subdivisions $\P\cap\Delta'$, $\P\cap\partial\Delta'$ and $\P\cap\partial\Delta$. Let $\P_{\partial\Delta'}^{[0]}$ denote the subset of vertices of $\P_{\partial\Delta'}$.
For $\omega\in\P_{\Delta'}$, let $X_\omega$ denote the toric variety defined by the fan along $\omega$. Note that $\dim Y^{k}=d+2-k$ and $\dim X_\omega=d+1-\dim\omega$, thus $X_\omega\subset Y^{\dim\omega+1}$.

\begin{lemma} \label{disjointdecomposition}
We have
\begin{enumerate}
\item $ Y^{k} =  Y_\tor^{k}\
 \sqcup\  Y_\tor^{k-1}\cap {\tilde{W}_0}$,\hbox{\qquad i.e.,\qquad} $Y^k_\ntor=Y_\tor^{k-1}\cap {\tilde{W}_0}$,
\item
$ Y_\tor^{k} = \coprod_{{\omega\in\P_{\Delta'}}\atop{k=\dim\omega+1}}X_\omega$,
\item $X_\omega = \coprod_{{\tau\in\P}\atop{\tau\supset\omega}}T_\tau$ for $\omega\in\P_{\Delta'}$, 
\item $T_\tau \cap {\tilde{W}_0} \cong (\CC^*)^{\dim\P_*(\tau)-\dim\tau} \times (T_{\P_*(\tau)}\cap {W^*_0})$ for $\tau\in\P$.
\end{enumerate}
\end{lemma}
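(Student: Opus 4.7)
The plan is to unpack the definitions of $Y^k_\tor$, $X_\omega$, and $T_\tau$ in conjunction with Prop.~\ref{W0descprop},(3), the standard toric orbit stratification, and the product structure of the resolution $X_\Sigma\to X_{\Sigma_*}$ when restricted to open torus orbits. I treat the four parts in order, as (1) and (2) amount to rewriting Prop.~\ref{W0descprop},(3), (3) is the orbit decomposition of a toric variety, and (4) is where all the geometry of the resolution enters.

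For (1), Prop.~\ref{W0descprop},(3) gives the irreducible decomposition
$Y = \tilde W_0 \cup \bigcup_{v\in\Delta'\cap M} D_v,$
with $\tilde W_0$ the unique component meeting $D$. Any component of a $k$-fold intersection in $Y^k$ is either a $k$-fold intersection of toric divisors $D_v$ — belonging to $Y^k_\tor$ by definition — or an intersection of $\tilde W_0$ with $k-1$ toric divisors; in the latter case it lies in $\tilde W_0$, hence in $Y^k_\ntor$, and every element of $Y^k_\ntor$ has this form since $\tilde W_0$ is the only non-toric component. This gives the disjoint decomposition with $Y^k_\ntor = Y^{k-1}_\tor\cap\tilde W_0$. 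For (2), a non-empty $k$-fold intersection $D_{v_0}\cap\cdots\cap D_{v_{k-1}}$ occurs precisely when $\omega:=\Conv\{v_0,\ldots,v_{k-1}\}$ is a cell of $\P$, and then equals the toric stratum $X_\omega$ associated to $\Cone(\omega)\in\Sigma$; since each $v_i\in\Delta'\cap M$, such $\omega$ are exactly the cells of $\P_{\Delta'}$ of dimension $k-1$. Assumption~\ref{overallhypo} (simpliciality of $\Sigma$) makes these intersections transverse and the $X_\omega$ smooth, so $Y^k_\tor=\coprod_{\omega\in\P_{\Delta'},\dim\omega+1=k} X_\omega$. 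Part (3) is the standard orbit decomposition of the toric variety $X_\omega$: its orbits are $T_\sigma$ for cones $\sigma\in\Sigma$ with $\sigma\supseteq\Cone(\omega)$, and by construction of $\Sigma$ from $\P$, these are exactly the $\Cone(\tau)$ with $\tau\in\P$, $\tau\supseteq\omega$.

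For (4), the refinement $\Sigma$ of $\Sigma_*$ gives a toric morphism $\pi\colon X_\Sigma\to X_{\Sigma_*}$ which, by the definition of $\P_*(\tau)$ as the smallest cell of $\P_*$ containing $\tau$, sends the orbit $T_\tau$ onto the orbit $T_{\P_*(\tau)}$. Both $\Cone(\tau)$ and $\Cone(\P_*(\tau))$ are simplicial (the former by Assumption~\ref{overallhypo}, the latter by Lemma~\ref{Pstar}), so the induced surjection of tori $\pi|_{T_\tau}\colon T_\tau\to T_{\P_*(\tau)}$ is a trivial torus bundle with fiber $(\CC^*)^{\dim\P_*(\tau)-\dim\tau}$. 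Since $\tilde W_0$ and $W^*_0$ are the closures of $w^{-1}(0)\cap(\CC^*)^{d+2}$ in $X_\Sigma$ and $X_{\Sigma_*}$ respectively (diagram \eqref{setupWs}), and since $\pi$ restricts to the identity on the dense torus, $\tilde W_0\cap T_\tau=\pi^{-1}(W_0^*\cap T_{\P_*(\tau)})\cap T_\tau$, giving the claimed product decomposition.

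The only non-routine step is verifying in (4) that $\pi|_{T_\tau}$ is really a trivial torus bundle and that $\tilde W_0\cap T_\tau$ is the full preimage of $W_0^*\cap T_{\P_*(\tau)}$. The triviality follows from simpliciality by choosing a splitting of the inclusion of the sublattice spanned by $\Cone(\tau)$ into that spanned by $\Cone(\P_*(\tau))$, and the pullback identification is immediate from the definitions of $\tilde W_0$ and $W_0^*$ as strict transforms sharing the same defining Laurent polynomial on $(\CC^*)^{d+2}$.
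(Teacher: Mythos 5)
Your parts (1)--(3) follow the paper's (very terse) route and are fine; the only thing left implicit is that no intersection of the toric components $D_v$ is itself contained in $\tilde W_0$ (needed to know such a component lies in $Y^k_\tor$ rather than $Y^k_\ntor$). That is supplied by Prop.~\ref{W0descprop}: $W_0$ meets every toric stratum in a hypersurface with nonempty Newton polytope, so no stratum is swallowed. Likewise your description of $T_\tau\to T_{\P_*(\tau)}$ as a trivial $(\CC^*)^{\dim\P_*(\tau)-\dim\tau}$-bundle is correct, though it needs only that the sublattice $\Cone(\P_*(\tau))^\perp\cap\bar M\subseteq\Cone(\tau)^\perp\cap\bar M$ is saturated (so the quotient is free and the surjection of tori splits), not simpliciality.

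The genuine gap is in (4), at exactly the step you declare immediate. For a toric refinement $\pi\colon X_\Sigma\to X_{\Sigma_*}$ and a hypersurface cut out by one Laurent polynomial on the dense torus, it is \emph{not} true in general that the strict transform meets a boundary orbit $T_\tau$ in the full preimage of the downstairs strict transform intersected with $T_{\P_*(\tau)}$; only the inclusion $W_0\cap T_\tau\subseteq(\pi|_{T_\tau})^{-1}(W_0^*\cap T_{\P_*(\tau)})$ is formal. For example, blow up the origin of $\AA^2$ and take $g=x+y+xy$: the closure of $V(g)$ downstairs contains the zero-dimensional orbit, so the preimage of that intersection is the whole open orbit $\CC^*$ of the exceptional divisor, while the strict transform meets it in a single point. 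So ``same defining Laurent polynomial'' does not do the job. What rescues the statement here is the compatibility of $\Sigma$, $\Sigma_*$ with the Newton polytope $\check\Delta_0$ of $w$: by Lemma~\ref{Pstar},(2) and the argument in the proof of Prop.~\ref{W0descprop},(1), each $\Cone(\tau)$, $\tau\in\P$, lies in a single normal cone of $\check\Delta_0$, and $W_0^*$ is the preimage of an ample hypersurface in $\PP_{\check\Delta_0}$ with general coefficients, hence contains no toric stratum of $X_{\Sigma_*}$. Consequently no component of $\pi^{-1}(W_0^*)$ (which is locally principal) lies in the toric boundary, so $\pi^{-1}(W_0^*)$ equals the closure of $V(w)$ in $X_\Sigma$, i.e.\ $W_0=\pi^{-1}(W_0^*)$ as sets --- this is precisely the assertion ``$W_0$ is the pullback of $W_0^*$'' used in the proof of Prop.~\ref{W0descprop},(2). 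You need to quote that fact (or reprove it as just sketched, e.g.\ by noting that all vertices of $\check\Delta_0$ carry nonzero coefficients of $w$, so the pulled-back divisor acquires no exceptional components); with it, your orbit-by-orbit identification and the product splitting give (4), in line with the paper's remark that (4) uses the factorization of $w$ through $X_{\Sigma_*}$.
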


\begin{proof}
(1) follows from Prop.~\ref{propdualintcomplex} and (2)-(4) are standard in toric geometry where (4) uses the fact that $w$ factors through $X_\Sigma\ra X_{\Sigma_*}$.
\end{proof}

\subsection{The duality for the $p$-th Euler characteristic}
\label{duality-for-euler}

\begin{lemma} \label{lemmasignidentities}
\begin{enumerate}
\item For $\tau\subseteq\Delta'$, we have
$$(-1)^{\dim\tau}=\sum_{\omega\in p_{\Delta\Delta'}^{-1}(\tau)}(-1)^{\dim\omega}.$$
\item For a polytope $\tau$ with a simplicial polyhedral decomposition $\P_\tau$, we have
$$(-1)^{\dim\tau}=\sum_{{\omega\in\P_\tau}\atop{\omega\not\subseteq\partial\tau}} (-1)^{\dim\omega}.$$
\item 
Let $\tau_1\in\P_{\partial\Delta}, \tau_2\subseteq\partial\Delta$ 
 with $\P_*(\tau_1)\subseteq\tau_2$. We set 
 $$\P_{\tau_1,\tau_2}=\{\tau\in\P|\tau\cap\partial\Delta=\tau_1, \tau\cap\Delta'\neq\emptyset,
 (p_{\Delta\Delta'}^1)^{-1}(\P_*(\tau))=\tau_2\}$$ 
\begin{minipage}[b]{0.54\textwidth} 
 and have
 $$\sum_{\tau\in\P_{\tau_1,\tau_2}}(-1)^{\dim\tau+1}
 =\left\{\begin{array}{ll}
 (-1)^{\dim\tau_1}&\P_*(\tau_1)=\tau_2\\
 0&\P_*(\tau_1)\neq\tau_2.
 \end{array}\right.$$
\end{minipage}
\qquad\qquad
\begin{minipage}{0.3\textwidth}
\vspace{3mm}
\resizebox{0.9\textwidth}{!}{
\input{Ptau1tau2.pstex_t}
}
\end{minipage}
\end{enumerate}
\end{lemma}

\begin{proof}
(1) This is an Euler characteristic calculation. Following the notation
of Lemma \ref{pdeltadeltaprime}, let $\check\tau\in\check\Sigma_{\Delta'}$
be the cone dual to the face $\tau$. 
Let $\check\tau'$ denote the inverse image of $\check\tau$
under the projection $N_{\RR}\rightarrow N_{\RR}/{\Delta'}^{\perp}$.
Then $\omega\in p^{-1}_{\Delta\Delta'}(\tau)$ if and only if the corresponding cone
$\check\omega\in\check\Sigma_{\Delta}$ satisfies $\check\omega\subseteq
\check\tau'$, $\check\omega\not\subseteq\partial\check\tau'$. Then
\[
\sum_{\check\omega\in\check\Sigma\atop
\check\omega\subseteq\check\tau', \check\omega\not\subseteq\partial\check\tau'}
(-1)^{\dim\check\omega}=\chi(\check\tau')-\chi(\partial\check\tau')
=1-(1+(-1)^{\dim\check\tau'-1})=(-1)^{\dim\check\tau'}.
\]
Since $\dim\check\omega=\dim\Delta-\dim\omega$ and $\dim\check\tau'=
(\dim\Delta-\dim\Delta')+(\dim\Delta'-\dim\tau)$, the desired result follows.

(2) As above, this is just a computation of the Euler characteristic of
$\tau\setminus\partial\tau$.

(3) The proof will use M\"obius inversion and be a variation of the proof of Lemma 3.5 in \cite{KS10}. Recall that for any finite poset $B$, the incidence algebra consists of $\ZZ$-valued functions on $\{(a,b)|a,b\in B,a\le b\}$ with the associative convolution product
$$(f*g)(a,b)=\sum_{a\le x\le b}f(a,x)g(x,b).$$
Its unit is $\delta$ which is non-zero only on $\{(a,a)|a\in B\}$ where it takes value one. If $\zeta$ denotes the function which is constant of value $1$, then the M\"obius function $\mu$ is its inverse, i.e.,
\begin{equation} \label{moebinv0}
\delta=\zeta*\mu.
\end{equation}
We set $\hat B = \{0\}\cup B$ and let $0\le a$ for all $a\in B$.
For any function $h:B\ra\ZZ$, we define
$\hat{h}:\hat B\times \hat B\ra\ZZ$ as $\hat{h}(a,b)=h(b)$ for $a=0,b\in B$ and
$\hat{h}(a,b)=0$ otherwise.
Multiplying (\ref{moebinv0}) from the left by $\hat{h}$ and restricting to $\{0\}\times B$ yields
\begin{equation} \label{moebinv}
h(b) = \sum_{x\le b} \mu(x,b)g(x),\hbox{ where }g(x)=\sum_{a\le x}h(a)
\end{equation}
because $\hat{g}=\hat{h}*\zeta$.
We apply this to our setup. 
First note that by Lemma~\ref{Pstar},(3), we have for $\tau\in\P$ that
\[
\tau\cap\Delta'\neq\emptyset \iff \tau\cap \Int(\Delta)\neq\emptyset.
\]
We pick $\tau_1'\in\P_{\partial\Delta},\hat\tau_2\in\P_*$ 
with 
$\tau_1\subseteq\tau_1'\subseteq\hat\tau_2$ and $\hat\tau_2\cap\Int(\Delta)\neq\emptyset$.
Let $\P|_{\hat\tau_2}$ denote the induced subdivision
on $\hat\tau_2$. 
The link of $\tau_1'\in\P|_{\hat\tau_2}$ is contractible and thus
$$\sum_{\tau\in\P|_{\hat\tau_2},\tau_1'\subsetneq\tau} (-1)^{\dim\tau-\dim\tau_1'-1}=1,$$
and hence
$$
\sum_{\tau\in\P|_{\hat\tau_2},\tau\supseteq\tau_1'} (-1)^{\dim\tau+1}=0.
$$
We think of this as the value of the function $g$ at $\tau'_1$, where
$g$ is defined in (\ref{moebinv}) using the poset 
$\{\tau_1'\,|\,\tau_1'\in\P|_{\hat\tau_2},\tau_1'\subseteq\partial\Delta,\tau_1\subseteq\tau_1'\}$
under reverse inclusion and the function
\[
h(\tau_1')=\sum_{\tau\in\P|_{\check\tau_2},\tau\cap\partial\Delta=\tau_1'}
(-1)^{\dim\tau+1}.
\]
We then obtain as an expression for $h(\tau_1)$ the identity
\begin{equation} \label{moebres1}
\sum_{\tau\in\P|_{\hat\tau_2},\tau\cap\partial\Delta=\tau_1} (-1)^{\dim\tau+1}=0.
\end{equation}
Next consider the poset
$
B=\{\hat\tau_2\,|\,\P_*(\tau_1)\subseteq\hat\tau_2, \hat\tau_2\cap\Int(\Delta)\neq\emptyset\}
$
under inclusion which has a global minimal element $b_0=p^1_{\Delta\Delta'}(\P_*(\tau_1))$. We define $g:B\times B\ra\ZZ$ by
\[
g(b_0,\hat\tau_2) = \sum_{{\tau\in\P|_{\hat\tau_2},\tau\cap\partial\Delta=\tau_1}\atop{\tau\cap\Int(\Delta)\neq\emptyset}} (-1)^{\dim\tau+1},
\]
which agrees with $(-1)^{\dim\tau_1}$ by (\ref{moebres1}), 
and we set $g(a,b)=0$ for $a\neq b_0$. We are interested in
\[
\sum_{{\tau\in\P|_{\hat\tau_2},\tau\cap\partial\Delta=\tau_1}\atop{\tau\cap\Int(\hat\tau_2)\neq\emptyset}} (-1)^{\dim\tau+1} = h(b_0,\hat\tau_2) = (g*\mu)(b_0,\hat\tau_2)
\]
for $\hat\tau_2=p^1_{\Delta\Delta'}(\tau_2)$.
However, on $\{b_0\}\times B$, we have $g=(-1)^{\dim\tau_1}\zeta$. By (\ref{moebinv0}) we thus get
$$h(b_0,\hat\tau_2) = (-1)^{\dim\tau_1} \delta(b_0,\hat\tau_2)$$
which completes the proof.

\end{proof}

\begin{lemma} \label{epW}
For $\tau\in\P$, let $T_\tau$ denote the corresponding torus orbit in $X_\Sigma$.
We have
$$(-1)^p e^p(T_\tau) = (-1)^{d+1-\dim\tau}\begin{pmatrix}{d+1-\dim\tau}\\p\end{pmatrix}.$$
Moreover,
for $\tau\not\subseteq \partial\Delta$, we have
$$
(-1)^p e^p(T_\tau \cap {\tilde{W}_0}) = 
(-1)^{d-\dim\tau}\left(
\begin{pmatrix}{d+1-\dim\tau}\\ p+1\end{pmatrix} 
-\begin{pmatrix}{\dim\P_*(\tau)-\dim\tau}\\ p+1\end{pmatrix} \right)
+ A_{\tau,p}.
$$
Here, $A_{\tau,p}=0$ if $\tau\not\subseteq \partial\Delta'$ and otherwise
$$
\begin{array}{r@{}l}
A_{\tau,p}=\sum_{\hat\tau\in p_{\Delta\Delta'}^{-1}(\P_*(\tau))}
(-1)^{\dim\P_*(\tau)-\dim\tau+d+1-\dim\hat\tau}
&\left(\begin{pmatrix}\dim\hat\tau-\dim\tau \\p+1 \end{pmatrix}\right.\\
&\left.-\begin{pmatrix}\dim\P_*(\tau)-\dim\tau\\p+1\end{pmatrix}\right).
\end{array}$$
\end{lemma}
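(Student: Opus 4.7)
The plan for (1) is immediate: $T_\tau\iso(\CC^*)^{d+1-\dim\tau}$, and the formula $(-1)^pe^p((\CC^*)^k)=(-1)^k\binom{k}{p}$ from \cite{DK86},\,1.10 (also recovered in the proof of Lemma \ref{handlebodytimestorus}) gives the assertion directly.

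For (2), I would begin with the product decomposition provided by Lemma \ref{disjointdecomposition},(4):
\[
T_\tau\cap\tilde W_0 \iso (\CC^*)^{\dim\P_*(\tau)-\dim\tau}\times\bigl(T_{\P_*(\tau)}\cap W_0^*\bigr).
\]
The K\"unneth formula for Hodge-Deligne numbers (Theorem \ref{eulerdecompose},(2)) together with part (1) then reduces the problem to computing $e^p(T_{\P_*(\tau)}\cap W_0^*)$. By Proposition \ref{W0descprop},(1), this factor is a non-degenerate hypersurface in the torus $T_{\P_*(\tau)}\iso(\CC^*)^{d+1-\dim\P_*(\tau)}$ whose Newton polytope is $\check\Delta_{\P_*(\tau)}=\check\Delta_0\cap\check{\P_*(\tau)}$, where $\check{\P_*(\tau)}$ is the face of $\check\sigma^o$ dual to $\P_*(\tau)$ under Lemma \ref{Pstar},(2).

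The geometric content of the second assertion is then read off from Lemma \ref{pdeltadeltaprime},(3), which describes the refinement of $\partial\check\sigma^o$ induced by $\check\Sigma$. If $\tau\not\subseteq\partial\Delta'$, then $\P_*(\tau)$ is either a crossing cell of $\P_*$ or $\P_*(\tau)=\Delta'$ with dual face not genuinely subdivided, so $\check\Delta_{\P_*(\tau)}$ is a single standard simplex of dimension $d+1-\dim\P_*(\tau)$; a non-degenerate hypersurface in a torus with such a Newton polytope is by definition a handlebody $H^{d-\dim\P_*(\tau)}$ (compare Corollary \ref{handlebodies}). Feeding $T_\tau\cap\tilde W_0\iso H^{d-\dim\P_*(\tau)}\times(\CC^*)^{\dim\P_*(\tau)-\dim\tau}$ into Lemma \ref{handlebodytimestorus} with $(k,l)=(d-\dim\P_*(\tau),\dim\P_*(\tau)-\dim\tau)$ yields exactly the displayed main term with $A_{\tau,p}=0$. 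If $\tau\subseteq\partial\Delta'$, then by Lemma \ref{pdeltadeltaprime},(3) the face $\check{\P_*(\tau)}$ is refined into standard simplices in inclusion-reversing bijection with $\hat\tau\in p_{\Delta\Delta'}^{-1}(\P_*(\tau))$, each of dimension $d+1-\dim\hat\tau$. This refinement induces a toric stratification of $T_{\P_*(\tau)}\cap W_0^*$ into handlebody-times-torus pieces indexed by the same $\hat\tau$, and applying Lemma \ref{handlebodytimestorus} stratum by stratum with parameters $(k,l)=(d-\dim\hat\tau,\dim\hat\tau-\dim\tau)$ and summing via Theorem \ref{eulerdecompose},(1) produces the main term together with the correction $A_{\tau,p}$.

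The main obstacle is this last step: correctly reading off the toric stratification of $T_{\P_*(\tau)}\cap W_0^*$ from the standard-simplex refinement of $\check{\P_*(\tau)}$, and then matching the resulting signs and binomials to the stated closed form of $A_{\tau,p}$. Once the indexing bijection of Lemma \ref{pdeltadeltaprime},(3) is interpreted stratification-wise, the remaining sign and binomial manipulations are routine applications of Lemma \ref{lemmasignidentities} and Proposition \ref{binomident}.
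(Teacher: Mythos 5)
Your computation of $e^p(T_\tau)$ and your treatment of the case $\tau\not\subseteq\partial\Delta'$ coincide with the paper's proof: the reduction via Lemma \ref{disjointdecomposition},(4), the K\"unneth formula of Thm.~\ref{eulerdecompose},(2), and Lemma \ref{handlebodytimestorus} applied to a handlebody times a torus is exactly what is done there. The gap is in the case $\tau\subseteq\partial\Delta'$. A standard-simplex refinement of the face of $\check\sigma^o$ dual to $\P_*(\tau)$, i.e.\ of the Newton polytope $\check\Delta_{\P_*(\tau)}$, does \emph{not} induce a decomposition of the hypersurface $T_{\P_*(\tau)}\cap W_0^*$ itself: a subdivision of the Newton polytope encodes a toric degeneration of the hypersurface (or lattice-point counting data), not a stratification of the general hypersurface. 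Concretely, every piece you propose, $H^{d-\dim\hat\tau}\times(\CC^*)^{\dim\hat\tau-\dim\tau}$, has dimension $d-\dim\tau=\dim(T_\tau\cap\tilde W_0)$, so a variety which for general coefficients is irreducible cannot be a disjoint union of several such pieces. The numerics confirm the failure: summing Lemma \ref{handlebodytimestorus} over your purported strata gives $(-1)^{d-\dim\tau}\bigl(N\binom{d+1-\dim\tau}{p+1}-\sum_{\hat\tau}\binom{\dim\hat\tau-\dim\tau}{p+1}\bigr)$ with $N=\#\,p_{\Delta\Delta'}^{-1}(\P_*(\tau))$, where every summand carries the same sign $(-1)^{d-\dim\tau}$, whereas in $A_{\tau,p}$ the signs alternate with $\dim\hat\tau$; already for $p+1=d+1-\dim\tau$ the two expressions disagree whenever $N>1$.

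What the bijection of Lemma \ref{pdeltadeltaprime},(3) actually provides is a triangulation of $\check\Delta_{\P_*(\tau)}$ into standard simplices, hence a count of interior lattice points of the dilates $j\check\Delta_{\P_*(\tau)}$ and thereby the functions $\varphi_i(\check\Delta_{\P_*(\tau)})$, exactly as in the proof of Prop.~\ref{HodgenumbersS}. The paper's proof then invokes the Danilov--Khovanskii formula \cite{DK86},\,4.4, which for a nondegenerate torus hypersurface with \emph{arbitrary} Newton polytope expresses $e^p$ as the handlebody term $(-1)^{p+a_\tau-1}\binom{a_\tau}{p+1}$ plus the correction $(-1)^{a_\tau-1}\varphi_{a_\tau-p}$; combining this with the torus factor by K\"unneth and applying Vandermonde (Prop.~\ref{binomident},(2)) produces precisely the stated $A_{\tau,p}$, whose alternating signs come from the inclusion--exclusion over simplices of varying dimension in the lattice-point count. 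To repair your argument you must replace the claimed stratification by this $\varphi$/Danilov--Khovanskii computation (or an equivalent inclusion--exclusion over all cells of the subdivision); the step you dismissed as routine bookkeeping is in fact the substance of the lemma.
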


Before we embark on the proof, note that the most simple form $T_\tau \cap {\tilde{W}_0}$ could have is a handlebody, i.e., the intersection of a general hyperplane with the open torus in the projective space. This occurs for $\dim\P_*(\tau)=\dim\tau$ and $A_{\tau,p}=0$ in the above lemma. A slightly more complicated shape of $T_\tau \cap {\tilde{W}_0}$ is given for $\dim\P_*(\tau)\neq\dim\tau$ and $A_{\tau,p}=0$ where it is a product of a lower-dimensional handlebody with an algebraic torus. Finally, $A_{\tau,p}\neq 0$ accounts for a $T_\tau \cap {\tilde{W}_0}$ which is a product of an algebraic torus with a decomposition of handlebodies rather than with a single handlebody.

\begin{proof}
By Lemma \ref{handlebodytimestorus}, 
we have $e^p((\CC^*)^n)=(-1)^{p+n}\begin{pmatrix}n\\p\end{pmatrix}$. 
Since $\dim T_\tau=d+1-\dim\tau$, this proves the first statement.
Note that $T_\tau \cap {\tilde{W}_0}=\emptyset$ if
$\dim\P_*(\tau)=d+1$ because $T_{\P_*(\tau)}$ is a point in that case. So we may assume that $\tau\not\subseteq\partial\Delta$ and $\dim\P_*(\tau)<d+1$.
By Lemma \ref{disjointdecomposition},\,(4) and Thm.~\ref{eulerdecompose},\,(2), 
\[
(-1)^p e^p(T_\tau\cap {\tilde{W}_0})=
(-1)^p \sum_{k=0}^p
e^k((\CC^*)^{\dim\P_*(\tau)-\dim\tau}) e^{p-k}(T_{\P_*(\tau)}\cap {W^*_0}).
\]
By \cite{DK86},\,4.4, for $p\ge 0$, we have
\begin{equation}
\label{DK86eq}
e^p(T_{\P_*(\tau)} \cap {W^*_0})=(-1)^{p+a_\tau-1}
\begin{pmatrix}{a_\tau}\\ p+1\end{pmatrix} 
+ (-1)^{a_\tau-1}\varphi_{a_\tau-p}(\Delta_{\P_*(\tau)})
\end{equation}
where 
$a_\tau= d+1-\dim\P_*(\tau)$,
$\Delta_{\P_*(\tau)}=\Newton(T_{\P_*(\tau)} \cap {W^*_0})$ and $\varphi_i$ is defined as in the proof of Prop.~\ref{HodgenumbersS}. 
For $\P_*(\tau)\not\subset\Delta'$, 
by Lemma~\ref{pdeltadeltaprime},(3) and Prop.~\ref{W0descprop},\,(1), we have that $\Delta_{\P_*(\tau)}$ is an $a_\tau$-dimensional standard simplex and thus 
$\varphi_{i}(\Delta_{\P_*(\tau)})=0$ for $i\le a_\tau$. 
In this case, $T_{\tau}\cap \tilde W_0\cong H^{a_\tau-1}\times 
(\CC^*)^{\dim\P_*(\tau)-\dim\tau}$, and Prop.\ \ref{handlebodytimestorus}
gives the result. 
The case $\tau\subseteq \partial\Delta'$ is similar, with the first term
on the right hand side of \eqref{DK86eq} giving the same contribution
as the previous case, and an additional possible contribution from 
$\varphi_{a_\tau-p}(\Delta_{\P_*(\tau)})$. We compute this term using Lemma~\ref{pdeltadeltaprime} and the 
formula for $\varphi_i$ given in the proof of Prop.~\ref{HodgenumbersS} as follows:
\begin{align*}
&(-1)^p \sum_{k\ge 0}e^k((\CC^*)^{\dim\P_*(\tau)-\dim\tau})(-1)^{a_\tau-1}\varphi_{a_\tau-p+k}(\Delta_{\P_*(\tau)})\\
&=\sum_{k\ge 0}
\begin{pmatrix}\dim\P_*(\tau)-\dim\tau\\k\end{pmatrix}
\sum_{\hat\tau\in p_{\Delta\Delta'}^{-1}(\P_*(\tau))}
(-1)^{m_{\tau,\hat\tau}}
\begin{pmatrix}a_{\tau}-(d+1-\dim\hat\tau) \\a_{\tau}+1-(a_\tau-p+k) \end{pmatrix}
\end{align*}
where 
$(-1)^{m_{\tau,\hat\tau}}=
(-1)^p(-1)^{k+\dim\P_*(\tau)-\dim\tau}(-1)^{a_\tau-1+a_\tau-p+k+(d+1-\dim\hat\tau)+1}$
simplifies to\newline
$m_{\tau,\hat\tau}=\dim\P_*(\tau)-\dim\tau+(d+1-\dim\hat\tau)$.
To obtain $A_{\tau,p}$, we need to subtract the $k=p+1$ term from the above which is
$$\sum_{\hat\tau\in p_{\Delta\Delta'}^{-1}(\P_*(\tau))}
(-1)^{m_{\tau,\hat\tau}} \begin{pmatrix}\dim\P_*(\tau)-\dim\tau\\p+1\end{pmatrix}.$$
Using Prop.~\ref{binomident},(2), yields
$A_{\tau,p}$ as given in the assertion.
\end{proof}

Recall from the introduction that 
$\shF_{S}=\phi_{\bar w,0}{\bf R}j_*\CC_{X_\Sigma}[1]$ where the filtrations are shifted by
$$F^i\shF_{ S}^k = F^{i+1}\bar{A}^{k+1},\qquad
W_i\shF_{ S}^k = W_{i+1}\bar{A}^{k+1}.
$$
This implies
\begin{lemma} \label{shiftHpq}
$h^{p,q}\HH^i( S,\shF_{ S})=h^{p+1,q+1}\HH^{i+1}(Y,\phi_{\bar w,0}{\bf R}j_*\CC_{X}).$
\end{lemma}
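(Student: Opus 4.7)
The plan is to prove the identity by a direct unwinding of the definition of the shifted filtrations on $\shF_{\check S}$, passing the shift from the level of complexes to the level of hypercohomology.

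First I would identify the underlying cohomology groups. By \eqref{perverseshifted}, the complex $\shF_{\check S}$ is the complex $\phi_{w,0}\mathbf{R}j_*\CC_X$ shifted by $[1]$, and by Theorem~\ref{cycles_computed}(4) together with the quasi-isomorphism in Theorem~\ref{exseqCHMCL0}(2), the latter is isomorphic (compatibly with its mixed Hodge theoretic data) to the $\CC$-component $\bar A^\bullet$ of a CMHC on $Y$. This immediately gives $\HH^i(\check S,\shF_{\check S})=\HH^{i+1}(Y,\bar A^\bullet)$ as vector spaces.

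Second I would pass the filtration shifts to hypercohomology. The prescribed conventions $F^p\shF_{\check S}^k=F^{p+1}\bar A^{k+1}$ and $W_q\shF_{\check S}^k=W_{q+1}\bar A^{k+1}$ on the complex induce corresponding shifts on the hypercohomology filtrations, provided one knows that the Hodge and weight spectral sequences of $\bar A^\bullet$ degenerate at $E_1$. This is exactly the CMHC property, which is built into the definition and is already invoked in Lemma~\ref{lemWspecseq}(1). With this in hand, the induced filtrations on $\HH^i(\check S,\shF_{\check S})$ are obtained from those on $\HH^{i+1}(Y,\bar A^\bullet)$ by the appropriate shift in the filtration index.

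Finally I would take associated graded pieces of $F$ and $W$ and compare dimensions: the index shift in the filtrations translates directly to the claimed $(p,q)\mapsto(p+1,q+1)$ shift on Hodge numbers (with the correspondingly shifted weight), and the identity $h^{p,q}\HH^i(\check S,\shF_{\check S})=h^{p+1,q+1}\HH^{i+1}(Y,\phi_{w,0}\mathbf{R}j_*\CC_X)$ follows. The argument is essentially bookkeeping; the only non-trivial input is $E_1$-degeneration of the spectral sequences, which is automatic for a CMHC, so there is no real obstacle beyond being careful with the conventions.
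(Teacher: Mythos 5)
Your argument is correct and is essentially the paper's: the lemma is stated there as an immediate consequence of the defining conventions $F^i\shF_{\check S}^k=F^{i+1}\bar A^{k+1}$, $W_i\shF_{\check S}^k=W_{i+1}\bar A^{k+1}$ together with the identification of $\phi_{w,0}{\bf R}j_*\CC_X$ with the CMHC $\bar A^\bullet$ (Thm.~\ref{cycles_computed},(4) and Thm.~\ref{exseqCHMCL0},(2)), and you simply carry out this bookkeeping explicitly. The appeal to $E_1$-degeneration is harmless but not really needed, since the Hodge numbers are computed from the filtrations induced on hypercohomology, which inherit the index shift directly.
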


\begin{theorem} \label{epdual} We have that
\begin{enumerate}
\item Poincar\'e duality holds for $h^{p,q}\HH^i( S,\shF_{ S})$, i.e.,
$$h^{p,q}\, \HH^{i}( S,\shF_{ S}) =
h^{d-p,d-q}\, \HH^{2d-i}( S,\shF_{ S}),$$
\item $e^{p}( S,\shF_{ S})= \sum_{i,j\ge 0} (-1)^{i+j} e^{p-i}(Y^{2+i+j})$,
\item $e^p(\check S)=(-1)^d e^{d-p}( S,\shF_{ S})$.
\end{enumerate}
\end{theorem}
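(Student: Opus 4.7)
The three parts build on the same toolkit: Lemma~\ref{shiftHpq} to move between $(\check S,\shF_{\check S})$ and $(Y,\bar A^\bullet)$, and the weight spectral sequence of Lemma~\ref{lemWspecseq}.

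\emph{Part (1)} will follow by chaining Lemma~\ref{shiftHpq} with the Poincar\'e duality of Lemma~\ref{lemWspecseq}(5). Writing $h^{p,q}\HH^i(\check S,\shF_{\check S}) = \sum_k h^{p+1,q+k+1}\HH^{i+1}(Y,\phi_f\CC_{\bar X})$, Poincar\'e duality with $n=\dim X_\Sigma = d+2$ converts the summand to $h^{d+1-p,d+1-q-k}\HH^{2d+1-i}(Y,\phi_f\CC_{\bar X})$. Substituting $k\mapsto -k$ and re-shifting via Lemma~\ref{shiftHpq} yields $h^{d-p,d-q}\HH^{2d-i}(\check S,\shF_{\check S})$.

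\emph{Part (2)} starts from the definition of $e^p(\check S,\shF_{\check S})$: setting $m=p+q$ turns it into $\sum_m(-1)^m\sum_{q'} h^{p,q'}\HH^m(\check S,\shF_{\check S})$, and Lemma~\ref{shiftHpq} rewrites this as $-\sum_{m'}(-1)^{m'}\sum_w h^{p+1,w}\HH^{m'}(Y,\bar A^\bullet)$. The weight spectral sequence degenerates at $E_2$ with differentials strict for $F$, so the alternating sum of column-$p$ Hodge numbers is preserved on passage to the $E_1$-term. The description of $E_1^{-k,m'+k}$ in Lemma~\ref{lemWspecseq}(3), together with the Tate-twist convention $h^{p+1,*}(H\langle -q'-k\rangle) = h^{p+1-q'-k,*}(H)$, then gives $\sum_{m'}(-1)^{m'}\sum_w h^{p+1,w}E_1^{-k,m'+k} = (-1)^k\sum_{q'}e^{p+1-q'-k}(Y^{2q'+k+1})$. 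The change of variables $(q',k) = (j,\,1+i-j)$ with $i,j\ge 0$ is a bijection onto the admissible region $q'\ge \max(0,1-k)$ and takes $(2q'+k+1,\,p+1-q'-k,\,-(-1)^k)$ to $(2+i+j,\,p-i,\,(-1)^{i+j})$, producing the claimed formula.

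\emph{Part (3)} will combine Part (2) with the toric decomposition of the $Y^s$. By Lemma~\ref{disjointdecomposition}, each $Y^s$ is a disjoint union of strata $T_\tau$ and intersections $T_\tau\cap\tilde W_0$, indexed by cells $\tau\in\P$, and Lemma~\ref{epW} gives $(-1)^r e^r$ of these as alternating sums of binomials with a correction $A_{\tau,r}$ supported on $\tau\subseteq\partial\Delta'$. Plugging into $\sum_{i,j\ge 0}(-1)^{i+j}e^{d-p-i}(Y^{2+i+j})$ and grouping by $\tau$, each $T_\tau$ picks up a weight of the form $\sum_{\omega\subseteq\tau,\omega\in\P_{\Delta'}}(-1)^{\dim\omega}(\cdots)$, and repeated application of Prop~\ref{binomident} collapses the double sums in $(i,j)$ to single sums over faces. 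Lemma~\ref{lemmasignidentities} will then convert these into the two sums of Prop~\ref{HodgenumbersS}(2), matching $(-1)^{d+p} e^{d-p}(\check S,\shF_{\check S})$ with $(-1)^p e^p(S)$. The main obstacle is the combinatorial bookkeeping: the correction terms $A_{\tau,r}$ couple cells of $\partial\Delta'$ to lattice points of $\partial\Delta$ through the map $p_{\Delta\Delta'}$ of Lemma~\ref{pdeltadeltaprime}, and I expect Lemma~\ref{lemmasignidentities}(3) --- whose identity collapses $\sum_{\tau\in\P_{\tau_1,\tau_2}}(-1)^{\dim\tau+1}$ to $(-1)^{\dim\tau_1}$ exactly when $\P_*(\tau_1)=\tau_2$ --- to be the decisive tool for extending the toric sum over $\P_{\Delta'}$ to a sum over all faces of $\Delta$, producing the $-\sum_{\tau\subseteq\Delta}(-1)^{\dim\tau}\binom{\dim\tau}{d-p+1}$ term of Prop~\ref{HodgenumbersS}(2).
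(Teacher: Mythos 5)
Your parts (1) and (2) follow the paper's own proof essentially verbatim. One slip in (1): Lemma~\ref{shiftHpq} is an equality of individual mixed Hodge numbers, $h^{p,q}\HH^i(\check S,\shF_{\check S})=h^{p+1,q+1}\HH^{i+1}(Y,\bar A^\bullet)$, with no sum over $k$; inserting $\sum_k$ as you do would only yield the weight-summed identity, which is weaker than statement (1). Dropping the spurious sum, your chain (shift, then the Poincar\'e duality of Lemma~\ref{lemWspecseq},(5) with $n=d+2$, then shift back) is exactly the paper's argument. Your part (2), including the passage to the $E_1$-term, the Tate-twist bookkeeping, the reindexing $(q',k)=(j,1+i-j)$ and the signs, is correct and coincides with the paper's computation.

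Part (3), however, is a plan rather than a proof, and this is where essentially all of the work in the paper's proof lies. You name the right ingredients (Lemma~\ref{disjointdecomposition}, Lemma~\ref{epW}, Prop.~\ref{binomident}, Lemma~\ref{lemmasignidentities}, and the closing appeal to Poincar\'e duality $e^{d-p}(S)=e^{p}(S)$), but the cancellation scheme is not carried out: after inserting Lemma~\ref{epW} one must organize the outcome into four families of terms (the paper's $C_1,\dots,C_4$), cancel $C_1$ against the first half of $C_2$ (a Lefschetz-type cancellation), push the correction terms $A_{\tau,p}$ through Lemma~\ref{lemmasignidentities},(1)--(2) --- it is this step, not part (3) of that lemma, that produces the sum over all faces $\tau\subseteq\Delta$ appearing in Prop.~\ref{HodgenumbersS},(2), including the Kronecker-delta corner case when $\dim\Delta'=\dim\Delta$ --- and only at the very end use Lemma~\ref{lemmasignidentities},(3) to collapse the remaining sum over cells meeting $\Delta'$ but not contained in it to a sum over $\tau'\in\P_{\partial\Delta}$ with $\P_*(\tau')=\Delta(\tau')$, which yields the second sum of Prop.~\ref{HodgenumbersS},(2). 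Your sketch misattributes the role of Lemma~\ref{lemmasignidentities},(3) and leaves all of this bookkeeping, including the boundary and dimension corner cases, undone; since the content of (3) is precisely this identity, your part (3) as written is not yet a proof, although the route you outline is the paper's route and does succeed when executed.
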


\begin{proof} (1) Using Lemma~\ref{lemWspecseq},(5), we get
\begin{align*}
h^{p,q} \HH^i( S,\shF_{ S})
= {} &h^{p+1,q+1} \HH^{i+1}(Y,\bar A^\bullet)\\
= {} &h^{d+2-(p+1),d+2-(q+1)} \HH^{2d+2-(i+1)}(Y,\bar A^\bullet)
=h^{d-p,d-q} \HH^{2d-i}( S,\shF_{ S}).
\end{align*}

(2) The Euler characteristic can be computed as an alternating sum of dimensions of the terms in the $E_1$ term of \eqref{Wspecseq}. 
We use Lemma~\ref{lemWspecseq},(3) to get
\begin{align*}
&(-1)^p e^p( S,\shF_{ S})\\
 = {} & \sum_{q} (-1)^q \sum_{k}h^{p,q+k}\,\HH^{p+q}( S,\shF_{ S})\\
 = {} & \sum_{q} (-1)^q \sum_{k}h^{p+1,q+k}\,
\HH^{p+1+q}(X,\Gr^W_{k}\bar A^\bullet)\\
 = {} & \sum_{q} (-1)^q \sum_{k}
\sum_{q'>-1,-k} h^{p+1,q+k}\,H^{(p+1+q)-2q'-k}( Y^{2q'+k+1},\CC)\langle-q'-k\rangle\\
 = {} & \sum_{q} (-1)^q \sum_{k}
\sum_{q'>-1,-k} h^{p+1-q'-k,q-q'}( Y^{2q'+k+1}).
\end{align*}
Note that $\{(q',k) | q'>-1,-k\}$ and $\{(j,1+i-j)|i,j\ge 0\}$ define the same subsets of $\ZZ^2$, we may thus reorganize the sum via $k=1+i-j,q'=j$ to get
\begin{align*}
(-1)^p e^p( S,\shF_{ S}) = {} & \sum_{q\ge 0} (-1)^q 
\sum_{i,j\ge 0}h^{p+1-j-(1+i-j),q-j}( Y^{2j+(1+i-j)+1})\\
= {} & \sum_{i,j\ge 0}(-1)^{p-i+j} e^{p-i}( Y^{2+i+j}).
\end{align*}

(3) By (2) and Lemma~\ref{disjointdecomposition},(1), we have
\begin{align*}
e^{d-p}( S,\shF_{ S})
= {} &\sum_{i,j\ge 0} (-1)^{i+j} e^{d-p-i}(Y^{2+i+j})\\
= {} &\sum_{i,j\ge 0} (-1)^{i+j} e^{d-p-i}(Y_{\tor}^{2+i+j})\\
&+ \sum_{i,j\ge 0} (-1)^{i+j} e^{d-p-i}(Y_{\tor}^{1+i+j}\cap {\tilde{W}_0}).
\end{align*}
Using Lemma~\ref{disjointdecomposition},(2) and setting $2+i+j=\dim\omega+1$ in the first and $1+i+j=\dim\omega+1$ in the second sum allows us to continue the equality as
\[
=\displaystyle\sum_{\omega\in\P_{\Delta'}\backslash \P_{\Delta'}^{[0]}\atop{0\le i\le\dim\omega-1}} (-1)^{\dim\omega-1} e^{d-p-i}(X_\omega)
+ \displaystyle\sum_{{\omega\in\P_{\Delta'}}\atop{0\le i\le\dim\omega}} (-1)^{\dim\omega} e^{d-p-i}(X_\omega\cap {\tilde{W}_0})
\]
and by Lemma~\ref{disjointdecomposition},(3), as
\[
=
\displaystyle\sum_{{{\tau\in\P}\atop{\omega\subseteq \tau\cap\Delta'}}\atop{0\le i\le\dim\omega-1}} 
\hspace{-0.15cm}
(-1)^{\dim\omega-1} e^{d-p-i}(T_\tau)
+ \hspace{-0.15cm}
\displaystyle\sum_{{{\tau\in\P}\atop{\omega\subseteq \tau\cap\Delta'}}\atop{-1\le i\le\dim\omega-1}} 
\hspace{-0.15cm}
(-1)^{\dim\omega} e^{d-p-i-1}(T_\tau\cap {\tilde{W}_0}).
\]
Note that, using Prop.~\ref{binomident},(1), for any simplex $\tau$ and $i\ge-1$, we have
$$\sum_{{\omega\subseteq\tau}\atop{\dim\omega\ge i+1}}(-1)^{\dim\omega}
=\sum_{j=i+1}^{\dim\tau} (-1)^j \begin{pmatrix}\dim\tau+1\\ j+1\end{pmatrix}
=(-1)^{i+1}\begin{pmatrix}\dim\tau\\ i+1\end{pmatrix}$$
which we insert above to have
\begin{align}
\label{edpeq}
\begin{split}
e^{d-p}( S,\shF_{ S})
= {} &
\displaystyle\sum_{{\tau\in\P}\atop{0\le i\le\dim\tau\cap\Delta'-1}} 
(-1)^i
\begin{pmatrix}\dim\tau\cap\Delta'\\ i+1\end{pmatrix}
e^{d-p-i}(T_\tau)\\
&+ 
\displaystyle\sum_{{\tau\in\P}\atop{-1\le i\le\dim\tau\cap\Delta'-1}}
(-1)^{i+1}
\begin{pmatrix}\dim\tau\cap\Delta'\\ i+1\end{pmatrix} 
e^{d-p-i-1}(T_\tau\cap {\tilde{W}_0}).
\end{split}
\end{align}
We apply Lemma~\ref{epW} and Prop.~\ref{binomident},(2), and obtain
\[
(-1)^{p} e^{d-p}( S,\shF_{ S}) = C_1 + C_2 + C_3 + C_4
\]
where
\begin{align*}
C_1 = {} &
\displaystyle\sum_{{\tau\in\P}\atop{\tau\cap\Delta'\neq\emptyset}} 
(-1)^{\dim\tau+1}
\begin{pmatrix}d+1-\dim\tau+\dim\tau\cap\Delta'\\ d-p+1\end{pmatrix}\\
C_2 = {} & 
\displaystyle\sum_{{\tau\in\P}\atop{\tau\cap\Delta'\neq\emptyset}} 
(-1)^{\dim\tau}
\begin{pmatrix}d+1-\dim\tau+\dim\tau\cap\Delta'\\ d-p+1\end{pmatrix}\\
&\ \ - 
\displaystyle\sum_{{\tau\in\P}\atop{{\tau\cap\Delta'\neq\emptyset}\atop{\dim\P_*(\tau)<d+1}}} 
(-1)^{\dim\tau}
\begin{pmatrix}\dim\P_*(\tau)-\dim\tau+\dim\tau\cap\Delta'\\ d-p+1\end{pmatrix}\\
C_3= {} & 
\displaystyle\sum_{{{\tau\in\P}\atop{\P_*(\tau)\subseteq\partial\Delta'}}
\atop{\hat\tau\in p_{\Delta\Delta'}^{-1}(\P_*(\tau))}} (-1)^{\dim\P_*(\tau)-\dim\tau+1-\dim\hat\tau}
\left(\begin{pmatrix}\dim\hat\tau \\d-p+1 \end{pmatrix}  
- \begin{pmatrix}\dim\P_*(\tau) \\d-p+1 \end{pmatrix} \right)\\
C_4= {} &
\displaystyle\sum_{{\tau\in\P}\atop{\tau\cap\Delta'\neq\emptyset}} 
(-1)^{\dim\tau}
\begin{pmatrix}d+1-\dim\tau\\ d-p+1\end{pmatrix}\\
&+ 
\displaystyle\sum_{{\tau\in\P}\atop{{\tau\cap\Delta'\neq\emptyset}\atop{\dim\P_*(\tau)=d+1}}} 
(-1)^{\dim\tau+1}
\begin{pmatrix}d+1-\dim\tau+\dim\tau\cap\Delta'\\ d-p+1\end{pmatrix}.
\end{align*}
Here $C_1$ is the first term of the right-hand-side of \eqref{edpeq},
along with an additional contribution for $i=-1$; this latter contribution
is cancelled by the first term of $C_4$. The expression for $C_2$
comes from the second term of \eqref{edpeq}, using Lemma~\ref{epW},
without taking into account the term $A_{\tau,p}$ in that lemma.
However, the first sum of $C_2$ includes a contribution from cells $\tau$
with $\dim\P_*(\tau)=d+1$, for which $e^{d-p-i-1}(T_{\tau}\cap\tilde W_0)=0$. 
The second term in $C_4$ cancels this contribution.
Finally, the $A_{\tau,p}$ term is accounted for in $C_3$. 

The first sum of $C_2$ cancels with $C_1$, the deeper reason for this
being the Lefschetz hyperplane theorem. 
Using Lemma~\ref{lemmasignidentities},(2),
the second sum of $C_2$ can be written as
\begin{align*}
&\sum_{\tau\in\P_{\partial\Delta'}}(-1)^{\dim\tau+1}
\begin{pmatrix}\dim\P_*(\tau) \\d-p+1 \end{pmatrix} + C'_2\\
= {} &
\sum_{\tau\subset\partial\Delta'}(-1)^{\dim\tau+1}
\begin{pmatrix}\dim\tau \\d-p+1 \end{pmatrix} + C'_2 
\end{align*}
where 
\[
C'_2=\displaystyle\sum_{{\tau\in\P\backslash\P_{\Delta'}}\atop{{\tau\cap\Delta'\neq\emptyset}\atop{\dim\P_*(\tau)<d+1}}} 
(-1)^{\dim\tau+1}
\begin{pmatrix}\dim\P_*(\tau)-\dim\tau+\dim\tau\cap\Delta'\\ d-p+1\end{pmatrix}
\]

We apply
Lemma~\ref{lemmasignidentities},(2) and (1) successively to $C_3$ to get
\begin{align*}
C_3= {} &\displaystyle\sum_{{\omega\subseteq\partial\Delta'}
\atop{\tau\in p_{\Delta\Delta'}^{-1}(\omega)}} (-1)^{\dim\tau+1}
\left(\begin{pmatrix}\dim\tau \\d-p+1 \end{pmatrix}  
- \begin{pmatrix}\dim\omega \\d-p+1 \end{pmatrix} \right)\\
= {} & \displaystyle\sum_{\tau\subseteq\Delta} (-1)^{\dim\tau+1}
\begin{pmatrix}\dim\tau \\d-p+1 \end{pmatrix}
+\displaystyle\sum_{\tau\subset\partial\Delta'}
 (-1)^{\dim\tau} \begin{pmatrix}\dim\tau \\d-p+1 \end{pmatrix}\\
&+ \delta_{\dim\Delta}^{\dim\Delta'} (-1)^{\dim\Delta}
\begin{pmatrix}\dim\Delta \\d-p+1 \end{pmatrix}
\end{align*}
where $\delta$ denotes the Kronecker symbol. This last
term arises because if $\dim\Delta'=\dim\Delta$, then $\partial\Delta'
\not=\Delta'$, and hence $\Delta\not\in p^{-1}_{\Delta\Delta'}(\omega)$ for
any $\omega\subseteq\partial\Delta'$.
Using Lemma~\ref{lemmasignidentities},(2), we rewrite the part of the 
second sum of $C_4$ involving those $\tau$ with $\tau\subseteq\Delta'$ as
\[
\displaystyle\sum_{{\tau\in\P_{\Delta'}}\atop{\dim\P_*(\tau)=d+1}} 
\hspace{-0.15cm}
(-1)^{\dim\tau+1}
\begin{pmatrix}d+1-\dim\tau+\dim\tau\cap\Delta'\\ d-p+1\end{pmatrix}
= \delta_{\dim\Delta}^{\dim\Delta'}
(-1)^{\dim\Delta'+1} \begin{pmatrix} \dim\Delta' \\d-p+1 \end{pmatrix}.
\]
Putting all transformations together in the previous order, after term pair cancellations in $(C_1,C_2)$, $(C_2,C_3)$ and $(C_3,C_4)$, we obtain
\begin{align*}
(-1)^{p} e^{d-p}( S,\shF_{ S})
= {} &
\displaystyle\sum_{{\tau\in\P\backslash\P_{\Delta'}}\atop{{\tau\cap\Delta'\neq\emptyset}\atop{\dim\P_*(\tau)<d+1}}} 
(-1)^{\dim\tau+1}
\begin{pmatrix}\dim\P_*(\tau)-\dim\tau+\dim\tau\cap\Delta'\\ d-p+1\end{pmatrix}\\
&+
\displaystyle\sum_{\tau\subseteq\Delta} (-1)^{\dim\tau+1}
\begin{pmatrix}\dim\tau \\d-p+1 \end{pmatrix}\\
&+
\displaystyle\sum_{{\tau\in\P}\atop{\tau\cap\Delta'\neq\emptyset}} 
(-1)^{\dim\tau}
\begin{pmatrix}d+1-\dim\tau\\ d-p+1\end{pmatrix}\\
&+ 
\displaystyle\sum_{{\tau\in\P\backslash\P_{\Delta'}}\atop{{\tau\cap\Delta'\neq\emptyset}\atop{\dim\P_*(\tau)=d+1}}} 
(-1)^{\dim\tau+1}
\begin{pmatrix}d+1-\dim\tau+\dim\tau\cap\Delta'\\ d-p+1\end{pmatrix}.
\end{align*}
Recall that $\Delta(\tau)$ denotes the smallest face of $\Delta$ containing $\tau\in\P$. 
Note that since $\Delta'$ contains all lattice points in the interior of $\Delta$, $\dim\Delta(\tau)=d+1$ is equivalent to $\tau\cap\Delta'\neq\emptyset$, so the third sum becomes
$$\displaystyle\sum_{{\tau\in\P}\atop{\tau\not\subseteq\partial\Delta}}
(-1)^{\dim\tau}
\begin{pmatrix}\dim\Delta(\tau)-\dim\tau\\ d-p+1\end{pmatrix}.
$$
For $\tau\in\P\backslash\P_{\Delta'}$ with $\tau\cap\Delta'\neq\emptyset$, we have $\dim\tau=\dim\tau\cap\partial\Delta+\dim\tau\cap\Delta'+1$. We can unite the first and fourth sum and write this as
\begin{align}
\label{almostdoneeq}
\begin{split}
&\displaystyle\sum_{{\tau\in\P\backslash\P_{\Delta'}}\atop{\tau\cap\Delta'\neq\emptyset}} 
(-1)^{\dim\tau+1}
\begin{pmatrix}\dim\P_*(\tau)-\dim\tau\cap\partial\Delta-1\\ d-p+1\end{pmatrix}\\
= {} &
\displaystyle\sum_{
{\tau'\in\P_{\partial\Delta}\atop
{{\hat{\tau}\in\P_*,\hat{\tau}\supseteq\P_*(\tau')}\atop
{\hat{\tau}\cap\Delta'\neq\emptyset}}}} 
\begin{pmatrix}\dim\hat\tau-\dim\tau'-1\\ d-p+1\end{pmatrix}
\displaystyle\sum_{{\tau\in\P}\atop{{\P_*(\tau)=\hat\tau}\atop{\tau\cap\partial\Delta=\tau'}}}
(-1)^{\dim\tau+1}.
\end{split}
\end{align}
In order to apply Lemma~\ref{lemmasignidentities},(3), we identify
\[
\P_{\tau',(p^1_{\Delta\Delta'})^{-1}(\hat\tau)}=\{\tau\in\P|\P_*(\tau)=\hat\tau, \tau\cap\partial\Delta=\tau'\}
\]
and obtain
\[
\displaystyle\sum_{{\tau\in\P}\atop{{\P_*(\tau)=\hat\tau}\atop{\tau\cap\partial\Delta=\tau'}}}
(-1)^{\dim\tau+1}
=\left\{\begin{array}{ll}
(-1)^{\dim\tau'}&\P_*(\tau')=(p^1_{\Delta\Delta'})^{-1}(\hat\tau) \\   
0&\hbox{otherwise}.
\end{array}\right.
\]
Thus, the non-trivial case coincides with $\hat\tau=p^1_{\Delta\Delta'}(\P_*(\tau'))$ such that the sum on the right-hand-side of \eqref{almostdoneeq} 
can be reduced to a sum over $\tau'\in\P_{\partial\Delta}$ by using
$\dim p^1_{\Delta\Delta'}(\P_*(\tau'))=\dim\P_*(\tau')+1$. Identifying $\P_*(\tau')=\Delta(\tau')$ for $\tau'\in\P_{\partial\Delta}$ and comparing the results with
Prop.~\ref{HodgenumbersS},(2), we get
\[
(-1)^{d-p}e^{d-p}(\check S)=(-1)^{p} e^{d-p}( S,\shF_{ S})
\]
and by Poincar\'e duality for $\check S$, we have
$e^{d-p}(\check S)=e^{p}(\check S)$
which finishes the proof.
\end{proof}

%
%

\subsection{A vanishing result}
\label{subsection-vanishing-result}
By the Lefschetz hyperplane theorem, 
$h^{p,q}(\check S)=0$ unless $p=q$ or $p+q=d$.
In this section, we prove that the corresponding mirror dual Hodge numbers also vanish. We recall the notation from (\ref{setupWs}). We will often drop the coefficient ring $\CC$ from a cohomology group, writing $H^k(T)$ instead of $H^k(T,\CC)$ for a variety $T$.

\begin{theorem} 
\label{batyrevthm} 
Let $Z$ be a smooth hypersurface in $(\CC^*)^k$ given by a Laurent polynomial 
whose Newton polytope is $k$-dimensional. Then
$h^{p,q}H^i(Z)=0$ unless either $i<\dim Z$ and $i=2p=2q$ or $i=\dim Z$ and $p+q\ge i$.
\end{theorem}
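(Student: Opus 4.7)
The plan is to combine the Andreotti--Frankel vanishing for affine varieties with Deligne's weight spectral sequence and the Hodge--Tate nature of cohomology of smooth toric hypersurfaces (Prop.~\ref{HodgenumbersS}). I would proceed in three steps.

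First, because $Z$ is a smooth affine variety of complex dimension $d=k-1$, it has the homotopy type of a CW complex of real dimension at most $d$ by Andreotti--Frankel; this yields $H^i(Z)=0$ for $i>\dim Z$, covering all degrees above the middle. In degree $i=\dim Z$, the statement $h^{p,q}H^i(Z)=0$ unless $p+q\ge i$ is nothing but the general lower weight bound $W_{i-1}H^i(U)=0$ valid for any smooth variety $U$, so nothing beyond standard MHS theory is needed there.

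The substantive case is $i<\dim Z$, where one must show that $H^i(Z)$ is Hodge--Tate (only $(p,p)$-types appear). I would pick a smooth projective toric compactification $\PP_\Delta$ of $(\CC^*)^k$ (refining the normal fan of $\Delta$ to get smoothness, then refining further to absorb any rays needed) and let $\bar Z\subset \PP_\Delta$ be the closure of $Z$. The $k$-dimensionality of $\Delta$ combined with a Bertini/regularity argument shows $\bar Z$ is smooth and meets the toric stratification transversely, so $D:=\bar Z\setminus Z$ is a simple normal crossings divisor whose strata $D^{(p)}$ are disjoint unions of smooth hypersurfaces $V(\tau)\cap\bar Z$ in smooth toric varieties $V(\tau)$ of dimension $k-p$.

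With this setup, I apply Deligne's weight spectral sequence
\[
E_1^{-p,q}=H^{q-2p}(D^{(p)})(-p)\Longrightarrow H^{q-p}(Z,\CC),
\]
which degenerates at $E_2$. Fix $n=q-p$ with $n<\dim Z$. Every nonzero $E_1$-term is of the form $H^{n-p}(D^{(p)})(-p)$ with $n-p<\dim D^{(p)}=\dim Z-p$, i.e.\ strictly below the middle dimension of the smooth projective hypersurface $D^{(p)}$ in the smooth toric variety $V(\tau)$. The Lefschetz part of Prop.~\ref{HodgenumbersS} then forces this cohomology group to be Hodge--Tate of type $((n-p)/2,(n-p)/2)$ (vanishing unless $n-p$ is even). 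The Tate twist $(-p)$ shifts the type to $((n+p)/2,(n+p)/2)$, which is still Hodge--Tate. Since $\Gr^W_{n+p}H^n(Z)=E_\infty^{-p,n+p}$ is a subquotient (in the category of MHS) of $E_1^{-p,n+p}$, it is Hodge--Tate, and summing over $p$ gives that $H^n(Z)$ itself is Hodge--Tate. Combined with the general weight bound, this gives the desired vanishing statement.

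The main obstacle I would expect is verifying that the hypotheses of Prop.~\ref{HodgenumbersS} genuinely apply to every stratum component $V(\tau)\cap\bar Z$: one needs simultaneous smoothness of each $V(\tau)$ (arranged by a sufficiently refined simplicial smooth fan) and $\Delta$-regularity of $\bar Z$ on each stratum (arranged by Bertini for the general Laurent polynomial with Newton polytope $\Delta$). Once this genericity input is in place, the Hodge--Tate propagation through the weight spectral sequence is automatic and the theorem follows.
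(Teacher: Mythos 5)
Your first two steps (Andreotti--Frankel vanishing above the middle degree, and the bound $h^{p,q}H^i=0$ for $p+q<i$ on a smooth variety) are exactly the paper's first two sentences. Where you diverge is the substantive part: the paper disposes of the case $i<\dim Z$ in one line by citing the Lefschetz-type theorem of \cite{DK86}, Prop.\ 3.9, i.e.\ the restriction $H^i((\CC^*)^k)\to H^i(Z)$ is an isomorphism below the middle degree whenever the Newton polytope is $k$-dimensional --- a statement that needs no genericity of the Laurent polynomial, only smoothness of $Z$ inside the torus. You instead compactify and run Deligne's weight spectral sequence with Hodge--Tate inputs on the boundary strata, and this is where your argument has genuine gaps.

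First, the hypothesis is only that $Z$ is smooth \emph{in the torus}; the Laurent polynomial is a fixed one, not a general member of its linear system. Smoothness in the torus does not imply that the closure $\bar Z$ in a toric compactification is smooth and meets the strata transversally --- that is exactly $\Delta$-regularity, which is strictly stronger (e.g.\ $f=(x-1)^2+y$ is smooth on $(\CC^*)^2$ but its closure is tangent to the divisor of the facet $\Conv\{(0,0),(2,0)\}$, and no toric refinement removes this, since toric modifications only have invariant centers). So the Bertini/regularity step you flag as ``the main obstacle'' is not available, and your proof only covers the $\Delta$-regular case. Second, even granting $\Delta$-regularity, the claim that every below-middle $E_1$-term is Hodge--Tate fails for the compactifications you propose: after refining the normal fan to make the ambient space smooth, an exceptional divisor whose minimal face of $\Delta$ has dimension smaller than the divisor meets $\bar Z$ in a stratum whose Newton polytope is \emph{not} full-dimensional there, so Prop.~\ref{HodgenumbersS} does not apply to it, and such a stratum can carry off-diagonal cohomology below its own middle degree (already for $k=4$ one can produce a boundary stratum fibred over a positive-genus curve). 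Those classes do cancel in the limit, but only at $E_2$, and proving that cancellation is essentially re-proving the Lefschetz-type theorem; staying in $\PP_\Delta$ itself avoids the bad strata but then $\bar Z$ is in general only a $V$-manifold, so the smooth snc weight spectral sequence you invoke does not literally apply. Finally, note that even where your argument runs it yields only $p=q$ (together with $p+q\ge i$), not the displayed ``$i=2p=2q$''; in fact the restriction isomorphism shows the type in degree $i<\dim Z$ is $(i,i)$, so the precise diagonal type in the statement should be read as $p=q$ --- which is all that is used later in the paper --- but your route to even that weaker conclusion needs the repairs above, whereas the paper's citation of \cite{DK86}, 3.9 gives it directly under the stated hypotheses.
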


\proof
For a smooth affine variety, $H^i(Z)=0$ for $i>\dim Z$ anyway. For a smooth
variety $Z$, $h^{p,q}H^i(Z)=0$ for $p+q<i$ (see e.g., \cite{PS08},\,Thm.\,5.39).
The remaining statements follow from the Lefschetz-type theorem of 
\cite{DK86},\,Prop.\,3.9.
\qed

\medskip


Let $\tilde{D}$ denote the complement of the dense torus in $\tilde\PP_{\check\Delta}$. 
By Lemma~\ref{W0descprop},(3) and the simpliciality of $\bar\Sigma$, 
$\tilde W_0\cap \tilde D$ is a normal crossing divisor in
$\tilde W_0$.
We denote by $\tilde W_0\cap \tilde D^{\bullet}=(\tilde W_0\cap \tilde D)^{\bullet}$ the associated semi-simplicial scheme (with $(\tilde W_0\cap \tilde D)^0=\tilde W_0$).
Let $\delta^{\tilde W_0\cap \tilde D}:H^k(\tilde W_0\cap\tilde D^i)\ra H^k(\tilde W_0\cap \tilde D^{i+1})$ 
denote the differential and augmentation of the cohomological complex associated to the semi-simplicial scheme $(\tilde W_0\cap \tilde D)^\bullet$ and let
$\gamma^{\tilde W_0\cap \tilde D}$ be its Poincar\'e dual, the 
Gysin map.

\begin{lemma} 
\label{Wsequence}
\begin{enumerate}
\item
There is a sequence
$$\cdots \ra H^{p-i,q-i}( {\tilde W_0}\cap \tilde D^i)
\stackrel{-\gamma^{\tilde W_0\cap \tilde D}}{\lra}
\cdots
\stackrel{-\gamma^{\tilde W_0\cap \tilde D}}{\lra} 
H^{p-1,q-1}( {\tilde{W}_0}\cap \tilde D^1) 
\stackrel{-\gamma^{\tilde W_0\cap \tilde D}}{\lra} 
H^{p,q}({\tilde W_0})\ra 0.$$
For $p\neq q$ and $\dim\Delta'>0$, the sequence is exact at every term except possibly at 
$H^{p-i,q-i}( {\tilde W_0}\cap \tilde D^i)$ where 
$p+q-2i=\dim \tilde W_0\cap \tilde D^{i}=d+1-i$. 
\item
For $p\neq q$, there is a sequence
$$\cdots \ra H^{p-i,q-i}({\tilde{W}_0}\cap Y^{i}_\tor)
\stackrel{-\gamma^{Y_\ntor}}{\lra}
\cdots
\stackrel{-\gamma^{Y_\ntor}}{\lra} 
H^{p-1,q-1}( {\tilde{W}_0}\cap Y^{1}_\tor) 
\stackrel{-\gamma^{Y_\ntor}}{\lra} 
H^{p,q}({\tilde{W}_0})$$
where each map is an alternating sum of Gysin maps given by projecting $-\gamma$ in Lemma~\ref{lemWspecseq},(4), to $Y^\bullet_\ntor$. 
When replacing the last term by the image of the last map, the resulting sequence is a direct summand of the sequence in (1) and thus
it is exact at every term except possibly at 
$H^{p-i,q-i}( {\tilde{W}_0}\cap Y^{i}_\tor)$ where 
$p+q-2i=\dim  {\tilde{W}_0}\cap Y^{i}_\tor=d+1-i$. 
Moreover, if $\dim\Delta'=0$ then it is exact everywhere for every $p,q$.
\end{enumerate}
\end{lemma}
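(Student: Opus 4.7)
The plan is to identify both sequences with $(p,q)$-Hodge components of weight spectral sequences computing cohomology of open subvarieties of $\tilde W_0$, and then to invoke Theorem~\ref{batyrevthm}.

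For part~(1), I would use the weight spectral sequence for the smooth projective pair $(\tilde W_0,\tilde W_0\cap\tilde D)$,
\[
E_1^{-l,k}=H^{k-2l}(\tilde W_0\cap\tilde D^{[l]})(-l)\Longrightarrow H^{k-l}(\tilde W_0\setminus\tilde D),
\]
whose $d_1$ is the Gysin differential (up to signs) and which degenerates at $E_2$ by Deligne's theory. Extracting the $(p,q)$-Hodge-type summand at weight $p+q$ yields exactly the sequence in~(1), and its cohomology at position $l$ equals $\Gr_W^{p+q}H^{p+q-l}(\tilde W_0\setminus\tilde D)^{(p,q)}$. Now $\tilde W_0\setminus\tilde D$ is the smooth affine hypersurface $\{w=0\}\subset(\CC^*)^{d+2}$ whose Newton polytope is $\check\Delta_0$; by Lemma~\ref{dimDelta0} this polytope is $(d+2)$-dimensional when $\dim\Delta'>0$, so Theorem~\ref{batyrevthm} forces $h^{p,q}H^k=0$ outside the ``low'' range $k=2p=2q<d+1$ and the ``middle'' range $k=d+1$, $p+q\ge d+1$. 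For $p\ne q$ only the middle range contributes, i.e.\ $k=p+q-l=d+1$, which rearranges to $p+q-2l=d+1-l$, exactly the claimed exception. The cokernel at $l=0$ is handled identically using also $H^{>d+1}(\tilde W_0\setminus\tilde D)=0$ for the affine variety of dimension $d+1$.

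For part~(2), set $\tilde D_\tor:=\bigsqcup_{v\in\Delta'\cap M}\bar D_v$, so that $\tilde W_0\cap Y^l_\tor=\tilde W_0\cap\tilde D_\tor^{[l]}$. Then the sequence in~(2) coincides with the $(p,q)$-Hodge piece of the weight spectral sequence for the pair $(\tilde W_0,\tilde W_0\cap\tilde D_\tor)$, with abutment $H^\bullet(\tilde W_0\setminus\tilde D_\tor)$. The inclusion $\tilde D_\tor\subseteq\tilde D$ induces at the $E_1$-level an inclusion of the sequence in~(2) into the sequence in~(1) compatible with Gysin (forgetting any factor of an all-interior-toric $l$-fold intersection yields an all-interior-toric $(l-1)$-fold intersection). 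To promote this inclusion to a direct summand, I would verify that -- after replacing the final term of~(2) by the image of the last map -- the natural projection onto the all-interior-toric summand is a chain retraction: the only Gysin contributions from intersections containing a non-interior-toric factor that can land in an all-interior-toric term come from intersections with exactly one such factor and map into the ``image'' that has been truncated, so the complementary piece is also a subcomplex. Exactness at each non-exceptional position for~(2) is then inherited from~(1).

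Finally, for $\dim\Delta'=0$, Lemma~\ref{dimDelta0} gives $\dim\check\Delta_0=d+1$, so after a lattice change of coordinates $\tilde W_0\cap(\CC^*)^{d+2}\cong Z\times\CC^*$, where $Z\subset(\CC^*)^{d+1}$ is a smooth hypersurface with full-dimensional Newton polytope and the $\CC^*$-factor corresponds to the unique ray $\Cone(v)$ giving $\bar D_v$. The K\"unneth decomposition combined with Theorem~\ref{batyrevthm} applied to $Z$ (of dimension $d$) shows that the $H^1(\CC^*)$-part of $H^\bullet(\tilde W_0\setminus\tilde D)$ is precisely the image of the Gysin map from $\bar D_v\cap\tilde W_0\cong Z$; consequently the $(p,q)$-Hodge piece of $\Gr_W^{p+q}H^{p+q-l}(\tilde W_0\setminus\tilde D_\tor)$ -- which is what the cohomology of the sequence in~(2) computes -- vanishes at every $l$ and every $p,q$. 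The main technical obstacle is establishing precisely the direct-summand splitting in part~(2); once it is in hand the remainder follows routinely from $E_2$-degeneration and Batyrev's theorem.
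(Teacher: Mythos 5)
Your treatment of part (1) is essentially the paper's argument (weight spectral sequence of the log pair $(\tilde W_0,\tilde W_0\cap\tilde D)$, $E_2$-degeneration, identification of the open piece with the torus hypersurface of Newton polytope $\check\Delta_0$, then Theorem~\ref{batyrevthm}), and it is fine. The problem is part (2). Your route to the direct-summand claim is that, after truncating the last term to the image, ``the only Gysin contributions from intersections containing a non-interior-toric factor that can land in an all-interior-toric term \ldots map into the image that has been truncated.'' That is false: a stratum $\bar D_{v_1}\cap\cdots\cap\bar D_{v_{l-1}}\cap F\cap\tilde W_0$ with $v_1,\dots,v_{l-1}\in\Delta'\cap M$ and $F$ a boundary toric divisor Gysin-maps (by deleting $F$) into the all-interior term $H^{\bullet}(\bar D_{v_1}\cap\cdots\cap\bar D_{v_{l-1}}\cap\tilde W_0)$ at position $l-1\ge 1$, not only into the position-$0$ term you truncated. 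What actually saves the splitting — and what the paper proves — is the vanishing of all off-diagonal Hodge classes on exactly these ``mixed'' strata: for $\tau=\Cone(\tau_1)$ with $\tau_1\in\P$, $\tau_1\not\subseteq\Delta'$, $\tau_1\not\subseteq\partial\Delta$, Prop.~\ref{W0descprop},(2) together with Lemma~\ref{pdeltadeltaprime},(3) shows $\tilde W_0\cap V(\tau)$ is the pullback of a projective space under a toric blowup, so $H^{p',q'}=0$ for $p'\neq q'$; combined with the fact that the interior divisors $\bar D_v$ ($v\in\Int\Delta\cap M$) are compact and disjoint from the divisors at infinity, this makes the off-diagonal part of the $E_1$ complex of (1) split into the two pieces indexed by cones in $\Cone(\Delta')$ and the rest. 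Without that vanishing statement your ``complementary piece is also a subcomplex'' step has no justification, and this is the heart of part (2).

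The $\dim\Delta'=0$ case is also not established by your sketch. The identification ``$\bar D_v\cap\tilde W_0\cong Z$'' is wrong ($\bar D_v\cap\tilde W_0$ is compact, $Z$ is affine; only the open torus part of $D_v\cap W_0$ is $Z$-like), the Gysin map whose injectivity you need lands in $H^\bullet(\tilde W_0)$ rather than in $H^\bullet(\tilde W_0\setminus\tilde D)$, and your conclusion that the cohomology of the two-term sequence ``vanishes at every $l$ and every $p,q$'' is both overstated (it is false at $l=0$) and not derived. What must be shown is precisely injectivity of the Gysin map $H^{\bullet-2}(\tilde W_0\cap Y^1_{\tor})\ra H^{\bullet}(\tilde W_0)$ for all $(p,q)$, equivalently purity of $H^m(\tilde W_0\setminus\bar D_v)$. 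The paper gets this from Prop.~\ref{properWprop},(2): the projection $\pi_{\Cone(\Delta')}$ restricts to $\bar W_0\ra\bar W_0\cap D_{\Cone(\Delta')}=Y^2_{\ntor}$ with the inclusion as a section, so pullback is surjective, its Poincar\'e dual Gysin map is injective, and composing with the injection $H^\bullet(\bar W_0)\ra H^\bullet(\tilde W_0)$ coming from the blowup gives the claim. You would need either this argument or a genuine replacement; the K\"unneth remark about $Z\times\CC^*$ does not by itself control the map from the compact divisor.
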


\begin{proof} 
The sequence in (1) can be derived from the weight spectral sequence of the cohomological mixed Hodge complex with complex part 
$\Omega^\bullet_{\tilde{W}_0}(\log (\tilde W_0\cap\tilde D))$ computing the mixed Hodge structure on $H^{a+b}(\tilde W_0\setminus (\tilde W_0\cap\tilde D))$. It is
\begin{equation}
E_1^{a,b}=\HH^{a+b}(\tilde{W}_0,
\Gr_{-a}^{W^{\tilde{D}}}\Omega^\bullet_{\tilde{W}_0}(\log (\tilde W_0\cap\tilde D)))
\Rightarrow 
H^{a+b}(\tilde{W}_0\setminus (\tilde W_0\cap\tilde D)).
\end{equation}
Using the residue map, in terms of the fan $\bar{\Sigma}$, this becomes
$$E_1^{a,b}=\bigoplus_{{\tau\in\bar\Sigma}\atop{\dim\tau=-a}}  H^{2a+b}(\tilde V(\tau)\cap {\tilde{W}_0}),
$$
where 
$V(\tau)$ denotes the closure of the orbit corresponding to $\tau$ and
$\tilde V(\tau)$ denotes its inverse image under the blowup $\tilde\PP_{\check\Delta}\ra X_{\bar\Sigma}$. 
The differential $d_1=-\gamma^{\tilde{W}_0\cap \tilde{D}}$ is given explicitly in \cite{PS08},\,Prop.\,4.10 as the (twisted) Gysin map.
Setting
$a=-i, b=p+q$ gives the sequence in the assertion.
By Lemma \ref{dimDelta0}, we have 
$$\dim \check\Delta_0\neq d+2 \iff \dim \check\Delta_0=d+1 \iff \dim\Delta'=0,$$
so we assume $\dim \check\Delta_0= d+2$.
We have
$$E_\infty^{a,b}=E_2^{a,b}=\Gr_{-a}^W H^{a+b}({\tilde{W}_0}\backslash({\tilde{W}_0}\cap \tilde D)).$$
The exactness follows if we show that
\begin{equation}
h^{p',q'}\Gr_{-a}^W H^{a+b}({\tilde{W}_0}\backslash({\tilde{W}_0}\cap \tilde D))=0\hbox{ for } p'\neq q'
\end{equation}
unless $a+b=d+1$.
This follows directly from 
${\tilde{W}_0}\setminus({\tilde{W}_0}\cap \tilde D)
={\bar{W}_0}\setminus({\bar{W}_0}\cap \bar D)$ and
Thm.~\ref{batyrevthm}, where $\bar D$ is the toric boundary in $X_{\bar\Sigma}$.

To prove (2), we set 
$$A=\{\tau\in\bar{\Sigma}| \tau=\Cone(\tau_1), \tau_1\in \P, 
\tau_1\not\subseteq \Delta', \tau_1\not\subseteq \partial\Delta\}.$$
Note that $\tilde V(\tau)=V(\tau)$ for $\tau\in A$.
Prop.~\ref{W0descprop},(2), implies that for $\tau\in A$, ${\tilde{W}_0}\cap V(\tau)$ is the
pullback of a projective space under a toric blowup. 
Hence $H^{p,q}({\tilde{W}_0}\cap V(\tau))=0$
for $p\neq q$ and $\tau \in A$.
Moreover, $\supp\bar\Sigma\backslash \supp(\{\Int(\tau)|\tau \in A\}\cup\{0\})$ has two connected components.
We focus on the component of cones contained in $\Cone(\Delta')$. As a summand of the sequence in (1), we get the desired sequence
$$\cdots \ra \bigoplus_{{\tau\in\bar\Sigma\cap \Cone(\Delta')}\atop{\dim\tau=i}} H^{p-i,q-i}(V(\tau)\cap {\tilde{W}_0})\ra\cdots\ra 
\bigoplus_{{\tau\in\bar\Sigma\cap \Cone(\Delta')}\atop{\dim\tau=1}}
H^{p-1,q-1}(V(\tau)\cap {\tilde{W}_0}) \ra H^{p,q}({\tilde{W}_0})$$
by identifying 
$ Y_\tor^{i}=\coprod_{{\tau\in\bar\Sigma\cap \Cone(\Delta')}\atop{\dim\tau=i}} V(\tau)$.

We now treat the case $\dim\Delta'=0$ separately by a different proof. 
The projection 
$\pi_{\Cone(\Delta')}$
in Prop.~\ref{properWprop},(2) induces a projection
$$\bar{W}_0\ra \bar{W}_0\cap D_{\Cone(\Delta')}=Y^2_{\ntor}$$
for which the inclusion 
$Y^2_{\ntor} \ra\bar{W}_0$ provides a section. Hence, the pullback by the inclusion is surjective on cohomology and thus its dual,
the Gysin map $H^{i-2}(Y^2_{\ntor})\ra H^i(\bar{W}_0)$, is an injection.
Since $Y^1_{\ntor}={\tilde{W}}_0$ is a blowup of $\bar W_0$, we may compose the above injection with the injection $H^i(\bar{W}_0)\ra H^i(Y^1_{\ntor})$. This composition is indeed $-\gamma^{Y_\ntor}$. 
\end{proof}

\begin{proposition} \label{vanishing}
We have 
\begin{enumerate}
\item $h^{p,q+k}\, \HH^{p+q}( S,\shF_{ S})=h^{p+1,q+k+1}\,\HH^{p+q+1}(Y,\psi_{\bar w,0}\CC_{\bar X})$
for $k\ge 1$,
\item
$h^{p,q+k}\, \HH^{p+q}(Y,\psi_{\bar w,0}\CC_{\bar X})=0$
unless $p+q=d+1$ or $k=0$,
\item
$h^{p,q+k}\, \HH^{p+q}( S,\shF_{ S})=0$
unless $p+q=d$ or $p-q=k=0$.
\end{enumerate}
\end{proposition}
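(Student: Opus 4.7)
The proof naturally splits into the three parts, and I would establish them in order.

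For part (1), the key is combining Lemma~\ref{shiftHpq} with Theorem~\ref{exseqCHMCL0}(3). The shift identity
\[
h^{p,q+k}\HH^{p+q}(\check S,\shF_{\check S}) = h^{p+1,q+k+1}\HH^{p+q+1}(Y,\phi_{\bar w,0}\CC_{\bar X})
\]
places the right-hand Hodge component in $\Gr^W_{p+q+k+2}$. Since $p,q\geq 0$ and $k\geq 1$ give $p+q+k+2\geq 3\geq 2$, Theorem~\ref{exseqCHMCL0}(3) allows us to replace $\phi_{\bar w,0}$ by $\psi_{\bar w,0}$ without changing the dimension.

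For part (2), the plan is to identify $\HH^i(Y,\psi_{\bar w,0}\CC_{\bar X})$ with the limit mixed Hodge structure on $H^i$ of the generic fiber of $\bar w$, and then show this limit MHS is pure of weight $i$ whenever $i\neq d+1$. For this, observe that the generic fiber $\bar w^{-1}(t)$ is isomorphic to a generic hyperplane section $W$ of the projective toric variety $\PP_{\check\Delta}$: the resolution $\phi:X_{\bar\Sigma}\to\PP_{\check\Delta}$ is an isomorphism over the smooth locus, and a generic $W\in|\O_{\PP_{\check\Delta}}(1)|$ avoids the (codimension $\geq 2$) singular locus. Lefschetz on the Cohen--Macaulay toric variety $\PP_{\check\Delta}$ then gives that $H^i(\PP_{\check\Delta})\to H^i(W)$ is an isomorphism for $i<d+1$, and $H^i(\PP_{\check\Delta})$ is pure of Hodge type $(i/2,i/2)$; Poincar\'e duality on the smooth projective $W$ extends the same purity to $i>d+1$. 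Since the restriction $H^i(\tilde\PP_{\check\Delta})\to H^i(\bar w^{-1}(t))$ factors through this isomorphism and is monodromy-invariant (the ambient variety does not move with $t$), the monodromy $T$ acts trivially on $H^i(\bar w^{-1}(t))$ for $i\neq d+1$; unipotence then forces $N=\log T=0$, so by the local invariant cycle theorem the limit MHS on $H^i$ is pure of weight $i$, giving the desired vanishing.

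For part (3), I would treat the cases by sign of $k$. For $k\geq 1$, combining parts (1) and (2) forces vanishing unless the total degree $p+q+1$ equals $d+1$, i.e., $p+q=d$. Negative $k$ reduces to positive $k$ via the Poincar\'e duality of Theorem~\ref{epdual}(1), which sends $h^{p,q+k}\HH^{p+q}(\check S,\shF_{\check S})$ to $h^{d-p,d-q-k}\HH^{2d-p-q}(\check S,\shF_{\check S})$, reindexing the Hodge shift from $k$ to $-k$. The remaining case $k=0$ with $p\neq q$ amounts to showing $h^{p+1,q+1}\HH^{p+q+1}(\phi_{\bar w,0}\CC_{\bar X})=0$ whenever $p+q\neq d$. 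This Hodge number lies in $\Gr^W_{p+q+2}\HH^{p+q+1}(\phi_{\bar w,0}\CC_{\bar X})=E_\infty^{-1,p+q+2}$ of the weight spectral sequence of Lemma~\ref{lemWspecseq}, which is a subquotient of
\[
E_1^{-1,p+q+2} = \bigoplus_{q'\geq 0} H^{p+q-2q'}(Y^{2q'+2})\langle -q'-1\rangle,
\]
whose $(p+1,q+1)$-component corresponds via the Tate twist to the $(p-q',q-q')$-component of $H^{p+q-2q'}(Y^{2q'+2})$. On the toric strata $Y^{2q'+2}_\tor$ this vanishes since $p\neq q$; on the non-toric strata $Y^{2q'+2}_\ntor$, which by Prop.~\ref{W0descprop}(2) are disjoint unions of smooth hypersurfaces in smooth projective toric varieties, the Lefschetz hyperplane theorem confines off-diagonal Hodge types to the middle dimension $\dim Y^{2q'+2}_\ntor=d-2q'$, forcing $p+q-2q'=d-2q'$, i.e., $p+q=d$. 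The hard part will be the rigorous handling of monodromy in part (2), which requires identifying the generic fiber with a hyperplane section of $\PP_{\check\Delta}$ and combining Lefschetz on the singular Cohen--Macaulay toric ambient with the local invariant cycle theorem to transfer monodromy triviality from the ambient family to the fiber.
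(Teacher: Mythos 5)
Parts (1) and the $k\neq 0$ portion of (3) of your argument are fine and match the paper (Lemma~\ref{shiftHpq} plus Thm.~\ref{exseqCHMCL0},(3), and Poincar\'e duality for the $k<0$ case). The two substantive steps, however, both contain genuine gaps. For part (2), your identification of the generic fibre of $\bar w$ with a \emph{smooth} generic hyperplane section of $\PP_{\check\Delta}$ fails: the singular locus of $\PP_{\check\Delta}$, while of codimension $\ge 2$, is in general positive-dimensional (it compactifies $X_\sigma$, whose singular locus contains curves of $A_1$ and $A_g$ singularities already in Ex.~\ref{genusgmirror}), so an ample hypersurface cannot avoid it; moreover the fibres of $\bar w$ are not members of $|\O_{\PP_{\check\Delta}}(1)|$ at all but pullbacks of members of a specific pencil to the resolution. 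Ex.~\ref{g2curveproper} makes this explicit: the member $w_{\check\Delta}=c$ is a \emph{singular} K3 whose preimage $\tilde W_c$ is smooth of Picard rank $18$. The extra cohomology of the fibre coming from the exceptional loci over $\Sing(\PP_{\check\Delta})$ is not in the image of the ambient restriction map, so your Lefschetz-plus-invariant-cycle argument does not control the monodromy on it. This is exactly why the paper, after reducing by Poincar\'e duality (Lemma~\ref{lemWspecseq},(6)) to $b>d+1$, splits $H^b(\tilde W_{t_0})$ via the weight sequence of the open part, kills the open contribution with Thm.~\ref{batyrevthm}, and then checks stratum by stratum (handlebody times toric decompositions plus the monodromy theorem) that $N$ vanishes on the boundary pieces.

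For the remaining case of (3) ($k=0$, $p\neq q$), your claimed $E_1$-level vanishing is false in general. The components of $Y^{j}_\ntor$ are \emph{not} ample regular hypersurfaces in smooth projective toric varieties: by Prop.~\ref{W0descprop},(2) their Newton polytope is $\check\Delta_{\P_*(\tau)}$, so whenever $\P_*(\tau)\supsetneq\tau$ they are pullbacks of ample hypersurfaces under toric contractions $V(\tau)\to V(\P_*(\tau))$, i.e.\ fibrations with toric fibres over a lower-dimensional hypersurface. Such a pullback carries off-diagonal Hodge classes in the middle degree of the \emph{base} (and its Tate twists), not only in the middle degree $d-2q'$ of the component; already for $d=3$ a vertex of $\P$ interior to a $2$-face of $\Delta'$ produces a component of $Y^2_\ntor$ whose $H^1$ carries $(1,0)$-classes pulled back from a positive-genus curve. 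So off-diagonal classes do occur in the $E_1$ term away from $p+q=d$, and the vanishing must be established at the $E_2$ level: this is the content of the paper's argument via the $\delta-\gamma$ double complex, the exactness statements of Lemma~\ref{Wsequence} (which rest on Thm.~\ref{batyrevthm} applied to $\tilde W_0\setminus \tilde D$ together with Poincar\'e duality), and the anticommutation of $\delta^{\tilde W_0\cap\tilde D}$ and $\gamma^{\tilde W_0\cap\tilde D}$ used to handle the top-left corner term. Your proposal is missing this cancellation mechanism entirely, so as written the $k=0$, $p\neq q$ case does not go through.
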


\begin{proof}
(1) follows from Lemma~\ref{shiftHpq} and Thm.~\ref{exseqCHMCL0},(3). 
By Poincar\'e duality, Lemma~\ref{lemWspecseq},(6), 
it suffices to prove the vanishing in (2) for $p+q>d+1$ and $k\neq 0$. 
Choose $t_0\in\CC$ with $|t_0|$ sufficiently small so that $0$ is the only
critical value of $\bar w$ in the closed disk with radius $|t_0|$.
Note that $\bar W_{t_0}:=\bar w^{-1}(t_0)$ is a $\bar{\Sigma}$-regular hypersurface as argued in 
the proof of Prop.~\ref{properWprop}.
As in the proof of Lemma~\ref{Wsequence},(1), we have an exact sequence
$$ 
\bigoplus_{{\tau\in\bar\Sigma}\atop{\dim\tau=1}}
H^{b-2}(\tilde V(\tau)\cap \tilde W_{t_0}) 
\ra H^b(\tilde W_{t_0})
\ra \Gr^W_0H^b(\bar W_{t_0}\backslash(\bar W_{t_0}\cap \bar D))
\ra 0.$$
Let $T$ denote the monodromy operator for $\bar w$ around $0$. Then 
$T$ and $N=\log T$ operate on this sequence. 
Note that it suffices to show that $N$ is trivial on $H^b(\tilde W_{t_0})$ for $b>d+1$ because this group is isomorphic to $\HH^{b}(Y,\psi_{\bar w,0}\CC_{\bar X})$ 
and triviality of $N$ implies the monodromy weight filtration (which coincides with weight filtration of the mixed Hodge structure \cite[\S11.2.4,\S11.2.5]{PS08}) is concentrated in weight 0, i.e.,
$$\HH^{b}(Y,\psi_{\bar w,0}\CC_{\bar X})=\Gr_0^W \HH^{b}(Y,\psi_{\bar w,0}\CC_{\bar X})$$
and hence all Hodge numbers $h^{p,q+k}\HH^{p+q}(Y,\psi_{\bar w,0}\CC_{\bar X})$ for $k\neq 0$ (and $p+q>d+1$) vanish. So let us show $N$ is trivial.
By Thm.~\ref{batyrevthm},
$\Gr^W_0H^b(\bar W_{t_0}\backslash(\bar W_{t_0}\cap \bar{D}))=0$ for $b>d+1$, so we only need to show that $N$ is trivial on $H^{b-2}(\tilde V(\tau)\cap \tilde W_{t_0})$. 
It suffices to show the triviality on $H^{b-2}(V(\tau)\cap \bar W_{t_0})$.
We show that $\bar{w}|_{V(\tau)}$ is constant if $\tau\not\in \Sigma$. 
Recall that the pencil defined by $\bar{w}$ as a family of sections of $\phi^*\O_{\PP_{\check \Delta}}(1)$ (where $\phi:X_{\bar\Sigma}\rightarrow\PP_{\check
\Delta}$ is the resolution) is
$$ 
\bar{w}(t) = t\cdot z^0 + c_{\rho}z^{\rho}+\sum_{\omega\subset\Delta} c_{\omega}z^{(n_{\omega},
\varphi_{\Delta}(n_{\omega}))}
$$
and $z^0$ vanishes on $D_\infty = X_{\bar\Sigma}\backslash X_\Sigma$.
We have $V(\tau)\subseteq D_\infty$ if $\tau\not\in \Sigma$, so indeed
$\bar{w}$ is constant on such $V(\tau)$. Now let us assume that $\tau\in\Sigma\backslash\{0\}$. The Newton polytope of 
$\bar W^\tau_t:=\bar w^{-1}(t)\cap V(\tau)$ is 
a proper face of $\check\Delta$ supported by the hyperplane $\tau^\perp$. It contains $0$ and thus by the smoothness assumption of $\PP_\Delta$, this face generates the standard cone 
$\tau^\perp\cap\check\sigma \subseteq \partial\check\sigma$. 
For each $t$, there is a diagram
\[
\xymatrix@C=30pt
{X_{\bar\Sigma}\ar[d] & V(\tau)\ar[d]\ar@{_{(}->}[l]& \bar W^\tau_t\ar[d]\ar@{_{(}->}[l]\\
\PP_{\check\Delta} & \PP_{\check\Delta\cap\tau^\perp}\ar@{_{(}->}[l]&  W^\tau_t\ar@{_{(}->}[l]
}
\]
where the first vertical map is birational, the second and
third vertical maps have toric fibres, the horizontal maps are closed 
embeddings, the squares are pullback diagrams, 
$\PP_{\check\Delta\cap\tau^\perp}\cong \PP^{\dim \check\Delta\cap\tau^\perp}$
and $W^\tau_t$ is a hyperplane section in the latter. 
The fibres of the vertical maps over closed points of $W^\tau_t$ are toric varieties.
In particular, $W^\tau_{t_0}$ is a 
$\check\Delta\cap\tau^\perp$-regular hypersurface and thus
has a disjoint decomposition in handlebodies of different dimensions induced from the intersection with the toric strata in $\PP_{\check\Delta\cap\tau^\perp}$. Since handlebodies as well as toric varieties have Hodge structures concentrated in degrees $(p,q)$ with $p=q$ (see Lemma~\ref{handlebodytimestorus}), this also holds for $\bar{W}^\tau_{t_0}$ which inherits a decomposition in products of handlebodies and toric varieties.
The monodromy theorem, e.g., \cite{PS08},\,Cor.\,11.42, implies that $N$ operates trivially on $H^\bullet(\bar{W}^\tau_{t_0})$.

We now show (3). Note that (1) and (2) and the Poincar\'e duality of 
Lemma~\ref{lemWspecseq},(5), imply the vanishing for $k\neq 0$. 
It suffices to show it for the case where $k=0, p+q>d$ and $p\neq q$. 
We use Lemma~\ref{shiftHpq} and work with $\bar{A}^\bullet$, i.e., we want to show
$$h^{p+1,q+1}\HH^{p+q+1}(Y,\bar A^\bullet)=0$$
for $p+q>d$.
Recall that Lemma~\ref{lemWspecseq} provides us with a sequence
$$\cdots\ra {}_WE_1^{-k,m+k}\stackrel{d_1}{\lra} {}_WE_1^{-(k-1),(m+1)+(k-1)} \ra \cdots$$
which becomes
$$ \cdots\ra 
\bigoplus_{\tilde q>-1,-k}H^{m-2\tilde q-k}( Y^{2\tilde q+k+1}) \stackrel{d_1}{\lra}
\bigoplus_{\tilde q>-1,-(k-1)}H^{m-2\tilde q-k+2}( Y^{2\tilde q+k}) \ra \cdots
$$
We have $\dim Y^i=d+2-i$, so $H^j(Y^i)=0$ for $2i+j>2d+4$ and in particular $H^{m-1}(Y^i)=0$ for $i>d+2-(m-1)/2$.
We fix $m$.
Because $d_1$ splits up as $d_1=\delta-\gamma$, the above sequence is the total complex of the double complex
\begin{center}
\resizebox{16cm}{!}{
$$
\xymatrix@C=30pt
{ 
 H^{m-1}( Y^{2})\ar^{\delta}[r] \ar@{.}[dr]|-{k=1}
&\qquad\cdots \qquad\ar[r] 
&H^{m-1}( Y^{d+1-(m-1-i)/2)}) \ar[r] \ar@{.}[dr]|-{k=2-d+(m-1-i)/2}
&H^{m-1}( Y^{d+2-(m-1-i)/2}) 
\\
 H^{m-3}( Y^{3})\ar^{-\gamma}[u]\ar[r]
&H^{m-3}( Y^{4}) \ar[r]\ar[u] 
&\qquad\cdots \qquad\ar[r]
&H^{m-3}( Y^{d+3-(m-1-i)/2}) \ar[u] 
\\
 \qquad\vdots\qquad\ar[u] \ar@{.}[dr]|-{k=(m-1-i)/2}
&\vdots\ar[u]&\qquad\qquad\qquad&\vdots\ar[u]
\\
 H^i( Y^{2+(m-1-i)/2}) \ar[r]^{\delta}\ar[u]^{-\gamma}
&H^i( Y^{3+(m-1-i)/2}) \ar[r]\ar[u] & \qquad \cdots \ar[r]\qquad
&H^i( Y^{d+2}) \ar[u] \ar@{.}[ul]|-{k=1-d+(m-1-i)}
}
$$
}
\end{center}
concentrated in a rectangle and with $i=1$ if $m$ is even and $i=0$ otherwise.
Here the main diagonal (marked as $k=1$) gives $E^{-1,m+1}$, with other diagonals giving $E^{-k,(m-k+1)+k}$ for various $k$.
We set $m=p+q+1>d+1$.
Note that $\Gr_1^W\HH^{p+q+1}(Y,\bar A^\bullet)$ is the cohomology group by the total differential at the main diagonal.
Since $m>d+1$, the rectangle extends more in the $-\gamma$-direction than it does in the $\delta$-direction.
We restrict this double complex to the off-diagonal Hodge classes, i.e., we write $\bigoplus_{p'\neq q'}H^{p',q'}$ in front of each term. There is no ambiguity here because the Hodge structure of each term is pure and maps are strictly compatible with these.
We then compute the cohomology of this restricted double complex
with respect to $\delta-\gamma$
using the spectral sequence whose $E_0$-term has differential $-\gamma$ and claim that $E_2|_{k=1}=0$. This will finish the proof of (3).

For $p'\neq q'$, $H^{p',q'}(Y^i_\tor)=0$ because toric varieties have no off-diagonal Hodge classes and thus $H^{p',q'}(Y^i)=H^{p',q'}(Y^i_\ntor)$.
All columns are exact at $k=1$ by Lemma~\ref{Wsequence},(2). 
Indeed ${\tilde{W}_0}\cap Y^{i}_\tor=Y^{i+1}_\ntor$ and since
$$\dim \tilde W_0\cap Y_\tor^{2\tilde q+1}=d-2\tilde q< p+q-2\tilde q=m-1-2\tilde q$$
by $p+q>d$, the exceptional cases lie strictly below the main diagonal.
We have thus shown that $E_1|_{k=1}=0$ away from the top left corner, i.e., away from $\bigoplus_{{p'+q'=m-1}\atop{p'\neq q'}}H^{p',q'}( Y_\ntor^{2})$. We claim that $E_2|_{k=1}=0$ at this term. 
This is equivalent to the map
on the cokernels of the two top left vertical arrows
induced by $\delta$ being an injection. Using the exactness of Lemma~\ref{Wsequence},(2), this is equivalent to the injectivity of
\begin{equation} \label{imgamma}
(\im\gamma)\cap H^{m+1}_{\neq}(\tilde W_0)
\stackrel{\delta}{\lra}
(\im\gamma)\cap H^{m+1}_{\neq}(\tilde W_0\cap Y_\tor^{1}).
\end{equation}
where we have used the short notation $H^b_{\neq}$ for $\bigoplus_{p'\neq q'}H^{p',q'}H^b$.
By Lemma~\ref{Wsequence},(1) and Poincar\'e duality, we have an injection
$$
H^{m+1}_{\neq}(\tilde W_0)
\stackrel{\delta^{\tilde W_0\cap \tilde D}}{\lra}
H^{m+1}_{\neq}(\tilde W_0\cap\tilde D^1).$$
We can't directly deduce the injectivity in (\ref{imgamma}) from this because
$\delta=\pi_{Y_\ntor}\circ\delta^{\tilde W_0\cap \tilde D}$
where $\pi_{Y_\ntor}:H^{m+1}_{\neq}(\tilde W_0\cap \tilde D^1)
\ra H^{m+1}_{\neq}( \tilde W_0\cap Y_\tor^{1})$ denotes the projection.
We are going to show that
\begin{equation}
\label{imagedeltacontained}
\delta^{\tilde W_0\cap \tilde D}((\im\gamma)\cap H^{m+1}_{\neq}(\tilde W_0))
\subseteq
(\im\gamma)\cap H^{m+1}_{\neq}(\tilde W_0\cap  Y_\tor^{1}),
\end{equation}
which then implies (\ref{imgamma}).
Let us consider the diagram
\[
\xymatrix@C=30pt
{
H^{m+1}_{\neq}(\tilde W_0)
\ar^{\delta^{\tilde W_0\cap \tilde D}}[r]&
H^{m+1}_{\neq}(\tilde W_0\cap\tilde D^1)\\
H^{m-1}_{\neq}(\tilde W_0\cap\tilde D^1)
\ar^{\delta^{\tilde W_0\cap \tilde D}}[r]
\ar^{-\gamma^{\tilde W_0\cap \tilde D}}[u]
&
H^{m-1}_{\neq}(\tilde W_0\cap\tilde D^2)
\ar^{-\gamma^{\tilde W_0\cap \tilde D}}[u].
}
\]
It is anti-commutative because it is part of the differential in the weight spectral sequence of the punctured tubular neighbourhood of 
$\tilde W_0\cap \tilde D$ in $\tilde W_0$. Moreover, by 
Lemma~\ref{Wsequence},(2), 
the three terms involving the bottom and right map split as direct sums where one summand is
$$H^{m-1}_{\neq}(\tilde W_0\cap Y_\tor^1)\stackrel{\delta}{\lra}
H^{m-1}_{\neq}(\tilde W_0\cap Y_\tor^2)\stackrel{-\gamma}{\lra}
H^{m+1}_{\neq}(\tilde W_0\cap Y_\tor^1).$$
We get
(\ref{imagedeltacontained}) from
\begin{align*}
\delta^{\tilde W_0\cap \tilde D}({(\im\gamma)\cap H^{m+1}_{\neq}(\tilde W_0)})
={} &
(\delta^{\tilde W_0\cap \tilde D}\circ \gamma^{\tilde W_0\cap \tilde D})({H^{m-1}_{\neq}(\tilde W_0\cap Y_\tor^1)})\\
={}&
(-\gamma^{\tilde W_0\cap \tilde D}
\circ\delta^{\tilde W_0\cap \tilde D})
({H^{m-1}_{\neq}(\tilde W_0\cap Y_\tor^1)})\\
={}&
(-\gamma
\circ\delta)({H^{m-1}_{\neq}(\tilde W_0\cap Y_\tor^1)})\\
\subseteq {} & (\im\gamma)\cap  H^{m+1}_{\neq}(\tilde W_0\cap Y_\tor^1).
\end{align*}
\end{proof}
 
\subsection{The main theorem}

With preparations complete, we can finish the proof of our main result 
by computing $h^{p,p}( S,\shF_{ S})$ as defined in 
\eqref{hpqFdef}, for $2p>d$.

Note that, for $2p>d$, we have by Lemma~\ref{shiftHpq} and Prop.~\ref{vanishing},(3), that
\[
h^{p,p}(\shF_{ S})=h^{p,p}\HH^{2p}( S,\shF_{ S})
=h^{p+1,p+1}\HH^{2p+1}(Y,\bar{A}^\bullet).
\]

\begin{proposition} \label{van_to_gen_nby}
For $2p>d+2$, we have
\begin{enumerate}
\item
$h^{p,p}\HH^{2p-1}(Y,\bar{A}^\bullet) =
h^{p,p}\HH^{2p}(Y,\CC)
-h^{p,p}\HH^{2p}(Y,{A}^\bullet)$.
\item $\Gr_i^W\HH^{m}(Y,\CC)=0$ for $i\neq 0$ and $m>d+2$.
\end{enumerate}
\end{proposition}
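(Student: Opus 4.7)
For part (1), the plan is to exploit the long exact sequence of mixed Hodge structures coming from Theorem \ref{exseqCHMCL0},(2):
\[
\cdots \to H^{m-1}(Y,\CC) \to \HH^{m-1}(Y, A^\bullet) \to \HH^{m-1}(Y, \bar{A}^\bullet) \to H^m(Y,\CC) \to \HH^m(Y, A^\bullet) \to \HH^m(Y, \bar{A}^\bullet) \to \cdots
\]
and specialize to $m = 2p$. Because every map is a morphism of mixed Hodge structures, it is strict with respect to the Hodge filtration; consequently, passing to the $(p,p)$-Hodge component preserves exactness and gives an Euler characteristic identity. The desired formula follows once the following four $(p,p)$-vanishings at surrounding terms are established:
\begin{itemize}
\item $h^{p,p}H^{2p-1}(Y,\CC)=0$, by Deligne's weight bound on compact varieties: classes of type $(p,p)$ sit in weight $2p$, while weights in $H^{2p-1}(Y)$ are bounded by $2p-1$;
\item $h^{p,p}\HH^{2p-1}(Y, A^\bullet)=0$, by Proposition \ref{vanishing},(2), read with $p'=p$, $q'=p-1$, $k=1$ (so that $p'+q'=2p-1>d+1$ and $k\neq 0$);
\item $h^{p,p}\HH^{2p}(Y,\bar{A}^\bullet) = h^{p-1,p-1}\HH^{2p-1}(\check S,\shF_{\check S})=0$, via Lemma \ref{shiftHpq} and Proposition \ref{vanishing},(3) applied with $p'=p-1$, $q'=p$, $k=-1$ (since $2p-1>d$ and $p'-q'\neq k$);
\item $h^{p,p}H^{2p+1}(Y,\CC)=0$, which is part (2) applied at $m=2p+1>d+2$.
\end{itemize}
Combined strictness then yields a short exact sequence on the $(p,p)$-parts centered at $H^{2p}(Y,\CC)$, hence the claimed equality.

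For part (2), I would use the Deligne weight spectral sequence for the compact simple normal crossings divisor $Y$,
\[
E_1^{-r,m+r} = H^{m-r}(Y^{r+1},\CC)\langle -r\rangle \;\Longrightarrow\; H^m(Y,\CC),
\]
whose $E_2$-degeneration identifies $\Gr^W_{\bullet} H^m(Y)$ with specific subquotients of the $H^{m-r}(Y^{r+1})$. The plan is to decompose each stratum according to Lemma \ref{disjointdecomposition},(1) as $Y^{r+1} = Y^{r+1}_\tor \sqcup Y^{r+1}_\ntor$, where the toric components contribute only Tate-type classes and the non-toric components $Y^{r+1}_\ntor = Y^r_\tor \cap \tilde{W}_0$ are Danilov--Khovanskii regular hypersurfaces covered by Theorem \ref{batyrevthm}. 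I would then adapt the argument of Proposition \ref{vanishing},(3), running the Gysin/restriction differentials $\delta-\gamma$ of Lemma \ref{lemWspecseq},(4), and invoking the exact sequences of Lemma \ref{Wsequence}, to show that for $m>d+2$ no off-diagonal or higher-weight contribution can survive in the graded pieces with $i\neq 0$.

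The main obstacle is clearly part (2): the bookkeeping of Hodge types across the toric and hypersurface strata, together with the verification that the spectral sequence differentials match the classes contributing to $\Gr^W_i$ with $i \neq 0$, mirrors the most delicate portion of the proof of Proposition \ref{vanishing},(3) and constitutes the heart of the argument. Part (1) is then a formal consequence of the long exact sequence together with (2).
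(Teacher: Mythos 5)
Your part (1) is essentially the paper's own argument: apply $\Gr^W$ (equivalently, take the weight-$2p$, type-$(p,p)$ component, using strictness of morphisms of mixed Hodge structures) to the long exact sequence of Theorem \ref{exseqCHMCL0},(2), and kill the outer terms by Proposition \ref{vanishing},(2)--(3). Two remarks: only the two vanishings $h^{p,p}\HH^{2p-1}(Y,A^\bullet)=0$ and $h^{p,p}\HH^{2p}(Y,\bar A^\bullet)=0$ are actually needed to extract the short exact sequence centered at $H^{2p}(Y,\CC)$; your extra vanishings at $H^{2p\pm1}(Y,\CC)$ are superfluous, so in particular part (1) does not depend on part (2), contrary to the way you set it up. Also, when you invoke Proposition \ref{vanishing},(3) with $p'=p-1$, $q'=p$, $k=-1$, your stated reason ``$p'-q'\neq k$'' misquotes the exceptional condition: the exception is $p'-q'=k=0$, and in your situation $p'-q'=k=-1$, so the vanishing does apply, but not for the reason you give.

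The genuine gap is part (2). The paper proves it by the very same soft mechanism as (1): apply $\Gr^W_i$ for arbitrary $i\neq 0$ (with index $i+1$ on the terms in degree $m-1$) to the same long exact sequence; since $m>d+2$ gives $m>d+1$ and $m-2>d$, Proposition \ref{vanishing},(2) and (3) in their full strength (the statements for $k\neq 0$) kill $\Gr^W_i\HH^{m}(Y,A^\bullet)$ and $\Gr^W_{i+1}\HH^{m-1}(Y,\bar A^\bullet)$, forcing $\Gr^W_i\HH^m(Y,\CC)=0$. Your proposal instead plans a direct computation with the Deligne weight spectral sequence of the compact normal crossings variety $Y$, ``adapting'' the proof of Proposition \ref{vanishing},(3); this is not carried out, and as sketched it would not go through without substantial additional work. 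Concretely: the differential of the weight spectral sequence of $\CC_Y$ is the restriction map $\delta$ alone, whereas the $\delta-\gamma$ differential of Lemma \ref{lemWspecseq},(4) and the exact sequences of Lemma \ref{Wsequence} pertain to the complexes $A^\bullet$ and $\bar A^\bullet$, and Lemma \ref{Wsequence} controls the Gysin direction on off-diagonal classes only. More importantly, statement (2) also asserts the vanishing of the Hodge--Tate pieces of type $(a,a)$ with $2a<m$, which are produced in abundance by the toric strata $Y^s_{\tor}$ and are invisible to Theorem \ref{batyrevthm} and to Lemma \ref{Wsequence},(2); proving exactness of the $\delta$-complexes $H^j(Y^\bullet)$ in the relevant range for those rows is a nontrivial geometric claim your sketch does not address. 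So as written, (2) remains an unproved plan, while the intended proof is an immediate consequence of results you already had in hand.
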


\begin{proof}
We apply $\Gr_\bullet^W$ to the sequence in Thm.~\ref{exseqCHMCL0},(2) in order to obtain the exact sequence
$$\cdots\ra\Gr^W_1\HH^{2p-1}(Y,{A}^\bullet)
     \ra\Gr^W_1\HH^{2p-1}(Y,\bar{A}^\bullet)
     \ra\qquad\qquad\qquad\qquad$$
$$
        \Gr^W_0\HH^{2p}(Y,\CC)   
     \ra\Gr^W_0\HH^{2p}(Y,{A}^\bullet)
     \ra\Gr^W_0\HH^{2p}(Y,\bar{A}^\bullet)
\ra\cdots
$$
and conclude (1) from the vanishing of the exterior terms by Prop.~\ref{vanishing},(2)-(3). Similarly, replacing $\Gr_0^W$ (resp.\ $\Gr_1^W$) in the above sequence by $\Gr_i^W$ (resp.\ $\Gr_{i+1}^W$), we deduce (2).
\end{proof}

\begin{lemma} \label{lemepqYtor}
Let $Y_\tor$ denote the closure of $Y\setminus \tilde W_0$.
We have $e^{p,q}(Y_\tor)=0$ for $p\neq q$ and
$$e^{p,p}(Y_\tor)
=(-1)^{d+1-p}\sum_{\tau\in\P,\tau\not\subset\partial\Delta} (-1)^{\dim\tau}
\begin{pmatrix}\dim\Delta(\tau)-\dim\tau\\ p\end{pmatrix}.$$
\end{lemma}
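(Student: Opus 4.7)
The proof will proceed by stratifying $Y_\tor$ into torus orbits and applying additivity of Hodge–Deligne numbers. By Prop.~\ref{W0descprop},(3), $Y = w^{-1}(0)$ decomposes as $W_0 \cup \bigcup_{v \in \Delta' \cap M} D_v$, with $\tilde{W}_0$ the unique component not contained in the toric boundary. Thus $Y_\tor = \overline{Y \setminus \tilde W_0} = \bigcup_{v \in \Delta' \cap M} D_v$, since each $D_v$ is already closed and not contained in $\tilde W_0$. Each $D_v = V(\Cone(v))$ stratifies as $\bigsqcup_{\tau \in \P,\, v \in \tau} T_\tau$, giving
\[
Y_\tor = \bigsqcup_{\tau \in \P \,:\, \tau \cap \Delta' \cap M \neq \emptyset} T_\tau.
\]
Each stratum $T_\tau$ is an algebraic torus of dimension $d+1-\dim\tau$, so $e^{p,q}(T_\tau) = 0$ for $p \neq q$ and $e^{p,p}(T_\tau) = (-1)^{d+1-\dim\tau+p}\binom{d+1-\dim\tau}{p}$ by \cite{DK86},\,1.10 (cited in Lemma~\ref{handlebodytimestorus}). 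The off-diagonal vanishing then follows from additivity (Thm.~\ref{eulerdecompose},(1)).

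For the $p=q$ formula, the remaining task is to identify the indexing set. I will show that for a star-like triangulation $\P$, a cell $\tau \in \P$ has a vertex in $\Delta' \cap M$ if and only if $\tau \not\subset \partial\Delta$. The forward implication is immediate since $\Delta' \subset \Int\Delta$. Conversely, if $\tau \not\subset \partial\Delta$, pick a maximal cell $C$ of $\P_*$ containing $\tau$; then $C \not\subset \partial\Delta$, so by Lemma~\ref{pdeltadeltaprime} one may write $C = \Conv(\tau_\partial \cup \tau_{\Delta'})$ with $C \cap \partial\Delta = \tau_\partial$ a face of $\Delta$. Any vertex of $\tau$ lying on $\partial\Delta$ is a lattice point of $C$ on $\partial\Delta$, hence lies in $\tau_\partial$. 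If all vertices of $\tau$ were on $\partial\Delta$, their convex hull $\tau$ would lie in $\tau_\partial \subset \partial\Delta$, contradicting the hypothesis. For such $\tau$ one has $\Delta(\tau) = \Delta$, so $d+1-\dim\tau = \dim\Delta(\tau)-\dim\tau$, and summing the torus contributions yields exactly the claimed formula. The only mildly subtle point is this combinatorial equivalence, which genuinely uses the star-like hypothesis on $\P$; without it, one could have simplices all of whose vertices lie on $\partial\Delta$ yet whose interior lies in $\Int\Delta$, and the indexing sets would diverge.
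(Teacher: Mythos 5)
Your proof is correct, and it reaches the formula by a somewhat different route than the paper. You decompose $Y_\tor=\bigcup_{v\in\Delta'\cap M}D_v$ into locally closed torus orbits $T_\tau$ and use additivity of Hodge--Deligne numbers (Thm.~\ref{eulerdecompose},(1)) together with $e^{p,q}$ of algebraic tori (\cite{DK86},\,1.10, as in Lemma~\ref{handlebodytimestorus}); the paper instead computes $e^{p,q}(Y_\tor)=\sum_{i\ge 1}(-1)^{i+1}h^{p,q}(Y^i_\tor)$ via the normalized intersections $Y^i_\tor$, identifies these as the compact toric varieties $X_\omega$, $\omega\in\P_{\Delta'}$ (Lemma~\ref{disjointdecomposition},(2)), invokes \cite{DK86},\,2.5 for $h^{2p}$ of a complete toric variety as a sum over cones $\tau\supseteq\omega$, and then interchanges the two sums using $\sum_{\P_{\Delta'}\ni\omega\subseteq\tau}(-1)^{\dim\omega}=\chi(\tau\cap\Delta')=1$. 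Your route trades that Euler-characteristic collapse for the disjointness of orbits, and it makes explicit the combinatorial point the paper leaves implicit (namely that the relevant index set $\{\tau\in\P:\tau\cap\Delta'\cap M\neq\emptyset\}$ coincides with $\{\tau\in\P:\tau\not\subset\partial\Delta\}$, which the paper absorbs into ``$\dim\Delta(\tau)=d+1$ for $\tau\in\P\setminus\P_{\partial\Delta}$'' together with Lemma~\ref{Pstar},(3)); proving it is a genuine plus, and you correctly observe it is where the refinement of $\P_*$ enters. One small caveat: when $\dim\Delta'=d+1$ the maximal cell $C\in\P_*$ containing $\tau$ may be $\Delta'$ itself, which is not of the form $\Conv(\tau_\partial\cup\tau_{\Delta'})$ with $\tau_\partial$ a nonempty face of $\Delta$ covered by Lemma~\ref{pdeltadeltaprime},(2) (there $C\cap\partial\Delta=\emptyset$); but in that case $\tau\subseteq\Delta'\subseteq\Int(\Delta)$, so all its vertices already lie in $\Delta'\cap M$ and the conclusion is immediate, so this does not affect correctness. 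The remaining bookkeeping (compactness of the $D_v$ for $v$ interior, $e^{p,p}(T_\tau)=(-1)^{p+d+1-\dim\tau}\binom{d+1-\dim\tau}{p}$, and the sign match with $(-1)^{d+1-p}(-1)^{\dim\tau}$) all checks out.
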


\begin{proof} 
Recall from \cite{DK86},\,2.5 that for a compact toric variety $X_{\Sigma_0}$, one has
$$h^{p,p}(X_{\Sigma_0},\CC)=\dim H^{2p}(X_{\Sigma_0})=\sum_{\tau\in\Sigma_0}(-1)^{\codim\tau-p}
\begin{pmatrix}\codim\tau\\ p\end{pmatrix}$$
and $H^{k}(X_{\Sigma_0},\CC)=0$ for odd $k$.

 From the weight spectral sequence on the mixed Hodge complex computing the mixed Hodge structure on $H^\bullet(Y,\CC)$, we get
$$e^{p,q}(Y_\tor)=\sum_{i\ge 1} (-1)^{i+1} h^{p,q}(Y_\tor^i)$$
which is zero if $p\neq q$, so let's assume $p=q$ giving
\begin{align*}
e^{p,q}(Y_\tor)= {} &\sum_{\omega\in\P_{\Delta'}} (-1)^{\dim\omega} \dim H^{2p}(X_\omega)\\
= {} & \sum_{\omega\in\P_{\Delta'}} (-1)^{\dim\omega}
\sum_{\tau\in\P, \tau\supseteq\omega} (-1)^{d+1-\dim\tau-p}
\begin{pmatrix}d+1-\dim\tau\\ p\end{pmatrix}.
\end{align*}
Using that, for fixed $\tau$, we have $1=\sum_{\P_{\Delta'}\ni\omega\subseteq\tau}(-1)^{\dim\omega}$ and that $d+1=\dim\Delta(\tau)$  for $\tau\in\P\setminus \P_{\partial\Delta}$ we conclude the assertion.
\end{proof}

\begin{theorem} \label{mainthm}
Given
\begin{itemize} 
\item a lattice polytope $\Delta$ defining a smooth toric variety and
having at least one interior lattice point;
\item a star-like triangulation of $\Delta$ by standard simplices;
\item Landau-Ginzburg models $w:X_\Sigma\ra\CC$ and $\check w:X_{\check\Sigma}\ra\CC$ associated to the resolutions given in \S\ref{section11} of the cone over $\Delta$ and 
its dual cone;
\end{itemize}
then for the sheaves of vanishing cycles
$\shF_{\check S}=\phi_{\check w,0}{\bf R}j_*\CC_{X_{\check \Sigma}}[1]$
and
$\shF_{ S}=\phi_{w,0}{\bf R}j_*\CC_{X_{\Sigma}}[1]$
we have\footnote{We use the notation $h^{p,q}(\shF_{\check S})=h^{p,q}(\check S,\shF_{\check S})= \sum_k h^{p,q+k}\,\HH^{p+q}(\check S,\shF_{\check S})$ and likewise for $ S$.}
$$h^{p,q}(\shF_{\check S}) = h^{d-p,q}(\shF_{ S})$$
giving
$$h^{p,q}(\check S) = h^{d-p,q}(\shF_{ S}).$$
\end{theorem}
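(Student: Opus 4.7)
My plan is to combine the Euler-characteristic duality of Thm.~\ref{epdual} with the vanishing of Prop.~\ref{vanishing} and one independent direct computation via Prop.~\ref{van_to_gen_nby}. First, the Lefschetz theorem (Prop.~\ref{HodgenumbersS},(1)) and Prop.~\ref{vanishing},(3) together force both $h^{p,q}(S)$ and $h^{d-p,q}(\shF_{\check S})$ to vanish outside the diagonal $p=q$ and the antidiagonal $p+q=d$ of the $S$-Hodge diamond. Combining this with the Poincar\'e dualities on $S$ (standard) and on $\shF_{\check S}$ (Thm.~\ref{epdual},(1)) will reduce the claim $h^{p,q}(S)=h^{d-p,q}(\shF_{\check S})$ to the range $2p\geq d$. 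The middle case $2p=d$ then follows at once from Thm.~\ref{epdual},(3), because there the diagonal and antidiagonal coincide and $e^p(S)=(-1)^p h^{p,p}(S)$ (and likewise for $\shF_{\check S}$) determines the unique remaining Hodge number. For $2p>d$ the Euler-characteristic identity supplies only a single linear relation between the two remaining pairs $\{h^{p,p}(\shF_{\check S}),h^{p,d-p}(\shF_{\check S})\}$ and $\{h^{p,p}(S),h^{p,d-p}(S)\}$, so one additional independent identity is required.

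The extra identity I will prove is the diagonal one, $h^{p,p}(\shF_{\check S})=h^{d-p,p}(S)$ for $2p>d$. By Lemma~\ref{shiftHpq} together with the vanishing of Prop.~\ref{vanishing},(3) (which collapses the sum defining $h^{p,p}(\shF_{\check S})$ to its $k=0$ term when $2p\neq d$), one has $h^{p,p}(\shF_{\check S})=h^{p+1,p+1}\HH^{2p+1}(Y,\bar A^\bullet)$. Applying Prop.~\ref{van_to_gen_nby},(1) with index $p+1$ (valid since $2(p+1)>d+2$) rewrites this as
\[
h^{p+1,p+1}\HH^{2p+2}(Y,\CC)\;-\;h^{p+1,p+1}\HH^{2p+2}(Y,A^\bullet).
\]
I plan to evaluate each term combinatorially. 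For the first, the purity Prop.~\ref{van_to_gen_nby},(2) together with the weight spectral sequence of the compact NC divisor $Y=Y_\tor\cup\tilde W_0$ expresses it as an alternating sum over strata: the toric contribution is supplied by Lemma~\ref{lemepqYtor}, while the contribution from the strata $\tilde W_0\cap Y^\bullet_\tor$ follows from a Lefschetz analysis of the type used in Lemma~\ref{Wsequence}. The second term is the nearby-cycle cohomology at $0$ of the proper map $\bar w$, identified by Steenbrink's theorem with $H^{2p+2}(\tilde W_{t_0},\CC)$ for a smooth general fibre $\tilde W_{t_0}$; the triviality of monodromy in degrees above $d+1$ (as in the proof of Prop.~\ref{vanishing},(2)) makes this pure Hodge, and Poincar\'e duality on $\tilde W_{t_0}$ brings it below middle dimension, where the weak Lefschetz theorem and the toric stratification of $\tilde W_{t_0}$ yield another explicit alternating sum indexed by cells of $\P$.

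The hard part will be the final combinatorial comparison: subtracting these two alternating sums and matching the result against Danilov-Khovanskii's formula for $h^{d-p,p}(S)=h^{p,d-p}(S)$ from Prop.~\ref{HodgenumbersS},(3). I expect this step to parallel the combinatorics of the proof of Thm.~\ref{epdual},(3), with the cell-counting identities of Lemma~\ref{lemmasignidentities}---especially part (3), which handles the interface between $\partial\Delta$, $\P_*$, and $\partial\Delta'$---doing the heavy lifting in collapsing the resulting double sum over cells of $\P$ and their $p_{\Delta\Delta'}$-projections into Danilov-Khovanskii's single-sum form. Once this diagonal identity $h^{p,p}(\shF_{\check S})=h^{d-p,p}(S)$ is established, the linear relation extracted from Thm.~\ref{epdual},(3) immediately produces the antidiagonal identity $h^{p,d-p}(\shF_{\check S})=h^{p,p}(S)$, finishing the proof.
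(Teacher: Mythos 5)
Your reduction steps coincide with the paper's own argument: the vanishing statements (Prop.~\ref{HodgenumbersS},(1) and Prop.~\ref{vanishing},(3)), the two Poincar\'e dualities, and the Euler-characteristic identity of Thm.~\ref{epdual},(3) are combined in exactly the same way to leave one identity for $2p>d$, and your pivot $h^{p,p}(\shF_{\check S})=h^{p+1,p+1}\HH^{2p+1}(Y,\bar A^\bullet)=h^{p+1,p+1}\HH^{2p+2}(Y,\CC)-h^{p+1,p+1}\HH^{2p+2}(Y,A^\bullet)$ via Lemma~\ref{shiftHpq} and Prop.~\ref{van_to_gen_nby},(1) is precisely how the paper proceeds. (Minor slip: at $p=d/2$ one has $e^p(S)=h^{p,p}(S)$, not $(-1)^ph^{p,p}(S)$; harmless.)

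The gap is in how you propose to finish. You plan to evaluate the two terms \emph{separately} as ``explicit alternating sums indexed by cells of $\P$''. That cannot work as stated: both $\bar w^{-1}(0)$ and $\bar w^{-1}(t_0)$ (equivalently $\tilde W_0$ and $\tilde W_{t_0}$) have strata lying over the boundary $D$ at infinity, and the fan $\bar\Sigma$ outside $\sigma$ was produced by a non-canonical auxiliary resolution (Prop.~\ref{properWprop}), so those strata --- hence $H^{2p+2}(Y,\CC)$ and $H^{2p+2}(\tilde W_{t_0},\CC)$ individually --- are not controlled by $\P$ at all. The missing idea is that the boundary contributions \emph{cancel in the difference}: the contraction $\tilde\PP_{\check\Delta}\ra X_{\bar\Sigma}$ identifies $\bar w^{-1}(0)\cap D$ with $\bar w^{-1}(t_0)\cap D$, so by additivity of Hodge--Deligne numbers (Thm.~\ref{eulerdecompose},(1), after converting the pure Hodge numbers into $e^{p+1,p+1}$ via Prop.~\ref{van_to_gen_nby},(2)) the difference equals $e^{p+1,p+1}(w^{-1}(0))-e^{p+1,p+1}(w^{-1}(t_0))$ on the open fibres in $X_\Sigma$; a second cancellation, via the Danilov--Khovanskii Gysin isomorphism on the big-torus parts in degrees $\ge d+2$, reduces further to the parts outside the dense torus. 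What remains is $e^{\hat p,\hat p}(Y_\tor)$ minus the contributions of the orbits $w^{-1}(t)\cap T_\tau$, $\tau\in\P_{\partial\Delta}$, which are handlebodies times tori by Cor.~\ref{handlebodies}; Lemma~\ref{lemepqYtor} and Lemma~\ref{handlebodytimestorus} then give Danilov--Khovanskii's formula directly, using only that $\P_*(\tau)=\Delta(\tau)$ for $\tau\subseteq\partial\Delta$. In particular the ``hard combinatorial comparison'' you anticipate, with Lemma~\ref{lemmasignidentities},(3) doing the heavy lifting as in Thm.~\ref{epdual},(3), is not needed and is the wrong place to look: without the two cancellation steps above your sums are not even well-defined combinatorially, and with them no further cell-counting identities are required.
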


\begin{proof}
By Example~\ref{cohoS}, $h^{p,q}(\shF_{\check S})=h^{p,q}(\check S)$. By 
Prop.~\ref{HodgenumbersS},(1), we have
\begin{equation}
\label{whatwehavealready}
e^p(\check S)=h^{p,p}(\check S)+(-1)^d h^{p,d-p}(\check S),
\end{equation}
while by Thm.~\ref{epdual},(3) and the vanishing by 
Prop.~\ref{vanishing},(3), we have
\[
e^p(\check S)=(-1)^de^{d-p}( S,\shF_{ S})=
h^{d-p,p}(\shF_{ S})+(-1)^d h^{d-p,d-p}(\shF_{ S}).
\]
Thus it is enough to show that $h^{d-p,d-p}(\shF_{ S})=h^{p,d-p}(\check S)$.
This follows from (\ref{whatwehavealready}) if $d$ is even and $p=d/2$,
so by the duality of Theorem~\ref{epdual},(1),
it remains to show that the equality holds for $2p>d$. Using 
Prop.~\ref{HodgenumbersS},(3),
again, we just need to show for $2p>d$ that
\begin{equation} \label{laststep}
h^{p,p}(\shF_{ S})
= (-1)^{d-p}\sum_{\tau\in\P}(-1)^{\dim\tau}
\begin{pmatrix}\dim\Delta(\tau)-\dim\tau\\ p+1\end{pmatrix}.
\end{equation} 
Let us assume $2p>d$. Choose $t_0\in\CC^*$ with $|t_0|$ small.
By Prop.~\ref{van_to_gen_nby}, we have
$h^{p,p}(\shF_{ S}) = 
h^{p+1,p+1}H^{2p+2}(\bar w^{-1}(0),\CC)-h^{p+1,p+1}H^{2p+2}(\bar w^{-1}(t_0),\CC)$.
Note that by Prop.~\ref{van_to_gen_nby},(2), and the fact that
$h^{p,q}H^i(Y)=0$ for $Y$ proper and $p+q>i$,
\[
e^{p+1,p+1}(\bar w^{-1}(0))=h^{p+1,p+1}H^{2p+2}(\bar w^{-1}(0),\CC).
\] 
By the smoothness of $\bar w^{-1}(t_0)$, this similarly holds for $\bar w^{-1}(t_0)$.
The contraction $\tilde \PP_{\check\Delta}\ra 
X_{\bar{\Sigma}}$ gives an isomorphism
of $\bar w^{-1}(t_0)\cap D$ and $\bar w^{-1}(0)\cap D$. 
Thus using 
Thm.~\ref{eulerdecompose},(1), we get
\[
h^{p,p}(\shF_{ S}) = e^{p+1,p+1}(w^{-1}(0))-e^{p+1,p+1}(w^{-1}(t_0)).
\]

Moreover by a Lefschetz-type result (see \cite{DK86},\,3.9), we have Gysin isomorphisms
$$H^i_c(w^{-1}(t_0)\cap(\CC^*)^{d+2}) 
\ra 
H^{i+2}_c((\CC^*)^{d+2})
\leftarrow
H^i_c(w^{-1}(0)\cap(\CC^*)^{d+2})$$
for $i\ge d+2$. Note that this is also true in the $\dim\Delta'=0$ case using the fact that then $w^{-1}(0)\cap(\CC^*)^{d+2} \cong\CC^*\times W'$ and $W'$ has a Newton polytope of dimension $d$. 
On the other hand $h^{p+1,p+1}H^i_c(T)=0$ for $i<2p+2$ and $T$ smooth (by Poincar\'e duality and \cite{PS08},\,Thm.\,5.39), so 
$h^{p+1,p+1}H^{i}_c(w^{-1}(t)\cap(\CC^*)^{d+2})=0$
for $i\le d+2$, $t\in\{0,t_0\}$ and thus again by Thm.~\ref{eulerdecompose},(1),
\[
h^{p,p}(\shF_{ S}) = e^{p+1,p+1}(\partial w^{-1}(0)) - 
e^{p+1,p+1}(\partial w^{-1}(t_0))
\]
where $\partial w^{-1}(t)$ denotes the intersection of
$w^{-1}(t)$ with the complement of the dense torus in $X_\Sigma$.
Note that $Y_\tor\subset w^{-1}(0)$, $w^{-1}(t_0)\cap Y_\tor=\emptyset$ and the torus orbits in $X_\Sigma\setminus Y_\tor$ are indexed by $\P_{\partial\Delta}$.
Decomposing in torus orbits yields 
\[
h^{p,p}(\shF_{ S}) 
= e^{\hat{p},\hat{p}}(Y_\tor) 
- \sum_{\tau\in\P_{\partial\Delta}} 
(e^{\hat{p},\hat{p}}(w^{-1}(t_0)\cap T_\tau) - e^{\hat{p},\hat{p}}( w^{-1}(0)\cap T_\tau))
\]
where $\hat{p}=p+1$. By Cor.~\ref{handlebodies}, for $\tau\in\P_{\partial\Delta}$, we have
\begin{align*}
w^{-1}(t_0)\cap T_\tau 
\cong {} & H^{\codim \P_*(\tau)-1}\times (\CC^*)^{\dim\P_*(\tau)-\dim\tau},\\
w^{-1}(0)\cap T_\tau 
\cong{} & H^{\codim \P_*(\tau)-2}\times (\CC^*)^{\dim\P_*(\tau)-\dim\tau+1}
\end{align*}
Note that, for $\tau\in\P_{\partial\Delta}$, $w^{-1}(t_0)\cap T_\tau$ is non-empty iff $\codim \P_*(\tau)\ge 1$ whereas $w^{-1}(0)\cap T_\tau$ is non-empty iff $\codim \P_*(\tau)\ge 2$; moreover, $\P_*(\tau)=\Delta(\tau)$.
The assertion now follows from Lemma~\ref{lemepqYtor} and 
Lemma~\ref{handlebodytimestorus}.
\end{proof}

\begin{corollary} \label{cor_mainthm}
With the assumptions of Thm.~\ref{mainthm},
$$h^{p,q}(X_\Sigma,w) = h^{n-p,q}(X_{\check\Sigma},\check w)$$
where $n=\dim X_\Sigma$, $h^{p,q}(X_\Sigma,w)=\sum_k h^{p,q+k}H^{p+q}(X_\Sigma,w)$, and 
$H^i(X_\Sigma,w)$ is defined as the $(i-1)$th hypercohomology of 
$\phi_{w,0}\CC_{X_\Sigma}$ with its Schmid-Steenbrink mixed Hodge structure (analogously for $(X_{\check\Sigma},\check w)$).
\end{corollary}


\section{A conjecture on Hochschild cohomology with a proof for curves} \label{sectionHH}
Except for the Calabi-Yau case, we expect that the Hochschild cohomology (exterior powers of the tangent bundle) and Hochschild homology of $\check S$ (Hodge groups) differ; e.g., the (co-)homologies generally differ in the Fano case. 
While the main result of this paper only involves Hodge groups and these relate to Hochschild homology, we also wish to identify the relevant Hochschild cohomology group of $\check S$ on the singular mirror $ S$. Recall that
$$\op{HH}^i(\check S)=  \bigoplus_{p+q=i} H^q(\check S,{\bigwedge}^p\shT_{\check S}).$$

\begin{conjecture} \label{conjectureHH}
For $S,\check S$ the singular loci of $ w^{-1}(0),\check w^{-1}(0)$ for a mirror pair of Landau-Ginzburg models as constructed in \S \ref{section1}, we have
$$\op{HH}^i(\check S) \cong H^i( S,\CC).$$
\end{conjecture}

\begin{theorem} \label{HHforcurves}
Conjecture \eqref{conjectureHH} holds for $\dim \check S=1$.
\end{theorem}
\begin{proof}
Indeed, we have, for $g\ge 2$ the genus of $\check S$,
\begin{align*}
\op{HH}^0(\check S){} = & H^0(\check S,\O_{\check S})\cong \CC\\
\op{HH}^1(\check S){} = & H^1(\check S,\O_{\check S})\oplus H^0(\check S,\shT_{\check S}) \cong \CC^g\\
\op{HH}^2(\check S){} = & H^1(\check S,\shT_{\check S})\cong \CC^{3g-3}.
\end{align*}
Assuming that $\check S$ is defined as
a hyperplane section of $\PP_{\Delta}$ for $\Delta$ satisfying Assumption 
\ref{overallhypo}, 
then it is standard that $g$ is $\#\Int(\Delta)\cap M=\#\Delta'\cap M$.
Now $ S$ is connected, so $H^0( S,\CC)=\CC\cong \op{HH}^0(\check S)$.
The curve $ S$ is a union of rational components, but it is easy
to see that its intersection complex is a graph of genus $g$, and thus
$H^1( S,\CC)\cong\CC^g\cong \op{HH}^1(\check S)$.

Finally, 
\[
H^2( S,\CC)\cong\CC^{\hbox{\# of irreducible components of
$ S$}}.
\]
 From the combinatorial description of $ S$ from 
Prop.\ \ref{propdualintcomplex}, one sees that
this number of irreducible components is $e+b$, where $e$ is
the number of edges $e$
of $\P\cap\Delta'$ and $b:=\#\partial\Delta'\cap M$. Note that $b$ is
also the number of edges of $\P\cap\Delta'$ contained in $\partial\Delta'$.
Let $f$ be the number of two-dimensional cells (standard simplices)
in $\P\cap\Delta'$. Then the area $A$ of $\Delta'$ is $f/2$, but by
Pick's Theorem we also have $A=i+b/2-1$, where $i=\#\Int(\Delta')\cap M$.
Also for the Euler characteristic $\chi(\Delta')$ we have $1=\chi(\Delta')=(b+i)-e+f$.
From these two equations one calculates that $e+b= 3(i+b)-3=3g-3$, as desired.
\end{proof}

\begin{remark}
Note that Thm.~\ref{HHforcurves} implies that the dimension of the complex moduli space of a curve $\check S$ with $g(\check S)\ge 2$ coincides with the number of irreducible components of its mirror. This is in line with expectation: the holomorphic moduli of $\check S$ match the symplectic moduli (volumes of the $\PP^1$'s) of its mirror dual $ S$.
\end{remark}
We have also checked that Conjecture~\ref{conjectureHH} holds when $\check S$ is a quintic surface in $\PP^3$.

\section{Complete intersections in toric varieties}
\label{completeint}
A Landau-Ginzburg model for a complete intersection in a toric variety was already given in \cite{HW09} based on \cite{BB94}. It closely relates to the local models of the logarithmic singularities given in \cite{Rud10} based on \cite{GS10}. Let $\PP_\Delta$ be a smooth projective toric variety, $D_1,...,D_k$ effective toric divisors with Newton polytopes
$\Delta_1,...,\Delta_k$ and non-degenerate global sections $f_1,...,f_k$ of the corresponding line bundles. We require $f_1,...,f_k$ to be transversal, i.e.,
$(\partial_{x_j} f_i)$ has rank $k$ at each point of $\PP_\Delta$, where $x_j$ are local coordinates on $\PP_\Delta$ and the $f_i$ are viewed as regular functions using a local trivialisation of $\shO(D_i)$. Transversality of $f_1,...,f_k$ is implied if $\Delta_1,...,\Delta_k$ are transversal, i.e., their tangent spaces embed as a direct sum in $M_\RR$.
We define the cone
$$\sigma=\Cone(\Conv(\Delta_1\times\{e_1\},\ldots,\Delta_k\times\{e_k\}))$$
in $M_\RR\oplus\RR^k$ where $e_1,\ldots,e_k$ is the standard basis of $\RR^k$.
Its dual cone is given by
$$\check\sigma=\{(n,a_1,\ldots,a_k)\,|\,a_i\ge\varphi_{\Delta_i}(n)\}\ \subseteq\  N_\RR\oplus\RR^k.$$
Let $\check\Sigma$ denote the star subdivision of $\check\sigma$ along the cone generated by $e_1^*,...,e_k^*$. It is not hard to see that 
$X_{\check\Sigma}=\Tot(\shO_{\PP_\Delta}(-D_1)\oplus...\oplus\shO_{\PP_\Delta}(-D_k))$. Setting $u_i=z^{e_i}$, we find that 
$$\check w=\sum_i u_if_i$$
is a regular function on $X_{\check\sigma}=\Spec[\sigma\cap (M\oplus\ZZ^k)]$ with Newton polytope $\hat\Delta=\Conv(\Delta_1\times\{e_1\},...,\Delta_k\times\{e_k\})$.
We pull $\check w$ back to $X_{\check\Sigma}$. The smoothness of
$$S=\crit(\check w)=V(f_1)\cap\cdots\cap V(f_k)$$
follows from the transversality of the $f_i$. We construct the mirror of $S$ as follows. Let  $\Sigma_*$ be the star subdivision of $\sigma$ along the subcone generated by $$\hat\Delta'=\Conv\{\Delta'_1\times\{e_1\},...,\Delta'_k\times\{e_k\}\}$$
where $\Delta_i'$ denotes the convex hull of the interior lattice points of $\Delta_i$. Let $\Sigma$ be a refinement of $\Sigma_*$ given by a triangulation of $\hat\Delta$ such that each cone in $\Sigma$ is a standard cone. Since this does not need to exist, more generally one needs to allow simplicial cones, see \S\ref{orbifoldsection}. However, with the assumption made, $X_\Sigma$ is smooth. Moreover, $\check w$ is now in the shape of (\ref{generalmirror2}). We define the potential $w$ on $X_\Sigma$ as in (\ref{generalmirror1}) and take the pair
$$(\check S = \Sing(w^{-1}(0)),\shF_{\check S}=\phi_{w,0}\CC_{X_\Sigma}[1])$$
for the mirror dual of $S$. One can show that $\dim\check S=\dim S$, 
so that $(\check S,\shF_{\check S})$ is plausible as a mirror of $S$,
in analogy with the hypersurface case.

\section{A refinement of the general conjecture using orbifolds}
\label{orbifoldsection}
We state here a refined version of the conjecture concerning Landau-Ginzburg
models defined using dual cones $\sigma$ and $\check\sigma$ of the statement made in the introduction. 
Given a cone $\sigma\subseteq M$, one can define a fan $\Sigma_*$ refining $\sigma$ in a canonical way, by taking $\Sigma_*$ to be the cones over faces of the convex hull $\sigma^o$ of the set of points $\sigma \cap (M\setminus \{0\})$. The corresponding toric variety $X_{\Sigma_*}$ is not necessarily a resolution of $X_{\sigma}$, however it is always Gorenstein. 
One can subdivide each bounded face of $\sigma^o$ into elementary
simplices, i.e., simplices which do not contain any integral points
of $M$ other than vertices. This refinement $\Sigma$, which is
not unique, yields an orbifold
resolution $X_{\Sigma}\rightarrow X_{\sigma}$ which is crepant over $X_{\Sigma_*}$.
We can follow the same
procedure for $\check\sigma$, hence obtain as in the introduction
Landau-Ginzburg potentials
\begin{align*} \label{refinedconjecture}
w:X_{\Sigma}\rightarrow &\CC\\
\check w:X_{\check\Sigma}\rightarrow &\CC.
\end{align*}
We pose the following

\begin{conjecture}
There is a version of the sheaf of vanishing cycles for orbifolds, where each $H^{p,q}(Y^j)$ in Lemma~\ref{lemWspecseq},(3) is replaced by $H^{p,q}_{\op{orb}}(Y^j)$. Defining $h^{p,q}_{\op{orb}}(X_\Sigma,w)$ and
$h^{p,q}_{\op{orb}}(X_{\check \Sigma},\check w)$ then analogously to Cor.~\ref{cor_mainthm}, $n=\dim X_\Sigma$, we have
$$h^{p,q}_{\op{orb}}(X_\Sigma,w) = h^{n-p,q}_{\op{orb}}(X_{\check \Sigma},\check w).$$
\end{conjecture}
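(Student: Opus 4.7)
The plan is to extend the proof of Thm.~\ref{mainthmintro} to the orbifold setting by carrying twisted-sector contributions through every step of the argument. First I would fix a precise definition of the orbifold vanishing-cycle sheaf on the smooth Deligne-Mumford stack $\shX_\Sigma := X_\Sigma$: writing $I\shX_\Sigma$ for its inertia stack, whose connected components $\shX_\Sigma^v$ are indexed by the box elements $v\in \bar M\cap \op{Box}(\tau)$ as $\tau$ ranges over cones of $\Sigma$, set
\[
\phi^{\op{orb}}_{w,0}\CC_{\shX_\Sigma} := \bigoplus_v \phi_{w|_{\shX_{\Sigma}^v},0}\CC_{\shX_\Sigma^v}\langle -\op{age}(v)\rangle,
\]
with the age-shift applied to both the Hodge and weight filtrations so that a class in bidegree $(p',q')$ on the $v$-th twisted sector contributes in bidegree $(p'+\op{age}(v), q'+\op{age}(v))$. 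Each summand is a classical vanishing-cycle sheaf on a smooth substack, so the cohomological mixed Hodge complex constructed in \S\ref{section3} applies term by term; the overall complex is the direct sum of these.

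Second, I would verify that the orbifold analogs of Thm.~\ref{exseqCHMCL0} and Lemma~\ref{lemWspecseq} follow formally. In the $E_1$-term of the resulting weight spectral sequence, each occurrence of $H^{m-2q-k}(Y^{2q+k+1},\CC)$ becomes a direct sum $\bigoplus_v H^{m-2q-k-2\op{age}(v)}(Y^{2q+k+1}_v,\CC)\langle -\op{age}(v)\rangle$, where $Y^j_v$ denotes the twisted sector lying over the intersection $Y^j$. Poincar\'e duality continues to hold on each smooth twisted sector, giving the analog of Lemma~\ref{lemWspecseq},(5); the degeneration at $E_2$ and the structural results of Prop.~\ref{van_to_gen_nby} and Prop.~\ref{vanishing} follow unchanged, since the orbifold Lefschetz theorem for quasi-smooth hypersurfaces in simplicial toric varieties supplies the analog of Thm.~\ref{batyrevthm} needed in Lemma~\ref{Wsequence}.

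Third, the combinatorial heart of \S\ref{section4} must be generalized. The Danilov-Khovanskii formula of Prop.~\ref{HodgenumbersS} is replaced by Batyrev's stringy Hodge-number formula for quasi-smooth hypersurfaces in simplicial toric varieties, and the identities of Lemma~\ref{lemmasignidentities} and Lemma~\ref{epW} become weighted sums over pairs $(\tau\in\P, v\in\op{Box}(\Cone(\tau)))$. These generalizations are essentially mechanical: the polyhedral Euler-characteristic identities hold at each fixed $v$, so the sign-cancellation argument leading to Thm.~\ref{epdual} and to the final identity (\ref{laststep}) can be carried out after re-indexing each sum by box elements and inserting an overall age-shift.

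The main obstacle will be establishing the duality of box data: namely, exhibiting an age-preserving correspondence between pairs $(\tau\in\P, v\in\bar M\cap\op{Box}(\Cone(\tau)))$ and the analogous data for $\check\Sigma$, under which the bidegree flip $(p,q)\mapsto(n-p,q)$ is intertwined with the age shifts on the two sides. This is the local combinatorial essence of stringy mirror symmetry; for reflexive $\Delta$ it reduces to the known Batyrev-Borisov duality of stringy Hodge numbers of mirror Calabi-Yau hypersurfaces, and the task here is to extend that duality to the setting of varieties of general type, where the potential $w$ plays the role of an anticanonical section. Once this duality of box data is in place, the sign-cancellation arguments of \S\ref{section4} yield the orbifold analog of (\ref{laststep}) verbatim, proving the conjecture.
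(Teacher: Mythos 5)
First, note that the statement you are proving is stated in the paper as a \emph{conjecture}: the authors give no proof, remarking only that the Calabi--Yau case follows from the Batyrev--Borisov result in \cite{BB96} (modulo an orbifold renormalization-flow argument) and that they have checked the conjecture in a handful of explicit low-dimensional examples, e.g.\ dual two-dimensional non-Gorenstein quotient cones. So there is no paper proof for your argument to match, and your text should be judged as a proposed proof of an open statement.

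As such, it has a genuine gap, and in fact you name it yourself: the ``duality of box data,'' i.e.\ an age-compatible correspondence between twisted sectors $(\tau\in\P, v\in\op{Box}(\Cone(\tau)))$ on the two sides intertwining $(p,q)\mapsto(n-p,q)$, is not constructed, and this is precisely the substantive content of the conjecture rather than a technical detail to be supplied later. Two further points make the reduction ``to §\ref{section4} verbatim'' unconvincing. (i) The existence of a well-behaved orbifold vanishing-cycle theory --- an inertia-stack decomposition with age shifts for which the analogues of Thm.~\ref{exseqCHMCL0}, Lemma~\ref{lemWspecseq} (including $E_2$-degeneration and Poincar\'e duality on the stacky strata $Y^j_v$), Prop.~\ref{vanishing} and the Lefschetz-type input of Thm.~\ref{batyrevthm} all hold --- is itself part of what is being conjectured; asserting that these ``follow formally'' or ``unchanged'' is not a proof, since the arguments in §\ref{section3}--§\ref{section4} use smoothness of $X_{\bar\Sigma}$, $\bar\Sigma$-regularity, and reduced normal crossings of $Y\cup D$ in essential ways. (ii) The conjecture is stated for \emph{arbitrary} dual cones $\sigma$, $\check\sigma$ with their canonical subdivisions by elementary simplices, not only for $\sigma=\Cone(\Delta)$ with $\PP_\Delta$ smooth. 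In that generality there is no smooth hypersurface $S\subseteq\PP_\Delta$, no $\Delta'$, and no map $p_{\Delta\Delta'}$, so the combinatorial backbone of §\ref{section4} (Lemma~\ref{pdeltadeltaprime}, Lemma~\ref{lemmasignidentities}, Lemma~\ref{epW}, and the Danilov--Khovanskii input of Prop.~\ref{HodgenumbersS}, which you propose to replace by a stringy formula valid only in the hypersurface setting) does not transfer by ``re-indexing by box elements''; the required identities would have to be formulated and proved afresh, symmetrically in $\sigma$ and $\check\sigma$. Your outline is a reasonable strategy for the cone-over-polytope case with simplicial (non-standard) triangulations, but as written it does not close the conjecture.
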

Assuming a renormalization flow argument works in the orbifold case, the last statement of the conjecture holds true in the Calabi-Yau case as was shown in \cite{BB96}.

Note that in the particular case of this paper, where $\sigma$ is the
cone over a polytope, the resolutions we use are special cases of the above
resolutions. We believe, based on this and some other examples, 
that these special types of
resolutions allow us to make the above statement using just the critical
value $0$ on both sides. This holds for the case considered
in this paper. On the other hand, using abitrary total resolutions as
in (\ref{generalmirror1}), (\ref{generalmirror2}) 
in some sense adds geometry that wasn't originally there.

The simplest case of this conjecture which is not a Calabi-Yau situation and not already verified in this paper would be where $\sigma$ and $\check\sigma$ are both two-dimensional cones defining non-Gorenstein rational quotient singularities. We have verified the conjecture in several such explicit examples.

\begin{appendix} 
\section{A binomial identity}
We include the proof of a binomial identity which we use in 
\S\ref{section2} and \S\ref{section4}.

\begin{proposition} \label{binomident}
For $n,k,m,p\in\ZZ_{\ge 0}$, we have
\begin{enumerate}
\item 
$$\begin{pmatrix}n\\k\end{pmatrix}=(-1)^m\sum_{i\ge 0}(-1)^i \begin{pmatrix}i\\m\end{pmatrix} \begin{pmatrix}n+m+1\\k+1+i\end{pmatrix}$$
\item (Vandermonde's identity) 
$$\begin{pmatrix}m+n\\k\end{pmatrix}=\sum_{i\ge 0}
\begin{pmatrix}m\\i\end{pmatrix} \begin{pmatrix}n\\k-i\end{pmatrix}$$
\end{enumerate}
\end{proposition}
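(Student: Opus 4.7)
The plan is to handle the two parts separately, with (2) being the classical Vandermonde identity and (1) being the real content.

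For (2), I would simply compare coefficients of $x^k$ on both sides of the polynomial identity $(1+x)^{m+n}=(1+x)^m(1+x)^n$. This is completely standard and requires no further comment.

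For (1), the strategy is induction on $m$, with the base case $m=0$ isolated as the crucial telescoping observation. When $m=0$, the claim reduces to
\[
\binom{n}{k}=\sum_{i\ge 0}(-1)^i\binom{n+1}{k+1+i},
\]
which I would prove by applying Pascal's identity $\binom{n+1}{k+1+i}=\binom{n}{k+i}+\binom{n}{k+1+i}$ term by term; the resulting alternating sum telescopes to $\binom{n}{k}$ (assuming $n\ge k$; otherwise both sides vanish).

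For the inductive step, assuming the identity for $m-1$, I would again apply Pascal to split
\[
\binom{n+m+1}{k+1+i}=\binom{n+m}{k+i}+\binom{n+m}{k+1+i},
\]
producing two sums. In the first, shift the summation index by one so that both sums have the form $\sum_i(-1)^i(\cdots)\binom{n+m}{k+1+i}$; the coefficients combine via $\binom{i}{m}-\binom{i+1}{m}=-\binom{i}{m-1}$ (another instance of Pascal). The resulting expression is, up to a sign $-(-1)^{m-1}=(-1)^m$, exactly the inductive hypothesis applied to $m-1$, and rearranging gives the claim. The main bookkeeping obstacle is just tracking the signs and the index shift; there is no deeper combinatorial difficulty. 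Boundary terms at $i=-1$ vanish since $\binom{0}{m}=0$ for $m\ge 1$, so the shift is harmless.
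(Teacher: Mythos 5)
Your proof is correct. Part (2) is handled exactly as one would expect (the paper simply calls it standard, and your generating-function argument is fine), and part (1) follows the same skeleton as the paper: induction on $m$, with the identical base case $\binom{n}{k}=\sum_{i\ge 0}(-1)^i\binom{n+1}{k+1+i}$ obtained by the same Pascal/telescoping mechanism (the paper writes it as iterated insertion of $\binom{n}{k+i}=\binom{n+1}{k+1+i}-\binom{n}{k+1+i}$, which is your telescoping read in the other direction). Where you genuinely diverge is the inductive step. The paper goes upward from $m$ to $m+1$ by re-inserting the base case to expand each $\binom{n+m+1}{k+1+i}$ as $\sum_{j\ge 0}(-1)^j\binom{n+m+2}{k+2+i+j}$, then regrouping the resulting double sum along $i'=1+i+j$ and invoking the diagonal (hockey-stick) summation $\sum_{j=0}^{i'-1}\binom{i'-j-1}{m}=\binom{i'}{m+1}$. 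You instead descend from $m$ to $m-1$ by a single application of Pascal on the large binomial, an index shift (with the harmless $i=0$ boundary term killed by $\binom{0}{m}=0$ for $m\ge 1$), and the coefficient collapse $\binom{i}{m}-\binom{i+1}{m}=-\binom{i}{m-1}$. Your route avoids the double sum and the hockey-stick identity entirely, so it is marginally more elementary and easier to check sign-by-sign; the paper's version has the mild advantage of reusing the base case verbatim as the engine of the induction. Either argument is complete.
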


\begin{proof} 
(2) is standard.
To see (1), note $\begin{pmatrix}n\\k\end{pmatrix}$ is the coefficient
of $t^{n-k}$ in $(1+t)^n=(1+t)^{-(m+1)}(1+t)^{n+m+1}$. Using the second
expression, this coefficient is
\begin{align*}
& \sum_{j\ge 0} \begin{pmatrix}-(m+1)\\j\end{pmatrix}
\begin{pmatrix}n+m+1\\ n-k-j\end{pmatrix}\\
= {} & \sum_{j\ge 0}(-1)^j \begin{pmatrix}j+m\\ j\end{pmatrix}
\begin{pmatrix}n+m+1\\ m+1+k+j\end{pmatrix}\\
= {} & \sum_{i\ge m} (-1)^{i+m}\begin{pmatrix}i\\ i-m\end{pmatrix}
\begin{pmatrix} n+m+1\\ k+i+1\end{pmatrix},
\end{align*}
replacing the sum over $j$ with the sum over $i=j+m$. Noting the summand
is zero for $i<m$, we obtain the desired result. 
\end{proof}

\end{appendix}

\end{document}